\PassOptionsToPackage{style=alphabetic}{bibtex}
\PassOptionsToPackage{final}{showkeys} 
\PassOptionsToPackage
{left=2.5cm, 
  right=2.5cm,
  top=1cm,
  bottom=1cm,
  footskip=18pt,
  includehead,
  includefoot,
  headsep=12pt}
{geometry}
\documentclass[reqno,11pt]{article}
\usepackage{macroGS}
\usepackage{tikz}
\usetikzlibrary{arrows.meta,positioning,fit,backgrounds}
\DeclareFontFamily{U}{rsfs}{}
\DeclareFontShape{U}{rsfs}{m}{n}{<-6>rsfs5<-8.5>rsfs7<->rsfs10}{}
\newcommand{\XO}[0]{\HtN\cap L^\infty(\Omega)}
\newcommand{\VO}[0]{\rmV(\Omega)}
\newcommand{\PVO}[0]{\rmV_{+\!+}(\Omega)}

\newcommand{\ZO}[0]{\mathrm{Test}(\Omega)}
\newcommand{\PZO}[0]{\mathrm{Test}_{+\!+}(\Omega)}
\newcommand{\WpO}[0]{H^2_{\!\surd}(\Omega)}%

\newcommand{\PXO}{\HtN\cap L^\infty_{+\!+}(\Omega)}

\newcommand{\duality}[2]{\la #1,#2\ra}
\newcommand{\DHtN}[0]{H^{-2}_{\sfN}(\Omega)}%
\newcommand{\DHtnrm}[1]{\|#1\|_{-2,\sfN}}%
\newcommand{\Htnrm}[1]{\|#1\|_{2,\sfN}}
\newcommand{\MDHtN}{\cM^{-2}_{\sfN,+}}
\newcommand{\MDHtnrm}[1]{\|#1\|_{\MDHtN(\overline\Omega)}}
\newcommand{\Cpt}[0]{\mathsf{cap}_{2,\sfN}}

\newcommand{\scalpt}[2]{(#1,#2)_{2}}
\newcommand{\tnrm}[1]{\|#1\|_{2}}
\newcommand{\pnrm}[1]{\|#1\|_{p}}
\newcommand{\Lap}[0]{\Delta}
\newcommand{\BLap}[0]{\Lap^2}

\newcommand{\HtN}[0]{H^2_{\sfN}(\Omega)}
\newcommand{\frAo}{\frA^r}

\title{Maximal monotonicity and contraction semigroup for the
  quantum drift-diffusion (Derrida--Lebowitz--Speer--Spohn) equation}
\begin{document}

\author{
Daniel Matthes
\thanks{Zentrum Mathematik, Technische Universit\"at M\"unchen,
  Garching, Germany; email:
  \textsf{matthes@ma.tum.de}.},
 Giuseppe Savar\'e\
 \thanks{Department of Decision Sciences and BIDSA, Bocconi University,
 Milano, Italy; email:
 \textsf{giuseppe.savare@unibocconi.it}.},
 Andr\'e Schlichting
 \thanks{Institut f\"ur Angewandte Analysis, Universit\"at Ulm,
  Ulm, Germany; email:
  \textsf{andre.schlichting@uni-ulm.de}.}
}

\date{\today}

\maketitle

\begin{abstract}
We study the quantum drift-diffusion, or
Derrida--Lebowitz--Speer--Spohn (DLSS), equation for a nonnegative
density $\varrho$ on a bounded convex domain with Neumann boundary
conditions, in the square-root variable $u=\sqrt\varrho$. We show that
the DLSS operator, defined and monotone on smooth strictly positive
functions, admits a \emph{unique} maximal monotone extension in
$L^2(\Omega)$, explicitly given by the minimal (defect-free) operator
plus the normal cone of the positivity constraint. The generated
semigroup, which contracts the Hellinger distance between the
densities, thus yields a canonical solution --- existing, unique, and
stable for every nonnegative $L^2$ initial datum and in every space
dimension --- independent of any approximation scheme: it is in fact
the unique contraction semigroup extending the classical evolutions
that emanate from smooth, uniformly positive data. The implicit
Euler scheme converges to it, and $\sqrt u\in L^2_{\rm loc}(H^2)$
along the flow. When the datum belongs to the domain of the operator,
the solution is strong and satisfies the equation pointwise, with no
reaction term created on the vacuum $\{u=0\}$. We characterize the
trajectories in several equivalent ways --- as B\'enilan integral
solutions and through one-sided weak formulations --- prove the
maximality of the operator also in the $H^2$--$H^{-2}$ duality and,
in dimension $d\le3$, identify the flow with the weak solutions in
the uniqueness class of Fischer. A second-order estimate of
independent interest underlies the construction: on a convex domain
with Neumann conditions the dissipation $\int_\Omega(\Lap
u)^2/u\,\d x$ is finite exactly when $\sqrt u\in H^2(\Omega)$, and it
then controls the full Hessian of $\sqrt u$, in every dimension.
\end{abstract}

{\small\noindent
\textbf{Keywords.} Quantum drift-diffusion; Derrida--Lebowitz--Speer--Spohn
equation; DLSS equation; 
nonlinear fourth-order parabolic equations; positivity preservation; maximal
monotone operators; nonlinear contraction semigroups; Hellinger distance.

\noindent\textbf{Mathematics Subject Classification (2020).}
35K35, 35K65, 47H05, 47H20, 35Q40, 49J40.}

{\small\tableofcontents}

\section{Introduction}

The quantum drift-diffusion, or Derrida--Lebowitz--Speer--Spohn (DLSS),
equation governs the evolution of a nonnegative density
$\varrho=\varrho(x,t)$ on a bounded convex domain $\Omega\subset\R^d$.
In divergence form it reads
\begin{equation}
	\label{eq:intro-rho-div}
	\partial_t\varrho=
	-2\,\nabla\!\cdot\!\Big(\varrho\,\nabla\frac{\Lap\sqrt\varrho}{\sqrt\varrho}\Big),
\end{equation}
and, expanding the fourth-order operator, equivalently
\begin{equation}
	\label{eq:intro-rho}
	\partial_t\varrho+\Lap^2\varrho
	-\sum_{i,j=1}^d\partial^2_{ij}\Big(\frac{\partial_i\varrho\,\partial_j\varrho}\varrho\Big)=0 ,
\end{equation}
always complemented with no-flux (Neumann) boundary conditions on the
convex domain $\Omega$ and with a nonnegative initial datum
$\varrho|_{t=0}=\varrho_0$. It was introduced in the analysis of
interface fluctuations \cite{DLSS91,DLSS91b,Bleher-Lebowitz-Speer1994}
and it is the simplest fourth-order quantum diffusion arising in
semiconductor modelling
\cite{Jungel16,Juengel-Pinnau2000,Bukal-Juengel-Matthes2013}.
The two names have distinct origins, and we use the abbreviation DLSS
throughout. In semiconductor modelling the quantum drift-diffusion system
carries in addition a drift term generated by the electric potential $V$ and
an isothermal pressure term,
\begin{equation}
	\label{eq:isothermal-intro}
	\partial_t\varrho=\nabla\!\cdot\!\Big(\varrho\,\nabla\Big(
	\theta \ln\varrho-\eps^2 \frac{\Lap\sqrt\varrho}{\sqrt\varrho}+V\Big)\Big),
\end{equation}
so that \eqref{eq:intro-rho-div} is the field-free, purely quantum case
$\theta=0,\, V\equiv0,\,\eps^2=2$ in which the pressure term is neglected; all the results below
concern that case. 
Its theory
is delicate: the equation is of fourth order, it obeys no maximum
principle, and the nonlinearity is singular  on the \emph{vacuum}
$\{\varrho=0\}$, which a strictly positive smooth datum may in general
reach in finite time.

\subsection{Lyapunov structure, gradient flow and a priori estimates}
\label{subsec:intro-rho}

Equation~\eqref{eq:intro-rho} carries a rich Lyapunov structure. The
mass is conserved,
\begin{equation}
	\label{eq:intro-mass}
	\int_\Omega\varrho_t\,\d x=\int_\Omega\varrho_0\,\d x ,
\end{equation}
while the Boltzmann entropy and the Fisher information
\begin{equation}
	\label{eq:intro-funct}
	\mathcal H(\varrho):=\int_\Omega\varrho\ln\varrho\,\d x ,
	\qquad
	\mathcal F(\varrho):=\int_\Omega|\nabla\sqrt\varrho|^2\,\d x ,
\end{equation}
are nonincreasing along the flow. The entropy dissipation is the
explicit identity
\begin{equation}
	\label{eq:intro-entropy-diss}
	\mathcal H(\varrho_T)+
	\int_S^T\!\!\int_\Omega\varrho_t\,\big|\rmD^2\ln\varrho_t\big|^2\,\d x\,\d t
	=\mathcal H(\varrho_S),\qquad 0\le S\le T ,
\end{equation}
at the origin of the algorithmic entropy method and of the sharp decay
estimates
\cite{Juengel-Matthes2006,Juengel-Violet2007,Dolbeault-Gentil-Juengel2006,%
Juengel-Toscani2003,Caceres-Carrillo-Toscani2005}. A deeper mechanism
underlies the decay of $\mathcal F$: 
\cite{Gianazza-Savare-Toscani09} showed that \eqref{eq:intro-rho} is the
gradient flow of the Fisher information $\mathcal F$ with respect to the
$L^2$-Wasserstein distance, in the sense of Otto's formal Riemannian
calculus \cite{Otto2001,Otto-Villani2000} and of the metric theory of
gradient flows \cite{JKO98,AGS08}.

Likewise, \eqref{eq:intro-rho} is also the gradient flow of the \emph{entropy}
$\mathcal H$ with respect to the \emph{diffusive transport} distance~\cite{MRSS23,MRS25}, a second-order analogue of the
Benamou--Brenier formula in which the continuity equation is driven by a
double divergence; the dissipation
$\int_\Omega\varrho\,|\rmD^2\ln\varrho|^2\,\d x$ in
\eqref{eq:intro-entropy-diss} corresponds to the metric
derivative, and already displays that second-order structure. 
Recently in~\cite{MSS25}, this structure is derived and generalized 
to nonlinear mobility, as an effective limit of chemical reaction systems, 
which adds another microscopic system as the possible origin of the equation.

Weak solutions and their long-time behaviour were constructed, under
various boundary conditions and by different approximation schemes, in
\cite{Juengel-Pinnau2000,Gualdani-Juengel-Toscani2006,Jungel-Matthes08};
each scheme distinguishes its own weak formulation. The one relevant to
us tests \eqref{eq:intro-rho} against a smooth function $\varphi$,
\begin{equation}
	\label{eq:intro-weak-rho}
	\int_\Omega\partial_t\varrho_t\,\varphi\,\d x
	+\int_\Omega\varrho_t\,\Lap^2\varphi\,\d x
	-\int_\Omega\sum_{i,j=1}^d
	\frac{\partial_i\varrho_t\,\partial_j\varrho_t}{\varrho_t}\,
	\partial^2_{ij}\varphi\,\d x=0 ,
\end{equation}
and corresponds, in the variable $u=\sqrt\varrho$, to the $b$-form
identity \eqref{w-equivalent2} which has also been exploited in the present paper.
Fischer \cite{Fischer13} proved uniqueness, for $d\le3$, in the
regularity class $\varrho^{1/2},\varrho^{1/4}\in L^2_{\rm loc}(H^2)$,
whose solutions are known to exist under periodic boundary conditions; a
structure-preserving discretization was proposed in \cite{MRSS23}.

Finally, the flow is a \emph{contraction} in a natural distance: if
$\varrho,\sigma$ are two solutions, then
\begin{equation}
	\label{eq:intro-hellinger}
	\int_\Omega\big|\sqrt{\varrho_t}-\sqrt{\sigma_t}\big|^2\,\d x
	\le
	\int_\Omega\big|\sqrt{\varrho_s}-\sqrt{\sigma_s}\big|^2\,\d x ,
	\qquad 0\le s\le t ,
\end{equation}
that is, the DLSS flow contracts the \emph{Hellinger} distance between
densities. As we shall see, \eqref{eq:intro-hellinger} is precisely the
$L^2$-contraction of the semigroup in the variable $u=\sqrt\varrho$.

\subsection{The square-root variable and the maximal monotone approach}
\label{subsec:intro-u}

Estimate \eqref{eq:intro-hellinger} already singles out $u:=\sqrt\varrho$
as the natural unknown: the Wasserstein picture makes $\varrho$ natural,
but the formal computations pointing to uniqueness
\cite{Jungel-Matthes08,Jungel16} are carried out at the level of the
square root. In this variable \eqref{eq:intro-rho} becomes
\begin{equation}
	\label{eq:intro-u}
	\partial_t u+\Lap^2u-\frac{(\Lap u)^2}u=0,
\end{equation}
the pointwise identity satisfied by our solutions
(Theorem~\ref{thm:generation}, equation~\eqref{eq:right}).

Two scalar quantities drive the analysis. The first is the mass, which
in the variable $u$ is the \emph{square} $L^2$-norm,
$\tnrm{u_t}^2=\int_\Omega\varrho_t\,\d x$, constant in time. The second
is the dissipation
\begin{equation}
	\label{eq:intro-Du}
	\sfD(u):=\int_\Omega\frac{(\Lap u)^2}u\,\d x ,
\end{equation}
which is controlled by the \emph{linear} functional $\int_\Omega u$:
along regular solutions of the flow
\begin{equation}
	\label{eq:intro-massD}
	\int_\Omega u_T\,\d x=\int_\Omega u_S\,\d x+\int_S^T\sfD(u_t)\,\d t,
\end{equation}
so that $t\mapsto\int_\Omega u_t$ is nondecreasing and
$\int_0^{+\infty}\sfD(u_t)\,\d t$ is finite (Proposition~\ref{prop:flow-regularity}). Since a finite dissipation
$\sfD(u)$ controls $\sqrt u$ in $H^2$ in every dimension
(Theorem~\ref{thm:Hessian}, \eqref{eq:twosided}; cf.\ the optimal
regularity of square roots \cite{Lions-Villani1995}), every trajectory
satisfies $\varrho^{1/4}=\sqrt u\in L^2_{\rm loc}(H^2)$ for arbitrary
$L^2_+$ data.

A finer and, in some respects, \emph{crucial} estimate concerns $u$
itself. When the entropy of the datum is finite, the entropy dissipation
\eqref{eq:intro-entropy-diss}, rewritten in the variable $u$, takes the
dimension-free form
\begin{equation}
	\label{eq:intro-H2u}
	\sfE(u_t)+4c_d\int_0^t\!\!\int_\Omega|\rmD^2u_r|^2\,\d x\,\d r\le\sfE(u_0),
	\qquad
	\sfE(u):=\int_\Omega u^2\ln(u^2)\,\d x ,
\end{equation}
with a constant $c_d>0$ in \emph{every} dimension, and yields
$u\in L^2_{\rm loc}(H^2)$
(Proposition~\ref{prop:entropy-regularization}). 

The contraction estimate \eqref{eq:intro-hellinger}  suggests that the map $u\mapsto\Lap^2u-(\Lap u)^2/u$ is
\emph{monotone} in $L^2(\Omega)$, and it is (Theorem~\ref{thm:DLSS-monotone}). One is thus
led to read \eqref{eq:intro-u} as the inclusion
\begin{equation}
	\label{eq:intro-inclusion}
	\partial_tu+\sfA u\ni0
\end{equation}
for a maximal monotone operator $\sfA$ in $L^2(\Omega)$, and to solve it
by the implicit Euler scheme. Three obstacles must be
overcome. First, the operator is defined and monotone a priori only on
smooth strictly positive functions (Section~\ref{subsec:positive}), and
might admit several maximal monotone extensions, hence several distinct
semigroups. Second, it is unclear how to extend it to the whole positive cone $L^2_+(\Omega)$ of
$L^2(\Omega)$. Third, in contrast with the Wasserstein formulation for
$\varrho$, at the level of $u$ the operator carries no variational
(minimization) structure, so that not even one step of the implicit
scheme is a minimization problem. 
The aim of this paper is to address and solve these main questions.

\subsection{Main results}
\label{subsec:intro-main}

We denote by $\sfA^\circ$ the minimal, defect-free DLSS operator
\begin{equation}
	\label{eq:intro-Amin}
	\sfA^\circ u:=\Lap^2u-\frac{(\Lap u)^2}u ,\quad
	\dom(\sfA^\circ)=
	\left\{u:
		\begin{aligned}
		&u\in H^2_+(\Omega)\text{ with } \partial_\nn u=0\text{ on }\partial\Omega, \\
	&
	\Lap^2u,\ \frac{(\Lap u)^2}u \in L^1(\Omega),\quad 
	\sfA^\circ u\in L^2(\Omega)
		\end{aligned}
	\right\}
\end{equation}
where the singular term $(\Lap u)^2/u$ just acts on $\{u>0\}$.
The definition of the domain of $\sfA^\circ$ is the minimal one needed to make sense of all the terms.

Our first result identifies
the maximal monotone extension of $\sfA^\circ$, denoted by $\sfA$, completely.
\begin{itemize}
	\item\emph{The unique maximal monotone extension.} 
	$\sfA^\circ$ has a unique maximal
	monotone extension in the positive cone of $L^2(\Omega)$ (Theorem~\ref{thm:main-maximal})
	which keeps the same domain as $\sfA^\circ$ and has
	the normal-cone structure
	\begin{equation}
		\label{eq:intro-structure}
		\sfA=\sfA^\circ+\partial I_{L^2_+} ,
		\quad
		\partial I_{L^2_+}(u):=\Big\{g\in L^2(\Omega):
		g\le 0,\ gu=0\Big\}
	\end{equation}
	the minimal operator plus the normal cone of the positivity
	constraint. The structure of $\sfA$ on states touching the vacuum, on which this
	description rests, is the content of Theorem~\ref{thm:main-equivalent} and 
	its Corollary \ref{prop:minimal-section}, which shows, in particular, that $\sfA^\circ u$ is the
	element of minimal norm in $\sfA u$.
	\item
	\emph{Canonicity.} $\sfA$ is the \emph{unique} maximal monotone
	operator in $L^2(\Omega)$ that extends the smooth core and whose domain
	consists of nonnegative functions (Corollary~\ref{cor:test-L2}): no
	choice is involved in passing from the classical operator on smooth
	positive functions to $\sfA$, and the generated semigroup is therefore
	unambiguous. Equivalently, at the level of the dynamics: $\sfS_t$
	is the unique continuous semigroup of contractions which coincides
	with the classical evolutions (for which only a local existence
	result is available), see Corollary~\ref{cor:unique-semigroup}.
	This is the result that
	makes the construction canonical
	rather than one construction among several.
	\item\emph{Strong solutions and the vacuum.} For every
	$u_0\in\dom(\sfA^\circ)$ (in particular for every smooth, strictly
	positive datum) the semigroup produces a Lipschitz in time strong solution
	with $u_t\in\dom(\sfA^\circ)$ and $\tnrm{\sfA^\circ u_t}$ nonincreasing,
	satisfying the pointwise equation \eqref{eq:intro-u}
	(Theorem~\ref{thm:generation}). \emph{No} reaction term is created
	when $u$ touches zero, so the solution is a genuine DLSS
	solution.
	\item\emph{The semigroup and its characterizations.} The contraction
	semigroup $\sfS_t$ extends to arbitrary data
	$u_0\in L^2_+(\Omega)=\overline{\dom(\sfA)}$
	(Lemma~\ref{le:domain-closure}). Its trajectories are exactly the
	curves with constant $L^2$-norm satisfying either the
	\emph{supersolution} inequality
	\begin{equation}
		\label{eq:intro-super}
		\partial_t\int_\Omega u_t\,v\,\d x+\fra(u_t,v)\ge0
		,\qquad
		\fra(u,v)=\int_\Omega\Big(\Lap u\,\Lap v-\frac{(\Lap u)^2}u\,v\Big)\d x ,
	\end{equation}
	tested against a suitable class of 
	smooth and uniformly positive test functions $v\in \PZO$
	(Theorem~\ref{def:weak-solution}) or, equivalently, the
	\emph{B\'enilan} integral inequalities
	\begin{equation}
		\label{eq:intro-benilan}
		\partial_t\int_\Omega u_t\,v\,\d x\ge\int_\Omega \sfA^\circ v\,u_t\,\d x
		\quad\text{for every }v\in \PZO
	\end{equation}
	(Definition~\ref{def:benilan-solution}). In every case
	$\sqrt u\in L^2_{\rm loc}(H^2)$
	(Proposition~\ref{prop:flow-regularity}) and, under finite entropy or
	for $d\le3$, $u\in L^2_{\rm loc}(H^2)$
	(Proposition~\ref{prop:entropy-regularization}).
	\item\emph{Second order calculus for square roots.} Independently of
	the evolution, the dissipation $\sfD$ of \eqref{eq:intro-Du} is finite
	\emph{exactly} when $\sqrt u$ lies in $\HtN$, and it then
	controls the full Hessian of $\sqrt u$, with a constant degenerating no
	faster than $d^{-2}$ (Theorem~\ref{thm:Hessian}). On a convex domain
	with Neumann conditions the Laplacian alone thus governs the whole
	second order structure of $\sqrt u$, in every dimension. The estimate
	is of independent interest and is used throughout the paper.
	\item\emph{Contraction and stability.} The semigroup is an
	$L^2$-contraction,
	$\tnrm{\sfS_tu_0-\sfS_t\hat u_0}\le\tnrm{u_0-\hat u_0}$; since
	$\tnrm{u-\hat u}^2=\int_\Omega|\sqrt\varrho-\sqrt{\hat\varrho}|^2\,\d x$,
	this is exactly the Hellinger contraction \eqref{eq:intro-hellinger},
	and $\sfS_t$ depends continuously on the datum.
\end{itemize}
It is worth noticing that the presence of 
the normal cone $\partial I_{L^2_+}$ in 
\eqref{eq:intro-structure} is a natural by-product of the maximality of the operator $\sfA$ and of the fact that the domain of $\sfA$ is contained in $L^2_+(\Omega)$.
In fact, every maximal monotone operator $\mathsf M$ in a Hilbert space $\mathbb H$ whose domain is contained in a closed convex set $\mathbb K\subset \mathbb H$ 
satisfies 
\begin{equation}
	\label{eq:intro-maximal}
	f\in \mathsf M u\quad	\Longrightarrow\quad
	f+\partial I_{\mathbb K}(u)\subset \mathsf M u.
\end{equation}
 In our case, the normal cone $\partial I_{L^2_+}$ is explicitly characterized by \eqref{eq:intro-structure}
 and the maximality of $\sfA$ also shows that no further term has to be added to $\sfA^\circ$.
\paragraph{What is new.}
Existence of suitable classes of weak solutions to \eqref{eq:intro-rho}, and their long-time
behaviour, are well established
\cite{Juengel-Pinnau2000,Gualdani-Juengel-Toscani2006,Jungel-Matthes08,%
Gianazza-Savare-Toscani09}, and uniqueness is known 
for periodic boundary conditions and $d\le3$ in the
regularity class of \cite{Fischer13}.
Singling out a good notion of weak solution is in fact a challenge in
itself: as recalled above, each approximation scheme distinguishes its own
formulation, and existence is granted only under an additional assumption
on the datum, typically the finiteness of the entropy $\mathcal
H(\varrho_0)$. What the present paper adds is of a
different nature: it identifies a \emph{single} operator, canonically
attached to the classical expression \eqref{eq:intro-u}, which governs
the equation for \emph{arbitrary} nonnegative $L^2$ data and in
\emph{every} dimension. Concretely, the new points are the monotonicity of
the DLSS operator in the $H^2$--$H^{-2}$ duality
(Theorem~\ref{thm:DLSS-monotone}) together with its maximality
(Theorems~\ref{thm:main-maximal} and~\ref{thm:frA-maximal}); the fact that
the maximal monotone extension is unique, so that the semigroup is not the
by-product of a particular approximation scheme
(Corollary~\ref{cor:test-L2}); the description of the operator on the
vacuum as a normal cone, with no reaction term created when $u$ touches
zero (Theorem~\ref{thm:vacuum-trace}); and the second order estimate of
Theorem~\ref{thm:Hessian}, which is of independent interest.

To these we add a new notion of weak solution, given by conditions
{\rm(W1)--(W3)} of Theorem~\ref{def:weak-solution}: it makes sense for
\emph{arbitrary} nonnegative data $u_0\in L^2_+(\Omega)$ and in every
dimension, it involves no assumption on the time derivative of the curve,
and for it we prove both existence and uniqueness
(Theorem~\ref{thm:generation}). The notion is robust: since the solution
map is a contraction, it depends $1$-Lipschitz continuously on the datum
and the class of solutions is closed under locally uniform $L^2$
convergence. Finally, the maximality of $\sfA$ turns the heuristics of
\eqref{eq:intro-inclusion} into a theorem: the implicit Euler scheme
converges through the exponential formula \eqref{eq:exponential}, thus
providing a new approximation scheme for \eqref{eq:intro-rho}, whose
single step is solved in Section~\ref{sec:maximality} by a variational
inequality (Theorem~\ref{thm:existence}) and not, as in the Wasserstein
setting, by a minimization problem.  Each step involves the minimal
operator $\sfA^\circ$ only, preserves nonnegativity with no recourse to a
maximum principle, and cannot enlarge the vacuum region
(Proposition~\ref{prop:Euler-vacuum}).  Along with it, the whole toolbox of
contraction semigroups becomes available: the regularity of strong
solutions, the right differentiability \eqref{eq:right}, the decay of
$\tnrm{\sfA^\circ u_t}$, and the inhomogeneous problem of
Remark~\ref{rem:inhomogeneous}.

A limitation should be stated here. Whether the semigroup $\sfS_t$
coincides with the weak solutions produced by the approximation schemes of
\cite{Juengel-Pinnau2000,Gualdani-Juengel-Toscani2006,Jungel-Matthes08} is
settled only for $d\le3$, through Theorem~\ref{thm:bform-fischer} and the
uniqueness theorem of \cite{Fischer13} (and, for the Neumann conditions
considered here, conditionally on Remark~\ref{rem:fischer-BC}). For
$d\ge4$ no uniqueness theory for weak solutions is available, and the
question remains open; what our results do give in every dimension is that
\emph{one} distinguished, canonically determined solution exists, depends
continuously on the datum, and is characterized by the equivalent
formulations listed above.

Around this core we develop several complements: 
commutation estimates for the Heat semigroup
(Corollary~\ref{cor:main-ineq}); several equivalent descriptions of the
graph of $\sfA$ (Theorem~\ref{thm:main-equivalent}); the fine structure
of the defect measure on the vacuum (Theorem~\ref{thm:vacuum-trace},
Proposition~\ref{prop:defect-variational}); and, for $d\le3$, the
identification of the semigroup with the solutions of the $b$-form
equation in Fischer's regularity class (Theorem~\ref{thm:bform-fischer}),
which links our results to the uniqueness theorem of \cite{Fischer13}.

\paragraph{Structure of the paper.}
Section~\ref{sec:prel} fixes notation and collects the preliminaries on
the Neumann Laplacian, the Heat semigroup, capacity, the second-order
calculus for nonnegative functions and the abstract theory of maximal
monotone operators. Section~\ref{sec:DLSS} constructs the DLSS operator
$\frA$ between $\HtN$ and $\DHtN$, analyses the defect measure, proves
monotonicity and introduces the $L^2$ realization $\sfA$.
Section~\ref{sec:evolution} develops the evolution equation: generation
of the semigroup, strong and B\'enilan solutions, the implicit Euler
scheme, space regularity, and
the distributional and weak characterizations (including the $d\le3$
theory). Section~\ref{sec:maximality} proves the maximality of $\sfA$ in
$L^2$ and of $\frA$ in the $H^2$--$H^{-2}$ duality. The appendices
collect the $L^1$ Neumann Laplacian, the proofs of the second-order
calculus, the link with Fischer's approach, a general existence result
for variational inequalities, and the auxiliary regularization
estimates.
Figure~\ref{fig:dependencies} summarizes how the results depend on one
another, from the tools of Section~\ref{sec:prel} up to the main theorems.

\begin{figure}[htbp]
\centering
\resizebox{\textwidth}{!}{%
\begin{tikzpicture}[
	every node/.style={font=\scriptsize},
	res/.style={draw, rounded corners=2pt, align=center, text width=2.35cm,
		minimum height=1.0cm, inner sep=2.5pt, fill=white},
	key/.style={res, very thick},
	dep/.style={-{Latex[length=1.5mm]}, rounded corners=4pt,
		shorten >=1pt, shorten <=1pt, gray!55!black},
]
\def\xa{1.6}\def\xb{4.7}\def\xc{7.8}\def\xd{10.9}\def\xe{14.0}
\def\ya{0}\def\yb{-2.9}\def\yc{-5.8}\def\yd{-8.7}\def\ye{-11.6}\def\yf{-14.5}
\def\hAB{-1.45}\def\hBC{-4.35}\def\hCD{-7.25}\def\hDE{-10.15}\def\hEF{-13.05}
\def\ga{3.15}\def\gb{6.25}\def\gc{9.35}\def\gd{12.45}\def\gR{16.1}\def\gRR{16.75}

\node[res] (A1) at (\xa,\ya) {Lem.~\ref{le:useful-approximation}\\ test algebra\\ $\ZO$};
\node[res] (A2) at (\xb,\ya) {Lem.~\ref{le:convex-estimate}\\ Jensen for $\sfH_t$};
\node[res] (A3) at (\xc,\ya) {Lem.~\ref{le:sqrt-calculus}\\ calculus for $\sqrt u$};
\node[res] (A4) at (\xd,\ya) {Thm.~\ref{thm:representation}\\ positive functionals};
\node[res] (A5) at (\xe,\ya) {Thm.~\ref{thm:Minty}, \ref{thm:BC-VI}\\ Minty; variational ineq.};
\node[res] (B1) at (\xb,\yb) {Prop.~\ref{prop:quartic}\\ quartic gradient bound};
\node[key] (B2) at (\xc,\yb) {Thm.~\ref{thm:Hessian}\\ $\sfD(u)<\infty\Leftrightarrow\sqrt u\in\HtN$};
\node[res] (B3) at (\xd,\yb) {Cor.~\ref{cor:main-ineq}\\ commutation with $\sfH_t$};
\node[res] (C1) at (\xa,\yc) {Lem.~\ref{le:obvious}\\ (A.1)--(A.3$''$)};
\node[res] (C2) at (\xb,\yc) {Prop.~\ref{prop:defect-variational}\\ defect measure};
\node[key] (C3) at (\xc,\yc) {Thm.~\ref{thm:DLSS-monotone}\\ $\frA$ is monotone};
\node[res] (C4) at (\xd,\yc) {Thm.~\ref{thm:vacuum-trace}\\ structure on the vacuum};
\node[res] (C5) at (\xe,\yc) {Prop.~\ref{le:DLSS-equivalence}\\ (A.2$'$)+(A.3$''$) characterize $\frA$};
\node[res] (D1) at (\xa,\yd) {Thm.~\ref{thm:existence}\\ regularized ineq.\ $\eps\Phi$};
\node[res] (D2) at (\xb,\yd) {Thm.~\ref{thm:eta}\\ limit $\eps\downarrow0$};
\node[key] (D3) at (\xc,\yd) {Thm.~\ref{thm:main-maximal}\\ $\sfA$ maximal monotone};
\node[res] (D4) at (\xd,\yd) {Thm.~\ref{thm:frA-maximal}\\ $\frA$ maximal monotone};
\node[key] (D5) at (\xe,\yd) {Cor.~\ref{cor:test-L2}\\ \emph{unique} extension of the core};
\node[key] (E1) at (\xb,\ye) {Thm.~\ref{thm:generation}\\ semigroup $\sfS_t$, strong solutions};
\node[res] (E2) at (\xc,\ye) {Prop.~\ref{prop:flow-regularity}\\ $\sqrt{u}\in L^2_{\rm loc}(\HtN)$};
\node[res] (E3) at (\xd,\ye) {Prop.~\ref{prop:entropy-regularization}\\ $u\in L^2_{\rm loc}(\HtN)$};
\node[key] (E4) at (\xe+0.35,\ye) {Thm.~\ref{thm:bform-flow}\\ $b$-form along the flow, $d\le3$};
\node[res] (F0) at (\xa,\yf) {Cor.~\ref{prop:minimal-section}\\ $\sfA=\sfA^\circ+\partial I_{L^2_+}$};
\node[res] (F1) at (\xb,\yf) {Thm.~\ref{thm:main-equivalent}\\ equivalent forms of $\sfA$};
\node[key] (F2) at (\xc,\yf) {Thm.~\ref{def:weak-solution}\\ weak formulation};
\node[key] (F3) at (\xd,\yf) {Thm.~\ref{thm:bform-fischer}\\ $b$-form, $d\le3$};

\draw[dep] (A1) -- (C1);
\draw[dep] (A2) -- (B2);
\draw[dep] (A2) -- (B3);
\draw[dep] (A3) -- (B1);
\draw[dep] (A3) -- (B2);
\draw[dep] (A3) -- (C4);
\draw[dep] (A4) -- (C5);
\draw[dep] (A5.south) -- (\xe,\hAB) -- (\gd-0.17,\hAB) -- (\gd-0.17,\hCD) -- (\xd,\hCD) -- (D4.north);
\draw[dep] (A5.east) -- (\gR,\ya) -- (\gR,\hCD-0.45) -- (\xa,\hCD-0.45) -- (D1.north);
\draw[dep] (B1) -- (B2);
\draw[dep] (B2) -- (C2);
\draw[dep] (B2.east) -- (\gc,\yb) -- (\gc,\hDE+0.55) -- (\xc,\hDE+0.55) -- (E2.north);
\draw[dep] (B3) -- (D5);
\draw[dep] (C1) -- (C2);
\draw[dep] (C2) -- (C3);
\draw[dep] (C3) -- (D3);
\draw[dep] (C3.south east) to[out=-45,in=150] (D4.north west);
\draw[dep] (C1.north east) -- (\ga-0.28,\hBC) -- (\xd,\hBC) -- (C4.north);
\draw[dep] (C2.north) -- (\xb,\hBC-0.4) -- (\xd-0.35,\hBC-0.4) -- (\xd-0.35,\yc+0.5);
\draw[dep] (C2.south west) -- (\ga,\yc-0.62) -- (\ga,\yf) -- (F1.west);
\draw[dep] (C2.south east) -- (\gb,\yc-0.62) -- (\gb,\hDE) -- (E3.west);
\draw[dep] (C4.east) -- (\gd+0.17,\yc) -- (\gd+0.17,\hEF+0.3) -- (\xb+0.35,\hEF+0.3) -- (\xb+0.35,\yf+0.5);
\draw[dep] (C5.west) -- (D2.east);
\draw[dep] (C5.east) -- (\gRR,\yc) -- (\gRR,\hEF-0.3) -- (\xb+1.6,\hEF-0.3) -- (F1.east);
\draw[dep] (D1) -- (D2);
\draw[dep] (D1.south) -- (\xa,\hDE+0.3) -- (\xd,\hDE+0.3) -- (D4.south);
\draw[dep] (D2.north east) to[out=35,in=145] (D4.north west);
\draw[dep] (D2) -- (D3);
\draw[dep] (D3.north east) to[out=35,in=145] (D5.north west);
\draw[dep] (D3) -- (E1);
\draw[dep] (E1) -- (E2);
\draw[dep] (E1.north east) to[out=35,in=145] (E3.north west);
\draw[dep] (E2) -- (E3);
\draw[dep] (E1) -- (F2);
\draw[dep] (E2) -- (F2);
\draw[dep] (E2) -- (F3);
\draw[dep] (E3) -- (F3);
\draw[dep] (E3) -- (E4);
\draw[dep] (E2.south east) to[out=-25,in=200] (E4.south west);
\draw[dep] (E4) -- (F3.east);
\draw[dep] (F1.south west) to[out=215,in=325] (F0.south east);
\end{tikzpicture}}
\caption{Principal dependencies among the results. Arrows point from a
statement to the results whose proof uses it; the thicker boxes are the
principal results. Tools from
Section~\ref{sec:prel} and the appendices are in the top row, the construction and the
monotonicity of $\frA$ in the middle, the maximality and the generated
semigroup below, and the equivalent characterizations in the last row.
Only the principal edges are drawn.}
\label{fig:dependencies}
\end{figure}
\clearpage
\bigskip

\centerline{\large\bfseries Main notation}

\smallskip
\halign{$#$\hfil\quad&#\hfil\cr
\Omega&open, bounded and convex subset of $\R^d$\cr
\Leb d&Lebesgue measure in $\R^d$\cr
\noalign{\smallskip}
\scalpt\cdot\cdot,\ \tnrm\cdot&scalar product and norm of $L^2(\Omega)$\cr
L^2_+(\Omega),\ L^\infty_+(\Omega)&cones of nonnegative functions in $L^2$ or $L^\infty$\cr
L^\infty_{++}(\Omega)&uniformly positive functions, with
positive (essential) infimum\cr
\HtN,\ \Htnrm{\cdot}&$H^2(\Omega)$ functions with the Neumann condition $\partial_\nn u=0$\cr
\WpO&nonnegative $u\in L^2(\Omega)$ with $\sqrt u\in\HtN$\cr
\DHtN&the dual space $(\HtN)'=H^{-2}_{\sfN}(\Omega)$\cr
\la\cdot,\cdot\ra&duality pairing between $\HtN$ and $\DHtN$, extending $\scalpt\cdot\cdot$\cr
\MDHtN(\overline\Omega)&positive finite measures belonging to $\DHtN$\cr
\ZO,\ \PZO&test algebra dense in $\HtN$, and its uniformly positive elements\cr
\noalign{\smallskip}
\Lap&Laplace operator with homogeneous Neumann conditions\cr
\nabla u,\ \rmD^2u&gradient and Hessian of $u$\cr
\partial_\nn&outer normal derivative on $\partial\Omega$\cr
\sfH_t&Heat (Markov) semigroup with Neumann conditions\cr
\sfR&resolvent $(\sfI-\Lap)^{-1}$ of the Neumann Laplacian\cr
\Cpt,\ \tilde u&$(2,2)$-capacity, and $\Cpt$-quasi-continuous representative of $u$\cr
\noalign{\smallskip}
\varrho,\ u=\sqrt\varrho,\ w=\sqrt u=\varrho^{1/4}&density and its roots\cr
\sfh(a,b)=b^2/a&perspective function of the DLSS operator\cr
\sfD(u)=\int_\Omega\sfh(u,\Lap u)\,\d x&dissipation (Fisher-type) functional\cr
\sfE&(relative) entropy functional\cr
\fra,\ \frb&the bilinear form and the $b$-form\cr
\frA&DLSS operator from $\HtN$ to $\DHtN$\cr
\sfA,\ \sfA^\circ&$L^2$ realization of $\frA$, and its minimal section\cr
\sfS_t&DLSS contraction semigroup in $L^2_+(\Omega)$\cr
}

\section{Preliminaries and second-order calculus for nonnegative functions}
\label{sec:prel}

Let $\Omega\subset \R^d$ be an open, bounded and convex domain.
We denote by $\scalpt \cdot\cdot$ the scalar product in $L^2(\Omega)$ with norm $\tnrm\cdot$.

\subsection{The Laplace operator with Neumann boundary conditions}
\label{subsec:Lap}
The Dirichlet bilinear  form in $H^1(\Omega)$ 
\begin{equation}
	\frd(u,v):=\int_\Omega \nabla u\cdot\nabla v\,\d x,\quad
	\dom(\frd):=H^1(\Omega),
\end{equation}
is associated with 
the Laplace operator
with homogeneous Neumann boundary conditions
$\Lap:\HtN\to L^2(\Omega)$ 
with domain
\begin{equation}
	\label{eq:domL}
	\HtN:=\Big\{u\in H^2(\Omega):
	\partial_\nn u=0\text{ on }\partial\Omega\Big\}
\end{equation} 
defined by
\begin{equation}
	\label{eq:Lap}
	u\in \HtN,\ -\Lap u=f\in L^2(\Omega)
	\quad\Leftrightarrow\quad
	u\in H^1(\Omega),\quad
	\frd(u,v)=\int_\Omega fv\,\d x
	\quad\text{for every }v\in H^1(\Omega).
\end{equation} 
Since $\Omega$ is convex, we have \cite[Section~3.2]{Grisvard85}
\begin{equation}
	\label{eq:H2bound}
	\int_\Omega (\Lap u)^2\,\d x\ge 
	\int_\Omega |\rmD^2 u|^2\,\d x\quad\text{for every }u\in \HtN.
\end{equation}
We can define an equivalent norm in 
the Hilbert space $\HtN$ 
by considering the operator
$$(\sfI-\Lap): \HtN\to L^2(\Omega),\quad 
u\mapsto u-\Lap u,$$
and setting
\begin{equation}
	\label{eq:H2norm-equivalent}
	\Htnrm u^2:=\tnrm{u-\Lap u}^2.
\end{equation}
As usual, we identify
$L^2(\Omega)$ with its dual via 
the duality induced by its scalar product.
Since $\HtN$ is dense in $L^2(\Omega)$,
we can extend the $L^2$ scalar product to the duality pairing
$\la \cdot,\cdot\ra$
between $\HtN$ and its dual $(\HtN)'$ which we will denote by $\DHtN$.
We have $L^2(\Omega)\subset \DHtN$ 
with dense embedding and a Hilbert triple
\begin{equation}
  \label{eq:2}
  \HtN\subset^{\rm ds}L^2(\Omega)\equiv \big(L^2(\Omega)\big)'
  \subset^{\rm ds}(\HtN)'=\DHtN.
\end{equation}
      Consider now the resolvent operator
$\sfR:L^2(\Omega)\to \HtN$ defined by 
$\sfR:=(\sfI-\Lap)^{-1}$: for every $f\in L^2(\Omega)$ 
$u=\sfR f$ solves the elliptic equation
\begin{equation}
	\label{eq:resolvent}
	u-\Lap u=f\quad\text{in }\Omega,\quad
	\partial_\nn u=0\quad\text{on }\partial\Omega,
\end{equation}
which corresponds to the variational formulation
\begin{equation}
	\label{eq:var-resolvent}
	\scalpt uv+\frd(u,v)=\scalpt fv\quad\text{for every }v\in H^1(\Omega).
\end{equation}
$\sfR$ can be extended to a 
positivity preserving contraction in 
every $L^p(\Omega)$, $p\in [1,+\infty)$, and it is easy to check that 
\begin{equation}
	\label{eq:dual-norm}
	\DHtnrm{f}:=
        \sup_{h\in \HtN,\ \Htnrm{h}\le 1} 
        \la{f},{h}\ra=
	\tnrm{\sfR f},\quad f\in L^2(\Omega).
\end{equation}
The above identity shows that $\sfR$ can be uniquely extended to a linear and continuous operator from $\DHtN$ to $L^2(\Omega)$
and \eqref{eq:dual-norm} can be extended to arbitrary elements $f\in \DHtN$
and provides an equivalent 
characterization of 
the dual norm of $\HtN$.
We also have
\begin{equation}
	\label{eq:duality1}
	\la f,w\ra=\int_\Omega \sfR f\,
	(w-\Lap w)\,\d x.
\end{equation}
Notice that the continuous and symmetric bilinear form 
\begin{equation}
	\label{eq:Dir2}
	\frd_2(u,v):=\int_\Omega \Lap u\,\Lap v\,\d x,\quad 
	u,v\in \HtN
\end{equation}
induces a bounded operator 
$\BLap:\HtN\to\DHtN$ satisfying
\begin{equation}
	\label{eq:BLap}
	\la \BLap u,v\ra=
	\int_\Omega \Lap u\,\Lap v\,\d x
	\quad \text{for every }
	u,v\in \HtN.
\end{equation}

\subsection{Heat flow and Markov semigroup}
\label{subsec:Heat}
$-\Lap$ generates a Markov analytic semigroup of contractions 
$(\sfH_t)_{t\ge 0}$ in $L^2(\Omega)$, corresponding to the solution
$u(t,\cdot):=\sfH_t u_0$ of the Heat equation
\begin{equation}
	\label{eq:Heat}
	\partial_t u-\Lap u=0\ \text{in }\Omega\times (0,+\infty),\quad
	\partial_\nn u=0\text{ on }\partial\Omega\times (0,+\infty),\quad 
	u(0,\cdot)=u_0.
\end{equation}
$(\sfH_t)_{t\ge 0}$ can be uniquely extended to
an analytic semigroup of contractions (still denoted by $\sfH_t$)  
in  every $L^p(\Omega)$, $p\in [1,+\infty)$. $\sfH_t$ 
is also ultracontractive and has the strong Feller property. In particular, 
for every $f\in L^1(\Omega)$ and $t>0$ 
$\sfH_t f$ has a (unique) Lipschitz continuous representative. 

Let us recall the most important estimates:
\begin{subequations}\label{eq:HeatEstimates}
\begin{align}
\label{eq:1infty}
	\|\sfH_t u\|_{L^p(\Omega)}&\le 
	\|u\|_{L^p(\Omega)},\quad &
	\|\sfH_t u\|_{L^\infty(\Omega)}
	&\le \frac {C_{\infty,1}}{t^{\frac{d}{2}}} \|u\|_{L^1(\Omega)}\\
	\lip(\sfH_t u)&\le 
	\lip(u)&
	\lip(\sfH_t u)&\le \frac{C_{\rm lip,\infty}}{\sqrt{t}}
	\|u\|_{L^\infty(\Omega)}\\
	\sfH_t(\Lap u)&=\Lap \sfH_t(u),\quad&
	\|\Lap \sfH_t u\|_{L^2(\Omega)}&\le 
	\frac Ct\|u\|_{L^2(\Omega)}\\
	\Htnrm{\sfH_t u}&\le 
	\Htnrm{u}&&
\end{align}
\end{subequations}
Combining the above estimates and using the semigroup property, we easily get
that
for
every $u\in L^1(\Omega)$ and every $t>0$, $\sfH_t u\in \HtN$ is a Lipschitz
function
with (essentially) bounded Laplacian;
if $u\ge 0$, $u\not\equiv 0$, then
$\min_{\overline\Omega}\sfH_t u>0$, by the strict positivity of the
Neumann heat kernel on the connected set $\Omega$ (see
e.g.~\cite{Davies89}).

We can also easily get the following approximation result
\begin{lemma}
  \label{le:useful-approximation}
  The space 
  \begin{equation}
    \label{eq:defZ}
    \ZO:=\Big\{u\in \HtN\cap \Lip(\overline\Omega):
    \Lap u\in L^\infty(\Omega)\Big\}
  \end{equation}
  is an algebra which is dense in $\HtN$ and separates the points of $\overline\Omega$.
  Moreover we have
  \begin{equation}
  	\label{eq:Z-acts}
  	u\in \HtN,\ 
  	w\in \ZO\quad\Rightarrow\quad
  	uw\in \HtN.
  \end{equation}
  In addition, $\ZO$ is also dense in $\rmC(\overline \Omega)$.
\end{lemma}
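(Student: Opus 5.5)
We prove in turn that $\ZO$ is an algebra, the multiplier property \eqref{eq:Z-acts}, density in $\HtN$, and density in $\rmC(\overline\Omega)$ with point separation.

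\emph{Algebra property.} Take $u,w\in\ZO$. Then $uw$ is Lipschitz on $\overline\Omega$ and lies in $H^1(\Omega)$. Its Laplacian is formally
\[
\Lap(uw)=w\Lap u+u\Lap w+2\nabla u\cdot\nabla w .
\]
Each term is in $L^\infty$, because $u,w,\nabla u,\nabla w,\Lap u,\Lap w$ are all bounded. To make this rigorous we use the variational characterization \eqref{eq:Lap}. For $v\in H^1(\Omega)$ we test with $wv$ and with $uv$, both of which belong to $H^1$ since $u,w\in W^{1,\infty}$. This gives
\[
\frd(uw,v)=\int_\Omega\big(w\nabla u\cdot\nabla v+u\nabla w\cdot\nabla v\big)\,\d x
=\int_\Omega f\,v\,\d x
\]
with $f=-(w\Lap u+u\Lap w+2\nabla u\cdot\nabla w)$. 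The term $w\nabla u\cdot\nabla v$ is rewritten as $\nabla u\cdot\nabla(wv)-v\nabla u\cdot\nabla w$, and symmetrically for $u\nabla w\cdot\nabla v$. Since $f\in L^2$, \eqref{eq:Lap} gives $uw\in\HtN$ with $\Lap(uw)=-f\in L^\infty$. So $uw\in\ZO$.

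\emph{Multiplier property \eqref{eq:Z-acts}.} Take $u\in\HtN$ and $w\in\ZO$. We run the same computation. Now $f=-(w\Lap u+u\Lap w+2\nabla u\cdot\nabla w)$ is in $L^2$ because $\Lap u\in L^2$, $u\in L^2$ and $\nabla u\in L^2$ are each multiplied by bounded functions. The identity $\frd(uw,v)=\int fv$ holds for all $v\in H^1$ since $uv,wv\in H^1$. For the $uv$ test we need $uv\in H^1$ when $v\in H^1$. This can fail for general $v$, so we first take $v$ Lipschitz and extend by density of Lipschitz functions in $H^1(\Omega)$, using that both sides are continuous in $v\in H^1$. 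Then \eqref{eq:Lap} and the convexity bound \eqref{eq:H2bound} give $uw\in\HtN$.

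\emph{Density in $\HtN$.} For $u\in\HtN$ set $u_t:=\sfH_tu$. By the remarks after \eqref{eq:HeatEstimates}, $u_t$ is Lipschitz and lies in $\HtN$ with bounded Laplacian, so $u_t\in\ZO$. In the norm \eqref{eq:H2norm-equivalent} we have
\[
\Htnrm{u_t-u}=\tnrm{\sfH_t(u-\Lap u)-(u-\Lap u)}\to0 ,
\]
by commutation and strong continuity of $\sfH_t$ in $L^2$.

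\emph{Density in $\rmC(\overline\Omega)$ and point separation.} Separation of points is immediate: the coordinate functions are not in $\HtN$ in general because of the Neumann condition, so instead, given $x\ne y$, we take $\varphi\in\rmC(\overline\Omega)$ with $\varphi(x)\neq\varphi(y)$. We use $\sfH_t\varphi\to\varphi$ uniformly as $t\downarrow0$, which holds for continuous data on a convex (hence Lipschitz) domain by the Feller property of the Neumann semigroup. Then $\sfH_t\varphi\in\ZO$ separates $x$ and $y$ for small $t$. The constant $1$ also belongs to $\ZO$, so Stone--Weierstrass gives density, or directly: $\sfH_t\varphi\to\varphi$ uniformly already yields it. The main obstacle is exactly this uniform convergence (the $C_0$/Feller property on $\rmC(\overline\Omega)$). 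If it is not available as stated, we approximate $\varphi$ uniformly by Lipschitz $\psi$ and use
\[
\|\sfH_t\psi-\psi\|_\infty\le C\,\lip(\psi)\sqrt t ,
\]
which follows from Gaussian-type bounds for the reflecting Brownian motion in a convex domain.
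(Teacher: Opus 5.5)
Your proof is correct and follows essentially the paper's route. Both use the Leibniz rule for the algebra and multiplier properties; you justify it more carefully than the paper, through the weak characterization \eqref{eq:Lap} and a density step in the test function $v$. Both use heat-semigroup smoothing $\sfH_t u\to u$ in $\HtN$ for density, and heat-smoothed separating functions plus Stone--Weierstrass for density in $\rmC(\overline\Omega)$.

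The uniform convergence you single out as the main obstacle needs neither the Feller property nor Gaussian bounds, at least for Lipschitz data, which is all the paper uses. By \eqref{eq:HeatEstimates} the family $\sfH_t\psi$ is bounded by $\|\psi\|_\infty$ and equi-Lipschitz, with $\lip(\sfH_t\psi)\le\lip(\psi)$. Arzel\`a--Ascoli together with $\sfH_t\psi\to\psi$ in $L^2(\Omega)$ then gives $\sfH_t\psi\to\psi$ uniformly on $\overline\Omega$.
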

The proof is postponed to Appendix~\ref{app:sec2-technical}.

We conclude this section by introducing the natural realization in
$L^1(\Omega)$ of the Neumann Laplacian $\Lap$ generating the semigroup
$\sfH_t$ studied above.
\begin{definition}[Transposition Laplacian]
  \label{def:L1-Laplacian}
  For $u,g\in L^1(\Omega)$ we say that 
  \begin{equation}
    \label{eq:L1-Laplacian}
    \text{$u\in D_{L1}(\Lap)$ with
  $\Lap u=g$ if}
  \qquad \int_\Omega u\,\Lap\zeta\, \d x=\int_\Omega g\,\zeta\, \d x
    \qquad\forall\,\zeta\in\ZO.
  \end{equation}
  Since $\ZO$ is dense in $\rmC(\overline\Omega)$
  (Lemma \ref{le:useful-approximation}), $g$ is uniquely determined by $u$.
\end{definition}
\begin{proposition}[Graph closure and integration by parts]
  \label{prop:L1-closure}
  The graph of the $L^1$ realization of $\Lap$ defined by
  \eqref{eq:L1-Laplacian}
  coincides with the closure of the graph of $\Lap$ on
  $\ZO\times L^\infty(\Omega)$ (equivalently, on $\HtN\times L^2(\Omega)$) with respect to the product norm
  on
  $L^1(\Omega)\times L^1(\Omega)$.
  In particular for every $u\in D_{L1}(\Lap)$, with $\Lap u=g$,
  \begin{equation}
    \label{eq:L1-closure}
    \sfH_\tau u\to u\quad\text{and}\quad
    \Lap(\sfH_\tau u)=\sfH_\tau(\Lap u)\to \Lap u
    \quad\text{in }L^1(\Omega)\text{ as }\tau\downarrow0.
  \end{equation}
  If in addition $u\in W^{1,1}(\Omega)$, then
  \begin{equation}
    \label{eq:L1-IBP}
    \int_\Omega g\,\zeta\,\d x=-\int_\Omega \nabla u\cdot\nabla\zeta\,\d x
    \qquad\forall\,\zeta\in\ZO,
  \end{equation}
  and, by density, \eqref{eq:L1-IBP} extends to every
  $\zeta\in\Lip(\overline\Omega)$.

  Finally, if $u\in D_{L1}(\Lap)$ 
  \begin{equation}
	\label{eq:bilap-parts}
	\int_\Omega \Lap u\Lap v\,\d x=
	\int_\Omega u\Lap^2 v\,\d x
	\quad 
	\text{for every $v\in \ZO$ with $\Lap v\in \ZO$}.
  \end{equation}
\end{proposition}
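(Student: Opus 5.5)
The plan is to prove the two inclusions between the $L^1$ realization of Definition~\ref{def:L1-Laplacian} and the $L^1\times L^1$ closure of the graph of $\Lap$ on $\ZO\times L^\infty(\Omega)$, using the Heat semigroup $\sfH_\tau$ as the regularizing device throughout.

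\emph{The closure is contained in the $L^1$ realization.} Let $u_n\in\ZO$ with $u_n\to u$ and $\Lap u_n\to g$ in $L^1(\Omega)$. For $\zeta\in\ZO$, Green's formula with Neumann conditions gives $\int_\Omega u_n\Lap\zeta\,\d x=\int_\Omega\Lap u_n\,\zeta\,\d x$, since both functions lie in $\HtN$. Because $\Lap\zeta\in L^\infty$ and $\zeta\in L^\infty$, we can pass to the limit in $L^1$, and this yields \eqref{eq:L1-Laplacian}. The same argument applies to approximants in $\HtN\times L^2(\Omega)$.

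\emph{The $L^1$ realization is contained in the closure, together with \eqref{eq:L1-closure}.} Let $u\in D_{L1}(\Lap)$ with $\Lap u=g$. For $\tau>0$, the function $\sfH_\tau u$ belongs to $\ZO$ by the estimates \eqref{eq:HeatEstimates} and the semigroup property, as noted in Section~\ref{subsec:Heat}.

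The key identity is $\Lap\sfH_\tau u=\sfH_\tau g$. To prove it, test against $\zeta\in\ZO$. By the symmetry of $\sfH_\tau$ (its kernel is symmetric, so the identity extends to $L^1\times L^\infty$),
\[\int_\Omega \Lap(\sfH_\tau u)\,\zeta\,\d x=\int_\Omega \sfH_\tau u\,\Lap\zeta\,\d x=\int_\Omega u\,\sfH_\tau\Lap\zeta\,\d x=\int_\Omega u\,\Lap(\sfH_\tau\zeta)\,\d x.\]
Since $\sfH_\tau\zeta\in\ZO$, the definition of the $L^1$ Laplacian gives that the last term equals $\int_\Omega g\,\sfH_\tau\zeta\,\d x=\int_\Omega\sfH_\tau g\,\zeta\,\d x$. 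Both $\Lap(\sfH_\tau u)$ and $\sfH_\tau g$ are in $L^1$, and $\ZO$ is dense in $\rmC(\overline\Omega)$, so they coincide.

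The strong continuity of $\sfH_\tau$ in $L^1$ then gives \eqref{eq:L1-closure}, which places $(u,g)$ in the closure of the graph on $\ZO\times L^\infty$. Since $\ZO\subset\HtN$ and $L^\infty\subset L^2$, the closure on $\ZO\times L^\infty$ is contained in the closure on $\HtN\times L^2$. Combined with the first inclusion (proved for both classes of approximants), this shows that all three graphs coincide.

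I expect the main obstacle to be the justification of the commutation steps when $u$ is only $L^1$. Specifically, one needs $\sfH_\tau\Lap\zeta=\Lap\sfH_\tau\zeta$ for $\zeta\in\ZO$, and the symmetry of $\sfH_\tau$ when pairing $L^1$ with $L^\infty$. I would handle both by proving them first in $L^2$ and then passing to $L^1\times L^\infty$ by density, using $L^1$ contractivity and duality.

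\emph{Integration by parts \eqref{eq:L1-IBP}.} Assume in addition $u\in W^{1,1}$. Set $u_\tau=\sfH_\tau u\in\HtN$; for these, $\int_\Omega\Lap u_\tau\,\zeta\,\d x=-\int_\Omega\nabla u_\tau\cdot\nabla\zeta\,\d x$ holds. To pass to the limit we need $\nabla u_\tau\to\nabla u$ in $L^1$. I would obtain this without a gradient-commutation estimate, by a duality argument: for $\zeta\in\ZO$,
\[\int_\Omega\nabla\sfH_\tau u\cdot\nabla\zeta\,\d x=-\int_\Omega u\,\sfH_\tau\Lap\zeta\,\d x=\int_\Omega\nabla u\cdot\nabla\sfH_\tau\zeta\,\d x,\]
and $\nabla\sfH_\tau\zeta\to\nabla\zeta$ uniformly if $\zeta$ is $\rmC^1$, or at least boundedly a.e. Taking the limit on the left-hand side with $\Lap u_\tau=\sfH_\tau g\to g$ then gives \eqref{eq:L1-IBP}. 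The extension to $\zeta\in\Lip(\overline\Omega)$ uses $\sfH_\tau\zeta\in\ZO$: we have $\lip(\sfH_\tau\zeta)\le\lip(\zeta)$, $\sfH_\tau\zeta\to\zeta$ uniformly, and $\nabla\sfH_\tau\zeta\to\nabla\zeta$ a.e. along a subsequence (from the $L^2$ convergence in $H^1$), so dominated convergence applies against $\nabla u\in L^1$.

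\emph{The final identity \eqref{eq:bilap-parts}.} This follows by applying \eqref{eq:L1-Laplacian} with $\zeta=\Lap v\in\ZO$, which gives $\int_\Omega \Lap u\,\Lap v\,\d x=\int_\Omega u\,\Lap(\Lap v)\,\d x$.
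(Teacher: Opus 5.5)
Your proof is correct and is essentially the paper's. It uses the same Green-formula limit for one inclusion, and the same identity $\Lap\sfH_\tau u=\sfH_\tau g$, obtained from the symmetry of $\sfH_\tau$ and $\sfH_\tau\Lap\zeta=\Lap\sfH_\tau\zeta$, for the converse and \eqref{eq:L1-closure}. It also uses the same bounded a.e.\ convergence of $\nabla\sfH_\tau\zeta$ (with $\lip(\sfH_\tau\zeta)\le\lip(\zeta)$) for Lipschitz $\zeta$, and obtains \eqref{eq:bilap-parts} directly from \eqref{eq:L1-Laplacian} with $\zeta=\Lap v$.

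The only difference is that your detour through $u_\tau=\sfH_\tau u$ for \eqref{eq:L1-IBP} on $\ZO$ is superfluous. After the commutation $\sfH_\tau\Lap\zeta=\Lap\sfH_\tau\zeta$, the second equality in your display is already the first-order Green formula $\int_\Omega u\,\Lap\eta\,\d x=-\int_\Omega\nabla u\cdot\nabla\eta\,\d x$ for $u\in W^{1,1}(\Omega)$ and $\eta\in\ZO$, with no boundary term since $\partial_\nn\eta=0$. The paper applies this formula directly with $\eta=\zeta$ and combines it with \eqref{eq:L1-Laplacian}. This is the one step that deserves a line of proof: approximate $u$ in $W^{1,1}(\Omega)$ by functions of $H^1(\Omega)$ and use \eqref{eq:Lap}, which is legitimate because $\nabla\eta$ and $\Lap\eta$ are bounded.
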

The proofs, standard semigroup-approximation arguments, are postponed to
  Appendix~\ref{app:sec2-technical}.
\begin{remark}[Consistency]\label{rem:consistency}
  It is easy to check by regularization 
  that if $u,g$ belong to $L^2(\Omega)$ in 
  \eqref{eq:L1-Laplacian} then $u\in \HtN$ and $g=\Lap u$ 
  according to \eqref{eq:Lap}, the
  usual Neumann Laplacian of $u$. Thus
  Definition~\ref{def:L1-Laplacian} extends the classical Neumann
  Laplacian on $\HtN$, and already encodes $\partial_\nn u=0$ at the
  $L^1$ level, even though no boundary trace of $u$ is available in
  general.
\end{remark}
We finally record two consequences of the convexity properties of the
Heat semigroup $\sfH_t$ that will be used repeatedly in the sequel;
their proofs are collected in Appendix~\ref{app:sparse-proofs}.
\begin{lemma}
	\label{le:convex-estimate}
	Let $\Theta:\R^M\to [0,+\infty]$ be
	a convex l.s.c.~function
	and let $\uu=(u^1,\cdots,u^M)\in 
	L^1(\Omega;\R^M)$ such that 
	$\Theta\circ \uu\in L^1(\Omega)$.
	Setting $\uu_t=\sfH_t\uu=
	(\sfH_t u^1,\cdots,\sfH_t u^M)$ 
	we have 
	\begin{equation}
		\label{eq:pointwise-convex-estimate}
		\sfH_t(\Theta\circ\uu)\ge 
		\Theta(\sfH_t \uu)\quad \text{a.e.~in }\Omega. 
	\end{equation}
\end{lemma}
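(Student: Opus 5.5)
The argument is the classical Jensen inequality, applied to the Markov kernel of $\sfH_t$ and made rigorous by writing $\Theta$ as a supremum of affine functions. Since $\Theta:\R^M\to[0,+\infty]$ is convex, lower semicontinuous and proper, the Fenchel--Moreau theorem gives a countable family of affine minorants with
\[
\Theta(z)=\sup_{k\in\N}\big(a_k\cdot z+b_k\big),\qquad a_k\in\R^M,\ b_k\in\R,\ z\in\R^M.
\]
Countability comes from separability of $\R^M$: take affine minorants with rational coefficients, which are dense among all minorants in the sense of pointwise suprema. We can even use $\max(0,a_k\cdot z+b_k)$, because $\Theta\ge0$.

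\textbf{Step 1 (each affine minorant).} For each $k$ we have $\Theta\circ\uu\ge a_k\cdot\uu+b_k$ a.e. Both sides lie in $L^1(\Omega)$: the left by assumption, the right because $\uu\in L^1$ and $\Omega$ is bounded. We apply $\sfH_t$, which is linear, positivity preserving (Markov) and satisfies $\sfH_t1=1$ (Neumann conditions, conservation of constants). This gives a.e.
\[
\sfH_t(\Theta\circ\uu)\ge a_k\cdot\sfH_t\uu+b_k .
\]
We use here that $\sfH_t$ acts on $L^1$ as the extension recalled in Section~\ref{subsec:Heat}, and that it preserves the order of $L^1$ functions. For instance, this follows by monotone approximation from $L^2$, using the $L^1$-contractivity.

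\textbf{Step 2 (supremum).} Each inequality of Step 1 holds outside a null set $N_k$. Outside $\bigcup_k N_k$, which is still null, we take the supremum over $k$ and obtain $\sfH_t(\Theta\circ\uu)\ge\Theta(\sfH_t\uu)$ a.e. This is \eqref{eq:pointwise-convex-estimate}.

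\textbf{Main obstacle.} The only delicate point is the countable representation of $\Theta$, when $\Theta$ takes the value $+\infty$ and its domain has empty interior. We handle it by the standard fact that a l.s.c.\ convex function equals the supremum of its affine minorants. If $a\cdot z+b\le\Theta$, then for rational $a'$ near $a$ and $b'<b$ rational, the function $a'\cdot z+b'$ need not be a minorant on an unbounded domain. So the countable reduction must instead be done by Lindelöf: the epigraph complement is open, and is covered by countably many open half-spaces disjoint from the epigraph. As an alternative, one can avoid countability altogether. Define $\Theta_n$ as the Moreau--Yosida (Pasch--Hausdorff) regularizations $\Theta_n(z)=\inf_y\{\Theta(y)+n|z-y|\}$. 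These are finite, convex and Lipschitz, and increase to $\Theta$. For continuous $\Theta_n$, a countable dense family of affine minorants does exist. We prove the inequality for $\Theta_n\le\Theta$ and pass to the limit $n\to\infty$ by monotone convergence on the right-hand side.
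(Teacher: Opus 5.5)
Your proof is correct, but it takes a different route from the paper. The paper tests the inequality against $w\in L^\infty_+(\Omega)$ and reduces, by an approximation it does not spell out, to $\Theta\in\rmC^2$ with bounded first and second derivatives. It then shows that $s\mapsto\zeta(s)=\int_\Omega\Theta(\sfH_{t-s}\uu)\,\sfH_sw\,\d x$ is nondecreasing on $[0,t]$. Convexity enters through the diffusion chain rule $\Lap\Theta(\uu)\ge\sum_i\partial_i\Theta(\uu)\,\Lap u^i$, so this is a Bakry--\'Emery-type interpolation.

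You use only three properties of $\sfH_t$ on $L^1(\Omega)$: linearity, positivity, and $\sfH_t1=1$. You combine them with a representation of $\Theta$ as a countable supremum of affine minorants. This buys several things:
\begin{itemize}
\item your argument holds for any Markov operator;
\item it handles integrands taking the value $+\infty$ directly, such as the perspective function $\sfh$ to which the lemma is applied in Corollary~\ref{cor:main-ineq};
\item it needs no smoothing of $\Theta$;
\item it needs no justification of the differentiability of $\zeta$, or of its continuity at $s=0$ and $s=t$ for data that are merely in $L^1$, which the paper leaves implicit.
\end{itemize}
The paper's computation, in turn, relies on the diffusion structure, which your argument does not need for this lemma.

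On the countable reduction: your opening claim that rational-coefficient minorants suffice is false, as you yourself notice. The fix is simpler than Lindel\"of or the Pasch--Hausdorff regularization. The set $\mathcal A\subset\R^{M+1}$ of pairs $(a,b)$ with $a\cdot z+b\le\Theta(z)$ for all $z$ is separable, so it has a countable dense subset $D$. For each fixed $z$ the map $(a,b)\mapsto a\cdot z+b$ is continuous, hence $\sup_D(a\cdot z+b)=\sup_{\mathcal A}(a\cdot z+b)=\Theta(z)$, including at points where $\Theta(z)=+\infty$.

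If you keep the Lindel\"of version, the covering sets must be the strict subgraphs $\{(z,r):r<a\cdot z+b\}$ of affine minorants. By Fenchel--Moreau their union is exactly the complement of the epigraph. Arbitrary open half-spaces disjoint from the epigraph will not do, because vertical ones yield no minorant.
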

The DLSS operator involves the convex and lower semicontinuous
integrand $\sfh:[0,+\infty)\times\R\to[0,+\infty]$,
\begin{equation}
	\label{eq:defh}
	\sfh(a,b):=\begin{cases}
		\dfrac{b^2}a,&a>0,\\
		0,&a=0,\ b=0,\\
		+\infty,&a=0,\ b\neq0,
	\end{cases}
\end{equation}
i.e.~the lower semicontinuous \emph{perspective function} of
$s\mapsto s^2$, $\sfh(a,b)=a\,(b/a)^2$ \cite[Section IV.2.2]{HUL93}.
\begin{corollary}
\label{cor:main-ineq}
	For every $u\in D_{L1}(\Lap)$ nonnegative
	and every $w
	\in L^\infty_+(\Omega)$ 
	we have 
	\begin{equation}
		\label{eq:sfh-crucial}
		\int_{\Omega}\sfh(u,\Lap u)
		\,\sfH_t w\,\d x
		\ge \int_\Omega 
		\sfh(\sfH_t u,\Lap \sfH_t u)
		 \, w\,\d x.
	\end{equation}
\end{corollary}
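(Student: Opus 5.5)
The idea is to apply Lemma~\ref{le:convex-estimate} with $M=2$, $\uu=(u,\Lap u)$ and $\Theta=\sfh$, which is convex, lower semicontinuous and nonnegative on $\R^2$ once we set $\sfh(a,b)=+\infty$ for $a<0$. By Proposition~\ref{prop:L1-closure}, $\sfH_t$ commutes with the $L^1$ Laplacian, so $\sfH_t\uu=(\sfH_t u,\Lap\sfH_t u)$. The lemma then gives, pointwise a.e.,
\begin{equation*}
\sfH_t\big(\sfh(u,\Lap u)\big)\ge \sfh(\sfH_t u,\Lap\sfH_t u).
\end{equation*}
Multiplying by $w\ge0$, integrating, and using the symmetry of $\sfH_t$ with respect to Lebesgue measure, $\int_\Omega (\sfH_t f)\,w\,\d x=\int_\Omega f\,\sfH_t w\,\d x$ for $f\ge0$, yields \eqref{eq:sfh-crucial}.

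Lemma~\ref{le:convex-estimate} requires $\Theta\circ\uu\in L^1(\Omega)$. If $\int_\Omega\sfh(u,\Lap u)\,\d x=+\infty$, the claim is not automatic, since the left side carries the weight $\sfH_t w$ rather than $1$. Two cases arise.
\begin{itemize}
\item If $w\not\equiv0$ and $t>0$, then $\sfH_t w\ge c>0$ on $\overline\Omega$ by strict positivity of the Neumann heat kernel. Hence the left side is $+\infty$ and there is nothing to prove.
\item If $w\equiv0$ or $t=0$, the inequality is trivial.
\end{itemize}
So we may assume $f:=\sfh(u,\Lap u)\in L^1(\Omega)$. In particular $\Lap u=0$ a.e.\ on $\{u=0\}$.

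The one genuine point is that the proof of Lemma~\ref{le:convex-estimate} should be valid for a convex l.s.c.\ $\Theta$ taking the value $+\infty$ and defined on all of $\R^M$. The standard route writes $\Theta$ as a supremum of countably many affine functions $\ell_k(z)=\alpha_k\cdot z+\beta_k$. Since $\sfH_t$ is linear, positive and Markov ($\sfH_t1=1$), it follows that $\sfH_t(\Theta\circ\uu)\ge\ell_k(\sfH_t\uu)$ for each $k$, and taking the supremum over $k$ gives the claim. Since the lemma is stated for $\Theta:\R^M\to[0,+\infty]$, we may use it as given, after extending $\sfh$ by $+\infty$ to $a<0$, which keeps it convex and l.s.c.

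Finally, the duality $\int(\sfH_t f)w=\int f\,\sfH_t w$ holds for $f\in L^1$ and $w\in L^\infty$ by self-adjointness on $L^2$ together with density and monotone convergence. The main obstacle is thus only this bookkeeping: the identification $\Lap\sfH_t u=\sfH_t\Lap u$ at the $L^1$ level, supplied by \eqref{eq:L1-closure}, and the case where the left side is infinite.
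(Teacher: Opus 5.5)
Your proposal is correct and follows the paper's own proof: apply Lemma~\ref{le:convex-estimate} with $\Theta=\sfh$ and $\uu=(u,\Lap u)$, use the commutation $\Lap\sfH_t u=\sfH_t\Lap u$, and use the symmetry of $\sfH_t$ to move the weight onto $w$. Your additional bookkeeping fills in details the paper's one-line argument leaves implicit: extending $\sfh$ by $+\infty$ to $a<0$, disposing of the case $\sfD(u)=+\infty$ through the strict positivity of $\sfH_t w$, and justifying Jensen's inequality for extended-valued $\Theta$ via countably many affine minorants.
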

\begin{corollary}
\label{cor:main-ineq2}
	For every nonnegative $u\in \HtN$ we have
	\begin{equation}
	\label{eq:fra-vs-S}
		\int_\Omega \Lap u\,\Lap(u-\sfH_t u)\,\d x-
		\int_{\{ u>0 \}}\frac{(\Lap u)^2}u
		(u-\sfH_t u)\,\d x\ge 0.
	\end{equation}
\end{corollary}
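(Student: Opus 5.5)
The plan is to rewrite the left-hand side of \eqref{eq:fra-vs-S} as a difference of two nonnegative quantities, and then compare them through the pointwise Jensen inequality of Lemma~\ref{le:convex-estimate}, applied at the half time $t/2$.

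\emph{Reduction.} If $u\equiv0$ there is nothing to prove. Otherwise, the remark after \eqref{eq:HeatEstimates} gives $\sfH_t u\ge c>0$ on $\overline\Omega$, and $\sfH_tu\in\HtN$. Since $u\in H^2(\Omega)$, the standard Stampacchia-type property of Sobolev functions gives $\rmD^2u=0$, hence $\Lap u=0$, a.e.\ on $\{u=0\}$. Two consequences follow:
\begin{itemize}
\item $\int_\Omega(\Lap u)^2\,\d x=\int_{\{u>0\}}(\Lap u)^2\,\d x$;
\item $\sfh(u,\Lap u)=(\Lap u)^2/u$ on $\{u>0\}$ and $\sfh(u,\Lap u)=0$ a.e.\ on $\{u=0\}$.
\end{itemize}
Splitting $\int_{\{u>0\}}\frac{(\Lap u)^2}u(u-\sfH_tu)\,\d x$ into $\int_{\{u>0\}}(\Lap u)^2\,\d x$, which is finite, minus $\int_\Omega\sfh(u,\Lap u)\,\sfH_tu\,\d x\in[0,+\infty]$, the left-hand side of \eqref{eq:fra-vs-S} becomes
\[
\int_\Omega \sfh(u,\Lap u)\,\sfH_t u\,\d x-\int_\Omega \Lap u\,\Lap\sfH_t u\,\d x .
\]
This expression is well defined in $(-\infty,+\infty]$. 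In particular, if $\sfD(u)=+\infty$ the first term is $+\infty$, because $\sfH_tu\ge c>0$, and the claim is trivial. We may therefore assume $\sfh(u,\Lap u)\in L^1(\Omega)$.

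\emph{Main step.} By the symmetry and the semigroup property of $\sfH_t$, and since $\Lap\sfH_t=\sfH_t\Lap$ by \eqref{eq:HeatEstimates},
\[
\int_\Omega \Lap u\,\Lap\sfH_tu\,\d x=\int_\Omega\big(\sfH_{t/2}\Lap u\big)^2\,\d x,
\qquad
\int_\Omega\sfh(u,\Lap u)\,\sfH_tu\,\d x=\int_\Omega\sfH_{t/2}\big(\sfh(u,\Lap u)\big)\,\sfH_{t/2}u\,\d x .
\]
The second identity uses that $\sfh(u,\Lap u)\in L^1$ and that $\sfH_{t/2}u$ is bounded. Now apply Lemma~\ref{le:convex-estimate} with $M=2$, $\Theta=\sfh$ (convex, l.s.c., nonnegative) and $\uu=(u,\Lap u)$. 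It gives, a.e.\ in $\Omega$,
\[
\sfH_{t/2}\big(\sfh(u,\Lap u)\big)\ge\sfh\big(\sfH_{t/2}u,\sfH_{t/2}\Lap u\big)=\frac{(\sfH_{t/2}\Lap u)^2}{\sfH_{t/2}u},
\]
where the last equality holds because $\sfH_{t/2}u>0$. Multiplying by $\sfH_{t/2}u\ge0$ and integrating yields
\[
\int_\Omega\sfh(u,\Lap u)\,\sfH_tu\,\d x\ge\int_\Omega(\sfH_{t/2}\Lap u)^2\,\d x=\int_\Omega\Lap u\,\Lap\sfH_tu\,\d x,
\]
which is \eqref{eq:fra-vs-S}. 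Equivalently, one can invoke Corollary~\ref{cor:main-ineq} with time $t/2$ and $w=\sfH_{t/2}u\in L^\infty_+(\Omega)$.

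\emph{Expected obstacle.} The only delicate point is the reduction step: justifying that $\Lap u=0$ a.e.\ on $\{u=0\}$, so that the restriction to $\{u>0\}$ in \eqref{eq:fra-vs-S} matches the integrand $\sfh$, and handling the possibly infinite terms. I would take the vanishing of $\rmD^2u$ on level sets from the second-order calculus for nonnegative functions (Lemma~\ref{le:sqrt-calculus}), or from the classical fact that $\nabla v=0$ a.e.\ on $\{v=0\}$ for $v\in W^{1,1}$, applied first to $u$ and then to $\partial_iu$ on $\{u=0\}$. Everything after that is the single Jensen step.
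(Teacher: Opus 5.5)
Your proof is correct and follows essentially the paper's argument: split $t=2\tau$, use commutation and symmetry to write $\int_\Omega\Lap u\,\Lap\sfH_tu\,\d x=\int_\Omega(\Lap\sfH_\tau u)^2\,\d x$, and apply the Jensen inequality of Corollary~\ref{cor:main-ineq} with $w=\sfH_\tau u$. Your reduction step ($\Lap u=0$ a.e.\ on $\{u=0\}$, plus the trivial case $\sfD(u)=+\infty$) makes explicit what the paper uses implicitly when it restates the claim as \eqref{eq:equivalence-main}.
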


\subsection{\texorpdfstring{$(2,2)$}{(2,2)}-capacity}

We refer to \cite{Kazumi-Shigekawa92}
(see also \cite{Fukushima-Kaneko85,Kaneko06,Fukushima93,Hoh-Jacob04}).
The space $\HtN$ 
coincides with $\FF_{2,2}$ 
of \cite{Kazumi-Shigekawa92} and 
satisfies the regularity assumptions (A.1,A.2,A.3) of 
\cite{Kazumi-Shigekawa92}: in fact
Lemma \ref{le:useful-approximation} shows that 
$\ZO\subset \HtN\cap \rmC(\overline\Omega)$ is dense
in $\HtN$, 
the algebra $\ZO$
separates the points of $\overline\Omega$ 
and $\overline\Omega$ is compact.

The $\Cpt$-capacity of a set $A\subset \overline\Omega$ is then defined as
\begin{equation}
	\Cpt(A):=
	\inf \Big\{\Htnrm{u}^2:
	u\in \HtN,\ u\ge 1\text{ a.e.~on some
	(relatively) open set containing $A$}\Big\}.
\end{equation}
$\Cpt$ is a Choquet capacity
in $\overline\Omega$, in particular
it is outer and inner regular and stable 
under increasing limits of sets and decreasing limits of closed (thus compact) sets
in $\overline\Omega$.

We say that 
a statement holds $\Cpt\text{-q.e.}$ ($\Cpt$-quasi everywhere) if it holds except on a set of null $\Cpt$-capacity.
A function $u$ is called $\Cpt$-quasi-continuous if for every $\eps>0$ there exists a (relatively) open 
	set $G_\eps\subset \overline\Omega$ with $\Cpt(G_\eps)<\eps$ 
	such that $u\restr{\overline\Omega\setminus G_\eps}$ is continuous. 
	$\Cpt$-quasi-uniform convergence can be defined in a similar way.
\begin{remark}
	\label{rem:continuous-representative}
	In dimension $d\le 3$, 
	every function in $\HtN$ 
	has a continuous representative in $\rmC(\overline\Omega)$ 
	so that $\Cpt(A)=0$ 
	implies $A=\emptyset$
	and the theory 
	we are recalling below is much simpler.
\end{remark}
If $u\in \HtN$ then $u$ has a 
Lebesgue representative 
$\tilde u$ which is $\Cpt$-quasi continuous. 
$\tilde u$ admits an interesting representation in terms of the semigroup $\sfH_t$, since 
it is possible to prove that
\begin{equation}
	\label{eq:Cp-representation}
	\tilde u(x)=\lim_{t\down0}u_t(x)
	\text{ for $\Cpt$-q.e.~$x\in \overline\Omega$},\quad
	u_t:=\sfH_t u\in \Lip(\overline\Omega).
\end{equation}
Consider now a finite positive measure
$\mu\in \cM_+(\overline\Omega)$. 
We say that $\mu$ is in $\MDHtN(\overline\Omega)$ if 
\begin{equation}
	\label{eq:dual-measure}
	\MDHtnrm{\mu}:=
	\sup \Big\{\int_{\overline\Omega} w\,\d\mu:
	w\in \ZO,\
	\Htnrm{w}\le 1\Big\}<+\infty.
\end{equation}
Since $\ZO$
is dense in $\HtN$, 
every $\mu\in \MDHtN(\overline\Omega)$ 
induces a unique element $\ell_\mu \in 
\DHtN$ which satisfies 
the positivity property
\begin{equation}
	\label{eq:positivity}
	\la\ell_\mu,w\ra\ge0\quad\text{for every }w\in \HtN,\ w\ge 0\text{ a.e.}
\end{equation}
\begin{remark}
	\label{rem:L1-H-2}
This construction applies in particular to 
a function $f\in L^1_+(\Omega)$, inducing a finite measure $\mu=f\mathscr L^d$. 
According to \eqref{eq:dual-measure}, we simply say that $f\in \DHtN$ 
if 
there exists a constant $C\ge0$ such that 
\begin{equation}
	\label{eq:dual-measureL1}
	\int_\Omega fw\,\d x\le C\Htnrm{w}\quad \text{for every }
	w\in \ZO.
\end{equation}
In this case $f\mathscr L^d\in \MDHtN(\overline\Omega)$.
\end{remark}
Conversely, we have the following representation result
for positive functionals $\ell
\in \DHtN$, thus satisfying
\begin{equation}
  \label{eq:3}
  \la \ell,w\ra\ge 0\quad\text{for every }w\in \HtN,\ w\ge 0\text{
    a.e. in }\Omega.
\end{equation}
\begin{theorem}[Representation of positive functionals
  in $\DHtN$]
  \label{thm:representation}
  Every positive element $\ell
  \in \DHtN$ satisfying \eqref{eq:3}
  can be represented by a unique measure
$\mu\in \MDHtN(\overline\Omega)$
so that 
\begin{equation}
	\label{eq:representation}
	\ell=\ell_\mu,\qquad
        \la \ell,w\ra=
	\int_{\overline\Omega} w\,\d\mu\quad\text{for every }
	w\in \HtN\cap\rmC(\overline\Omega).
\end{equation}
Moreover, for every 
$\mu\in \MDHtN(\overline\Omega)$ we have
\begin{equation}
	\label{eq:measure-capacity}
	\mu(B)\le \MDHtnrm{\mu}\Cpt(B)^{1/2}\quad\text{for every Borel set }B\subset \overline\Omega,
\end{equation}
so that 
\begin{equation}
	\label{eq:Cp-null-negligible}
	\mu(B)=0\quad\text{for every Borel set with }
	\Cpt(B)=0.
\end{equation}
Furthermore, if 
$\tilde w$ is a $\Cpt$-quasi-continuous representative of $w\in \HtN$ we have
\begin{equation}
	\label{eq:wow}
	\tilde w\in L^1(\overline\Omega;\mu),\quad
	\la \ell_\mu,w\ra=
	\int_{\overline\Omega} \tilde w\,\d\mu,\quad
	\|\tilde w\|_{L^1(\overline\Omega;\mu)}
	\le 
	\Htnrm{w}\MDHtnrm{\mu}.
\end{equation}
\end{theorem}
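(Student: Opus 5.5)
The plan is to follow the classical Riesz--Markov route combined with the capacity theory of \cite{Kazumi-Shigekawa92}. \emph{Step 1: from $\ell$ to a measure.} We restrict $\ell$ to the algebra $\ZO$ of Lemma~\ref{le:useful-approximation}. Since $1\in\ZO$, for $w\in\ZO$ we have $\|w\|_\infty\cdot 1\pm w\ge0$. Positivity \eqref{eq:3} then gives $|\la\ell,w\ra|\le \la\ell,1\ra\,\|w\|_{\infty}$. Hence $\ell|_{\ZO}$ is a positive linear functional that is bounded in the sup norm. Because $\ZO$ is dense in $\rmC(\overline\Omega)$ (Lemma~\ref{le:useful-approximation}), it extends uniquely to a positive continuous functional on $\rmC(\overline\Omega)$. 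The Riesz representation theorem then produces a finite $\mu\in\cM_+(\overline\Omega)$ with $\la\ell,w\ra=\int w\,\d\mu$ for $w\in\ZO$.

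We have $\MDHtnrm{\mu}\le\DHtnrm{\ell}<\infty$, so $\mu\in\MDHtN(\overline\Omega)$. By density of $\ZO$ in $\HtN$, $\ell=\ell_\mu$. Uniqueness of $\mu$ follows because two such measures agree on $\ZO$, which is dense in $\rmC(\overline\Omega)$.

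\emph{Step 2: extension to $\HtN\cap\rmC(\overline\Omega)$.} Fix $w\in\HtN\cap\rmC(\overline\Omega)$ and set $w_t=\sfH_tw$. By the estimates \eqref{eq:HeatEstimates}, $w_t\in\ZO$ (Lipschitz, with bounded Laplacian after using the semigroup property). Moreover $w_t\to w$ in $\HtN$. Continuity of $\sfH_t$ on continuous functions gives $w_t\to w$ uniformly on $\overline\Omega$. Passing to the limit on both sides of $\la\ell,w_t\ra=\int w_t\,\d\mu$ yields \eqref{eq:representation}.

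\emph{Step 3: capacity bound \eqref{eq:measure-capacity}.} First let $G$ be relatively open, and let $u\in\HtN$ satisfy $u\ge1$ a.e.\ on an open set containing $G$. For compact $K\subset G$ we want $\mu(K)\le\MDHtnrm{\mu}\Htnrm{u}$. Choose $\phi\in\rmC(\overline\Omega)$ with $0\le\phi\le1$, $\phi=1$ on $K$, and $\mathrm{supp}\,\phi\subset G$; then $\mu(K)\le\int\phi\,\d\mu$. To compare this with $u$, I would use the regularizations $u_t=\sfH_tu\in\ZO$. On the one hand, $\la\ell,u^+\ra\le\MDHtnrm{\mu}\Htnrm{u}$ needs care, since $u^+\notin\HtN$ in general. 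So I would instead argue with the Choquet capacity machinery: by \cite{Kazumi-Shigekawa92} there is an equilibrium potential $e_G\in\HtN$ with $\tilde e_G\ge1$ q.e.\ on $G$ and $\Cpt(G)=\Htnrm{e_G}^2$. One then approximates $\phi$ from below by quasi-continuous functions, i.e.\ tests with $\min(\phi,\cdot)$ of smooth approximants of $e_G$ via $\sfH_t e_G\to\tilde e_G$ q.e.\ (see \eqref{eq:Cp-representation}). Combining this with Fatou's lemma gives $\int\phi\,\d\mu\le\liminf\int \sfH_t e_G\,\d\mu=\la\ell,e_G\ra\le\MDHtnrm{\mu}\Cpt(G)^{1/2}$. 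Outer regularity of $\Cpt$, together with inner regularity of $\mu$, extends the bound to Borel sets, and \eqref{eq:Cp-null-negligible} follows at once.

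\emph{Step 4: \eqref{eq:wow}.} Take $w\in\HtN$ and $w_n\in\ZO$ with $\sum\Htnrm{w_{n+1}-w_n}<\infty$, fast enough that $w_n\to\tilde w$ $\Cpt$-quasi-uniformly. This is the standard capacitary estimate $\Cpt(\{|\tilde v|>\lambda\})\le\lambda^{-2}\Htnrm v^2$ applied to the differences. By \eqref{eq:Cp-null-negligible}, $w_n\to\tilde w$ $\mu$-a.e. Applying Step~1 to $|w_n-w_m|$ is not possible, since it is not in $\HtN$. Instead, positivity gives $\int|w_n-w_m|\,\d\mu\le\sup_{\pm}\la\ell,\pm(\cdot)\ra$ only after writing $|v|\le$ some $\HtN$ majorant.

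I expect Steps 3--4 to be the main obstacle. The difficulty is that $\HtN$ is not a lattice: truncations $u^+$ and $|u|$ leave the space, so the Dirichlet-form arguments are not directly available. Where this is needed, I would rely on the abstract $(r,p)$-capacity results of \cite{Kazumi-Shigekawa92}, namely the existence of equilibrium potentials with norm $\Cpt^{1/2}$ and the capacitary weak-type inequality. In particular, for each $v\in\HtN$ there is $\hat v\in\HtN$ with $\hat v\ge|\tilde v|$ q.e.\ and $\Htnrm{\hat v}\le C\Htnrm v$; from this, $\int|\tilde v|\,\d\mu\le\MDHtnrm{\mu}\Htnrm{\hat v}$. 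This gives the Cauchy property of $w_n$ in $L^1(\mu)$, hence $\tilde w\in L^1(\mu)$ and $\la\ell,w\ra=\lim\int w_n\,\d\mu=\int\tilde w\,\d\mu$. The sharp constant $1$ in \eqref{eq:wow} then comes from $|\la\ell_\mu,w\ra|\le\DHtnrm{\ell}\Htnrm w$, after applying the identity to $w$ and using the splitting supplied by these potentials.
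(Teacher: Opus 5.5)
Your Steps 1 and 2 are correct. The genuine gap is in Step 3, and it carries over to Step 4.

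\textbf{Where Step 3 fails.} Your chain $\int\phi\,\d\mu\le\liminf_{t\downarrow0}\int\sfH_t e_G\,\d\mu$ needs $\sfH_te_G\to\tilde e_G\ge1$ $\mu$-a.e.\ on $\operatorname{supp}\phi$. But \eqref{eq:Cp-representation} only gives this convergence $\Cpt$-q.e. Upgrading q.e.\ to $\mu$-a.e.\ is exactly \eqref{eq:Cp-null-negligible}, which you want to derive from the very bound you are proving, so the argument is circular. There are two further problems:
\begin{itemize}
\item Fatou's lemma needs a lower bound on $\sfH_te_G$, and in $\HtN$ nothing forces the capacitary potential $e_G$ to be nonnegative.
\item Functions like $\min(\phi,\sfH_te_G)$ are not in $\HtN$, so $\ell$ cannot be evaluated on them.
\end{itemize}

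\textbf{Where Step 4 fails.} The weak-type bound $\Cpt(\{|\tilde v|>\lambda\})\le\lambda^{-2}\Htnrm{v}^2$ is not free here, since $|v|/\lambda\notin\HtN$. Your last sentence also does not yield the constant $1$. The identity $\la\ell_\mu,w\ra=\int\tilde w\,\d\mu$ controls $|\int\tilde w\,\d\mu|$, not $\int|\tilde w|\,\d\mu$. A majorant with $\Htnrm{\hat v}\le C\Htnrm{v}$ therefore gives \eqref{eq:wow} only with constant $C$.

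\textbf{The missing idea.} It is elementary. It is also exactly the structure behind the Kazumi--Shigekawa theory from which the paper imports this theorem without proof, via $\HtN=\FF_{2,2}=(\sfI-\Lap)^{-1}L^2(\Omega)$. Since $\Htnrm{v}=\tnrm{(\sfI-\Lap)v}$ and $\sfR$ is positivity preserving, the function $\hat v:=\sfR\,|(\sfI-\Lap)v|$ lies in $\HtN$, satisfies $\hat v\ge|v|$ a.e., and has $\Htnrm{\hat v}=\Htnrm{v}$.

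\textbf{Step 3 with the majorant.} The argument becomes direct and non-circular.
\begin{itemize}
\item Let $u\ge1$ a.e.\ on an open $U\supset B$, and let $\phi\in\rmC(\overline\Omega)$ with $0\le\phi\le1$ and $\operatorname{supp}\phi\subset U$.
\item Then $\hat u-\phi\ge0$ a.e., so $\sfH_t\hat u-\sfH_t\phi\ge0$ a.e., with both terms in $\HtN$ and $\sfH_t\phi\in\ZO$.
\item Positivity of $\ell_\mu$ gives $\int\sfH_t\phi\,\d\mu\le\la\ell_\mu,\sfH_t\hat u\ra$.
\item Let $t\downarrow0$, using uniform convergence as in your Step 2. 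Taking the supremum over $\phi$ gives $\mu(B)\le\mu(U)\le\MDHtnrm{\mu}\Htnrm{u}$, which is \eqref{eq:measure-capacity}.
\end{itemize}

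\textbf{Step 4 with the majorant.}
\begin{itemize}
\item The same comparison with $\phi:=|v|$, $v\in\ZO$, gives $\int|v|\,\d\mu\le\MDHtnrm{\mu}\Htnrm{v}$. For continuous $v$ it also justifies your weak-type bound, since $\hat v/\lambda\ge1$ a.e.\ on the open set $\{|v|>\lambda\}$.
\item Apply this to $v=w_n-w_m$ with $w_n:=\sfH_{t_n}w$. Then $(w_n)$ is Cauchy in $L^1(\mu)$.
\item Meanwhile $w_n\to\tilde w$ q.e.\ by \eqref{eq:Cp-representation}, hence now legitimately $\mu$-a.e.
\item This gives $\tilde w\in L^1(\mu)$ and $\la\ell_\mu,w\ra=\int\tilde w\,\d\mu$. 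The sharp constant in \eqref{eq:wow} comes from passing to the limit in $\int|w_n|\,\d\mu\le\MDHtnrm{\mu}\Htnrm{w_n}$.
\end{itemize}
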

In the following we will identify $\ell_\mu$ with $\mu$ and we write
$\la \mu,w\ra$ instead of $\la \ell_\mu,w\ra$.

\subsection{Second order quantities associated with nonnegative functions}
\label{subsec:second-order}

Both the definition and the study of the DLSS operator involve
the convex and lower semicontinuous integrand $\sfh$ of
\eqref{eq:defh} and the associated
functional
\begin{equation}
	\label{eq:defD}
	\sfD(u):={\int_\Omega \sfh(u,\Lap u)\,\d x} ,
\end{equation}
which is well defined (possibly $+\infty$) for every nonnegative $u\in
D_{L1}(\Lap)$, the domain of the $L^1$ realization of the Neumann
Laplacian of Definition~\ref{def:L1-Laplacian}.
We set $\sfD(u)=+\infty$ if $u\notin D_{L1}(\Lap)$.
{Equivalently,
\begin{equation}
	\label{eq:defD-explicit}
	\sfD(u)=\int_{\{u>0\}}\frac{(\Lap u)^2}u\,\d x
	\quad\text{if }\Lap u=0\ \text{$\Leb d$-a.e.~on }\{u=0\},
	\qquad
	\sfD(u)=+\infty\quad\text{otherwise,}
\end{equation}
so that the finiteness of $\sfD(u)$ also encodes the vanishing of
$\Lap u$ on the vacuum region $\{u=0\}$.
This constraint is quite natural: we will see in Lemma~\ref{le:sqrt-calculus} below that
$\Lap u$ vanishes a.e.~on $\{u=0\}$ whenever $\sqrt u\in\HtN$.}
Recall that nonnegative
functions of $\HtN$ belong to $D_{L1}(\Lap)$ with the same Laplacian
(see the Consistency Remark~\ref{rem:consistency} in Section~\ref{subsec:Heat}).

\medskip
In this subsection we collect the basic relations between $\sfD(u)$ and
the second order quantities associated with $w:=\sqrt u$, under the
minimal regularity condition $\sqrt u\in \HtN$; we will also introduce a
natural lower semicontinuous extension of $\sfD$ to nonnegative
functions in $L^2(\Omega)$ which need not belong to $H^2(\Omega)$.
All the proofs are collected in Appendix~\ref{app:second-order}.

We introduce the class
\begin{equation}
	\label{eq:defW}
	\WpO:=\Big\{u\in L^2(\Omega):\ u\ge0\ \text{a.e.~in }\Omega,\
	\sqrt u\in \HtN\Big\}.
\end{equation}
Notice that we do \emph{not} require $u\in H^2(\Omega)$: by the next
lemma, however, every $u\in \WpO$ possesses a Neumann Laplacian in the
$L^1$ sense of Definition~\ref{def:L1-Laplacian}.

\begin{lemma}[Calculus for square roots]
	\label{le:sqrt-calculus}
	Let $u\in \WpO$ and $w:=\sqrt u\in\HtN$. Then:
	\begin{enumerate}[\rm (a)]
	\item $\nabla w=0$ and $\rmD^2 w=0$ $\Leb d$-a.e.~on $\{w=0\}$;
	\item $u\in W^{1,1}(\Omega)$ with $\nabla u=2w\nabla w$;
	\item $u\in D_{L1}(\Lap)$ with
		\begin{equation}
			\label{eq:chain-Lap}
			\Lap u=2w\Lap w+2|\nabla w|^2\in L^1(\Omega);
		\end{equation}
		in particular $\Lap u=0$ $\Leb d$-a.e.~on $\{u=0\}$;
	\item $\Leb d$-a.e.~on $\{u>0\}$ we have
		\begin{equation}
			\label{eq:pointwise-sqrt}
			\frac{(\Lap u)^2}{u}=
			4\Big(\Lap w+\frac{|\nabla w|^2}{w}\Big)^{\!2} .
		\end{equation}
	\end{enumerate}
\end{lemma}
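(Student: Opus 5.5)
We prove the four items in order; each uses the previous ones and standard Sobolev chain rules.

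For (a), we use the classical fact that for any $f\in W^{1,1}_{\rm loc}$ one has $\nabla f=0$ a.e.\ on a level set $\{f=c\}$ (Stampacchia). Applied to $f=w\in H^2\subset W^{1,1}$ with $c=0$, this gives $\nabla w=0$ a.e.\ on $\{w=0\}$. Since $\partial_i w\in H^1(\Omega)$, the same fact applied to $f=\partial_i w$ gives $\nabla\partial_i w=0$ a.e.\ on $\{\partial_i w=0\}\supset\{w=0\}$ (up to a null set). Hence $\rmD^2 w=0$ a.e.\ on $\{w=0\}$.

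For (b), the chain/product rule holds because $w\in H^1$ implies $w^2\in W^{1,1}$ with $\nabla(w^2)=2w\nabla w$. This follows by approximating $w$ in $H^1$ with smooth functions, so that $w_n^2\to w^2$ in $L^1$ and $2w_n\nabla w_n\to 2w\nabla w$ in $L^1$.

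For (c), we must verify \eqref{eq:L1-Laplacian} with $g:=2w\Lap w+2|\nabla w|^2$. This $g$ lies in $L^1$ because $w\in L^2$, $\Lap w\in L^2$ and $|\nabla w|^2\in L^1$. Fix $\zeta\in\ZO$.
\begin{itemize}
\item By (b) and the integration by parts underlying \eqref{eq:Lap}, with $w\zeta$ in place of $v$,
\[
\int_\Omega u\Lap\zeta\,\d x=-\int_\Omega 2w\nabla w\cdot\nabla\zeta\,\d x .
\]
Here we use \eqref{eq:Lap} for $\zeta\in\HtN$ tested with $v=w^2$. Since $w^2\in W^{1,1}$ need not be in $H^1$, we justify this by approximating $w$ by $w_n:=\sfH_{1/n}w\in\ZO$, for which $w_n^2\in\HtN$ by \eqref{eq:Z-acts}, and passing to the limit in $L^1$.
\item Then $2w\nabla w\cdot\nabla\zeta=2\nabla w\cdot\nabla(w\zeta)-2\zeta|\nabla w|^2$. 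Since $w\zeta\in H^1$, \eqref{eq:Lap} applied to $w$ gives
\[
-\int_\Omega 2\nabla w\cdot\nabla(w\zeta)\,\d x=\int_\Omega 2\zeta w\Lap w\,\d x .
\]
\end{itemize}
Summing the two steps yields $\int_\Omega u\Lap\zeta\,\d x=\int_\Omega g\zeta\,\d x$, i.e.\ $\Lap u=g$ in the sense of Definition~\ref{def:L1-Laplacian}.

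The vanishing of $\Lap u$ on $\{u=0\}=\{w=0\}$ then follows from (a): there $\nabla w=0$, and $w=0$ kills the term $w\Lap w$. (Alternatively, $\Lap w=\operatorname{tr}\rmD^2w=0$ there.)

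For (d), on $\{u>0\}$ we have $w>0$ and $u=w^2$, so
\[
\frac{(\Lap u)^2}{u}=\frac{4\bigl(w\Lap w+|\nabla w|^2\bigr)^2}{w^2}=4\Bigl(\Lap w+\frac{|\nabla w|^2}{w}\Bigr)^2 .
\]
This is pure algebra.

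The main obstacle is the approximation argument in (c): passing to the limit needs $w_n\to w$ in $H^2$, which holds by strong continuity of $\sfH_t$ in $\HtN$. Then $w_n^2\to w^2$ in $L^1$, $\nabla(w_n^2)\to\nabla(w^2)$ in $L^1$, and $\Lap(w_n^2)\to g$ in $L^1$, which makes the argument rigorous.
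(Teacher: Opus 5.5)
Your proof is correct, and its skeleton matches the paper's: the level-set property for (a), a product rule for (b), two integrations by parts for (c), and algebra for (d). The differences lie in how you justify (b) and (c).

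\emph{The paper's route.} The paper proves (b) by truncating with $w\wedge M$. In (c) it gets $\int_\Omega u\Lap\zeta\,dx=-2\int_\Omega w\nabla w\cdot\nabla\zeta\,dx$ from the first-order Green formula for $W^{1,1}$ functions, as in the proof of \eqref{eq:L1-IBP}. It then treats $\int_\Omega w\nabla w\cdot\nabla\zeta\,dx$ by Gauss--Green for the truncated fields $V_M=(w\wedge M)\nabla w\in W^{1,1}$. That step needs traces to multiply, so that the boundary term vanishes, followed by a dominated-convergence limit $M\uparrow\infty$.

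\emph{Your route.} Your second step is more economical. Since $w\zeta\in H^1(\Omega)$ and $\Lap w\in L^2(\Omega)$, testing the weak Neumann definition \eqref{eq:Lap} for $w$ with $v=w\zeta$ gives the identity at once, with no truncation and no traces. In your first step, the heat-flow approximation $w_n=\sfH_{1/n}w$ replaces the $W^{1,1}$ Green formula. That step needs only $H^1$-convergence of $w_n$, not the $H^2$-convergence your last paragraph mentions.

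\emph{A shortcut you already have.} The argument sketched in your final paragraph is by itself a complete proof of (c), and it makes the second bullet unnecessary. By \eqref{eq:Leibniz2}, $w_n^2\in\ZO$ with $\Lap(w_n^2)=2w_n\Lap w_n+2|\nabla w_n|^2$. Since $w_n\to w$ in $H^2$, you get $w_n^2\to u$ and $\Lap(w_n^2)\to 2w\Lap w+2|\nabla w|^2$ in $L^1$. The elementary closure half of Proposition~\ref{prop:L1-closure} then gives $u\in D_{L1}(\Lap)$ with the stated Laplacian.

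A small presentational slip: the phrase ``with $w\zeta$ in place of $v$'' in your first bullet belongs to the second bullet.
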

By Lemma~\ref{le:sqrt-calculus}(c) we have $\WpO\subset D_{L1}(\Lap)$
{and every $u\in\WpO$ complies with the vacuum constraint in
\eqref{eq:defD-explicit}}, so that $\sfD$ is well defined on $\WpO$;
moreover, combining (a), (c) and (d),
\begin{equation}
	\label{eq:defD-w}
	\sfD(u)=4\int_{\{w>0\}}
	\Big(\Lap w+\frac{|\nabla w|^2}{w}\Big)^{\!2}\,\d x
	\quad\text{for every }u\in\WpO,\ w=\sqrt u.
\end{equation}

The next result shows that the quartic gradient term hidden in
\eqref{eq:defD-w} is always controlled by the Hessian of $w$; in
particular $\sfD$ is \emph{finite} on the whole class $\WpO$.
\begin{proposition}[Quartic gradient bound and upper bound for $\sfD$]
	\label{prop:quartic}
	For every $u\in\WpO$, $w=\sqrt u$, we have
	\begin{equation}
		\label{eq:quartic}
		\int_{\{w>0\}}\frac{|\nabla w|^4}{w^2}\,\d x
		\le\Big(\tnrm{\Lap w}+2\tnrm{\rmD^2 w}\Big)^{\!2}
		\le 9\int_\Omega(\Lap w)^2\,\d x
	\end{equation}
	and therefore
	\begin{equation}
		\label{eq:D-upper}
		\sfD(u)\le
		8\int_\Omega(\Lap w)^2\,\d x
		+8\int_{\{w>0\}}\frac{|\nabla w|^4}{w^2}\,\d x
		\le
		80\int_\Omega(\Lap w)^2\,\d x<+\infty.
	\end{equation}
\end{proposition}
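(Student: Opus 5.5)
The plan is to prove the quartic bound \eqref{eq:quartic} by integrating by parts the vector field $|\nabla w|^2\nabla w/w$, regularizing $w$ away from zero. I would fix $\eps>0$ and work with $w_\eps:=w+\eps$. Since $w\in\HtN$ and $w_\eps\ge\eps$, the field $\nabla w\,|\nabla w|^2/w_\eps$ has vanishing normal component on $\partial\Omega$, because $\partial_\nn w=0$. The key identity is
\begin{equation*}
\int_\Omega\frac{|\nabla w|^4}{w_\eps^2}\,\d x
=-\int_\Omega \nabla w\cdot\nabla\Big(\frac1{w_\eps}\Big)|\nabla w|^2\,\d x
=\int_\Omega\frac{1}{w_\eps}\,\mathrm{div}\big(|\nabla w|^2\nabla w\big)\,\d x .
\end{equation*}
Expanding the divergence gives
\begin{equation*}
\mathrm{div}\big(|\nabla w|^2\nabla w\big)=|\nabla w|^2\Lap w+2\,\rmD^2w\,\nabla w\cdot\nabla w .
\end{equation*}
Hence the right-hand side is bounded by
\begin{equation*}
\int_\Omega\frac{|\nabla w|^2}{w_\eps}\big(|\Lap w|+2|\rmD^2 w|\big)\,\d x
\le \Big(\int_\Omega\frac{|\nabla w|^4}{w_\eps^2}\Big)^{1/2}\big(\tnrm{\Lap w}+2\tnrm{\rmD^2w}\big).
\end{equation*}
Dividing by the square root, which is finite because $w_\eps\ge\eps$ and $\nabla w\in L^4$, gives the first inequality of \eqref{eq:quartic} with $w_\eps$ in place of $w$.

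The main technical obstacle is justifying the integration by parts at the regularity $w\in H^2$, where $|\nabla w|^2\nabla w$ is only $W^{1,1}$-type. I would first prove the identity for $w\in\ZO$, or for $\sfH_t w$, which lies in $\ZO$ by Lemma~\ref{le:useful-approximation} and the heat estimates \eqref{eq:HeatEstimates}. For these functions the field is Lipschitz with zero normal trace. I would then pass to the limit $t\downarrow0$: we have $\sfH_tw\to w$ in $H^2$, hence $\nabla \sfH_t w\to\nabla w$ in $L^4$ by Sobolev embedding in low dimension. In general dimension I would instead use Fatou on the left-hand side together with the already-established bound, which is uniform in $t$. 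This needs $\nabla\sfH_t w\to\nabla w$ a.e. along a subsequence, which holds by $H^1$ convergence, and convergence of $\tnrm{\Lap\sfH_tw}$ and $\tnrm{\rmD^2\sfH_tw}$. Positivity $\sfH_tw\ge0$ is preserved, so the regularization $w_\eps$ stays $\ge\eps$.

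Then I would let $\eps\downarrow0$ by monotone convergence. On $\{w>0\}$ the integrand increases to $|\nabla w|^4/w^2$, and the set $\{w=0\}$ can be dropped since $\nabla w=0$ a.e.\ there by Lemma~\ref{le:sqrt-calculus}(a). The second inequality of \eqref{eq:quartic} follows from $\tnrm{\rmD^2w}\le\tnrm{\Lap w}$, which is \eqref{eq:H2bound} on the convex domain, giving the factor $(1+2)^2=9$.

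For \eqref{eq:D-upper}, I would use \eqref{eq:defD-w} and the elementary inequality $(a+b)^2\le2a^2+2b^2$ to get
\begin{equation*}
\sfD(u)\le 8\int_\Omega(\Lap w)^2\,\d x+8\int_{\{w>0\}}\frac{|\nabla w|^4}{w^2}\,\d x .
\end{equation*}
Inserting \eqref{eq:quartic} then yields $\sfD(u)\le(8+72)\tnrm{\Lap w}^2=80\,\tnrm{\Lap w}^2$.
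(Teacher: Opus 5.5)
Your proof is correct and follows essentially the paper's route. Both use the integration-by-parts identity $\int|\nabla w|^4/w^2=\int\big(\Lap w\,|\nabla w|^2+2\la\rmD^2w\,\nabla w,\nabla w\ra\big)/w$ on the heat-regularized functions $\sfH_t w\in\ZO$, then Cauchy--Schwarz, a lower-semicontinuity passage $t\downarrow0$, and finally \eqref{eq:H2bound} and \eqref{eq:defD-w}. The only difference is technical: the paper avoids your shift $w+\eps$ by using the strict positivity of $\sfH_\tau w$ and Ioffe's theorem for the integrand $|\xi|^4/s^2$, while your shift makes the integrand continuous, so Fatou and monotone convergence suffice (note that the vector field is only $H^1\cap L^\infty$, not Lipschitz, which is still enough for Gauss--Green).
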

The first inequality in \eqref{eq:quartic} holds on every bounded
Lipschitz domain; only the second one uses the convexity of $\Omega$,
through \eqref{eq:H2bound} in the form
$\tnrm{\rmD^2w}\le\tnrm{\Lap w}$. In this way all the upper bounds are
expressed by the sole quantity $\int_\Omega(\Lap w)^2\,\d x$,
symmetrically to \eqref{eq:defD}, and the constants are not optimized.
Let us also remark that the proof of \eqref{eq:quartic} given in
Appendix~\ref{app:second-order} only uses the fact that $w$ is a
nonnegative element of $\HtN$, not that it is the square root of $u$:
the quartic gradient bound
\begin{equation}
	\label{eq:quartic-general}
	\int_{\{w>0\}}\frac{|\nabla w|^4}{w^2}\,\d x
	\le\Big(\tnrm{\Lap w}+2\tnrm{\rmD^2 w}\Big)^{\!2}
	\qquad\text{for every }w\in\HtN,\ w\ge0,
\end{equation}
will also be applied with $w:=u$ itself.

The converse lower bound of $\sfD(u)$ in terms of the Hessian of
$\sqrt u$ is deeper, since only the \emph{Laplacian} of $u$ appears in
\eqref{eq:defD}: it relies on the convexity of $\Omega$ through
\eqref{eq:H2bound} and on a dimensional algebraic (sum of squares)
inequality. It is best stated for the following relaxation of $\sfD$,
which is well defined for every nonnegative $u\in L^2(\Omega)$ and will
also play an important role in the a priori estimates of the next
sections.
\begin{definition}[Relaxed functional]
	\label{def:relaxedD}
	For every nonnegative $u\in L^2(\Omega)$ we set
	\begin{equation}
		\label{eq:defDstar}
		\sfG_\tau(u):=\int_\Omega
		\frac{(\Lap \sfH_\tau u)^2}{\sfH_\tau u}\,\d x\quad
		(\tau>0),
		\qquad
		\sfD^*(u):= \sup_{\tau>0}\sfG_\tau(u) = \lim_{\tau\to 0^+} \sfG_\tau(u) ,
	\end{equation}
	with the convention $\sfG_\tau(0):=0$.
\end{definition}
Definition~\ref{def:relaxedD} is meaningful: if $u\ge0$,
$u\not\equiv0$, then for every $\tau>0$ the function $\sfH_\tau u$ is
Lipschitz with (essentially) bounded Laplacian (Section
\ref{subsec:Heat}) and admits a continuous representative with
$\min_{\overline\Omega}\sfH_\tau u>0$, by the strict positivity of the
Neumann heat kernel on the connected set $\Omega$ (see
e.g.~\cite{Davies89}); hence $\sfG_\tau(u)<+\infty$.
Also, by writing the integrand as $\sfh(\sfH_\tau u, \Lap \sfH_\tau u)$ with $\sfh$ defined in~\eqref{eq:defh}, the joint convexity and one-homogeneity of $\sfh$ imply that $\tau \mapsto \sfG_\tau(u)$ is monotone decreasing (by Jensen's inequality for the Markov kernel of $\sfH_{\tau-s}$, cf.~Lemma~\ref{le:convex-estimate}), so that $\sup_{\tau>0}\sfG_\tau(u) = \lim_{\tau\to 0^+} \sfG_\tau(u)$.
\begin{lemma}[Lower semicontinuity properties of $\sfD^*$]
	\label{le:relaxedD}
	Each functional $\sfG_\tau$, and therefore $\sfD^*$, is
		convex and sequentially lower semicontinuous on $\big\{u\in
		L^2(\Omega):u\ge0\big\}$ with respect to the weak
		convergence of~$L^2(\Omega)$.
\end{lemma}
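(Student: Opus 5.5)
Fix $\tau>0$. The plan is to write $\sfG_\tau$ as the composition of a linear, weakly continuous map with a convex, lower semicontinuous integral functional. Convexity and weak lower semicontinuity then follow for each $\sfG_\tau$ separately, and $\sfD^*$ inherits both because it is a pointwise supremum.

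Consider the linear map $T_\tau:u\mapsto(\sfH_\tau u,\Lap\sfH_\tau u)$ on $L^2(\Omega)$. By \eqref{eq:HeatEstimates} it is bounded from $L^2(\Omega)$ into $L^2(\Omega;\R^2)$, so $\sfG_\tau(u)=\int_\Omega\sfh\big(T_\tau u\big)\,\d x$. Two facts give convexity.
\begin{itemize}
\item The integrand $\sfh$ of \eqref{eq:defh} is jointly convex and lower semicontinuous on $[0,+\infty)\times\R$, being the perspective of $s\mapsto s^2$.
\item $T_\tau$ is linear and maps the cone $\{u\ge0\}$ into $[0,+\infty)\times\R$-valued pairs.
\end{itemize}
Hence $u\mapsto\sfG_\tau(u)$ is convex on the cone. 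The convention $\sfG_\tau(0)=0$ is consistent, because $\sfh(0,0)=0$.

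For lower semicontinuity, let $u_n\rightharpoonup u$ weakly in $L^2$ with $u_n\ge0$; then $u\ge0$.
\begin{itemize}
\item For every $x\in\overline\Omega$, $\sfH_\tau u_n(x)=\int k_\tau(x,y)u_n(y)\,\d y$ with a kernel $k_\tau(x,\cdot)\in L^2$. So $\sfH_\tau u_n\to\sfH_\tau u$ pointwise.
\item By the semigroup property, $\Lap\sfH_\tau u_n=\sfH_{\tau/2}\Lap\sfH_{\tau/2}u_n$. Since $\Lap\sfH_{\tau/2}$ is bounded on $L^2$, hence weakly continuous, the same kernel argument gives $\Lap\sfH_\tau u_n\to\Lap\sfH_\tau u$ pointwise.
\item The bounds $\|\sfH_\tau u_n\|_\infty+\|\Lap\sfH_\tau u_n\|_\infty\le C_\tau\sup_n\tnrm{u_n}$ hold uniformly, by \eqref{eq:1infty} and ultracontractivity.
\end{itemize}
Since $\sfh\ge0$ is l.s.c., Fatou's lemma yields $\sfG_\tau(u)\le\liminf_n\sfG_\tau(u_n)$.

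A softer alternative avoids the pointwise argument. The functional $(a,b)\mapsto\int\sfh(a,b)$ is convex and strongly l.s.c. on $L^2(\Omega;\R^2)$ (Fatou along a.e.-subsequences), hence weakly l.s.c. by Mazur. Composing with the weakly continuous linear map $T_\tau$ gives the result directly. I would use this abstract route as the main argument, since it needs neither kernels nor uniform bounds.

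Finally, $\sfD^*=\sup_{\tau>0}\sfG_\tau$ is a supremum of convex, weakly sequentially l.s.c. functionals, so it has both properties. The only delicate point is handling the singularity of $\sfh$ at $a=0$, including the case $u\equiv0$. This is exactly what the lower semicontinuous extension \eqref{eq:defh} takes care of: there is no need for strict positivity of $\sfH_\tau u$ in the liminf argument.
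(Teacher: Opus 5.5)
Your proof is correct. The convexity part coincides with the paper's: joint convexity of $\sfh$ composed with the linear, positivity-preserving map $u\mapsto(\sfH_\tau u,\Lap\sfH_\tau u)$, then passage to the supremum.

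For lower semicontinuity, your first (kernel) route is essentially the paper's argument. The paper notes that $\sfH_{\tau/2}$ has a bounded kernel on the bounded domain, hence is Hilbert--Schmidt and compact on $L^2(\Omega)$. It then writes $\sfH_\tau u_n=\sfH_{\tau/2}(\sfH_{\tau/2}u_n)$ and $\Lap\sfH_\tau u_n=\sfH_{\tau/2}(\Lap\sfH_{\tau/2}u_n)$. This yields \emph{strong} $L^2$ convergence of both components, after which it extracts an a.e.\ convergent subsequence and applies Fatou with the nonnegative l.s.c.\ integrand $\sfh$. Your pointwise convergence via $k_\tau(x,\cdot)\in L^2$ is the same compactness mechanism. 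The uniform $L^\infty$ bounds you list are not needed for Fatou.

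Your preferred abstract route is genuinely different. It uses no smoothing or compactness property of $\sfH_\tau$: it only needs $T_\tau$ to be bounded and linear, hence weak--weak continuous. The burden moves onto the joint convexity of $\sfh$, via Mazur's lemma, which upgrades strong to weak lower semicontinuity of $(a,b)\mapsto\int_\Omega\sfh(a,b)\,\d x$.

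The trade-off is this. The paper's argument uses only nonnegativity and lower semicontinuity of the integrand for the l.s.c.\ claim; convexity enters only in the convexity claim. It also yields the extra information $T_\tau u_n\to T_\tau u$ strongly. Your argument is shorter and works for any bounded linear map in place of $T_\tau$, but it genuinely relies on convexity.

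One technical point for the Mazur step. To regard $\int_\Omega\sfh(a,b)\,\d x$ as a convex l.s.c.\ functional on all of $L^2(\Omega;\R^2)$, extend $\sfh$ by $+\infty$ on $\{a<0\}$, or equivalently work on the closed convex set $\{a\ge0\}$. This keeps the integrand convex and lower semicontinuous.
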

\begin{theorem}[Hessian and quartic bounds]
	\label{thm:Hessian}
	Let $u\in L^2(\Omega)$, $u\ge0$.
	\begin{enumerate}
		\item  
		$\sfD^*(u)=\sfD(u)$
		\item $\sfD(u)<+\infty$
		if and only if $w:=\sqrt u\in\HtN$, that is $\dom(\sfD) = \WpO$.
		In this case
		\begin{equation}
			\label{eq:Hessian}
			\int_\Omega|\rmD^2 w|^2\,\d x\le\frac{d^2}4\,\sfD^*(u),
			\qquad
			\int_{\{w>0\}}\frac{|\nabla w|^4}{w^2}\,\d x\le
			\Big(\frac{4d^2}9+\frac16\Big)\,\sfD^*(u).
		\end{equation}
		\item In particular
	\begin{equation}
		\label{eq:twosided}
		\frac4{d^2}\int_\Omega|\rmD^2\sqrt u|^2\,\d x
		\le \sfD(u)\le
		80\int_\Omega(\Lap\sqrt u)^2\,\d x .
	\end{equation}
	\end{enumerate}
\end{theorem}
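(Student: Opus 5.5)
The plan is to prove the Hessian estimate for smooth, uniformly positive functions first and then pass to general $u$ through the heat regularization $u_\tau:=\sfH_\tau u$ and the monotonicity of $\tau\mapsto\sfG_\tau(u)$.

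\emph{Step 1: the estimate on the regularized functions.} Fix $\tau>0$ and $u\not\equiv0$. Then $u_\tau$ is Lipschitz, has bounded Laplacian and satisfies $\min u_\tau>0$, so $w_\tau:=\sqrt{u_\tau}\in\HtN$ and the pointwise identity \eqref{eq:pointwise-sqrt} holds everywhere. This gives
\[
\sfG_\tau(u)=4\int_\Omega\Big(\Lap w_\tau+\frac{|\nabla w_\tau|^2}{w_\tau}\Big)^{2}\,\d x .
\]
I will aim for the bound
\[
\int_\Omega|\rmD^2w_\tau|^2\,\d x\le \frac{d^2}{4}\,\sfG_\tau(u),
\]
together with the corresponding quartic bound.

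To get it, expand the square. The cross term $2\int\Lap w\,|\nabla w|^2/w$ is integrated by parts, which is legitimate because the Neumann condition holds and $w$ is bounded below. It produces $-\int 2\rmD^2w(\nabla w,\nabla w)/w+\int|\nabla w|^4/w^2$. This yields an expression of the form
\[
\int\Big[(\Lap w)^2-4\,\frac{\rmD^2w(\nabla w,\nabla w)}{w}+3\,\frac{|\nabla w|^4}{w^2}\Big]\,\d x .
\]
Next use the convexity inequality \eqref{eq:H2bound}, $\int(\Lap w)^2\ge\int|\rmD^2w|^2$. This reduces the lower bound to the pointwise quadratic form
\[
Q(H,p)=|H|^2-4\,H(p,p)+3|p|^4
\]
in the variables $H=\rmD^2 w$ and $p\otimes p=\nabla w\otimes\nabla w/w$. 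The exact coefficients still need to be checked in the expansion.

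I would also keep a free parameter by adding a multiple $\lambda$ of the identity obtained from integrating by parts in $\int \mathrm{div}(|\nabla w|^2\nabla w/w)=0$. The Neumann condition on a convex domain makes the boundary terms nonpositive, using $\partial_\nn|\nabla w|^2\le0$ as in Grisvard. The aim is to find $\lambda$ such that $Q_\lambda(H,p)\ge \frac{4}{d^2}\big(|H|^2\cdot\tfrac{d^2}{4}\cdot\tfrac{1}{4}\big)$, stated more precisely as a sum-of-squares inequality $Q_\lambda\ge c_d(|H|^2+|p|^4)$. The algebra uses $(\mathrm{tr}H)^2\le d|H|^2$ and $H(p,p)\le |H||p|^2$, and this is where the $d^2$ comes from. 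This algebraic sum-of-squares step is the main obstacle. I would first try the ansatz $Q_\lambda=\alpha\,|H-\beta\,p\otimes p-\gamma|p|^2 I|^2+\ldots$ and optimize over the parameters. Once the Hessian bound is available, the quartic bound follows from \eqref{eq:quartic-general} applied to $w_\tau$, since $(\tnrm{\Lap w}+2\tnrm{\rmD^2w})^2\le(\sqrt d+2)^2\tnrm{\rmD^2w}^2$, or from the same algebraic inequality directly.

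\emph{Step 2: passage to the limit.} If $\sfD^*(u)<\infty$, the functions $w_\tau$ are bounded in $H^2$, because $\tnrm{w_\tau}^2=\int u_\tau=\int u$. As $\tau\downarrow0$ we have $u_\tau\to u$ in $L^2$, so $w_\tau\to\sqrt u$ in $L^4$, and weak compactness gives $\sqrt u\in\HtN$. The estimates \eqref{eq:Hessian} then pass to the limit by weak lower semicontinuity, using Fatou's lemma for the quartic term on $\{w>0\}$.

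\emph{Step 3: the equality $\sfD^*=\sfD$.} The inequality $\sfD^*\le\sfD$ comes from Corollary~\ref{cor:main-ineq} with $w\equiv1$: since $\sfH_t1=1$, it gives $\sfG_\tau(u)\le\sfD(u)$ whenever $u\in D_{L1}(\Lap)$, and it is trivial otherwise. For $\sfD\le\sfD^*$, assume $\sfD^*(u)<\infty$. By Step 2, $u\in\WpO\subset D_{L1}(\Lap)$, so Proposition~\ref{prop:L1-closure} gives $u_\tau\to u$ and $\Lap u_\tau\to\Lap u$ in $L^1$, hence almost everywhere along a subsequence. The lower semicontinuity of $\sfh$ and Fatou's lemma then give $\sfD(u)\le\liminf\sfG_\tau(u)=\sfD^*(u)$.

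This proves item 1. Item 2 follows from Step 2 in one direction and from Proposition~\ref{prop:quartic} in the other. Item 3 combines \eqref{eq:Hessian} with \eqref{eq:D-upper}.
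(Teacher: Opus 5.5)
Your Steps~2 and~3 follow the paper's Claims~1--3 and are fine once Step~1 is available. Using Fatou in place of Ioffe also works, since $\nabla w_\tau\to\nabla w$ strongly in $L^2$ by compactness. The gap is the algebraic core of Step~1. You leave it open, and as you set it up it is false in high dimension.

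Write $H=\rmD^2w$ and $g=\nabla w/\sqrt w$. Let $c$ be the multiple you add of the divergence identity $\int|g|^4=\int\mathrm{tr}H\,|g|^2+2\int\langle Hg,g\rangle$. Its boundary term vanishes by the Neumann condition alone; convexity plays no role there. If you then replace the whole of $\int(\Lap w)^2$ by $\int|\rmD^2w|^2$, the pointwise integrand becomes $|H|^2+c\,\mathrm{tr}H\,|g|^2+(2c-4)\langle Hg,g\rangle+(3-c)|g|^4$.

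For $c=0$ this is your $Q$, and it is indefinite: $g=\sqrt r\,e_1$, $H=2r\,e_1\otimes e_1$ gives $-r^2$. For general $c$, take $g=\sqrt r\,e_1$ and $H=\mathrm{diag}(a_1,\frac z{d-1},\dots,\frac z{d-1})$, and minimize in $(a_1,z)$. The minimum is $\frac{r^2}4\big(-4+20c-(d+8)c^2\big)\le\frac{r^2}4\big(-4+\frac{100}{d+8}\big)$. So for $d\ge17$ no choice of $c$ gives a lower bound $\kappa|H|^2$ with $\kappa>0$, yet the theorem is claimed in every dimension.

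What is lost is control of $z=\mathrm{tr}H-a_1$: $|H|^2$ controls it only through $z^2/(d-1)$. The scalar bounds you mention, $|\mathrm{tr}H|\le\sqrt d\,|H|$ and $|\langle Hg,g\rangle|\le|H||g|^2$, lose even more and do not close the estimate in any dimension $d\ge2$.

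The missing idea is a second interpolation parameter. Apply \eqref{eq:H2bound} only to the fraction $\frac1d$ of $\int(\Lap w)^2$, keep $(1-\frac1d)(\mathrm{tr}H)^2$ inside the pointwise form, and take $c=2-\frac1d$. By rotation invariance you may take $g=\sqrt r\,e_1$. The off-diagonal entries of $H$ then appear in the form minus $\frac1{d^2}|H|^2$ only with the nonnegative coefficient $\frac1d-\frac1{d^2}$, so $H$ may be taken diagonal. After using $\sum_{i\ge2}a_i^2\ge z^2/(d-1)$, you are left with a quadratic form in $(a_1,r,z)$ whose leading minors are $\frac{d^2-1}{d^2}$, $\frac{16d^2-13d-4}{4d^3}$ and $\frac{11d^2-8d-4}{4d^5}$, all positive. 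Setting $\mathcal J(w):=\int(\Lap w+|\nabla w|^2/w)^2$, this gives $\int|\rmD^2w_\tau|^2\le d^2\mathcal J(w_\tau)=\frac{d^2}4\sfG_\tau(u)$, which is exactly the constant in \eqref{eq:Hessian}.

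Your route to the quartic bound also falls short of the stated constant. Using \eqref{eq:quartic-general} with $\tnrm{\Lap w}^2\le d\,\tnrm{\rmD^2w}^2$ only yields $(\sqrt d+2)^2\frac{d^2}4$, not $\frac{4d^2}9+\frac16$. The stated constant comes from your own identity $\mathcal J=\int(\Lap w)^2-4\int\langle Hg,g\rangle+3\int|g|^4$. Drop $\int(\Lap w)^2\ge0$, apply Cauchy--Schwarz to the middle term, then Young's inequality; this gives $\int|g|^4\le\frac{16}9\int|H|^2+\frac23\mathcal J$. Inserting the Hessian bound then gives $\frac{4d^2}9+\frac16$.
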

For $u\in\WpO$, when there is no risk of ambiguity, we will simply write
\begin{equation}
	\label{eq:evoc-notation}
	\frac{(\Lap u)^2}{u}(x):=
	\sfh(u(x),\Lap u(x))=
	\begin{cases}
		\frac{(\Lap u(x))^2}{u(x)}&\text{if }u(x)>0,\\
		0&\text{if }u(x)=0.
	\end{cases}
\end{equation}

We postpone the proof of Lemma \ref{le:relaxedD} and Theorem~\ref{thm:Hessian} to	
Appendix~\ref{app:second-order}, where we also provide a more detailed
discussion of the related algebraic inequalities.
\begin{remark}[Dimension one]
	\label{rem:d1}
	For $d=1$ the constant $4/d^2=4$ in \eqref{eq:Hessian} and
	\eqref{eq:twosided} is obtained by a two-line argument, and the
	inequality refines to the identity
	$\sfD(u)=4\int_\Omega (w'')^2\,\d x+\frac{20}3
	\int_{\{w>0\}}\frac{(w')^4}{w^2}\,\d x$
	for positive $u$ (see the end of the proof of
	Proposition~\ref{prop:positive-case} in
	Appendix~\ref{app:second-order}).
\end{remark}
\begin{remark}[Bibliographical remarks]
	\label{rem:secondorder-refs}
	On general Lipschitz domains, identities and equivalences between
	the second order functionals
	$\int\big(2|\rmD^2u|^2+(\Lap u)^2\big)/u\,\d x$ and the
	corresponding quantities of $\sqrt u$ (also with respect to
	weighted measures) have been established in
	\cite[Section 3]{Gianazza-Savare-Toscani09}, see in particular
	Theorem 3.2 and Corollaries 3.1, 3.2 there: in those results the
	\emph{full Hessian} of $u$ enters the functional.
	The content of Theorem~\ref{thm:Hessian} is that on a
	\emph{convex} domain with Neumann boundary condition the sole
	Laplacian --- i.e.~the functional \eqref{eq:defD} which rules the
	DLSS operator --- controls the full Hessian of $\sqrt u$ in every
	dimension, with a constant degenerating not faster than $d^{-2}$.
	Entropy--dissipation inequalities of this type on the torus are at
	the core of the existence theory of \cite{Jungel-Matthes08} (see
	Lemma 2.2 there, together with the formulation (1.4) of the DLSS
	equation in terms of $\sqrt u$), and are obtained by the
	algorithmic entropy construction method; see also~\cite{MRSS23}.
	Finally, the class $\WpO$ essentially coincides with the
	regularity class in which Fischer~\cite{Fischer13} proved his
	uniqueness results for the DLSS equation: this connection will
	play a crucial role in the characterization of the DLSS
	semigroup.
\end{remark}
\subsection{Maximal monotone operators between a Hilbert space
and its dual}
\label{subsec:monotone-VV}

Let $\W$ be a real Hilbert space with scalar product
$(\cdot,\cdot)_\W$ and let $\W'$ be its dual space, with duality
pairing $\la\cdot,\cdot\ra$. We do not necessarily identify $\W$ with
$\W'$; the Riesz isomorphism (or duality map) $\rmJ_\W:\W\to\W'$ is
defined by
\begin{equation}
	\label{eq:duality-map}
	\la \rmJ_\W v,w\ra:=(v,w)_\W
	\quad\text{for every }v,w\in\W .
\end{equation}
A multivalued operator $\frM:\W\rightrightarrows\W'$ can be
identified with its graph $\graph(\frM)\subset\W\times\W'$, writing
$f\in\frM v$ if and only if $(v,f)\in\graph(\frM)$.
$\frM$ is
\begin{itemize}
	\item \emph{proper} if its domain
	$\dom(\frM):=\{v\in\W:\frM v\neq\emptyset\}$ is not empty,
	\item
	\emph{monotone} if $\la f-g,v-w\ra\ge0$ for every
	$(v,f),(w,g)\in\graph(\frM)$, 
	\item \emph{maximal monotone} if,
	moreover, its graph is maximal in the class of monotone subsets of
	$\W\times\W'$ ordered by inclusion.
\end{itemize}
Maximal monotone operators are proper, and maximality can be
equivalently characterized in terms of the surjectivity of the
perturbed operators $\lambda\rmJ_\W+\frM$, $\lambda>0$.
\begin{theorem}[Minty's characterization]
	\label{thm:Minty}
	Let $\frM:\W\rightrightarrows\W'$ be a monotone operator. The
	following properties are equivalent:
	\begin{enumerate}[\rm (a)]
	\item $\frM$ is maximal monotone;
	\item $\rmJ_\W+\frM$ is surjective: for every $f\in\W'$
		there exists $v\in\dom(\frM)$ such that
		$\rmJ_\W v+\frM v\ni f$;
	\item $\lambda\rmJ_\W+\frM$ is surjective for every
		$\lambda>0$.
	\end{enumerate}
	In this case, for every $\lambda>0$ the solution $v_\lambda\in
	\dom(\frM)$ of $\lambda \rmJ_\W v_\lambda+\frM v_\lambda\ni f$
	is unique and, if $v_\lambda,\hat v_\lambda$ correspond to data
	$f,\hat f\in\W'$, then
	\begin{equation}
		\label{eq:resolvent-estimate-W}
		\|v_\lambda-\hat v_\lambda\|_\W\le
		\frac1\lambda\,\|f-\hat f\|_{\W'} .
	\end{equation}
\end{theorem}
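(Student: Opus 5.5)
The plan is the classical route through the Riesz isomorphism, which turns the statement into the Hilbert-space version of Minty's theorem. Set $\tilde\frM:=\rmJ_\W^{-1}\circ\frM:\W\rightrightarrows\W$. By \eqref{eq:duality-map}, $\la f-g,v-w\ra=(\rmJ_\W^{-1}f-\rmJ_\W^{-1}g,v-w)_\W$. Hence monotonicity, maximality and surjectivity of $\lambda\rmJ_\W+\frM$ correspond exactly to the same properties of $\lambda\sfI+\tilde\frM$ in the Hilbert space $\W$. From now on I therefore work in $\W$ with $\W'$ identified with $\W$.

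I would first prove the easy implications. (c)$\Rightarrow$(b) is the case $\lambda=1$. For uniqueness and \eqref{eq:resolvent-estimate-W}, take $\lambda v_\lambda+f_1=f$ and $\lambda\hat v_\lambda+f_2=\hat f$ with $f_i\in\frM(\cdot)$. Pairing the difference with $v_\lambda-\hat v_\lambda$ and using monotonicity gives $\lambda\|v_\lambda-\hat v_\lambda\|^2\le\la f-\hat f,v_\lambda-\hat v_\lambda\ra$. Cauchy--Schwarz then yields the estimate, and with $f=\hat f$ it yields uniqueness.

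For (b)$\Rightarrow$(a), let $(v,f)$ be monotonically related to the whole graph, and solve $\rmJ v_1+\frM v_1\ni \rmJ v+f$ with $f_1\in\frM v_1$. Then $f-f_1=\rmJ(v_1-v)$. Testing the monotonicity relation with $(v_1,f_1)$ gives $0\le\la f-f_1,v-v_1\ra=-\|v-v_1\|^2$, so $v=v_1$ and $f=f_1$, which proves maximality. For (b)$\Rightarrow$(c), I would apply a fixed-point argument. The resolvent $\sfJ:=(\sfI+\tilde\frM)^{-1}$ is everywhere defined and nonexpansive by the estimate above. The equation $\lambda v+\tilde\frM v\ni f$ is equivalent to $v=\sfJ\big(\lambda^{-1}f+(1-\lambda^{-1})\cdot\ \text{scaled}\big)$; concretely, for $\lambda>0$ rewrite it as $v+\tilde\frM v\ni f+(1-\lambda)v$. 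The map $v\mapsto\sfJ(f+(1-\lambda)v)$ is a contraction when $|1-\lambda|<1$, i.e. $\lambda\in(0,2)$. Iterating the argument with $\lambda\tilde\frM$ in place of $\tilde\frM$ (itself satisfying (b) at the new parameters) extends surjectivity to all $\lambda>0$ by the usual continuation in $\lambda$.

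The main obstacle is (a)$\Rightarrow$(b), the genuine Minty theorem, i.e. surjectivity of $\sfI+\tilde\frM$ for a maximal monotone $\tilde\frM$. I would use the Debrunner--Flor/Browder approach. Fix $f$ and, for each finite-dimensional subspace $F\subset\W$, solve a variational inequality: find $v\in\overline{\mathrm{conv}}$ of a bounded piece of $\dom$ such that $(g+v-f,w-v)\ge0$ for all $(w,g)\in\graph\tilde\frM$. In finite dimensions, existence follows from a Ky Fan minimax/KKM argument using monotonicity; the a priori bound comes from testing with a fixed point $(w_0,g_0)$ of the graph. Weak compactness and monotonicity then let me pass to the limit and obtain $v$ with $(g-(f-v),w-v)\ge0$ for all graph points. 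Maximality finally gives $f-v\in\tilde\frM v$. Alternatively, since the paper only needs this as a standard tool, I would be prepared simply to cite the classical references (Brezis, Barbu) for this implication.
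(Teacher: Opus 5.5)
Your proposal is correct and follows the paper's route: you pass to the Hilbert-space case via $\rmJ_\W^{-1}\circ\frM$ and rely on the classical Minty theorem for (a)$\Rightarrow$(b), which the paper simply cites (Minty, Brezis, Barbu). You also obtain uniqueness and \eqref{eq:resolvent-estimate-W} by pairing the difference of two resolvent relations with $v_\lambda-\hat v_\lambda$, exactly as the paper does; your explicit arguments for (b)$\Rightarrow$(a) and for (b)$\Rightarrow$(c), via the contraction and continuation in $\lambda$, only fill in details that the paper leaves to those references.
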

When $\W$ is identified with $\W'$ (so that $\rmJ_\W=\rmI$),
Theorem~\ref{thm:Minty} is the classical result of Minty~\cite{Minty62} (see also \cite[Proposition 2.2]{Brezis73}, and
\cite[Section 2.1]{Barbu10} for reflexive Banach spaces); the general
case follows by applying the classical one to the monotone operator
$\rmJ_\W^{-1}\circ\frM:\W\rightrightarrows\W$, and
\eqref{eq:resolvent-estimate-W} by pairing the difference of two
resolvent relations with $v_\lambda-\hat v_\lambda$.

\medskip
In this paper we will consider the dualities generated by the
specific Hilbert triple \eqref{eq:2}: from now on we set
$\H:=L^2(\Omega)$, identified with its dual $\H'$, and we endow
$\HtN$ with the equivalent Hilbertian norm
\eqref{eq:H2norm-equivalent}, so that \eqref{eq:2} reads
$\HtN\subset^{\rm ds}\H\equiv\H'\subset^{\rm ds}\DHtN$.
Only the intermediate space is identified with its dual,
so that $\rmJ_\H=\rmI$, whereas the duality map of
$\HtN$, which we will simply denote by $\rmJ$, takes the explicit
form
\begin{equation}
	\label{eq:J-explicit}
	\rmJ u=u-2\Lap u+\BLap u,
	\qquad
	\la \rmJ u,v\ra=\int_\Omega(u-\Lap u)(v-\Lap v)\,\d x=
	\scalpt uv+2\frd(u,v)+\frd_2(u,v),
\end{equation}
with $\BLap$ as in \eqref{eq:BLap} (expand the product and integrate
by parts as in \eqref{eq:Lap}): $\rmJ$ is thus the realization, as a
bounded operator from $\HtN$ to $\DHtN$, of the formal fourth order
operator $(\sfI-\Lap)^2=\sfI-2\Lap+\Lap^2$.
Under the identification, the duality pairing between $\H\subset\DHtN$
and $\HtN\subset\H$ reduces to the scalar product of $\H$:
\begin{equation}
	\label{eq:pairing-bridge}
	\la f,v\ra=(f,v)_\H
	\qquad\text{for every }f\in\H,\ v\in\HtN .
\end{equation}
The definitions and Theorem~\ref{thm:Minty} then apply to two
distinct instances inside the same triple:
\begin{itemize}
\item $\W=\HtN$, $\W'=\DHtN$: multivalued operators
	$\frA:\HtN\rightrightarrows\DHtN$, whose maximality is
	characterized by the solvability of
	$\lambda\rmJ v+\frA v\ni f$ for every $f\in\DHtN$, with the
	resolvent estimate \eqref{eq:resolvent-estimate-W} in the norms
	\eqref{eq:H2norm-equivalent} and \eqref{eq:dual-norm};
\item $\W=\H\equiv\H'=\W'$: multivalued operators
	$\sfA:\H\rightrightarrows\H$, for which we say that $\sfA$ is
	maximal \emph{in $\H$}. Since $\rmJ_\H$ is the identity,
	maximality in $\H$ is characterized by the solvability, for
	every $f\in\H$ (and every, or some, $\lambda>0$), of the
	classical resolvent equation
	\begin{equation}
		\label{eq:resolvent-abstract}
		\lambda u_\lambda+\sfA u_\lambda\ni f.
	\end{equation}
	This instance underlies the
	classical theory of contraction semigroups recalled in the next
	subsection.
\end{itemize}
The two instances are naturally related by a \emph{restriction}
construction:
$\frA$ induces a multivalued operator $\sfA:\H\rightrightarrows \H$ by restricting the graph of $\frA$ to $\H\times \H$:
\begin{equation}
	\label{eq:restriction}
	\graph(\sfA):=\graph(\frA)\cap (\H\times \H)=
	\{(v,f)\in \H\times \H:(v,f)\in \graph(\frA)\}.
\end{equation}
Still assuming that $\sfA$ is proper, we can define the domain of $\sfA$ as
\begin{equation}
	\label{eq:domain-restriction}
	\dom(\sfA):=\{v\in \H:(v,f)\in \graph(\sfA)\text{ for some }f\in \H\}.
\end{equation}
It is immediate to check that $\sfA$ is monotone in $\H$ if $\frA$ is monotone from $\HtN$ to $\DHtN$.

The interplay between the two instances of maximality is a crucial
point of our strategy. The next result shows that the maximality of
the restriction $\sfA$ is equivalent to the solvability in $\HtN$ of
the equation perturbed by the \emph{identity}, i.e.~by the
inclusion $\HtN\subset\H\subset\DHtN$ instead of the duality map
$\rmJ$, for data in the smaller space $\H$; no maximality of $\frA$
is assumed.
\begin{proposition}[Maximality of the restriction]
	\label{prop:restriction-maximality}
	Let $\frA:\HtN\rightrightarrows\DHtN$ be a monotone operator and let
	$\sfA$ be its restriction \eqref{eq:restriction}. The following
	properties are equivalent:
	\begin{enumerate}[\rm (a)]
	\item $\sfA$ is maximal monotone in $\H$;
	\item for every $f\in\H$ there exists $v\in\dom(\frA)$ such
		that
		\begin{equation}
			\label{eq:identity-resolvent-VV}
			v+\frA v\ni f\quad\text{in }\DHtN.
		\end{equation}
	\end{enumerate}
	In this case, for every $f\in\H$ the solution $v$ of
	\eqref{eq:identity-resolvent-VV} is unique, it belongs to
	$\dom(\sfA)$, and it coincides with the solution of
	\eqref{eq:resolvent-abstract} for $\lambda=1$.
\end{proposition}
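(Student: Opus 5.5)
The plan is to use Minty's characterization (Theorem~\ref{thm:Minty}) in the instance $\W=\H=\W'$, with $\rmJ_\H=\rmI$ and $\lambda=1$, and to observe that the resolvent inclusion for $\sfA$ coincides with \eqref{eq:identity-resolvent-VV} whenever the datum lies in $\H$. First we note that $\sfA$ is monotone in $\H$: if $(v,f),(w,g)\in\graph(\sfA)$, then both pairs lie in $\graph(\frA)$ with $v,w\in\HtN$ and $f,g\in\H$. Using \eqref{eq:pairing-bridge}, this gives $(f-g,v-w)_\H=\la f-g,v-w\ra\ge0$. Consequently, by Theorem~\ref{thm:Minty}, statement (a) is equivalent to the surjectivity of $\rmI+\sfA$ on $\H$.

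\emph{(a)$\Rightarrow$(b).} Given $f\in\H$, Minty's theorem yields $v\in\dom(\sfA)$ and $g\in\sfA v$ with $v+g=f$. Since $(v,g)\in\graph(\frA)$, we have $v\in\dom(\frA)\subset\HtN$ and $g\in\frA v$. Because $v\in\HtN\subset\H\subset\DHtN$, the identity $v+g=f$ in $\H$ also holds in $\DHtN$, which gives \eqref{eq:identity-resolvent-VV}.

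\emph{(b)$\Rightarrow$(a).} Given $f\in\H$, take $v\in\dom(\frA)$ and $g\in\frA v\subset\DHtN$ with $v+g=f$ in $\DHtN$. The key observation is that $g=f-v$ is the difference of two elements of $\H$, hence $g\in\H$ by the injectivity of the embedding $\H\subset\DHtN$ (density of $\HtN$ in $\H$). Therefore $(v,g)\in\graph(\frA)\cap(\H\times\H)=\graph(\sfA)$, so $v\in\dom(\sfA)$ and $v+\sfA v\ni f$ in $\H$. Thus $\rmI+\sfA$ is surjective, and Theorem~\ref{thm:Minty} gives maximality. This regularity gain is the only nontrivial point, and it is where one must be careful with the identification of $\H$ inside $\DHtN$.

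\emph{Uniqueness.} If $v_1,v_2$ both solve \eqref{eq:identity-resolvent-VV} with selections $g_i=f-v_i$, then monotonicity of $\frA$ gives
\[
0\le\la g_1-g_2,v_1-v_2\ra=-\tnrm{v_1-v_2}^2,
\]
so $v_1=v_2$. The argument in (b)$\Rightarrow$(a) shows that this solution belongs to $\dom(\sfA)$ and solves \eqref{eq:resolvent-abstract} with $\lambda=1$, which is itself unique by Theorem~\ref{thm:Minty}. Hence the two solutions coincide.
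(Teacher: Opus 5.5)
Your proof is correct and follows the paper's argument. For (b)$\Rightarrow$(a) you observe that $f-v\in\H$, so the pair lies in $\graph(\sfA)$, and then apply Theorem~\ref{thm:Minty} in $\H$; (a)$\Rightarrow$(b) is immediate from $\graph(\sfA)\subset\graph(\frA)$. Your direct uniqueness computation $0\le\la g_1-g_2,v_1-v_2\ra=-\tnrm{v_1-v_2}^2$ is only a minor variant of the paper's appeal to Theorem~\ref{thm:Minty}.
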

\begin{proof}
	(b)$\Rightarrow$(a): let $f\in\H$ and let $v,\xi$ satisfy
	$\xi\in\frA v$, $v+\xi=f$ in $\DHtN$. Since $v\in\HtN\subset\H$ we
	get $\xi=f-v\in\H$, so that
	$(v,\xi)\in\graph(\frA)\cap(\H\times\H)=\graph(\sfA)$ and $u:=v$
	solves \eqref{eq:resolvent-abstract} with $\lambda=1$;
	Theorem~\ref{thm:Minty} (instance $\W=\H$) then shows that
	$\sfA$ is maximal monotone in $\H$. 

	The converse implication is trivial 
	since $\graph(\sfA)\subset\graph(\frA)$.

	The final conclusion is again a consequence of Theorem~\ref{thm:Minty}.
\end{proof}
Proposition~\ref{prop:restriction-maximality} describes precisely the
path we will follow for the DLSS operator. 	
By
solving $u+\frA u\ni f$ for every $f\in L^2(\Omega)\subset \DHtN$, we show the maximality of $\sfA$ in $\H$. 

Notice that the Cauchy problem \eqref{eq:Cauchy-A} of the next
subsection is driven by the $\H$-valued derivative of $u$, 
so that the
generation of the contraction semigroup requires the maximality of
$\sfA$ in $\H$: for the DLSS operator we will prove \emph{both}
properties in Section~\ref{sec:maximality}.

On the other hand, neither of the two maximality properties implies
the other one: picking $f_0\in\DHtN\setminus\H$, the constant
operator $\frA v:\equiv\{f_0\}$ is maximal
(Theorem~\ref{thm:Minty}) with an empty restricted graph, whereas
removing from $\graph(\rmJ)$ a single pair $(v_0,\rmJ v_0)$ with
$\rmJ v_0\notin\H$ destroys its maximality without affecting its
restriction, a nonnegative selfadjoint --- thus maximal monotone ---
operator in $\H$.

\subsection{Semigroups generated by maximal monotone operators}
\label{subsec:maximal-general}
For the general theory of maximal monotone operators in Hilbert spaces
and of their semigroups we refer to \cite{Brezis73}; here we recall
the statements that will be used in the sequel.

If $\sfA$ is maximal monotone
and $u_0\in \dom(\sfA)$,
the Cauchy problem 
\begin{equation}
	\label{eq:Cauchy-A}
	u'(t)+\sfA u(t)\ni 0,\quad 
	\text{a.e. in } (0,+\infty),\quad
	u(0)=u_0 
\end{equation}
has a unique solution 
$u\in \Lip([0,+\infty);\H)$ 
which can be expressed by the exponential formula
\begin{equation}
	\label{eq:exponential}
	u(t)=\lim_{n\to\infty}
	\Big(\rmI+\frac t n
	\sfA\Big)^{-n}u_0,\quad
	t>0,
\end{equation}
and it is right-differentiable:
\begin{equation}
	\label{eq:right-derivative}
	\lim_{h\down0}
	\frac{u(t+h)-u(t)}h=
	-\sfA^\circ u(t)\quad
	\text{for every }t\ge0 
\end{equation}
where $\sfA^\circ u$ 
denotes the (unique) element of minimal $\H$-norm in $\sfA u$.

The limit in \eqref{eq:exponential}
also holds for every 
$u_0\in \overline{\dom(\sfA)}$ 
and the corresponding map $\sfS_t: 
u_0\to u(t)$ is a semi-group of contractions in $\overline{\dom(\sfA)}$ satisfying
\begin{equation}
	\label{eq:semigroup}
	\lim_{t\down 0}\sfS_t u=u,\quad 
	\|\sfS_t u-\sfS_t v\|_\H 
	\le \|u-v\|_\H
	\quad
	\text{for every }u,v\in 
	\overline{\dom(\sfA)}.
\end{equation}
For every $u_0\in \overline{\dom(\sfA)}$ 
the continuous curve
$u\in \rmC([0,+\infty);\overline{\dom(\sfA)})$ defined by 
$u(t):=\sfS_t(u_0)$, $t\ge 0$, 
is the unique 
integral solution of 
\eqref{eq:Cauchy-A},
according to the B\'enilan formulation
\cite{Benilan72}, \cite[Ch.~III, Prop.~3.6]{Brezis73}:

\begin{equation}
	\label{eq:Benilan}
	\frac{\d}{\d t}
	\frac 12 \|u(t)-v\|_\H^2
	\le ( g,v-u(t))_\H
	\quad \text{in }\mathscr D'(0,+\infty)
	\quad\text{for every }
	(v,g)\in \graph(\sfA);\quad
	u(0)=u_0.
\end{equation}
\eqref{eq:Benilan}
can also be written as
\begin{equation}
	\label{eq:Benilan-integral}
	\frac 12 \|u(t)-v\|_\H^2
	-
	\frac 12 \|u(s)-v\|_\H^2
	\le 
	\int_s^t ( g,v-u(r))_\H\,\d r
	\quad\text{for every }
	0\le s<t,\ 
	(v,g)\in \graph(\sfA).
\end{equation}
We will refer to this class of curves as B\'enilan integral solutions.
The solutions to~\eqref{eq:Benilan-integral} are a weak form of~\eqref{eq:Cauchy-A}, since for $-u'(t)\in \sfA u(t)$ and $(v,g)\in \graph(\sfA)$, then
\begin{align*}
	\frac{\d}{\d t} \frac 12 \|u(t)-v\|_\H^2 = ( u'(t),u(t) -v)_\H  =
	 ( u'(t) + g , u(t) - v)_\H - (g , u(t) -v )_\H 
\end{align*}
and the monotonicity of $\sfA$ gives
$( u'(t) + g , u(t) - v)_\H
=-( -u'(t) - g , u(t) - v)_\H\le0$,
so that the inequality~\eqref{eq:Benilan} follows.
The solutions~\eqref{eq:Benilan-integral} are just the integrated version of~\eqref{eq:Benilan}.
\begin{remark}[The inhomogeneous equation]
	\label{rem:inhomogeneous}
	The maximal monotonicity of $\sfA$ in $\H$ yields the well-posedness of the \emph{inhomogeneous} Cauchy
	problem
	\begin{equation}
		\label{eq:Cauchy-f}
		u'(t)+\sfA u(t)\ni f(t)
		\quad\text{a.e.~in }(0,T),\qquad u(0)=u_0
	\end{equation}
	{\em \cite[Chap.~III]{Brezis73}}. If $f\in W^{1,1}(0,T;\H)$ (or
	$f\in\mathrm{BV}([0,T];\H)$) and $u_0\in\dom(\sfA)$, then
	\eqref{eq:Cauchy-f} admits a unique strong solution
	$u\in\Lip([0,T];\H)$; for arbitrary $f\in L^1(0,T;\H)$ and
	$u_0\in\overline{\dom(\sfA)}$ there is a unique \emph{integral
	solution} $u\in\rmC([0,T];\H)$ in the sense of B\'enilan
	\cite{Benilan72} (obtained as the uniform limit of the strong
	solutions corresponding to approximating data), characterized by
	the family of inequalities
	\begin{equation}
		\label{eq:Benilan-f}
		\frac12\|u(t)-v\|_\H^2-\frac12\|u(s)-v\|_\H^2\le
		\int_s^t\big(f(r)-g,u(r)-v\big)_\H\,\d r
		\quad
		\begin{gathered}
			\text{for every }0\le s<t\le T\\
			\text{and }(v,g)\in\graph(\sfA),
		\end{gathered}
	\end{equation}
	together with $u(0)=u_0$.
	Moreover, two integral solutions $u_1,u_2$ corresponding to the
	data $(u_{0,1},f_1)$, $(u_{0,2},f_2)$ satisfy the stability
	estimate
	\begin{equation}
		\label{eq:inhom-contraction}
		\|u_1(t)-u_2(t)\|_\H\le
		\|u_{0,1}-u_{0,2}\|_\H+
		\int_0^t\|f_1(r)-f_2(r)\|_\H\,\d r,
		\qquad t\in[0,T].
	\end{equation}
	This is one of the concrete gains of the semigroup approach: once the
	maximality of the (restricted) DLSS operator $\sfA$ in
	$L^2(\Omega)$ is established, existence, uniqueness and stability
	for the DLSS equation driven by a source term $f(t)$ follow at
	once from the abstract theory, whereas methods tailored to the
	autonomous equation (e.g.~those based on the decay of entropy
	functionals along the flow) 
	need more effort to be extended to 
	\eqref{eq:Cauchy-f}.
\end{remark}
\section{The DLSS operator}
\label{sec:DLSS}
The aim of this section is to introduce the 4-th order DLSS differential operator
\begin{equation}
	\label{eq:DLSS-smooth}
	u\mapsto \Lap^2 u-\frac{(\Lap u)^2}u,\quad u\ge 0
\end{equation}
starting first from a suitable class of sufficiently smooth functions
and then constructing its natural monotone extension $\frA$
in the duality $\DHtN$-$\HtN$ and
its restriction $\sfA$ to $L^2(\Omega)$ as we explained in Section \ref{subsec:monotone-VV}.
Our main result is to show that $\sfA$ is maximal monotone in $L^2(\Omega)$
and it is the \emph{unique} maximal monotone extension of 
\eqref{eq:DLSS-smooth}.

A third, \emph{weak}, level of interpretation of the DLSS
operator, defined on the whole class $\WpO$ of~\eqref{eq:defW},
with values in the dual of the test space $\ZO$, will be
introduced in Section~\ref{subsec:very-weak}: 
it plays no role in the
generation of the semigroup, but it will be crucial to characterize
the integral B\'enilan solutions starting from arbitrary initial data in $L^2(\Omega)$.

\subsection{The DLSS operator
for uniformly positive functions}
\label{subsec:positive}
We introduce the space
\begin{equation}
	\label{eq:Linftyplus}
	L^\infty_{+\!+}(\Omega):=\Big\{u\in L^\infty(\Omega):
	\essinf\limits_{\Omega} u>0\Big\},
\end{equation}
and we define the 
function $\fra:\big(\PXO\big)\times \big(\XO\big) \to \R$ 
\begin{equation}
	\label{eq:form}
	\begin{aligned}
		\fra(u,v):={}&\int_\Omega 
		\Big(\Lap u\Lap v-\frac{(\Lap u)^2}uv\Big)\,\d x
	        \\={}&\frd_2(u,v)-
	        \int_\Omega 
	        \frac{(\Lap u)^2}uv\,\d x,
			\qquad
			u\in \PXO,\ v\in \XO.
	\end{aligned}
\end{equation}
If $u\in \PXO$ then 
\begin{equation}
	\label{eq:operator}
	\frAo u:=\Lap^2 u-\frac{(\Lap u)^2}u
	\in \DHtN
	+L^1(\Omega)
\end{equation}
and it is easy to see that 
\begin{equation}
	\label{eq:operator-form}
	\la \frAo u , v\ra =
	\fra(u,v)\quad\text{for every }
	u\in \PXO,\ v\in \XO.
\end{equation}
Since $\fra(u,u)=0$ we also have
\begin{equation}
	\label{eq:formb}
	\fra(u,u-v)=-\fra(u,v)
	=\int_\Omega
	\Big({\frac{(\Lap u)^2}u}v-
	\Lap u\Lap v\Big)\,\d x.
\end{equation}
It is also easy to check that 
for every $u,v\in \PXO$ we have
\begin{align}
	\notag
	\fra(u,u-v)+\fra(v,v-u)&=
	-\fra(u,v)-\fra(v,u)\\&=
	\notag
	\int_\Omega
	\Big({\frac{(\Lap u)^2}u}v+
	{\frac{(\Lap v)^2}v}u
	-2\Lap u\Lap v\Big)\,\d x
	\\&=
		\label{eq:monotone}
		\int_\Omega 
	\Big|\Lap u\sqrt{\frac vu}-
	\Lap v\sqrt{\frac{u}{v}}\Big|^2
	\ge0.
\end{align}
\paragraph{Useful estimates:}
\begin{enumerate}[E1]
	\item $v=0$:
\begin{equation}
	\label{eq:est0}
	\fra(u,u)=0\quad \text{for every }u\in \PXO,
\end{equation}
which corresponds to the conservation of the $L^2(\Omega)$ norm.
\item $v=1$:
\begin{equation}
	\label{eq:est1}
	\fra(u,u-1)=-\fra(u,1)=
	\int_\Omega \frac{(\Lap u)^2}u\,\d x
	\quad\text{for every }u\in \PXO.
\end{equation}
Notice that the right-hand side is precisely the functional
$\sfD(u)$ of \eqref{eq:defD}: since \eqref{eq:est1} will provide a
uniform control of $\sfD$ on the domain of any reasonable extension
of the DLSS operator
(see Proposition~\ref{prop:defect-variational}),
Theorem~\ref{thm:Hessian} suggests that such a
domain should be contained in the class $\WpO$ of \eqref{eq:defW}
and, symmetrically, that on the whole of $\WpO$ the DLSS operator
should retain a (very weak) meaning: see
Section~\ref{subsec:very-weak}.
\item 
By Lemma 
\ref{le:useful-approximation}
\begin{equation}
	\label{eq:good-multiplication}
	u\in \XO,\ w\in \ZO
	\quad 
	\Rightarrow\quad
	uw\in \XO;
\end{equation}
choosing $u\in \PXO$, 
$w\in \ZO$ and $v:=uw$ 
we obtain 
\begin{equation}
	\label{eq:est0phi}
		\frb(u,w):=	\fra(u,u\,w)
=	
	\int_\Omega 
	\Big(u\Lap u\Lap w+2\Lap u
	\nabla u\cdot\nabla w\Big)\,\d x 
\end{equation}
It is interesting that the form 
$\frb$ 
is also well defined in $\HtN\times \ZO$.
\item
Suppose that $u\in \PXO$; then
the function $v=u\ln u$ belongs to $\XO$
with $\Lap v=\Lap u(1+\ln u)+|\nabla u|^2/u$.
Indeed, $|\nabla u|^2/u\in L^2(\Omega)$ 
by \eqref{eq:quartic-general} applied to $w:=u$. 

Since $\ln u\in L^\infty(\Omega)$, testing the
identity \eqref{eq:Lap} for $u$ against $(1+\ln u)\varphi$,
$\varphi\in H^1(\Omega)\cap L^\infty(\Omega)$, gives
$$\frd(v,\varphi)=-\int_\Omega\big(\Lap u(1+\ln
u)+|\nabla u|^2/u\big)\varphi\,\d x,$$ so that $v\in\HtN$ by
\eqref{eq:Lap} with the stated Laplacian.
Moreover
\begin{align}
\label{eq:crucial-estimate}
	\fra(u,u\ln u)&=
	\int_\Omega \bigg[(\Lap u)^2+
	\Lap u\,\frac{|\nabla u|^2}u\bigg]
	\ge 
	\int_\Omega 
	\bigg|
	\rmD^2 u-\frac{\nabla u\otimes \nabla u}u\bigg|^2\,\d x
	\ge 
	c_d \int_\Omega |\rmD^2 u|^2\,\d x
\end{align}
for some $c_d>0$, where 
the last inequality follows by
\eqref{eq:H2bound} and the 
calculations of 
\cite[Theorem~5.1]{Gianazza-Savare-Toscani09}.
\item 
The following estimate requires smoothness,
it will be justified by a different argument 
using Corollary \ref{cor:main-ineq2}.
\begin{equation}
	\label{eq:est2}
	\fra(u,-\Lap u)=
	\int_\Omega |\nabla\Lap u|^2\,\d x+
	\int_\Omega \frac{(\Lap u)^3}u\,\d x	
	=
	\int_\Omega \Bigl|\nabla\Lap u-
	\frac {\Lap u}{u}\nabla u
	\Bigr|^2\,\d x\ge 0
\end{equation}
since
\begin{align*}
	\int_\Omega \frac{(\Lap u)^3}u\,\d x&=
	\int_\Omega \Lap u\frac{(\Lap u)^2}u\,\d x
	\\&= 
	-\int_\Omega \nabla u
	\cdot \Big(2\frac{\Lap u}u\nabla \Lap u-
	\Big(\frac {\Lap u}{u}\Big)^2\nabla u\Big)\,\d x
	\\&=
	\int_\Omega \biggl( \Bigl(\frac {\Lap u}{u}\Bigr)^2
	|\nabla u|^2-
	2\frac{\Lap u}u\nabla \Lap u\cdot \nabla u
	\biggr)\,\d x.
\end{align*}
\end{enumerate}

\subsection{The realization 
of the DLSS operator between \texorpdfstring{$\HtN$}{H2N} and \texorpdfstring{$\DHtN$}{H-2N}.}
\label{subsec:frA}
We want to extend the previous
discussion, and in particular the definition
\eqref{eq:operator} of the
DLSS operator
$\frAo$
\begin{displaymath}
	\frAo(u)=\Lap^2 u-\frac{(\Lap u)^2}u
\end{displaymath}
 to a more general class 
 of function in $\HtN$.

Since $\Lap^2 u\in \DHtN$ for every $u\in \HtN$, 
 a first natural condition is to ask that 
 the singular term $\frac{(\Lap u)^2}u$ 
 defines a $L^1(\Omega)$ function
 (i.e.~$u\in \WpO$) which is also an element of $\DHtN$
 (see Remark \ref{rem:L1-H-2}).
 In order to gain maximality, we will also see that 
 $(\Lap u)^2/u$ should be relaxed,
  by adding arbitrary (nonnegative finite) measures 
  $\nu\in \MDHtN(\overline\Omega)$ 
  concentrated in the vacuum set $\{\tilde u=0\}$ 
  so that 
  $\mu=(\Lap u)^2/u\,\mathscr L^d+\nu\in \MDHtN(\overline\Omega)$ 
 satisfies $\tilde u\mu=(\Lap u)^2\,\Leb d$. 
 The use of the quasi-continuous representative $\tilde u$ 
 of $u\in \HtN$
 is needed to define the product $\tilde u\mu$ 
 if $d>3$, since 
 we do not know if $u$ is continuous.
 \smallskip
\begin{definition}[The operator $\frA$]
	\label{def:frA-strong}
	A nonnegative $u\in \HtN$ 
	belongs to 
	$\dom(\frA)$ if and only if
	$\mathsf D(u)<\infty$ 
	(i.e.~$u\in \WpO$) and  
	$\displaystyle
		\frac{(\Lap u)^2}{u}\,\Leb d\restr{\{u>0\}}
		\in \DHtN$.
	In this case we set 
	\begin{equation}
		\label{eq:framin}
		\frA^\circ(u):=
		\Lap^2 u-\frac{(\Lap u)^2}{ u}\,\Leb d\restr{\{ u>0\}}
		\in \DHtN,
	\end{equation}
	\begin{equation}
		\label{eq:varineq1}
		\frA(u):=
		\Big\{\frA^\circ(u)-\nu:
		\nu\in \MDHtN(\overline\Omega),
		\ 
		 \text{concentrated on }\{\tilde u=0\}\Big\}.	
	\end{equation}
	We call \emph{defect measure} 
	associated with $f\in \frA u$ the measure 
	$\nu\in \MDHtN(\overline\Omega)$
	satisfying $f=\frA^\circ(u)-\nu$. 
	In particular, if $\Cpt(\{\tilde u=0\})=0$ then
	the defect measure is necessarily null,
	$\frA(u)$ is a singleton and reduces to 
	$\frA^\circ(u)=\Lap^2 u-\frac{(\Lap u)^2}{u}\,\Leb d\restr{\{u>0\}}$.
\end{definition}
We wrote 
$\frac{(\Lap u)^2}{ u}\,\Leb d\restr{\{ u>0\}}$ to emphasize
the dual nature of the distribution, interpreted 
as a measure in $\MDHtN(\overline\Omega)$.
Notice that 
$$\sfh(u,\Lap u)\mathscr L^d=
\frac{(\Lap u)^2}{ u}\,\Leb d\restr{\{ u>0\}}=
\frac{(\Lap u)^2}{\tilde u}\,\Leb d\restr{\{\tilde u>0\}}
\quad \text{since $u=\tilde u$ $\mathscr L^d$-a.e.}$$
As usual, we will often simply write $\frac{(\Lap u)^2}{ u}=
\sfh(u,\Lap u)$. 

Let us highlight a few relevant properties related to the definition of $\frA$, which will be useful to obtain
equivalent characterizations.
\begin{lemma}
	\label{le:obvious}
	Suppose that 
	$(u,f)\in \graph(\frA)\subset \HtN
	\times \DHtN $.
	Then 
	\begin{enumerate}[\rm ({A}.1)]
		\item $u\ge 0$.
		\item 
		$\mu:=\Lap^2 u-f$ 
		is a positive measure in $\MDHtN(\overline\Omega)$, thus characterized by 
		\begin{equation}
			\label{eq:DLSS1-pre}
			\int_{\overline\Omega} v\,\d\mu=
			\int_\Omega \Lap u\Lap v\,\d x-\la f,v\ra 
			\quad\text{for every }v\in \ZO.
		\end{equation}
		\item $\tilde u\mu =(\Lap u)^2\,\Leb d$; equivalently  
		\begin{equation}
			\label{eq:DLSS2-pre}
			\la \mu,uw\ra=\int_\Omega (\Lap u)^2w\,\d
                        x\quad\text{for every }w\in \ZO.
		\end{equation}
		\item[\rm (A.3')] We have
		\begin{gather}
		\label{eq:defect-lower-pre}
		\mu\ge\frac{(\Lap u)^2}{u}\,\Leb d\restr{\{\tilde u>0\}}
		\quad\text{as measures on $\overline\Omega$,}
		\\
		\label{eq:defect-global-pre}
		\int_{\overline\Omega}\tilde u\,\d\mu\le \int_\Omega(\Lap u)^2\,\d x,
		\quad \text{ and }\quad
		\la f,u\ra\ge 0
	\end{gather}
	\item[\rm (A.3'')]
	Recalling the form $\frb:\HtN\times\ZO\to \R$
introduced by 
\eqref{eq:est0phi}
\begin{equation}
	\label{eq:frb}
		\frb(u,w):=
	\int_\Omega 
	\Big(u\Lap u\Lap w+2\Lap u
	\nabla u\cdot\nabla w\Big)\,\d x,
\end{equation}
	it holds
	\begin{equation}
		\label{eq:DLSS2b-pre}
		\frb(u,w)=
		\la f,uw\ra 
		\quad\text{for every }w\in
		\ZO.
	\end{equation}	
	\end{enumerate}
\end{lemma}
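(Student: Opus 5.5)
The plan is to unwind Definition~\ref{def:frA-strong} directly. By definition, $(u,f)\in\graph(\frA)$ means that $u\in\HtN$ is nonnegative, $u\in\WpO$, $\sfh(u,\Lap u)\Leb d\in\DHtN$, and $f=\frA^\circ u-\nu$ for some $\nu\in\MDHtN(\overline\Omega)$ concentrated on $\{\tilde u=0\}$. This gives (A.1) at once. For (A.2) we set
\begin{displaymath}
\mu:=\Lap^2u-f=\frac{(\Lap u)^2}u\Leb d\restr{\{u>0\}}+\nu .
\end{displaymath}
This is a sum of two positive measures in $\MDHtN(\overline\Omega)$; the first one is in this class by Remark~\ref{rem:L1-H-2}. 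Identity \eqref{eq:DLSS1-pre} is then just \eqref{eq:BLap} paired with $v\in\ZO$, using \eqref{eq:representation} to write $\la\mu,v\ra=\int v\,\d\mu$ for continuous $v$.

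For (A.3), we use Theorem~\ref{thm:representation}, eq.~\eqref{eq:wow}, to write $\la\mu,uw\ra=\int\tilde u\tilde w\,\d\mu$ for $w\in\ZO$; here $uw\in\HtN$ by \eqref{eq:Z-acts}, and its quasi-continuous representative is $\tilde u w$ since $w$ is continuous. The $\nu$ part contributes zero, because $\nu$ is concentrated on $\{\tilde u=0\}$. The absolutely continuous part gives $\int_{\{u>0\}}(\Lap u)^2w\,\d x$. By Lemma~\ref{le:sqrt-calculus}(c), $\Lap u=0$ a.e.\ on $\{u=0\}$, so this equals $\int_\Omega(\Lap u)^2w\,\d x$. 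Density of $\ZO$ in $\rmC(\overline\Omega)$ then upgrades this to the measure identity $\tilde u\mu=(\Lap u)^2\Leb d$. The one delicate point here is justifying that $\tilde u\in L^1(\mu)$ and that the pairing formula applies to the product $uw$; \eqref{eq:wow} handles both.

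For (A.3$'$), the inequality \eqref{eq:defect-lower-pre} is immediate since $\nu\ge0$; here one notes that $\{\tilde u>0\}$ and $\{u>0\}$ differ by an $\Leb d$-null set. For the global estimate we apply \eqref{eq:wow} with $w=u$: $\int\tilde u\,\d\mu=\la\mu,u\ra$, and the computation above with $w\equiv1\in\ZO$ gives $\int\tilde u\,\d\mu=\int(\Lap u)^2\,\d x$. So in fact equality holds, and this yields both \eqref{eq:defect-global-pre} and
\begin{displaymath}
\la f,u\ra=\int(\Lap u)^2\,\d x-\la\mu,u\ra=0\ge0 .
\end{displaymath}

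For (A.3$''$), write $\la f,uw\ra=\int\Lap u\,\Lap(uw)\,\d x-\la\mu,uw\ra$. Expand $\Lap(uw)=w\Lap u+2\nabla u\cdot\nabla w+u\Lap w$; this is valid in $L^2$ since $w$ is Lipschitz with bounded Laplacian (Lemma~\ref{le:useful-approximation}). Substituting (A.3), the $(\Lap u)^2w$ terms cancel and what remains is exactly $\frb(u,w)$. The main obstacle overall is the rigorous handling of quasi-continuous representatives in (A.3). Everything else is algebra.
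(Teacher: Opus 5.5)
Your proposal is correct and is essentially the paper's proof. You unwind Definition~\ref{def:frA-strong} to write $\mu=\frac{(\Lap u)^2}{u}\Leb d\restr{\{u>0\}}+\nu$, multiply by $\tilde u$ (via \eqref{eq:wow}) to kill the defect $\nu$ concentrated on $\{\tilde u=0\}$, and use the Leibniz expansion of $\Lap(uw)$ to obtain (A.3$''$). Your extra step of testing with $w\equiv1$, which gives equality in \eqref{eq:defect-global-pre} and hence $\la f,u\ra=0$, is just a spelled-out version of what the paper compresses into ``(A.3$'$) still follows by \eqref{eq:mu}''.
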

\begin{remark}
	\label{rem:check}
	The equivalences stated in 
	{\rm (A.2) and (A.3)} follow
	by the 
	the density of
	$\ZO$ in 
	$\HtN$ and in $\rmC(\overline\Omega)$ (see Lemma~\ref{le:useful-approximation}), respectively.
	In fact, 
	the density of
	$\ZO$ in 
	$\HtN$
	shows that 
	\eqref{eq:DLSS1-pre} yields
	\begin{displaymath}
		\la \BLap u-\mu,v\ra=
		\la f,v\ra
		\quad\text{for every }v\in \HtN,
	\end{displaymath}
	so that \eqref{eq:DLSS1-pre} yields
	$\BLap u-\mu=f$ (here we identify $\mu$ with $\ell_\mu$).
	
	Similarly, \eqref{eq:DLSS2-pre}
	and \eqref{eq:wow}
	yield
	\begin{displaymath}
		\int_{\overline\Omega} w\tilde u \,\d\mu=
		\int_\Omega (\Lap u)^2 w\,\d x
		\quad\text{for every }w\in \rmC(\overline\Omega),
	\end{displaymath}
	which shows that $\tilde u\mu=(\Lap u)^2\,\Leb d$.
\end{remark}
\begin{proof}
	(A.1) is obvious.
	
	\noindent
	(A.2): we can write $f=\Lap^2 u-\frac{(\Lap u)^2}u\mathscr L^d\restr{u>0}-\nu$ with
	$\nu\in \MDHtN(\overline\Omega)$ concentrated on 
	$\{\tilde u=0\}$, so that
	\begin{equation}
		\label{eq:mu}
		\mu=\frac{(\Lap u)^2}u\mathscr L^d\restr{u>0}+\nu
		\in \MDHtN(\overline\Omega.)
	\end{equation}
	(A.3) Just multiply \eqref{eq:mu} by $\tilde u$
	(which coincides with $u$ $\mathscr L^d$-a.e.) and use the fact that $\tilde u\nu=0$.

	\noindent
	(A.3') still follows by \eqref{eq:mu}

	\noindent
	(A.3''):
	Recall that, by Lemma \ref{le:useful-approximation} and 
\eqref{eq:Leibniz2}, for every
$w\in \ZO $ 
it holds $uw\in \HtN$ 
and 
\begin{align}
	\notag 
	\int_\Omega \Lap u\,\Lap
	(uw)\,\d x
	&=
	\int_\Omega \Lap u
	\Big(
	\Lap u\,w+2\nabla u\cdot\nabla w +
		u\Lap w\Big)\,\d x
	\\&=
	\int_\Omega (\Lap u)^2
	w\,\d x+ 
	\frb(u,w)
	\label{eq:later}
\end{align}
Since \eqref{eq:DLSS2-pre} yields
\begin{displaymath}
	\la \mu,uw\ra=\int_\Omega (\Lap u)^2
	w\,\d x
\end{displaymath}
we deduce by \eqref{eq:DLSS1-pre}
	\begin{equation}
	\label{eq:equivalence-step1}
	\la f,uw\ra 
	=	
	\int_\Omega \Lap u\,\Lap
	(uw)\,\d x-
		\la \mu,uw\ra
		=\frb(u,w)
	\end{equation}
	so that \eqref{eq:DLSS2b-pre}
	holds.
\end{proof}
We will show now that properties (A.1)--(A.3) (or (A.3'))
are in fact sufficient to characterize a pair in the graph of $\frA$.
\begin{proposition}[Variational properties]
	\label{prop:defect-variational}
	Let $(u,f)\in\HtN\times\DHtN$ 
	be satisfying {\em (A.1) and (A.2)} 
	of Lemma \ref{le:obvious}. 
	Then condition {\rm(A.3)} is equivalent to condition
	${\rm (A.3')}$. 
	
	If they are satisfied then 
	\eqref{eq:defect-global-pre} in fact holds with equality,
		\begin{equation}
			\label{eq:defect-global}
			\int_{\overline\Omega}\tilde u\,\d\mu=\int_\Omega(\Lap u)^2\,\d x,\qquad 
		\la f,u\ra=0.
		\end{equation}
	and
	\begin{enumerate}[\rm 1.]
		\item The following estimates hold:
		\begin{align}
		\label{eq:mass-identity}
		\mu(\overline\Omega)&=-\la f,1\ra\le
		|\Omega|^{1/2}\,\DHtnrm f,
		\\
		\label{eq:mu-ac-part}
		\mu\restr{\{\tilde u>0\}}&=
		\frac{(\Lap u)^2}{\tilde u}\,
		\Leb d\restr{\{\tilde u>0\}},
		\\
		\label{eq:Dfinite}
		\sfD(u)&=\mu(\{\tilde u>0\})\le-\la f,1\ra<\infty.
	\end{align}
		\item 
		$\sqrt u\in\HtN$, i.e.~$u\in\WpO$, and
	\begin{equation}
		\label{eq:static-bounds}
		\frac4{d^2}\int_\Omega|\rmD^2\sqrt u|^2\,\d x\le
		\sfD(u)\le|\Omega|^{1/2}\,\DHtnrm f .
	\end{equation}
		\item $\displaystyle
		\frac{(\Lap u)^2}{\tilde u}\,\Leb d\restr{\{\tilde u>0\}}
		\in \DHtN\cap L^1(\Omega)$,
		\item 
		the excess
		$\nu:=\mu-(\Lap u)^2\tilde u^{-1}\Leb d\restr{\{\tilde u>0\}}\ge0$
		is concentrated on the vacuum set $\{\tilde u=0\}$.	
	\end{enumerate}
\end{proposition}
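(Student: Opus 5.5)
We treat the two directions of the equivalence first and derive the listed consequences afterwards. Throughout, $\mu:=\BLap u-f\in\MDHtN(\overline\Omega)$ is given by (A.2), and $\tilde u$ is the quasi-continuous representative.

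\emph{(A.3)$\Rightarrow$(A.3').} From $\tilde u\mu=(\Lap u)^2\Leb d$, restrict to the Borel set $\{\tilde u>0\}$. There $\mu\restr{\{\tilde u>0\}}=\tilde u^{-1}(\Lap u)^2\Leb d\restr{\{\tilde u>0\}}$, which gives \eqref{eq:mu-ac-part} and in particular \eqref{eq:defect-lower-pre}. Integrating $\tilde u$ against $\mu$ gives \eqref{eq:defect-global} with equality, since $\tilde u\ge0$ q.e.\ (hence $\mu$-a.e.\ by \eqref{eq:Cp-null-negligible}) and $\tilde u\mu$ vanishes on $\{\tilde u=0\}$. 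Pairing (A.2) with $v=u$, which is legitimate by density of $\ZO$ together with \eqref{eq:wow}, yields $\la f,u\ra=\int(\Lap u)^2-\int\tilde u\,\d\mu=0$.

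\emph{(A.3')$\Rightarrow$(A.3).} Set $\mu_a:=\tilde u^{-1}(\Lap u)^2\Leb d\restr{\{\tilde u>0\}}\le\mu$. The inequality $\la f,u\ra\ge0$ together with (A.2) tested with $u$ gives $\int\tilde u\,\d\mu\le\int(\Lap u)^2$. On the other hand, $\int\tilde u\,\d\mu\ge\int\tilde u\,\d\mu_a=\int_{\{u>0\}}(\Lap u)^2$. The key point is to show $\Lap u=0$ a.e.\ on $\{u=0\}$: for $u\in\HtN$ this is the standard fact that second derivatives of a Sobolev function vanish a.e.\ on level sets (apply Stampacchia to $\nabla u$, which lies in $H^1$). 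Hence all the inequalities collapse to equalities, so $\int\tilde u\,\d(\mu-\mu_a)=0$. Since $\mu-\mu_a\ge0$ and $\tilde u\ge0$ $\mu$-a.e., we get $\tilde u(\mu-\mu_a)=0$, i.e.\ $\tilde u\mu=\tilde u\mu_a=(\Lap u)^2\Leb d$. The same computation shows that $\nu:=\mu-\mu_a$ is concentrated on $\{\tilde u=0\}$, which is item 4.

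\emph{Consequences.} Testing (A.2) with $v=1\in\ZO$ gives $\mu(\overline\Omega)=-\la f,1\ra\le\DHtnrm f\Htnrm1=|\Omega|^{1/2}\DHtnrm f$, which is \eqref{eq:mass-identity}. Since $\Lap u=0$ a.e.\ on $\{u=0\}$, definition \eqref{eq:defD-explicit} gives $\sfD(u)=\int_{\{u>0\}}(\Lap u)^2/u=\mu_a(\overline\Omega)=\mu(\{\tilde u>0\})\le\mu(\overline\Omega)$, which is \eqref{eq:Dfinite}. Theorem~\ref{thm:Hessian} then yields $\sqrt u\in\HtN$ and \eqref{eq:static-bounds}. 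For item 3, $\mu_a\in L^1$ by finiteness of $\sfD$, and $0\le\mu_a\le\mu$ gives $\int w\,\d\mu_a\le\int|w|\,\d\mu\le\Htnrm{w}\MDHtnrm\mu$ for $w\in\ZO$, by \eqref{eq:wow} applied to $|w|$. This requires $|w|\in\HtN$ with controlled norm, which fails. To avoid it, bound instead via $\int w\,\d\mu_a\le\int w^+\,\d\mu$ and handle $w^+$ through \eqref{eq:measure-capacity}. A cleaner alternative is to note that positive measures in $\DHtN$ are ordered: if $0\le\mu_a\le\mu$, then for $w\in\ZO$ write $w=\sfH$-smoothed, dominated by $\sfR(|\,\cdot\,|)$-type majorants $W\ge|w|$ in $\HtN$ with $\Htnrm{W}\lesssim\Htnrm w$. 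We expect this domination step, together with the a.e.\ vanishing of $\Lap u$ on $\{u=0\}$ for a general $u\in\HtN$, to be the main technical obstacle.
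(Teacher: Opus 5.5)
Your argument for the equivalence (A.3)$\Leftrightarrow$(A.3'), for the equality case \eqref{eq:defect-global}, and for items 1, 2 and 4 is correct and is essentially the paper's proof. As there, you test \eqref{eq:DLSS1-pre} with $u$ and with $1$, and you use $\mu\ge\mu_a$ together with $\tilde u\ge0$ $\mu$-a.e.\ to force $\int\tilde u\,\d(\mu-\mu_a)=0$. You then read off $\sfD(u)=\mu(\{\tilde u>0\})\le\mu(\overline\Omega)$ and conclude with Theorem~\ref{thm:Hessian}. The vanishing of $\Lap u$ a.e.\ on $\{u=0\}$, which you still list at the end as a possible obstacle, is already settled by your own argument (Stampacchia for $u$, then for each $\partial_iu\in H^1(\Omega)$). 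This is the same level-set argument as in Lemma~\ref{le:sqrt-calculus}(a).

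Item 3 is the one place where the proposal does not close, and the obstacle comes from a misreading of \eqref{eq:wow}. You treat \eqref{eq:wow} as if it had to be applied to $|w|$. But it already contains the bound $\|\tilde w\|_{L^1(\overline\Omega;\mu)}\le\Htnrm{w}\MDHtnrm{\mu}$, i.e.\ it controls $\int|\tilde w|\,\d\mu$ for $w\in\HtN$ itself, so $|w|\in\HtN$ is never needed. Your first chain is therefore complete: for $w\in\ZO$ (continuous, so $\tilde w=w$),
\[ \Big|\int_{\overline\Omega} w\,\d\mu_a\Big|\le\int_{\overline\Omega}|w|\,\d\mu_a\le\int_{\overline\Omega}|w|\,\d\mu\le\Htnrm{w}\,\MDHtnrm{\mu}. \]
Hence $\mu_a$ defines an element of $\DHtN$ with $\MDHtnrm{\mu_a}\le\MDHtnrm{\mu}$; the paper regards this as immediate from $0\le\mu_a\le\mu$.

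If you want to avoid \eqref{eq:wow}, the majorant you were reaching for is explicit. Set $h:=w-\Lap w\in L^\infty(\Omega)$, so that $w=\sfR h$, and put $W:=\sfR|h|$. Positivity of $\sfR$ gives $|w|\le W$, and $\Htnrm{W}=\tnrm{h}=\Htnrm{w}$. Moreover $W\in\ZO$, since $\sfR=\int_0^\infty e^{-t}\sfH_t\,\d t$ maps $L^\infty(\Omega)$ into $\ZO$ by \eqref{eq:HeatEstimates}. By continuity, $|w|\le W$ then holds everywhere on $\overline\Omega$, not just Lebesgue-a.e., which matters because $\mu$ may charge null sets. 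Thus $\int|w|\,\d\mu\le\int W\,\d\mu\le\MDHtnrm{\mu}\Htnrm{w}$ directly from \eqref{eq:dual-measure}.

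Drop the detour through \eqref{eq:measure-capacity}. It only yields $\MDHtnrm{\mu}\int_0^\infty\Cpt(\{w>t\})^{1/2}\,\d t$, a Lorentz-type quantity for which no bound in terms of $\Htnrm{w}$ is available.
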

\begin{corollary}
	\label{cor:DLSS-operator1}
	A pair $(u,f)\in \HtN
	\times \DHtN $ belongs to the graph
	of the DLSS operator $\frA$ 
	(or, equivalently, $f\in \frA u$) iff 
	{\em (A.1), (A.2)} and {\em (A.3) or (A.3')}
	hold.
\end{corollary}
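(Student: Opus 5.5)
The ``only if'' direction is Lemma~\ref{le:obvious}: if $f\in\frA u$ then (A.1), (A.2), (A.3) and (A.3') all hold. Conversely, let $(u,f)\in\HtN\times\DHtN$ satisfy (A.1), (A.2) and either (A.3) or (A.3'). By Proposition~\ref{prop:defect-variational} these two conditions are equivalent, so we may assume both. We then check the requirements of Definition~\ref{def:frA-strong} one at a time.

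First, $u\in\dom(\frA)$. Proposition~\ref{prop:defect-variational}(2) gives $\sqrt u\in\HtN$, i.e.\ $u\in\WpO$, and \eqref{eq:Dfinite} gives $\sfD(u)<\infty$. Item (3) of the same proposition gives $\frac{(\Lap u)^2}{\tilde u}\Leb d\restr{\{\tilde u>0\}}\in\DHtN$. Since $u=\tilde u$ a.e., this measure coincides with $\frac{(\Lap u)^2}{u}\Leb d\restr{\{u>0\}}$. Hence $u$ belongs to the domain, and $\frA^\circ(u)$ is a well-defined element of $\DHtN$.

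Next, we identify the defect. Set $\nu:=\mu-\frac{(\Lap u)^2}{\tilde u}\Leb d\restr{\{\tilde u>0\}}$. By item (4) of Proposition~\ref{prop:defect-variational}, $\nu\ge0$ and $\nu$ is concentrated on $\{\tilde u=0\}$. We also need $\nu\in\MDHtN(\overline\Omega)$. Here $\mu\in\MDHtN$ by (A.2), and the absolutely continuous part is a positive measure in $\DHtN$ by item (3). For every nonnegative $w\in\ZO$ we have $0\le\int w\,\d\nu\le\int w\,\d\mu$. A general $w\in\ZO$ splits into positive and negative parts, but these parts need not lie in $\ZO$. The simplest route is to write $\int w\,\d\nu$ as the difference of the pairings of $w$ with $\mu$ and with the absolutely continuous part. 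Each pairing is bounded by the corresponding dual norm times $\Htnrm w$, and this gives \eqref{eq:dual-measure} for $\nu$ directly.

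Finally, by (A.2), $f=\BLap u-\mu=\BLap u-\frac{(\Lap u)^2}{u}\Leb d\restr{\{u>0\}}-\nu=\frA^\circ(u)-\nu$ in $\DHtN$. This is exactly the form required by \eqref{eq:varineq1}, so $f\in\frA u$. All the real work sits in Proposition~\ref{prop:defect-variational}. The only step that needs care is the membership $\nu\in\MDHtN$, and the linear-difference argument above settles it, so no step is expected to be a genuine obstacle.
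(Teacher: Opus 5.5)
Your proposal is correct and follows the same route as the paper. The corollary is Lemma~\ref{le:obvious} for necessity and Proposition~\ref{prop:defect-variational} (items 3 and 4) for sufficiency, with $f=\BLap u-\mu=\frA^\circ(u)-\nu$. Your explicit check that $\nu\in\MDHtN(\overline\Omega)$, obtained by writing $\nu$ as the difference of two elements of $\DHtN$ (namely $\mu$ and its absolutely continuous part), fills in a detail the paper leaves implicit.
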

\begin{proof}[Proof of Proposition \ref{prop:defect-variational}]
	Let us first observe that 
	\eqref{eq:mass-identity}
	only depends on (A.1)-(A.2) and follows immediately by
	testing \eqref{eq:DLSS1-pre} with $v\equiv1\in\ZO$.

	\smallskip \noindent 	
	In order to the implication (A.3)$\Rightarrow$(A.3'), it is sufficient to 
	show that (A.3) implies \eqref{eq:defect-global}
	(which clearly implies 
	\eqref{eq:defect-global-pre})
	and \eqref{eq:mu-ac-part}.

	\eqref{eq:defect-global} follows immediately 
	testing \eqref{eq:DLSS1-pre} with $v=u$ (recall Remark~\ref{rem:check}) and
	\eqref{eq:DLSS2-pre} with $w\equiv1$ we get
	\begin{equation}
		\label{eq:identity}
		\la f,u\ra=\int_\Omega(\Lap u)^2\,\d x-\int_{\overline\Omega}\tilde
		u\,\d\mu=0
	\end{equation}
	Concerning \eqref{eq:mu-ac-part}: by Remark~\ref{rem:check} the
	finite Borel measures $\tilde u\,\mu$ and $(\Lap u)^2\Leb d$
	act in the same way on $\rmC(\overline\Omega)$, hence they
	coincide; integrating the bounded Borel functions $\tilde
	u^{-1}\mathbf1_{\{\tilde u>1/n\}}w$, $w\in\rmC(\overline\Omega)$,
	and letting $n\uparrow\infty$ by monotone convergence, we obtain
	of \eqref{eq:mu-ac-part}.

	Conversely, assume 
	that (A.3') (i.e.~\eqref{eq:defect-lower-pre}--\eqref{eq:defect-global-pre}) holds
	and set $\nu:=\mu-(\Lap u)^2\tilde u^{-1}\Leb d\restr{\{\tilde
	u>0\}}\ge0$. Then 
	$$\int_{\overline\Omega}\tilde u\,\d\mu
	=\int_\Omega(\Lap u)^2\,\d x+\int_{\overline\Omega}\tilde u\,\d\nu,$$ so
	\eqref{eq:defect-global-pre} forces $\int_{\overline\Omega}\tilde u\,\d\nu\le0$;
	as $\tilde u\ge0$ and $\nu\ge0$ we get
	$\int_{\overline\Omega}\tilde u\,\d\nu=0$, i.e.~$\nu$ is concentrated on
	$\{\tilde u=0\}$. Hence
	$\tilde u\mu=\tilde u\,(\Lap u)^2\tilde u^{-1}\Leb
	d\restr{\{\tilde u>0\}}+\tilde u\,\nu=(\Lap u)^2\,\Leb d$ (again $\Lap u=0$
	on $\{u=0\}$), which is {\rm(A.3)}. Finally, testing $\Lap^2
	u-\mu=f$ with $u\in\HtN$ gives
	$\la f,u\ra=\int_\Omega(\Lap u)^2\,\d x-\int_{\overline\Omega}\tilde u\,\d\mu$,
	so the equality in \eqref{eq:defect-global} is the same as
	$\la f,u\ra=0$.

	\medskip \noindent	
	Let us now assume (A.3)-(A.3') and prove the other properties. 
	
	Concerning 1., we have already checked \eqref{eq:mass-identity} and \eqref{eq:mu-ac-part}.
	Since $\tilde u=u$ $\Leb d$-a.e.~and
	$\Lap u=0$ $\Leb d$-a.e.~on $\{u=0\}$ (by the level set property
	of Sobolev functions, as in the proof of
	Theorem~\ref{thm:DLSS-monotone}), the total mass of
	$\mu\restr{\{\tilde u>0\}}$ coincides with $\int_\Omega \sfh(u,\Lap
	u)\,\d x=\sfD(u)$, and it is bounded by
	$\mu(\overline\Omega)=-\la f,1\ra$.
	Since $\sfD(u)<+\infty$, Theorem~\ref{thm:Hessian} gives \eqref{eq:Dfinite} and $\sqrt
	u\in\HtN$. 
	The regularity property of Claim 2. and \eqref{eq:static-bounds} immediately follow.

	Claims 3. and 4. have already been discussed in the 
	proof of the implication (A.3')$\Rightarrow$(A.3).
\end{proof}
\begin{remark}
	\label{rem:mu-removed}
	If $u\in \HtN$ and $f\in \DHtN$,
	condition {\rm (A.2)} implies
		\begin{equation}
			\label{eq:DLSS1bis}
			\int_\Omega \Lap u\Lap v\,\d x\ge 
			\la f,v\ra
                        \quad\text{for every }v\in \HtN,
                        \ v\ge 0.
		\end{equation}
		i.e.~$\BLap u\ge f$ in $\DHtN$.
\end{remark}
The role of condition (A.3'') is clarified by the following result.
\begin{proposition}
	\label{le:DLSS-equivalence}
	Let $u\in \HtN,\ f\in \DHtN$.
	$(u,f)$ belongs to the graph of $\frA$
	if and only if 
	\begin{enumerate}
		\item[\rm ({A}.1)] $u\ge 0$. 
	\item[\rm ({A}.2')] 
	$\BLap u\ge f$ in $\DHtN$, i.e.
	\begin{equation}
			\label{eq:DLSS1bisbis}
			\int_\Omega \Lap u\Lap v\,\d x\ge 
			\la f,v\ra
            \quad\text{for every }v\in \PZO
		\end{equation}
	\item[{\rm (A.3'')}]	
	\begin{equation}
		\label{eq:DLSS2bis}
		\frb(u,w)=
		\la f,uw\ra 
		\quad\text{for every }w\in
		\ZO.
	\end{equation}	
	\end{enumerate}
\end{proposition}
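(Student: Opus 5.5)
The plan is to reduce the statement to Corollary~\ref{cor:DLSS-operator1}, which characterizes $\graph(\frA)$ through (A.1), (A.2) and (A.3). The point is that (A.2') together with (A.3'') reconstructs (A.2) and (A.3).

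\emph{Necessity.} Let $(u,f)\in\graph(\frA)$. Then (A.1) and (A.3'') are contained in Lemma~\ref{le:obvious}. Property (A.2) gives \eqref{eq:DLSS1bis} for every nonnegative $v\in\HtN$, by Remark~\ref{rem:mu-removed}. Since $\PZO\subset\{v\in\HtN: v\ge0\}$, this yields (A.2').

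\emph{Sufficiency, step 1: recover (A.2).} Set $\ell:=\BLap u-f\in\DHtN$. By (A.2'), $\la\ell,v\ra\ge0$ for every $v\in\PZO$.
\begin{itemize}
\item If $v\in\ZO$ with $v\ge0$, then $v+\eps\in\PZO$ for every $\eps>0$, because constants belong to $\ZO$. Letting $\eps\downarrow0$ and using linearity and continuity of $\ell$ gives $\la\ell,v\ra\ge0$.
\item If $v\in\HtN$ with $v\ge0$, consider $v_t:=\sfH_tv$. By the estimates of Section~\ref{subsec:Heat}, $v_t$ is Lipschitz with bounded Laplacian, so $v_t\in\ZO$; it is nonnegative because $\sfH_t$ is Markov.
\item Moreover $v_t\to v$ in $\HtN$ as $t\downarrow0$. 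Indeed $\sfH_t$ commutes with $\sfI-\Lap$, so $(\sfI-\Lap)v_t=\sfH_t(\sfI-\Lap)v\to(\sfI-\Lap)v$ in $L^2$ by strong continuity of $\sfH_t$.
\end{itemize}
Passing to the limit, $\ell$ satisfies the positivity condition \eqref{eq:3}. Theorem~\ref{thm:representation} then provides $\mu\in\MDHtN(\overline\Omega)$ with $\ell=\ell_\mu$, i.e.\ $\mu=\BLap u-f$. Testing against $v\in\ZO$ gives \eqref{eq:DLSS1-pre}, which is (A.2).

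\emph{Sufficiency, step 2: recover (A.3).} Fix $w\in\ZO$. By Lemma~\ref{le:useful-approximation}, $uw\in\HtN$, so we may test $\BLap u-\mu=f$ with $uw$. Using the Leibniz expansion \eqref{eq:later}, this gives
\[
\la\mu,uw\ra=\int_\Omega\Lap u\,\Lap(uw)\,\d x-\la f,uw\ra=\int_\Omega(\Lap u)^2w\,\d x+\frb(u,w)-\la f,uw\ra .
\]
By (A.3''), the last two terms cancel, leaving $\la\mu,uw\ra=\int_\Omega(\Lap u)^2w\,\d x$. This is \eqref{eq:DLSS2-pre}, i.e.\ (A.3). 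Corollary~\ref{cor:DLSS-operator1} then gives $(u,f)\in\graph(\frA)$.

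\emph{Main obstacle.} The delicate step is step 1: passing from positivity on the uniformly positive test class $\PZO$ to positivity on all of $\HtN_+$. This requires that the regularization preserves nonnegativity, stays inside $\ZO$, and converges in the $\HtN$ norm. The heat semigroup $\sfH_t$ does all three, which is why I use it rather than a generic density argument. The pairing $\la\mu,uw\ra$ in step 2 is understood through the quasi-continuous representative \eqref{eq:wow} and the extension of $\ell_\mu$ to $\HtN$, so no extra continuity of $u$ is needed.
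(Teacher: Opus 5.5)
Your proof is correct and follows the paper's argument. Necessity comes from Lemma~\ref{le:obvious} and Remark~\ref{rem:mu-removed}. For sufficiency, you represent $\BLap u-f$ by a measure $\mu$ via Theorem~\ref{thm:representation}, combine the Leibniz identity \eqref{eq:later} with (A.3$''$) to get \eqref{eq:DLSS2-pre}, and conclude with Corollary~\ref{cor:DLSS-operator1}, exactly as the paper does. The one addition is that you justify extending positivity from $\PZO$ to every nonnegative $v\in\HtN$ (via $v+\eps$ and $\sfH_t v$), a density step the paper leaves implicit when it calls $\ell$ nonnegative.
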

\begin{proof}
If 
$(u,f)\in \frA$
according to Definition \ref{def:frA-strong},
then (A.2') is trivial and we have already checked
(A.3''). 

	Conversely, let us
	assume that 
	(A.1), (A.2'), and (A.3'') hold.
	
	By \eqref{eq:DLSS1bisbis}
	the linear functional
	$\ell\in \DHtN$ defined by 
	\begin{equation*}
		\la \ell,v\ra
		:=
		\int_\Omega \Lap u\,\Lap v
		\,\d x-
		\la f,v\ra\quad
		v\in \HtN,
	\end{equation*}
	is nonnegative;
	By Theorem
	\ref{thm:representation}
	there is a unique measure
	$\mu\in \MDHtN(\overline\Omega)$ 
	such that $\ell=\ell_\mu$
	according to \eqref{eq:representation}:
	in particular,
	for this precise choice
	of $\mu$ \eqref{eq:DLSS1-pre} holds.
	
	On the other hand
	the identity 
	\eqref{eq:later} and 
	\eqref{eq:DLSS2bis}
	yield
	\begin{align*}
		\la \mu,uw\ra
		&=\int_\Omega \Lap u\,\Lap
	(uw)\,\d x-
		\la f,uw
		\ra 
		\\&=
		\int_\Omega \Lap u\,\Lap
	(uw)\,\d x-\frb(u,w)
		\\&=
		\int_\Omega (\Lap u)^2w\,\d x
		\quad
		\text{for every }w\in \ZO
	\end{align*}
	which in turn implies 
	\eqref{eq:DLSS2-pre} and (A.3). 
	We can eventually invoke 
	Corollary \ref{cor:DLSS-operator1}.
\end{proof}
We derive a further characterization.
\begin{corollary}
	\label{cor:easy2}
	A pair $(u,f)\in \HtN\times \DHtN$
	belongs to the graph of $\frA$ if and only if
	$u\ge 0$ and 
	\begin{equation}
		\label{eq:equivalent-better}
		\Lap^2 u\ge f+
		\sfh(u,\Lap u)
		\quad \text{in }\DHtN,
        \quad \text{and}\quad 
		\langle f,u\rangle \ge 0.
	\end{equation}
	If this is the case, then $\langle f,u\rangle=0$.
\end{corollary}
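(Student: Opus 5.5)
The plan is to reduce the statement to Corollary~\ref{cor:DLSS-operator1}, i.e.\ to conditions (A.1), (A.2) and (A.3'). The main point is to interpret the first inequality in \eqref{eq:equivalent-better} correctly, since $\sfh(u,\Lap u)$ need not a priori be an element of $\DHtN$. I read $\Lap^2u\ge f+\sfh(u,\Lap u)$ as follows. The nonnegative function $\sfh(u,\Lap u)$ (possibly $+\infty$ on a null set) satisfies
\[
\int_\Omega \Lap u\,\Lap v\,\d x-\la f,v\ra\ge \int_\Omega \sfh(u,\Lap u)\,v\,\d x
\quad\text{for every }v\in\ZO,\ v\ge0 .
\]
Here the right-hand side is a priori in $[0,+\infty]$. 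Part of the conclusion will be that it is finite.

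\emph{Necessity.} Let $(u,f)\in\graph(\frA)$. Then (A.1) holds, $\mu=\Lap^2u-f\in\MDHtN(\overline\Omega)$ by (A.2), and (A.3') gives $\mu\ge\sfh(u,\Lap u)\Leb d\restr{\{\tilde u>0\}}$. Since $\sfh(u,\Lap u)=0$ a.e.\ on $\{u=0\}$ (Lemma~\ref{le:sqrt-calculus}(c), with $u\in\WpO$ by Proposition~\ref{prop:defect-variational}), the right-hand side equals $\sfh(u,\Lap u)\Leb d$. Integrating against $v\ge0$ in $\ZO$ yields the inequality. Proposition~\ref{prop:defect-variational} gives $\la f,u\ra=0$, which in particular shows $\la f,u\ra\ge0$.

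\emph{Sufficiency.} Assume $u\ge0$ and \eqref{eq:equivalent-better}.

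First, since $\sfh\ge0$, the inequality implies (A.2'). Theorem~\ref{thm:representation} then produces $\mu\in\MDHtN(\overline\Omega)$ with $\Lap^2u-f=\mu$, which is (A.2).

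Second, the inequality reads $\int v\,\d\mu\ge\int\sfh(u,\Lap u)v\,\d x$ for all nonnegative $v\in\ZO$. Density of $\ZO$ in $\rmC(\overline\Omega)$ (Lemma~\ref{le:useful-approximation}), together with monotone approximation, gives $\mu\ge\sfh(u,\Lap u)\Leb d$ as Borel measures. Taking $v=1$ gives $\sfD(u)\le\mu(\overline\Omega)<\infty$, so $\Lap u=0$ a.e.\ on $\{u=0\}$ by \eqref{eq:defD-explicit}. Since $u=\tilde u$ a.e., this is \eqref{eq:defect-lower-pre}.

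Third, for \eqref{eq:defect-global-pre}: by \eqref{eq:wow}, testing $\Lap^2u-f=\mu$ with $u$ itself gives
\[
\int_{\overline\Omega}\tilde u\,\d\mu=\int_\Omega(\Lap u)^2\,\d x-\la f,u\ra\le\int_\Omega(\Lap u)^2\,\d x,
\]
using $\la f,u\ra\ge0$. Hence (A.3') holds, and Corollary~\ref{cor:DLSS-operator1} gives $(u,f)\in\graph(\frA)$. The final claim $\la f,u\ra=0$ then follows from \eqref{eq:defect-global}.

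The only delicate step is passing from the inequality tested on $\ZO$ to the measure inequality on $\overline\Omega$ when $\sfh(u,\Lap u)$ is not yet known to be integrable. I would handle it by truncation. Apply the inequality to $\min(\sfh,k)$, which is bounded, so that $\min(\sfh,k)\Leb d\le\mu$ follows from the Riesz representation on $\rmC(\overline\Omega)$. Then let $k\uparrow\infty$ by monotone convergence.
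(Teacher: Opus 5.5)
Your proposal is correct and follows the same route as the paper. You set $\mu:=\Lap^2u-f$, use $\Lap^2u-f\ge\sfh(u,\Lap u)\ge0$ to obtain (A.2) and the finiteness of $\sfD(u)$, check \eqref{eq:defect-lower-pre}--\eqref{eq:defect-global-pre}, and conclude with Corollary~\ref{cor:DLSS-operator1}. Your extra care (reading the inequality on nonnegative tests in $\ZO$, invoking Theorem~\ref{thm:representation}, truncating $\sfh$) only makes explicit what the paper's short proof leaves implicit; testing with $v\equiv1$ already gives $\sfh(u,\Lap u)\in L^1(\Omega)$, so the truncation is optional.
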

\begin{proof}
	One implication is an immediate consequence of 
	the definition of $\frA$.

	Conversely, if \eqref{eq:equivalent-better} holds
	then $\int_\Omega \sfh(u,\Lap u)\,\d x<\infty$ 
	so that $\mathsf D(u)<\infty$ and 
	$\displaystyle
		\frac{(\Lap u)^2}{\tilde u}\,\Leb d\restr{\{\tilde u>0\}}
		\in \DHtN$.
	Defining $\mu:=\Lap^2 u-f$ we immediately have
	\eqref{eq:defect-lower-pre} and
	\eqref{eq:defect-global-pre}. 
	We can then apply Corollary~\ref{cor:DLSS-operator1}.
\end{proof}
\begin{remark}
	\label{rem:defect-variational}
	Condition \eqref{eq:defect-global} contains the mass identity
	$\la f,u\ra=0$, the static counterpart of the conservation of
	$\|u_t\|_2^2$ along the flow. 
	The
	measure $\mu$ representing $\Lap^2u-f$ is allowed to \emph{exceed}
	the absolutely continuous square $(\Lap u)^2\tilde u^{-1}\Leb d$, the excess being the reaction on the vacuum, the normal-cone
	component $\partial I_{L^2_+}(u)$ of
	Proposition~\ref{prop:minimal-section}, while the single global
	constraint pins that excess to $\{\tilde u=0\}$, recovering the
	vacuum-trace structure of Theorem~\ref{thm:vacuum-trace}.
\end{remark}%

\subsection{The finer structure of the defect measure}
We now investigate the structure of 
the defect measure $\nu$ 
of \eqref{eq:varineq1} of $\frA$ more closely
assuming that $f\in \MDHtN(\overline\Omega)$ (or, equivalently, that 
$\Lap^2 u\in \MDHtN(\overline\Omega)$.) 
This extra
information, combined with the regularity
$\sqrt u\in\HtN$ of Proposition~\ref{prop:defect-variational}, allows a
complete identification of the measure $\nu$, which coincides
with 
the restriction of $-f$ to $\{\tilde u=0\}$.

The crucial tool is provided by the following concentration  result.
\begin{lemma}
	\label{le:Lap2structure}
	Let $u\in \HtN$ with $w=\sqrt u\in \HtN$ and 
	let $w_\eps:=\varphi(w/\eps)$ where 
	$\eps>0$ and $\varphi\in\rmC^\infty_c([0,+\infty))$ with $0\le\varphi\le1$,
	$\varphi\equiv1$ on $[0,\tfrac12]$ and $\varphi\equiv0$ on
	$[1,+\infty)$. Then for every $\chi\in\ZO$ 
	\begin{equation}
		\label{eq:technical}
		\lim_{\eps\downarrow0}\int_\Omega \Lap u\, \Lap (\chi w_\eps)\,\d x=0
	\end{equation}
\end{lemma}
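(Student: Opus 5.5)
The plan is to expand $\Lap(\chi w_\eps)$ by the chain and Leibniz rules and to show that each resulting term tends to zero by dominated convergence. The mechanism is that every term carries either the factor $w_\eps$, which is bounded by $1$ and tends pointwise to $\mathbf 1_{\{w=0\}}$, or a derivative $\varphi'(w/\eps)$, $\varphi''(w/\eps)$, which is supported in the strip $S_\eps:=\{\eps/2\le w\le\eps\}$. On $S_\eps$ every negative power $\eps^{-k}$ can be traded for $2^kw^{-k}$. The indicators $\mathbf 1_{S_\eps}$ tend to $0$ pointwise as $\eps\downarrow0$, since $w>0$ on $S_\eps$ and any fixed point with $w>0$ leaves the strip. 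Throughout, Lemma~\ref{le:sqrt-calculus} gives $\Lap u=2w\Lap w+2|\nabla w|^2$ with $\Lap u=0$ a.e.\ on $\{w=0\}$. Proposition~\ref{prop:quartic}, in the form \eqref{eq:quartic-general}, gives $|\nabla w|^2/w\in L^2(\{w>0\})$.

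The first step is to check that $\chi w_\eps\in\HtN$, so that the integral in \eqref{eq:technical} makes sense, and that
\begin{equation*}
\Lap w_\eps=\frac{\varphi''(w/\eps)}{\eps^2}|\nabla w|^2+\frac{\varphi'(w/\eps)}{\eps}\Lap w .
\end{equation*}
The first term is in $L^2$ because on $S_\eps$ we have $|\nabla w|^2/\eps^2\le 2|\nabla w|^2/(\eps w)$. The Neumann condition is inherited from $\partial_\nn w=0$. I would justify the formula by approximating $w$ with $\sfH_\tau w$ and passing to the limit, or via the transposition Laplacian of Definition~\ref{def:L1-Laplacian} together with Consistency Remark~\ref{rem:consistency}. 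Then \eqref{eq:Z-acts} gives $\chi w_\eps\in\HtN$. This regularity bookkeeping is the step I expect to be most delicate, since the composition $\varphi(w/\eps)$ of an $H^2$ function need not be in $H^2$ without the quartic control.

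Next I write
\begin{equation*}
\Lap(\chi w_\eps)=w_\eps\Lap\chi+2\nabla\chi\cdot\nabla w_\eps+\chi\Lap w_\eps
\end{equation*}
and treat the three terms in turn.

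\emph{First term.} $\Lap u\in L^1$, $\Lap\chi\in L^\infty$ and $0\le w_\eps\le1$. By dominated convergence,
\begin{equation*}
\int_\Omega\Lap u\,w_\eps\Lap\chi\,\d x\to\int_{\{w=0\}}\Lap u\,\Lap\chi\,\d x=0 .
\end{equation*}

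\emph{Second term.} Here $|\nabla w_\eps|\le\|\varphi'\|_\infty|\nabla w|\mathbf 1_{S_\eps}/\eps$, so the integrand is bounded by $C\big(w|\Lap w|+|\nabla w|^2\big)|\nabla w|\mathbf 1_{S_\eps}/\eps$. Using $w/\eps\le1$ and $1/\eps\le 1/w$ on $S_\eps$, this is at most $C\big(|\Lap w||\nabla w|+|\nabla w|\,|\nabla w|^2/w\big)\mathbf 1_{S_\eps}$. Both factors are products of two $L^2$ functions, so the integral tends to $0$ by dominated convergence.

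\emph{Third term.} Multiplying $\Lap u$ by $\Lap w_\eps$ gives four contributions, each bounded on $S_\eps$ by an integrable majorant times $\mathbf 1_{S_\eps}$:
\begin{equation*}
\frac{w|\Lap w|^2}{\eps}\le|\Lap w|^2,\qquad \frac{|\nabla w|^2|\Lap w|}{\eps}\le 2\frac{|\nabla w|^2}{w}|\Lap w|,
\end{equation*}
\begin{equation*}
\frac{w|\Lap w||\nabla w|^2}{\eps^2}\le 2\frac{|\nabla w|^2}{w}|\Lap w|,\qquad \frac{|\nabla w|^4}{\eps^2}\le 4\frac{|\nabla w|^4}{w^2}.
\end{equation*}
The majorants are integrable by $\Lap w\in L^2$ and \eqref{eq:quartic-general}, so dominated convergence again yields the limit $0$.

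Summing the three terms gives \eqref{eq:technical}.
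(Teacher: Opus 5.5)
Your proposal is correct and follows essentially the paper's own argument: the same formula for $\Delta w_\varepsilon$ (the paper obtains it by testing \eqref{eq:Lap} for $w$ against $\frac1\varepsilon\varphi'(w/\varepsilon)\zeta$), the same three-term Leibniz splitting of $\Delta(\chi w_\varepsilon)$, and the same use of $\Delta u=2w\Delta w+2|\nabla w|^2$ together with the quartic bound on the annulus $\{\varepsilon/2\le w\le\varepsilon\}$. The only difference is cosmetic. You apply dominated convergence directly, with integrable majorants multiplied by $\mathbf 1_{S_\varepsilon}$. The paper instead first uses Cauchy--Schwarz to bound each term by tail integrals over the annulus, and those tails vanish by the same dominated convergence.
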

\begin{proof}
	\emph{Step 0.}
	Since $w\in \HtN$ the quantities
	\begin{equation*}
		\sfT:=\int_\Omega(\Lap w)^2\,\d x,\qquad
		\sfG:=\int_{\{w>0\}}\frac{|\nabla w|^4}{w^2}\,\d x,\qquad
		\sfN:=\int_\Omega|\nabla w|^2\,\d x
	\end{equation*}
	are finite (for $\sfG$, recall \eqref{eq:quartic}); moreover
	$\nabla w=0$, $\Lap u=0$ a.e.~on $\{w=0\}=\{u=0\}$ and
	$\Lap u=2w\Lap w+2|\nabla w|^2$ a.e.~in $\Omega$
	(Lemma~\ref{le:sqrt-calculus}).

	\emph{Step 1 (test functions).}
	By the chain rule in $H^1(\Omega)$,
	$\nabla w_\eps=\tfrac1\eps\varphi'(w/\eps)\nabla w$, and all the
	derivatives are supported in the annulus
	$A_\eps:=\{\eps/2\le w\le\eps\}$.
	Testing \eqref{eq:Lap} for $w$ against
	$\tfrac1\eps\varphi'(w/\eps)\zeta$, $\zeta\in H^1(\Omega)\cap
	L^\infty(\Omega)$, we obtain
	\begin{equation*}
		\int_\Omega\nabla w_\eps\cdot\nabla\zeta\,\d x=
		-\int_\Omega\Big(
		\tfrac1\eps\varphi'(w/\eps)\Lap w+
		\tfrac1{\eps^2}\varphi''(w/\eps)|\nabla w|^2
		\Big)\zeta\,\d x ,
	\end{equation*}
	where the right-hand side defines an $L^2$-function: indeed on
	$A_\eps$ we have $w\ge\eps/2$, so that
	$$\int_{A_\eps}|\nabla w|^4\le\eps^2\sfG_\eps
	\quad\text{with}\quad 
	\sfG_\eps:=\int_{A_\eps}|\nabla w|^4/w^2.$$
	By \eqref{eq:Lap} we conclude that $w_\eps\in\HtN$ with
	\begin{equation}
		\label{eq:Lap-veps}
		\Lap w_\eps=
		\tfrac1\eps\varphi'(w/\eps)\Lap w+
		\tfrac1{\eps^2}\varphi''(w/\eps)|\nabla w|^2 .
	\end{equation}
	Setting also
	$\sfT_\eps:=\int_{A_\eps}(\Lap w)^2$ and
	$\sfN_\eps:=\int_{A_\eps}|\nabla w|^2$, the tails
	$\sfT_\eps,\sfG_\eps,\sfN_\eps$ all vanish as $\eps\down0$, by
	dominated convergence, since
	$\mathbf1_{A_\eps}\to0$ pointwise in $\Omega$.

	\emph{Step 2 (passage to the limit).}
	Let $\chi\in\ZO$, so that by \eqref{eq:Z-acts},
	$$\chi w_\eps\in\HtN,
	 \quad
	\Lap(\chi w_\eps)=\chi\Lap w_\eps+2\nabla\chi\cdot\nabla
	w_\eps+w_\eps\Lap\chi,\quad 
	\int_\Omega \Lap u\,\Lap (\chi w_\eps)\,\d x=
	I_\eps+II_\eps+III_\eps$$
	Using $\Lap u=2w\Lap w+2|\nabla w|^2$,
	$w\le \eps$ on $A_\eps$, and the Cauchy--Schwarz inequality
	we have
	\begin{align*}
		|I_\eps|&=
		2\left|\int_\Omega \Big(w\Lap w+|\nabla w|^2\Big)
		\Big(\tfrac1\eps\varphi'(w/\eps)\Lap w+
		\tfrac1{\eps^2}\varphi''(w/\eps)|\nabla w|^2\Big)
		\chi\,\d x\right|
		\\
		&\le C 
		\int_{A_\eps}
		\bigg[\frac w\eps (\Lap w)^2+
		\Big(\frac w\eps+ \frac{w^2}{\eps^2}\Big)
		\, |\Lap w|\,  \frac{|\nabla w|^2}w
		+
		\frac{w^2}{\eps^2} \frac{|\nabla w|^4}{w^2}\bigg]
		\chi\,\d x
		\\
		&\le 
		C\,\|\chi\|_\infty
		\Big[\sfT_\eps+2(\sfT_\eps\sfG_\eps)^{1/2}
		+\sfG_\eps\Big]\\
		|II_\eps|&=\frac 2\eps\left|\int_\Omega
		\Big(w\Lap w+|\nabla w|^2\Big)\,
		\varphi'(w/\eps)\,
		\nabla\chi\cdot
		\nabla w\,\d x\right|\\
		&\le C\int_{A_\eps}
		\Big(\frac w\eps |\Lap w|\,|\nabla w|
		 + \frac1\eps |\nabla w|^3\Big)\,\,
		|\nabla\chi|\,\d x
		\le C\,\|\nabla\chi\|_\infty
		\Big[(\sfT_\eps\sfN_\eps)^{1/2}
		+\eps^{1/2}\sfG_\eps^{3/4}|\Omega|^{1/4}\Big],\\
		III_\eps&=\int_\Omega\Lap u\,w_\eps\,\Lap\chi\,\d x
		\;\longrightarrow\;
		\int_{\{u=0\}}\Lap u\,\Lap\chi\,\d x=0,
	\end{align*}	
	where the last limit 
	follows by dominated convergence, since
	$w_\eps\to\mathbf1_{\{w=0\}}$ pointwise and $\Lap u=0$
	a.e.~on $\{u=0\}=\{w=0\}$. Hence
	$\int_\Omega\Lap u\,\Lap(\chi w_\eps)\,\d x\to0$ as
	$\eps\down0$.
\end{proof}
As an application, we obtain the following description of the 
defect measure.
\begin{theorem}
	\label{thm:vacuum-trace}
	Let $(u,f)\in\graph(\frA)$ with 
	defect measure $\nu$ and associated measure $\mu=
	\Lap^2u-f$,
	and let
	$\mu_0:=\mu\restr{\{\tilde u=0\}}$. 
	If $f\in \MDHtN(\overline\Omega)$ then
	\begin{equation}
		\label{eq:mu-structure-2}
		\mu=\frac{(\Lap u)^2}{\tilde u}\,
		\Leb d\restr{\{\tilde u>0\}}
		\;+\;(-f)\,\restr{\{\tilde u=0\}} .
	\end{equation}
	In particular:
	\begin{enumerate}[\rm (i)]
	\item $\nu=
	\mu_0=(-f)\restr{\{\tilde u=0\}}$;
	\item the restriction of 
	$f$ on the vacuum set $\{\tilde u=0\}$ is nonpositive;
	\item $\mu_0=0$ if and only if $f=0$ on
		$\{\tilde u=0\}$.
	\end{enumerate}
\end{theorem}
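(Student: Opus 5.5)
The strategy is to test the identity $\mu+f=\Lap^2u$ against the cut-off functions $\chi w_\eps$ of Lemma~\ref{le:Lap2structure}, which concentrate on the vacuum set. This shows that the measure $\mu+f$ gives no mass to $\{\tilde u=0\}$. Combined with \eqref{eq:mu-ac-part}, this yields \eqref{eq:mu-structure-2}.

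\emph{Setting.} By Proposition~\ref{prop:defect-variational}, $(u,f)\in\graph(\frA)$ gives $w:=\sqrt u\in\HtN$, so Lemma~\ref{le:Lap2structure} applies. It also gives \eqref{eq:mu-ac-part} and the fact that $\nu=\mu-(\Lap u)^2\tilde u^{-1}\Leb d\restr{\{\tilde u>0\}}$ is concentrated on $\{\tilde u=0\}$, so that $\nu=\mu_0$. Hence it only remains to prove $\mu_0=(-f)\restr{\{\tilde u=0\}}$. I read the hypothesis $f\in\MDHtN(\overline\Omega)$ as saying that $f$ is a finite (signed) Borel measure whose positive and negative parts act on $\HtN$ as in Theorem~\ref{thm:representation}. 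In particular $f$ does not charge $\Cpt$-null sets, and \eqref{eq:wow} holds for $f$ as well as for $\mu$. Then $\sigma:=\mu+f$ is a finite signed measure with $\la\sigma,v\ra=\int_\Omega\Lap u\,\Lap v\,\d x$ for every $v\in\HtN$.

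\emph{Key step.} Fix $\chi\in\ZO$ and let $w_\eps=\varphi(w/\eps)$. By Step~1 of the proof of Lemma~\ref{le:Lap2structure}, $w_\eps\in\HtN$, and \eqref{eq:Z-acts} then gives $\chi w_\eps\in\HtN$. Lemma~\ref{le:Lap2structure} yields
$$\int_{\overline\Omega}\widetilde{(\chi w_\eps)}\,\d\sigma=\int_\Omega\Lap u\,\Lap(\chi w_\eps)\,\d x\longrightarrow0 .$$
To identify the limit of the left-hand side I need the quasi-continuous representatives. First, $\tilde w^2$ is $\Cpt$-quasi-continuous and agrees a.e.\ with $u$, so $\tilde w^2=\tilde u$ q.e.; here I use that two quasi-continuous functions which agree a.e.\ agree q.e. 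Second, $\chi\,\varphi(\tilde w/\eps)$ is a quasi-continuous representative of $\chi w_\eps$, since $\chi$ is continuous and $\varphi$ is smooth. As $\eps\downarrow0$ we have $\varphi(\tilde w/\eps)\to\mathbf 1_{\{\tilde w=0\}}=\mathbf 1_{\{\tilde u=0\}}$ q.e., hence $|\sigma|$-a.e. by \eqref{eq:Cp-null-negligible}. These functions are bounded by $\|\chi\|_\infty$, and $|\sigma|$ is finite, so dominated convergence gives
$$\int_{\{\tilde u=0\}}\chi\,\d\sigma=0\qquad\text{for every }\chi\in\ZO .$$
Since $\ZO$ is dense in $\rmC(\overline\Omega)$ (Lemma~\ref{le:useful-approximation}), it follows that $\sigma\restr{\{\tilde u=0\}}=0$, that is, $\mu_0=(-f)\restr{\{\tilde u=0\}}$. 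Adding \eqref{eq:mu-ac-part} gives \eqref{eq:mu-structure-2}.

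\emph{Consequences.} Item (i) follows from $\nu=\mu_0$. Item (ii) holds because $\mu_0\ge0$. Item (iii) is immediate from (i). The main obstacle is the step identifying the limit: one must check that the pairing of $\sigma$ with $\chi w_\eps$ is given by integrating the quasi-continuous representative $\chi\varphi(\tilde w/\eps)$, and that this representative converges q.e.\ to the indicator of $\{\tilde u=0\}$. The analytic content, namely the vanishing of $\int\Lap u\,\Lap(\chi w_\eps)$, is already supplied by Lemma~\ref{le:Lap2structure}. In dimension $d\le3$ (Remark~\ref{rem:continuous-representative}) this representative issue disappears, because $u$ and $w$ are then continuous.
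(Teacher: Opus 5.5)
Your proposal is correct and follows the paper's proof essentially step by step. You test $\mu+f=\Lap^2u$ against $\chi w_\eps$ and use Lemma~\ref{le:Lap2structure} for the vanishing of $\int_\Omega\Lap u\,\Lap(\chi w_\eps)\,\d x$. You then identify the limit through the quasi-continuous representative $\chi\varphi(\tilde w/\eps)\to\chi\mathbf 1_{\{\tilde u=0\}}$ and dominated convergence, and conclude by the density of $\ZO$ in $\rmC(\overline\Omega)$ together with \eqref{eq:mu-ac-part}. Your explicit reading of $f\in\MDHtN(\overline\Omega)$ as a signed measure for which \eqref{eq:wow} holds is the same assumption the paper makes implicitly when it writes $\la f,\chi w_\eps\ra=\int\chi\tilde w_\eps\,\d f$.
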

\begin{proof}
	Let us set $w=\sqrt u\in \HtN$, $\varphi, w_\eps,\chi$
	as in the previous Lemma and consider the quasi-continuous representatives
	$\tilde w=\sqrt{\tilde u}$, $\tilde w_\eps:=
	\varphi(\tilde w/\eps)$.

	$\chi\,\tilde w_\eps$ is $\Cpt$-quasi continuous and coincides
	$\Leb d$-a.e.~with $\chi w_\eps$: it is therefore the
	quasi-continuous representative of $\chi w_\eps$. As
	$\eps\down0$ it converges pointwise to
	$\chi\mathbf1_{\{\tilde u=0\}}$, so that, by dominated
	convergence with respect to the finite measure $\mu$ and by
	\eqref{eq:wow},
	\begin{equation*}
		\la\mu,\chi w_\eps\ra\to\int_{\{\tilde
		u=0\}}\chi\,\d\mu=
		\int_{\overline\Omega}\chi\,\d\mu_0,
		\qquad
		\la f,\chi w_\eps\ra=\int_\Omega \chi \tilde w_\eps\,\d f\to
		\int_{\{\tilde u=0\}}\chi\,\d f .
	\end{equation*}

	\emph{Conclusion.}
	Testing $f+\mu$ against $\chi w_\eps$ and 
	using (A.2) of Lemma~\ref{le:obvious}
	we get
	\begin{displaymath}
		\la f+\mu,\chi w_\eps\ra=
		\int \chi \tilde w_\eps \,\d(f+\mu)=
		\int_\Omega \Lap u\,\Lap(\chi w_\eps);
	\end{displaymath}
	letting
	$\eps\down0$ we obtain
	$-\int\chi\,\d\mu_0=\int_{\{\tilde u=0\}}\chi\,\d f$ for every
	$\chi\in\ZO$; by the density of $\ZO$ in
	$\rmC(\overline\Omega)$, the finite Borel measures $\mu_0$ and
	$(-f)\restr{\{\tilde u=0\}}$ coincide. Together with
	\eqref{eq:mu-ac-part} this proves \eqref{eq:mu-structure-2};
	(i) and (ii) follow from $\mu_0\ge0$, and (iii) is immediate.
\end{proof}
\subsection{Monotonicity of \texorpdfstring{$\frA$}{A} in the duality pairing
\texorpdfstring{$\HtN$--$\DHtN$}{H2N-H-2N}}
\begin{theorem}
	\label{thm:DLSS-monotone}
	The DLSS operator $\frA$
	is monotone from $\HtN$ to $\DHtN$.
\end{theorem}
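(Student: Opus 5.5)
We must show that $\la f-g,u-v\ra\ge0$ for $(u,f),(v,g)\in\graph(\frA)$. By Proposition~\ref{prop:defect-variational}, $\la f,u\ra=0=\la g,v\ra$, so the claim reduces to
\[
-\la f,v\ra-\la g,u\ra\ge0 .
\]
Write $f=\BLap u-\mu$ and $g=\BLap v-\sigma$ with $\mu,\sigma\in\MDHtN(\overline\Omega)$ positive measures, as in (A.2). Theorem~\ref{thm:representation}, in the form \eqref{eq:wow}, gives
\[
-\la f,v\ra-\la g,u\ra=\int_{\overline\Omega}\tilde v\,\d\mu+\int_{\overline\Omega}\tilde u\,\d\sigma-2\int_\Omega\Lap u\,\Lap v\,\d x .
\]
Since $\tilde v,\tilde u\ge0$ quasi-everywhere (hence $\mu$- and $\sigma$-a.e.), we may discard the singular parts. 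Using \eqref{eq:mu-ac-part}, we keep only the absolutely continuous parts on $\{\tilde u>0\}$ and $\{\tilde v>0\}$, and bound the right-hand side from below by
\[
\int_{\{u>0\}}\frac{(\Lap u)^2}{u}\,v\,\d x+\int_{\{v>0\}}\frac{(\Lap v)^2}{v}\,u\,\d x-2\int_\Omega\Lap u\,\Lap v\,\d x .
\]

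The key step is to show that this quantity is nonnegative, pointwise, mimicking \eqref{eq:monotone}. We split $\Omega$ into four regions:
\begin{itemize}
\item On $\{u>0,v>0\}$ the integrand equals $\big|\Lap u\sqrt{v/u}-\Lap v\sqrt{u/v}\big|^2\ge0$.
\item On $\{u=0\}$ we have $\Lap u=0$ a.e. This is Lemma~\ref{le:sqrt-calculus}(c), applicable since $u\in\WpO$ by Proposition~\ref{prop:defect-variational}; alternatively it follows from the level-set property of $H^2$ functions, since $\nabla u=0$ a.e. there and hence $\partial_i\partial_j u=0$ a.e. there. So the cross term vanishes, and the remaining term $\frac{(\Lap v)^2}{v}u$ is zero or nonnegative.
\item On $\{v=0\}$ the same argument applies with the roles of $u$ and $v$ exchanged.
\end{itemize}
Hence the integrand is a.e. nonnegative and integrable: the cross term lies in $L^1$, and the other terms are nonnegative. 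This gives monotonicity.

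The step requiring the most care is the justification of the pairings with measures. Specifically, $\la\mu,v\ra=\int\tilde v\,\d\mu$ holds with $\tilde v\in L^1(\mu)$ by \eqref{eq:wow}. Splitting $\mu$ as $\frac{(\Lap u)^2}{u}\Leb d\restr{\{\tilde u>0\}}+\nu$ with $\nu\ge0$ and $\tilde v\ge0$ $\nu$-a.e. requires $\tilde v\ge0$ quasi-everywhere. This follows from $v\ge0$ a.e., because the quasi-continuous representative of a nonnegative function is nonnegative q.e. (for instance via \eqref{eq:Cp-representation}, since $\sfH_t v\ge0$), together with \eqref{eq:Cp-null-negligible}.

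The potential obstacle is integrability of $\frac{(\Lap u)^2}{u}v$ separately from the cross term. It is harmless: this term is dominated by $\int\tilde v\,\d\mu<\infty$, so all manipulations are between finite quantities.
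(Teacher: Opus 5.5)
Your proposal is correct and follows essentially the same route as the paper: you reduce via $\la f,u\ra=\la g,v\ra=0$ to the inequality $\int\tilde v\,\d\mu+\int\tilde u\,\d\sigma\ge 2\int_\Omega\Lap u\,\Lap v\,\d x$, discard the contributions of the measures off the positivity sets, and complete the square on $\{u>0,v>0\}$ using that $\Lap u=0$ a.e.\ on $\{u=0\}$ (and likewise for $v$). The differences are cosmetic: you invoke \eqref{eq:mu-ac-part} where the paper uses $\tilde u\,\mu=(\Lap u)^2\Leb d$ directly, and you explicitly justify $\tilde v\ge0$ quasi-everywhere (hence $\mu$-a.e.), a point the paper takes for granted.
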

\begin{proof}
	We want to reproduce the
	simpler argument of
	\eqref{eq:monotone} in this
	more general setting.
	
	Let $(u_i,f_i)$, $i=1,2$, be elements
	in the graph of $\frA$
	in $\HtN\times \DHtN$,
	let $\tilde u_i$ be $\Cpt$-quasi-continuous representatives of $u_i$, and
	let $\mu_i\in \MDHtN(\overline\Omega)$ be the corresponding 
	measures as in Definition~\ref{def:frA-strong}, so that
	$f_i=\BLap u_i-\mu_i$.
	We want to prove that 
	\begin{equation}
		\label{eq:DLSS-monotone}
	\la f_1-f_2,u_1-u_2\ra \ge 0.
	\end{equation}
	We have
	\begin{align*}
		\duality{f_i}{u_i}=
		\la \BLap u_i-\mu_i,u_i\ra=
		\int_\Omega (\Lap u_i)^2\,\d x-
		\int_{\overline\Omega} \tilde u_i\,\d\mu_i=0
	\end{align*}
	so that \eqref{eq:DLSS-monotone}
	is equivalent to show
	that
	\begin{equation}
		\label{eq:DLSS-monotone2}
		\duality 
		{ f_1}{u_2}+
		\duality{f_2}{u_1}\le  0.
	\end{equation}
	We have 
	\begin{align*}
		\duality{f_1}{u_2}&=
		\la \BLap u_1-\mu_1,u_2\ra=
		\int_\Omega \Lap u_1\,\Lap u_2\,\d x-
		\int_{\overline\Omega} \tilde u_2\,\d\mu_1,\\
		\duality {f_2}{u_1}&=
		\la \BLap u_2-\mu_2,u_1\ra=
		\int_\Omega \Lap u_2\,\Lap u_1\,\d x-
		\int_{\overline\Omega} \tilde u_1\,\d\mu_2
	\end{align*}
	and therefore 
	\eqref{eq:DLSS-monotone2} amounts to prove
	\begin{equation}
		\label{eq:DLSS-monotone3}
		\int_{\overline\Omega} \tilde u_2\,\d\mu_1+
		\int_{\overline\Omega} \tilde u_1\,\d\mu_2\ge 2\int_\Omega \Lap \tilde u_1\,\Lap \tilde u_2\,\d x.
	\end{equation}
	Let $\Omega_i:=\{x\in \Omega:
	\tilde u_i>0\}$. 
	Since $\tilde u_i\in \HtN$ we know that 
	$\Lap \tilde u_i=0$ $\Leb d$-a.e.~in $\Omega\setminus \Omega_i$ so that 
	\begin{displaymath}
		2\int_\Omega \Lap 
		\tilde u_1\,\Lap \tilde u_2\,\d x
		=
		\int_{\Omega_1\cap \Omega_2} 2\Lap 
		\tilde u_1\,\Lap \tilde u_2\,\d x
	\end{displaymath}
	On the other hand,
	since $\tilde u_i$ and $\mu_i$ are nonnegative, we have
	\begin{align*}
		\int_{\overline\Omega} \tilde u_2\,\d\mu_1
		&\ge 
		\int_{\Omega_1} \tilde u_2\,\d\mu_1=
		\int_{\Omega_1} 
		\frac{\tilde u_2}{\tilde u_1}\,\tilde u_1\,\d\mu_1
		=
		\int_{\Omega_1} 
		\frac{\tilde u_2}{\tilde u_1}\,
		(\Lap u_1)^2\,\d x
		\ge 
		\int_{\Omega_1\cap \Omega_2} 
		\frac{\tilde u_2}{\tilde u_1}\,
		(\Lap u_1)^2\,\d x\\
		\int_{\overline\Omega} \tilde u_1\,\d\mu_2
		&\ge 
		\int_{\Omega_2} \tilde u_1\,\d\mu_2=
		\int_{\Omega_2} 
		\frac{\tilde u_1}{\tilde u_2}\,\tilde u_2\,\d\mu_2
		=
		\int_{\Omega_2} 
		\frac{\tilde u_1}{\tilde u_2}\,
		(\Lap u_2)^2\,\d x
		\ge 
		\int_{\Omega_1\cap \Omega_2} 
		\frac{\tilde u_1}{\tilde u_2}\,
		(\Lap u_2)^2\,\d x
	\end{align*}
	so that 
	\begin{align*}
		\int_{\overline\Omega} \tilde u_2\,\d\mu_1
		+\int_{\overline\Omega} \tilde u_1\,\d\mu_2
		&\ge 
		\int_{\Omega_1\cap\Omega_2}
		\bigg(
		\frac{\tilde u_2}{\tilde u_1}\,
		(\Lap u_1)^2
		+\frac{\tilde u_1}{\tilde u_2}\,
		(\Lap u_2)^2
		\bigg)\,\d x
		\\&=
		\int_{\Omega_1\cap \Omega_2} 2\Lap
		\tilde u_1\,\Lap \tilde u_2\,\d x+
		\int_{\Omega_1\cap\Omega_2}
		\bigg(
		\sqrt{\frac{\tilde u_2}{\tilde u_1}}\,
		\Lap u_1
		{-}\sqrt{\frac{\tilde u_1}{\tilde u_2}}\,
		\Lap u_2
		\bigg)^2\,\d x
		\\&\ge
		\int_{\Omega_1\cap \Omega_2} 2\Lap
		\tilde u_1\,\Lap \tilde u_2\,\d x. 
	\end{align*}
\end{proof}
This subsection culminates in the following result, whose proof
requires the machinery of {Section~\ref{sec:maximality}} and will be completed there.
\begin{theorem}[$\frA$ is maximal monotone]
	\label{thm:frA-maximal}
	The DLSS operator $\frA$ is maximal monotone from $\HtN$ to
	$\DHtN$: for every $\lambda>0$ and every $f\in\DHtN$ there
	exists a (unique) solution $v\in\dom(\frA)$ of
	\begin{equation}
		\label{eq:frA-resolvent}
		\lambda\rmJ v+\frA v\ni f,
	\end{equation}
	where $\rmJ$ is the duality map \eqref{eq:J-explicit}
	(the equivalence of the two formulations being
	Theorem~\ref{thm:Minty}).
\end{theorem}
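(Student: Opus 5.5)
By Theorem~\ref{thm:DLSS-monotone}, $\frA$ is monotone. By Minty's characterization, Theorem~\ref{thm:Minty}, it is then enough to solve $\lambda\rmJ v+\frA v\ni f$ for every $f\in\DHtN$ and $\lambda>0$. The resolvent estimate \eqref{eq:resolvent-estimate-W} gives uniqueness. Since $\frA$ has no potential structure, I will not minimize; I will solve a regularized variational inequality and remove the regularization. By Corollary~\ref{cor:easy2} and Proposition~\ref{le:DLSS-equivalence}, $v\in\dom(\frA)$ with $f-\lambda\rmJ v\in\frA v$ is characterized by three conditions: $v\ge0$; $\BLap v-\sfh(v,\Lap v)\ge f-\lambda\rmJ v$ in $\DHtN$; and $\la f-\lambda\rmJ v,v\ra\ge0$.

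\emph{Step 1 (regularized problem).} For $\eps>0$ I replace the singular term by a bounded, monotone regularization. Concretely, I take $\sfh_\eps(a,b)=b^2/(a^+ +\eps)$, or a truncated version convex in $(a,b)$, and the associated penalization $\eps\Phi$ of negativity. I then solve the following inequality on the closed convex cone $K=\{v\in\HtN:v\ge0\}$: find $v_\eps\in K$ with
\[
\lambda\la\rmJ v_\eps,z-v_\eps\ra+\int_\Omega\Lap v_\eps\,\Lap(z-v_\eps)\,\d x-\int_\Omega\sfh_\eps(v_\eps,\Lap v_\eps)(z-v_\eps)\,\d x\ge\la f,z-v_\eps\ra
\]
for all $z\in K$. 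I plan to obtain this from the general existence result for pseudomonotone variational inequalities (Theorem~\ref{thm:BC-VI}), as in Theorem~\ref{thm:existence}. The principal part $\lambda\rmJ+\BLap$ is coercive on $\HtN$. The lower order term is compact-type: it is monotone in the sense of \eqref{eq:monotone} for the regularized form and has controlled growth. Coercivity comes from testing with $z=0$ and $z=2v_\eps$, which gives $\la f-\lambda\rmJ v_\eps,v_\eps\ra=\fra_\eps(v_\eps,v_\eps)$, essentially $0$. This yields the bound $\lambda\Htnrm{v_\eps}\le\DHtnrm f$.

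\emph{Step 2 (a priori bounds and limit, as in Theorem~\ref{thm:eta}).} From the uniform $\HtN$ bound, I extract $v_\eps\rightharpoonup v$ weakly in $\HtN$, with $v\ge0$. Testing with $z=v_\eps+\zeta$ for $\zeta\in\PZO$ gives $\BLap v_\eps-\sfh_\eps(v_\eps,\Lap v_\eps)\ge f-\lambda\rmJ v_\eps$. Choosing $\zeta\equiv1$ then gives $\int\sfh_\eps(v_\eps,\Lap v_\eps)\le C$. Lemma~\ref{le:relaxedD} (weak lower semicontinuity and convexity of $\sfD^*$) together with Theorem~\ref{thm:Hessian} gives $\sfD(v)<\infty$ and $\sqrt v\in\HtN$. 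To pass to the limit in the one-sided inequality, it suffices to test with nonnegative $\zeta\in\PZO$. The term $\int\sfh_\eps(v_\eps,\Lap v_\eps)\zeta$ is then lower semicontinuous under weak convergence, since $\sfh$ is jointly convex and l.s.c.\ and $\sfh_\eps\uparrow\sfh$. This yields $\BLap v-\sfh(v,\Lap v)\ge f-\lambda\rmJ v$, which is (A.2$'$) with $\sfh$ absorbed.

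\emph{Step 3 (the mass identity --- the main obstacle).} It remains to show $\la f-\lambda\rmJ v,v\ra\ge0$. I will use $\la f-\lambda\rmJ v_\eps,v_\eps\ra=0$ from Step 1. Passing to the limit requires $\la\rmJ v_\eps,v_\eps\ra\to\la\rmJ v,v\ra$, that is, \emph{strong} convergence in $\HtN$. Weak lower semicontinuity gives the inequality in the wrong direction:
\[
\la f-\lambda\rmJ v,v\ra\ge\limsup_{\eps\downarrow0}\la f-\lambda\rmJ v_\eps,v_\eps\ra=0.
\]
However, $\la\rmJ v,v\ra\le\liminf\la\rmJ v_\eps,v_\eps\ra$ means $-\lambda\la\rmJ v,v\ra\ge\limsup(-\lambda\la\rmJ v_\eps,v_\eps\ra)$. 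Since $\la f,v_\eps\ra\to\la f,v\ra$, this is in fact the right direction. So the desired inequality follows directly, and Corollary~\ref{cor:easy2} concludes that $f-\lambda\rmJ v\in\frA v$.

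I therefore expect the genuine difficulty to lie in Step 1. There one needs a regularization that keeps the exact identity $\fra_\eps(v,v)=0$, or at least an inequality of the right sign, while remaining pseudomonotone on $\HtN$. The natural first attempt is the perspective regularization $b^2/(a+\eps)$ on the cone $K$. If the cross term destroys pseudomonotonicity, I would fall back to a Galerkin or heat-semigroup smoothing $\sfH_\tau$ inside $\sfh$, controlled by Corollary~\ref{cor:main-ineq}.
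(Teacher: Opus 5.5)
\emph{Verdict: Step~1 has a genuine gap in dimension $d\ge4$; the rest of the proposal is sound and matches the paper's argument.}

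Your architecture is the paper's: a regularized variational inequality solved by Theorem~\ref{thm:BC-VI}, an $\eps$-uniform $\HtN$ bound coming from $\lambda\rmJ$ alone, one-sided tests against nonnegative $\zeta$, lower semicontinuity of the perspective term, and then Corollary~\ref{cor:easy2}. Your Step~3 is correct: weak lower semicontinuity of $\la\rmJ v_\eps,v_\eps\ra$ gives $\la f-\lambda\rmJ v,v\ra\ge0$ in the right direction.

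Your worry about Step~1 is misplaced. On $K$ the shifted form satisfies the same completing-the-square identity,
\[
\fra_\eps(u,u-z)+\fra_\eps(z,z-u)=\int_\Omega\Big|\Lap u\sqrt{\tfrac{z+\eps}{u+\eps}}-\Lap z\sqrt{\tfrac{u+\eps}{z+\eps}}\Big|^2\,\d x\ge0,
\]
which is all that (VI.1) needs. Also $\fra_\eps(v,v)=\eps\int_\Omega(\Lap v)^2/(v+\eps)\,\d x\ge0$ has the right sign for Step~3. The quadratic term is not ``lower order'' or compact; Theorem~\ref{thm:BC-VI} handles it through monotonicity alone.

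The actual gap is that your Step~1 problem is not well defined on the cone $K\subset\HtN$ once $d\ge4$.
\begin{itemize}
\item For $u\in\HtN$ you only know $\sfh_\eps(u,\Lap u)\in L^1(\Omega)$, while $\HtN\not\subset L^\infty(\Omega)$ for $d\ge4$.
\item Hence $\int_\Omega\sfh_\eps(u,\Lap u)\,z\,\d x$ can be $+\infty$ for $u,z\in K$. Your $\mathsf f$ is then not a real-valued function on $K\times K$, and the upper semicontinuity in $z$ required by (VI.2) fails.
\end{itemize}

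Since the theorem holds in every dimension, you need the paper's device.
\begin{itemize}
\item Pose the approximate problem in a reflexive space continuously embedded in $\rmC(\overline\Omega)$, namely $\VO=W^{1,p}(\Omega)\cap\HtN$ with $p>d$.
\item Add $\frac\eps p\|\nabla v\|_{L^p}^p$ to the convex term so that (VI.4) holds in $\VO$.
\end{itemize}
The paper also uses the barrier $\int u^{-\theta}$ instead of your shift; it keeps $\min u_\eps>0$ and the exact identity $\fra(u,u)=0$.

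With this change your scheme closes:
\begin{itemize}
\item Testing with $z=0$ gives $\eps\|\nabla v_\eps\|_{L^p}^p\le C$. So the extra term $\eps\int_\Omega|\nabla v_\eps|^{p-2}\nabla v_\eps\cdot\nabla\zeta\,\d x$ vanishes as in \eqref{eq:disappearing}.
\item Testing with $z=(1\pm t)v_\eps$ gives $\la f-\lambda\rmJ v_\eps,v_\eps\ra=\eps\|\nabla v_\eps\|_{L^p}^p+\fra_\eps(v_\eps,v_\eps)\ge0$, which is what Step~3 uses.
\end{itemize}
For $d\le3$, where $\HtN\subset\rmC(\overline\Omega)$, your Step~1 works as written, with the indicator of $K$ as the convex term.

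A minor point in Step~2: Lemma~\ref{le:relaxedD} does not apply as stated. The bound on $\int\sfh_\eps(v_\eps,\Lap v_\eps)$ does not control $\sfD^*(v_\eps)$, because $\sfh_\eps\le\sfh$. Finiteness of $\sfD(v)$ follows instead from the monotone-in-$\eps$ Ioffe argument you already use for the tested term, taken with $\zeta\equiv1$.
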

Starting from Theorem~\ref{thm:frA-maximal}, we can give other
equivalent characterizations of $\frA$.
We first state a useful property.
\begin{lemma}[Commutation inequality]
	\label{le:commutationfrA}
	If $u\in \dom(\frA)$ then for every $\tau>0$
	$\sfH_\tau u\in \dom(\frA)$ and
	\begin{equation}
		\label{eq:commutation}
		\la\frA^\circ(u),\sfH_\tau v\ra 
		\le \la \frA^\circ(\sfH_\tau u),v\ra  
		\quad
		\text{for every }v\in \HtN,\ v\ge0.
	\end{equation}
\end{lemma}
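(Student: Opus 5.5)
We prove the two claims together by exhibiting, for $u_\tau:=\sfH_\tau u$, a measure $\mu_\tau$ for which the pair $(u_\tau,\BLap u_\tau-\mu_\tau)$ fulfils the criterion of Corollary~\ref{cor:easy2}; the inequality \eqref{eq:commutation} is then a consequence of Corollary~\ref{cor:main-ineq}.

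\emph{Step 1: the data for $u_\tau$.} Let $u\in\dom(\frA)$ and set $u_\tau:=\sfH_\tau u$. If $u\equiv0$ there is nothing to prove, so we assume $u\not\equiv0$. By Section~\ref{subsec:Heat}, $u_\tau\in\HtN$ is Lipschitz, has bounded Laplacian, and satisfies $\min_{\overline\Omega}u_\tau>0$. Hence $u_\tau\in\PXO$, the function $\sfh(u_\tau,\Lap u_\tau)$ is bounded, and in particular it belongs to $L^2(\Omega)\subset\DHtN$. It follows that $u_\tau\in\dom(\frA)$: $\sfD(u_\tau)<\infty$, and the singular term lies in $\DHtN$. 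Moreover the vacuum set of $u_\tau$ is empty, so $\frA(u_\tau)=\{\frA^\circ(u_\tau)\}$ with
\[
\frA^\circ(u_\tau)=\BLap u_\tau-\sfh(u_\tau,\Lap u_\tau).
\]
The first claim is therefore essentially free.

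\emph{Step 2: rewriting both sides.} Let $v\in\HtN$ with $v\ge0$. Since $\sfH_\tau$ is selfadjoint and commutes with $\Lap$, we have $\la\BLap u,\sfH_\tau v\ra=\la\BLap u_\tau,v\ra$. Indeed, $\int\Lap u\,\Lap\sfH_\tau v=\int\Lap\sfH_\tau u\,\Lap v$, using $\sfH_\tau\Lap v=\Lap\sfH_\tau v$ in $L^2$. Subtracting this identity, \eqref{eq:commutation} becomes equivalent to
\[
\int_\Omega\sfh(u,\Lap u)\,\sfH_\tau v\,\d x\;\ge\;\int_\Omega\sfh(u_\tau,\Lap u_\tau)\,v\,\d x .
\]
Here the left-hand side is the pairing of the measure $\frac{(\Lap u)^2}{u}\Leb d\restr{\{u>0\}}\in\MDHtN(\overline\Omega)$ with $\sfH_\tau v$. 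This pairing is a genuine Lebesgue integral by \eqref{eq:wow}, since $\sfH_\tau v$ is continuous.

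\emph{Step 3: the commutation inequality.} The displayed inequality is exactly Corollary~\ref{cor:main-ineq}, applied to $u\in D_{L1}(\Lap)$ and $w=v$, except that the corollary requires $w\in L^\infty_+(\Omega)$. We extend it by truncation: apply the corollary to $v_n:=\min(v,n)$ and let $n\uparrow\infty$, using monotone convergence on both sides. On the left, $\sfH_\tau v_n\uparrow\sfH_\tau v$ because $\sfH_\tau$ is positivity preserving. We expect this truncation to be the only delicate point. The left-hand side must be finite, which holds because $\sfh(u,\Lap u)\in L^1$ and $\sfH_\tau v\in L^\infty$ for $\tau>0$ (ultracontractivity). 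In addition, on the left the quasi-continuous representative of $\sfH_\tau v$ must agree with its continuous version, so that \eqref{eq:wow} indeed identifies the duality pairing with the Lebesgue integral. Combining Steps 2 and 3 gives \eqref{eq:commutation}.
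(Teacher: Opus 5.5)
Your proof is correct and follows the paper's argument. The $\BLap$ part is handled by self-adjointness of $\sfH_\tau$ and its commutation with $\Lap$, and the perspective term by Corollary~\ref{cor:main-ineq}. The truncation $v_n=\min(v,n)$, which you use to meet the hypothesis $w\in L^\infty_+(\Omega)$ of that corollary (this matters for $d\ge4$, where $v\in\HtN$ need not be bounded), is a detail the paper's proof passes over silently.
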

\begin{proof}
	Since $\sfH_\tau u\in \PZO$ with $\Lap \sfH_\tau u\in \ZO$, it is immediate to check that 
	$\sfH_\tau u\in \dom(\frA)$.
	Using \eqref{eq:sfh-crucial} we compute
	\begin{align*}
		\la \frA^\circ(\sfH_\tau u),v\ra
		&=
		\int_\Omega \Lap \sfH_\tau u\,
		\Lap v\,\d x-
		\int_\Omega 
		\sfh(\sfH_\tau u, \Lap \sfH_\tau u)\,v\d x
		\\&=
		\int_\Omega \Lap u\,
		\Lap \sfH_\tau v\,\d x-
		\int_\Omega 
		\sfh(\sfH_\tau u,\sfH_\tau \Lap u)\,v\d x
		\\&\ge 
		\int_\Omega \Lap u\,
		\Lap \sfH_\tau v\,\d x-
		\int_\Omega 
		\sfh(u, \Lap u)\,\sfH_\tau v\d x
		= \la\frA^\circ(u),\sfH_\tau v\ra .
	\end{align*}
\end{proof}
\begin{proposition}
    \label{prop:ineq}
    A pair $(u,f)\in \HtN\times \DHtN$
    with $u\ge 0$
    belongs to the graph of $\frA$ if and only if 
    \begin{equation}
        \label{eq:cond3}
        \int_\Omega 
        \Lap u\,\Lap v\,\d x
        \le 
        \int_\Omega \frac{(\Lap v)^2}v\,u\,\d x+
        \langle f,u-v\rangle
        \quad 
        \text{for every $v\in \PZO$}
    \end{equation}
    or
    \begin{align}
    \label{eq:cond1}
        \langle f,u\rangle&=0,\\
        \label{eq:cond2}
        \int_\Omega 
        \Lap u\,\Lap v\,\d x
        &\le 
        \int_\Omega \frac{(\Lap v)^2}v\,u\,\d x-
        \langle f,v\rangle
        \quad 
        \text{for every $v\in \PZO$}.
    \end{align}
\end{proposition}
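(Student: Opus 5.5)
The plan is to prove necessity by testing monotonicity against the smooth core, and sufficiency by a Minty-type argument based on the maximality of $\frA$ (Theorem~\ref{thm:frA-maximal}). For the algebraic link between the two formulations, note that \eqref{eq:cond1}--\eqref{eq:cond2} trivially imply \eqref{eq:cond3}. Conversely, if \eqref{eq:cond3} holds, replace $v$ by $\lambda v$ with $\lambda>0$; this is allowed because $\PZO$ is a cone. Both sides of \eqref{eq:cond3} are $1$-homogeneous in $v$ except the term $\la f,u\ra$. Dividing by $\lambda$ and letting $\lambda\to\infty$ gives \eqref{eq:cond2}, while $\lambda\downarrow0$ gives $\la f,u\ra\ge0$. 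So it is enough to prove that \eqref{eq:cond3} characterizes $\graph(\frA)$. The missing inequality $\la f,u\ra\le 0$, and hence \eqref{eq:cond1}, will then follow from membership in the graph and \eqref{eq:defect-global}.

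\emph{Necessity.} Let $(u,f)\in\graph(\frA)$ and $v\in\PZO$. Since $v$ is uniformly positive and $\Lap v\in L^\infty(\Omega)$, we have $\sfh(v,\Lap v)\in L^\infty(\Omega)\subset\DHtN$. The vacuum set of $v$ is empty, so $v\in\dom(\frA)$ with $\frA v=\{\frA^\circ v\}$. Monotonicity (Theorem~\ref{thm:DLSS-monotone}) gives $\la f-\frA^\circ v,u-v\ra\ge0$. Using $\la\frA^\circ v,v\ra=0$, we compute
\[
\la\frA^\circ v,u-v\ra=\int_\Omega\Lap v\,\Lap u\,\d x-\int_\Omega\frac{(\Lap v)^2}{v}\,u\,\d x ,
\]
and inserting this into the monotonicity inequality gives exactly \eqref{eq:cond3}.

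\emph{Sufficiency.} Assume $u\ge0$ and \eqref{eq:cond3}. By the maximality of $\frA$, it suffices to show that $\la f-g,u-z\ra\ge0$ for every $(z,g)\in\graph(\frA)$. Write $g=\BLap z-\mu_z$, where $\mu_z=\sfh(z,\Lap z)\Leb d+\nu$ and $\nu\ge0$ is concentrated on $\{\tilde z=0\}$. Apply \eqref{eq:cond3} with $v_{\tau,\delta}:=\sfH_\tau z+\delta\in\PZO$, where $\tau,\delta>0$; this uses Section~\ref{subsec:Heat}, and if $z\equiv0$ we simply take $v=\delta$. Since $\sfh(a+\delta,b)\le\sfh(a,b)$ and $u\ge0$, Corollary~\ref{cor:main-ineq} with the test function $u$ gives
\[
\int_\Omega\frac{(\Lap v_{\tau,\delta})^2}{v_{\tau,\delta}}\,u\,\d x\le\int_\Omega\sfh(\sfH_\tau z,\Lap\sfH_\tau z)\,u\,\d x\le\int_\Omega\sfh(z,\Lap z)\,\sfH_\tau u\,\d x .
\]
Strictly speaking, Corollary~\ref{cor:main-ineq} asks for a bounded test function, so I would apply it to $u\wedge k$ and let $k\to\infty$ by monotone convergence. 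Now let $\delta\downarrow0$ and $\tau\downarrow0$. The left-hand side of \eqref{eq:cond3} converges to $\int_\Omega\Lap u\,\Lap z\,\d x$, because $\sfH_\tau z\to z$ in $\HtN$; and $\la f,u-v_{\tau,\delta}\ra\to\la f,u-z\ra$.

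The delicate point, which I expect to be the main obstacle, is the last integral. Since $\sfh(z,\Lap z)\Leb d\in\MDHtN(\overline\Omega)$ by Proposition~\ref{prop:defect-variational} and $\sfH_\tau u\to u$ in $\HtN$, \eqref{eq:wow} gives
\[
\int_\Omega\sfh(z,\Lap z)\,\sfH_\tau u\,\d x\to\int_\Omega\sfh(z,\Lap z)\,\tilde u\,\d x\le\int_{\overline\Omega}\tilde u\,\d\mu_z .
\]
In the limit we therefore obtain $\int_\Omega\Lap u\,\Lap z\,\d x-\int_{\overline\Omega}\tilde u\,\d\mu_z\le\la f,u-z\ra$, that is, $\la g,u\ra\le\la f,u-z\ra$. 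Combined with $\la g,z\ra=0$ from \eqref{eq:defect-global}, this is $\la f-g,u-z\ra\ge0$. Maximality then yields $(u,f)\in\graph(\frA)$, and \eqref{eq:cond1} follows from \eqref{eq:defect-global}.
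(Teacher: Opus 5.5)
Your proof is correct and follows the paper's argument. Necessity comes from monotonicity against $(v,\frA^\circ v)$ with $v\in\PZO$. Sufficiency comes from heat-regularizing an arbitrary $(z,g)\in\graph(\frA)$ and using the Jensen-type inequality of Corollary~\ref{cor:main-ineq} (which is what Lemma~\ref{le:commutationfrA} packages); one then passes to the limit in the $\HtN$--$\DHtN$ duality, drops the nonnegative defect, and invokes the maximality of $\frA$. Your extra care is welcome: the shift $\sfH_\tau z+\delta$ also covers $z\equiv0$, where $\sfH_\tau z\notin\PZO$; the truncation $u\wedge k$ handles the boundedness requirement on the test function; and the scaling $v\mapsto\lambda v$ derives \eqref{eq:cond2} and $\la f,u\ra\ge0$ directly from \eqref{eq:cond3}.
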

Notice that 
\eqref{eq:cond3} is equivalent to
\begin{equation}
	\label{eq:benilan-H2}
		\la \frA^\circ v,u\ra \le \la f,u-v\ra 
		\quad
		\text{for every }v\in \PZO.
\end{equation}
\begin{proof}
    It is clear that if $f\in \frA u$
    then $\langle f,u\rangle=0$ (i.e.~\eqref{eq:cond1}
    holds) and 
    for every $v\in \PZO$
    \begin{align*}
        0&\le 
        \langle f-\frA v,u-v\rangle
        =
        -\langle \frA v,u\rangle -\langle f,v\rangle\\
        &=
        -\int_\Omega \Lap u\,\Lap v\,\d x+
        \int_\Omega \frac{(\Lap v)^2}v\,u\,\d x-
        \langle f,v\rangle
    \end{align*}
    which shows \eqref{eq:cond2}.

    It is also immediate to check that 
    \eqref{eq:cond1} and \eqref{eq:cond2} 
    imply \eqref{eq:cond3}.

    Let us now suppose that \eqref{eq:cond3} holds, i.e.
    We fix $(v,g)\in \graph(\frA)$ 
	and we write $g= \frA^\circ v-\nu$, with
	$\nu$ positive measure concentrated on $\{\tilde v=0\}$.

    Since for every $\tau>0$ the function $v_{\tau}:=\sfH_\tau v$ belongs to
    $\PZO$,  \eqref{eq:cond3} and \eqref{eq:commutation}
	yield
    \begin{align*}
        \la \frA^\circ(v),\sfH_\tau u\ra
		\le
		\la \frA^\circ(v_\tau), u\ra
		\le
        \langle f,u-v_\tau\rangle.
    \end{align*}
    We can now pass to the limit as $\tau\downarrow0$
    observing that $\sfH_\tau u\to u$
	and $\sfH_\tau v\to v$ 
    in $\HtN$: we obtain
    \begin{align*}
        \la \frA^\circ(v),u\ra\le
		\langle f,u-v\rangle.
    \end{align*}
	Since $g= \frA^\circ v-\nu\le \frA^\circ v$, 
we obtain
\begin{displaymath}
    \langle g,u\rangle \le \langle f,u-v\rangle.
\end{displaymath}
Since $\langle g,v\rangle =0$, we conclude that 
\begin{displaymath}
    \langle f-g,u-v\rangle\ge0\quad\text{for every 
    $(v,g)\in \frA$}
\end{displaymath}
and therefore $f\in \frA u$, since $\frA$ is
maximal monotone {(Theorem~\ref{thm:frA-maximal})}.
\end{proof}
We can easily deduce that
$\frA$ is the unique maximal monotone extension
(in the duality between $\HtN$ and $\DHtN$)
of the core
\begin{equation}
    \frA^\infty:=
    \Big\{(v,g):
    v\in \mathrm C^\infty(\Omega)\cap \ZO,\ 
    \Lap v\in \ZO,\ \inf_\Omega v>0,\ 
    g=\Lap^2 v-\frac{(\Lap v)^2}v\Big\}.
\end{equation}
\begin{theorem}
    \label{thm:test}
    If the pair $(u,f)\in \HtN\times \DHtN$
    satisfies
    \begin{equation}
    \label{eq:test}
    \langle f-g,u-v\rangle\ge0\quad \text{for every }
    (v,g)\in \frA^\infty,
    \end{equation}
    then $f\in \frA u$.
    In particular,
    $\frA$ is the unique maximal monotone extension
    of $\frA^\infty$.
\end{theorem}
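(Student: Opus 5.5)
The strategy is to reduce \eqref{eq:test} to the criterion of Proposition~\ref{prop:ineq}, i.e.\ to \eqref{eq:cond3} (equivalently \eqref{eq:benilan-H2}), tested only against $v\in\PZO$. Note that $\frA^\infty\subset\graph(\frA)$: for $(v,g)\in\frA^\infty$ we have $v$ uniformly positive with $\Lap v\in\ZO$, so the vacuum is empty and $g=\frA^\circ v$. Moreover $\la g,v\ra=\fra(v,v)=0$. Hence \eqref{eq:test} reads
\[
\la f,u-v\ra\ge\la \frA^\circ v,u\ra=\int_\Omega\Lap v\,\Lap u\,\d x-\int_\Omega\frac{(\Lap v)^2}{v}\,u\,\d x,
\]
which is exactly \eqref{eq:cond3}, but only for $v$ in the smaller class underlying $\frA^\infty$. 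Before invoking Proposition~\ref{prop:ineq} we must also check $u\ge0$, and we must pass from this smaller class to all of $\PZO$.

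\emph{Step 1 (positivity).} I would test with $v=c+\eta$, where $c>0$ is a large constant and $\eta\ge0$ is smooth with $\Lap\eta\in\ZO$. Then $\la\frA^\circ(c+\eta),u\ra$ is bounded by $\|u\|_{\HtN}$ times quantities involving only $\eta$, while $\la f,u-c-\eta\ra$ is affine in $c$. This alone does not isolate $u^-$. A better route is to avoid a separate positivity step: the proof of Proposition~\ref{prop:ineq} only needs \eqref{eq:cond3} to feed the commutation argument. If positivity of $u$ is genuinely used there, namely in \eqref{eq:commutation} with test function $u\ge0$, I would instead first establish $u\ge0$ by a scaling argument. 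Replace $v$ by $tv$ with $t\to+\infty$. Since $\frA^\circ(tv)=t\frA^\circ v$, dividing by $t$ gives $\la f,-v\ra\ge\la\frA^\circ v,u\ra$. For $v=1+s\eta$ this is linear in $s$, and expanding as $s\to\infty$ along suitable $\eta$ concentrating where $u<0$ should force $\int u\,\cdot\,(\Lap\eta)^2/(1+s\eta)$-type terms to have a sign. This is the step I expect to be the main obstacle, and if the scaling does not close I would derive positivity from the maximal monotone structure instead. Concretely, the set of $(u,f)$ satisfying \eqref{eq:test} for all of $\frA^\infty$ is monotonically related to $\frA$ after Step 2, so maximality of $\frA$ would then give $f\in\frA u$, hence $u\ge0$, for free.

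\emph{Step 2 (density of the core).} Given $v\in\PZO$, set $v_\tau:=\sfH_\tau v$. For $\tau>0$ this is smooth in the interior by parabolic regularity, uniformly positive, lies in $\ZO$, and has $\Lap v_\tau=\sfH_{\tau/2}\Lap\sfH_{\tau/2}v\in\ZO$; thus $(v_\tau,\frA^\circ v_\tau)\in\frA^\infty$. As $\tau\downarrow0$ we have $v_\tau\to v$ in $\HtN$, and $(\Lap v_\tau)^2/v_\tau\to(\Lap v)^2/v$ in $L^1$. The $L^1$ convergence follows from the uniform lower bound $v_\tau\ge\inf v>0$ (Markov property) and from $\Lap v_\tau\to\Lap v$ in $L^2$. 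Since $u\in\HtN\subset L^2$ and the ratio is bounded in $L^\infty$ only after care, I would use $L^2$ convergence of $\Lap v_\tau$ together with $\|\Lap v_\tau\|_\infty\le\|\Lap v\|_\infty$ to get convergence of $\int(\Lap v_\tau)^2u/v_\tau$ by dominated convergence. Passing to the limit yields \eqref{eq:cond3} for every $v\in\PZO$.

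\emph{Step 3.} Apply Proposition~\ref{prop:ineq} to conclude $f\in\frA u$. Uniqueness of the maximal monotone extension follows at once: any maximal monotone $\frB\supset\frA^\infty$ has every $(u,f)\in\graph(\frB)$ satisfying \eqref{eq:test}, so $\graph(\frB)\subset\graph(\frA)$, and maximality of $\frB$ forces equality.
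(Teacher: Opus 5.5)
Your Steps 2 and 3 are exactly the paper's proof. Using $\langle g,v\rangle=0$, the hypothesis \eqref{eq:test} becomes \eqref{eq:cond4}. The heat-flow regularization $v_\tau=\sfH_\tau v$ upgrades it to \eqref{eq:cond3} for every $v\in\PZO$, and Proposition~\ref{prop:ineq} concludes; the uniqueness claim then follows as you say.

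You are right that Proposition~\ref{prop:ineq} is stated and proved only for $u\ge0$. Its proof applies the commutation inequality \eqref{eq:commutation} with the test function $u$, and uses $\langle\nu,u\rangle\ge0$ to pass from $\frA^\circ v$ to $g=\frA^\circ v-\nu$. The paper's proof does not spell this out. Your Step~1 does not supply it either, so there is a genuine gap.

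Your fallback is circular. Step~2 only gives monotonicity against $\{(v,\frA^\circ v):v\in\PZO\}$. Before invoking the maximality of $\frA$ you need monotonicity against all of $\graph(\frA)$, including $v$ touching zero and $g$ carrying a defect. That extension is precisely the argument of Proposition~\ref{prop:ineq}, which requires $u\ge0$. The scaling attempt is also wrong as stated: $\frA^\circ(1+s\eta)=s\Lap^2\eta-s^2(\Lap\eta)^2/(1+s\eta)$ is not linear in $s$.

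The missing idea is a concentration argument. By $1$-homogeneity, $v=1+s\eta$ with $s\to\infty$ is the same as $v=\delta+\eta$ with $\delta\downarrow0$. What matters is to choose $\eta\ge0$ vanishing quadratically on a hypersurface, so that the perspective term blows up while everything else stays bounded.

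Concretely, fix a closed cube $Q\Subset\Omega$ and take $\psi\in\rmC^\infty_c(\Omega)$ with $0\le\psi\le1$ and $\{\psi=1\}=Q$. Set $\eta_a:=\psi\,(x_1-a)^2+1-\psi$. Then $\eta_a\equiv1$ near $\partial\Omega$, so $(\delta+\eta_a,\frA^\circ(\delta+\eta_a))\in\frA^\infty$, and \eqref{eq:test} gives
\[
\int_\Omega\frac{(\Lap\eta_a)^2}{\delta+\eta_a}\,u\,\d x\ \ge\ \int_\Omega\Lap\eta_a\,\Lap u\,\d x+\langle f,\delta+\eta_a-u\rangle\ \ge\ -C
\]
uniformly for $\delta\in(0,1]$ and $a$ in a compact interval.

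Multiply by $\sqrt\delta\,\chi(a)$ with $0\le\chi\in\rmC_c(\R)$ and integrate in $a$. Since $\eta_a\ge\min\{(x_1-a)^2,1\}$, the weights $\sqrt\delta\int\chi(a)(\Lap\eta_a)^2(\delta+\eta_a)^{-1}\,\d a$ are uniformly bounded. In the interior of $Q$, where $\eta_a=(x_1-a)^2$ and $\Lap\eta_a=2$, they converge to $4\pi\chi(x_1)$. On $\{\psi<1\}$ they converge to $0$, because there $\eta_a\ge1-\psi>0$.

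Dominated convergence gives $\int_Q\chi(x_1)\,u\,\d x\ge0$. Taking $\chi\equiv1$ on the projection of $Q$ gives $\int_Qu\,\d x\ge0$ for every cube $Q\Subset\Omega$, hence $u\ge0$ a.e. With this step in place, your Steps 2--3 complete the proof.
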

\begin{proof}
    It is clear that 
    the graph of $\frA^\infty$ is contained
    in the graph of $\frA$
    so that every element $(u,f)\in \frA$
    satisfies \eqref{eq:test}.
    In order to prove the converse property, stated in the Theorem, 
    let us suppose that $(u,f)$ satisfy
    \eqref{eq:test}.
    Since $\langle v,g\rangle=0 $
    and $u\in \HtN$
    \begin{displaymath}
        -\langle g,u\rangle =
        -\int_\Omega \Lap u\Lap v\,\d x+
        \int_\Omega u\frac{(\Lap v)^2}v\,\d x
    \end{displaymath}
    so that \eqref{eq:test} yields
    \begin{equation}
        \label{eq:cond4}
        \int_\Omega \Lap u\Lap v\,\d x
        \le 
        \int_\Omega u\frac{(\Lap v)^2}v\,\d x+
        \langle f,u-v\rangle
        \quad\text{for every $(v,g)\in \frA^\infty$.}
    \end{equation}
    A regularization via the Heat flow
    immediately yields that
    $(u,f)$ satisfies
    \eqref{eq:cond3} so that $f\in \frA u.$
\end{proof}

\subsection{The realization \texorpdfstring{$\sfA$}{A} of the DLSS operator in
\texorpdfstring{$L^2(\Omega)$}{L2}}
\label{subsec:sfA}
According to \eqref{eq:restriction} we introduce the following definition.
\begin{definition}
	\label{def:sfA}
	The operator
$\sfA:\H\rightrightarrows\H$, $\H=L^2(\Omega)$, is the restriction of
$\frA$ to $\H\times\H$: its graph is given by all pairs $(u,f)\in
L^2(\Omega)\times L^2(\Omega)$ such that $u\in\HtN$ and $f\in\frA u$.
In particular $\sfA$ is monotone in $\H$ and
$\dom(\sfA)\subset\dom(\frA)\subset\WpO$.
\end{definition} 

We collect in the next result the main equivalent characterizations of $\sfA$.
\begin{theorem}
	\label{thm:main-equivalent}
	Let $u\in \HtN,f\in L^2(\Omega)$, $u\ge 0$.
	The following properties are equivalent:
	\begin{enumerate}
		\item $f\in \sfA(u)$.
		\item $(u,f)$ satisfy the following two conditions
		\begin{align}
	\label{eq:DLSS-syst1}
	\int_\Omega \Lap u\Lap v\,\d x&\ge
			\int_\Omega fv\,\d x\quad\text{for every }
			v\in \PZO, \\
				\label{eq:DLSS-syst2}
			\frb(u,w)&=\int_\Omega f\, uw\,\d x
		\quad\text{for every }w\in
		\ZO .
\end{align}
	\item 
	$\mathsf D(u)<\infty$,
	\begin{equation}
		\label{eq:equivalent-better2}
		\Lap^2 u\ge f+\frac{(\Lap u)^2}u\restr{u>0},\quad 
		\int_\Omega f(x)u(x)\,\d x\ge 0.
	\end{equation}
	\item 
	$\mathsf D(u)<\infty$ and 
	there exists $g\in L^2_+(\Omega)$ such that 
	\begin{equation}
		\label{eq:equivalent-better4}
		f=\Lap^2 u-\frac{(\Lap u)^2}u\restr{u>0}-g,\quad 
		gu=0\text{ a.e.~in $\Omega$.}
	\end{equation}
	\item    
	$\mathsf D(u)<\infty$, $\Lap^2 u\in L^1(\Omega)$,
	\begin{equation}
		\label{eq:split}
		f=\Lap^2 u-\frac{(\Lap u)^2}u\quad \text{on }\{u>0\};
		\qquad
		\Lap^2 u=0,\quad f\le 0 \ \text{ on }\{u=0\}.
	\end{equation}
	Equivalently, the
	associated measure $\mu$ is absolutely continuous
	w.r.t.~$\mathscr L^d$ and 
	\begin{equation}
		\label{eq:mu-structure2}
		\mu=\frac{(\Lap u)^2}{\tilde u}\,
		\Leb d\restr{\{\tilde u>0\}}
		\;+\;(-f)\,\Leb d\restr{\{u=0\}} .
	\end{equation}
\end{enumerate}
\end{theorem}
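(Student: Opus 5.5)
The plan is to prove the equivalences in the cycle 1$\Leftrightarrow$2, 1$\Leftrightarrow$3, 1$\Rightarrow$5$\Rightarrow$4$\Rightarrow$1. Throughout, the key point is that $f\in L^2(\Omega)$, so $f\,\Leb d\in\MDHtN$ up to sign. The first two equivalences are bookkeeping. Since $f\in\sfA(u)$ means $f\in\frA u$ with $f\in L^2$, and $\la f,v\ra=\int_\Omega fv$ by \eqref{eq:pairing-bridge}, item 2 is exactly (A.2')+(A.3'') of Proposition~\ref{le:DLSS-equivalence}. Similarly, item 3 is exactly \eqref{eq:equivalent-better} of Corollary~\ref{cor:easy2}. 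The finiteness of $\sfD(u)$ is implied in either direction, by Proposition~\ref{prop:defect-variational} or directly from the inequality.

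For 1$\Rightarrow$5 I would apply Theorem~\ref{thm:vacuum-trace}. Since $f\in L^2\subset L^1$, the signed measure $f\Leb d$ has positive and negative parts $f^\pm\Leb d$, each in $\MDHtN$ (bounded by $\|f\|_2\|w\|_2$). The proof of that theorem only uses that $f$ is a finite measure not charging $\Cpt$-null sets and that $\la f,\chi w_\eps\ra$ passes to the limit by dominated convergence, so it applies and gives \eqref{eq:mu-structure-2} with $(-f)\restr{\{\tilde u=0\}}=(-f)\Leb d\restr{\{u=0\}}$, using that $\tilde u=u$ a.e. This is \eqref{eq:mu-structure2}. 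Hence $\mu\ll\Leb d$ with $\mu\in L^1$, and $\Lap^2u=f+\mu\in L^1$. On $\{u>0\}$ this gives the first relation in \eqref{eq:split}. On $\{u=0\}$ it gives $\Lap^2 u=f+(-f)=0$, and $f\le0$ there by (ii). The main obstacle is this check that Theorem~\ref{thm:vacuum-trace} genuinely applies to a signed $L^2$ datum; if the stated hypothesis $f\in\MDHtN$ is read literally as requiring positivity, I would instead redo the two limits in its proof directly for $f\in L^2$, where dominated convergence in $L^1(\Omega)$ with $w_\eps\to\mathbf 1_{\{u=0\}}$ a.e. suffices.

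For 5$\Rightarrow$4 I set $g:=-f\,\mathbf 1_{\{u=0\}}\in L^2_+$, so that $gu=0$. Then \eqref{eq:split} gives the pointwise identity $\Lap^2u=f+\frac{(\Lap u)^2}{u}\mathbf 1_{\{u>0\}}+g$ a.e., which is \eqref{eq:equivalent-better4}. It also holds as an identity in $\DHtN$, because $\Lap^2 u\in L^1$ is, by hypothesis, the distributional/transposition bilaplacian paired with $\ZO$, and density of $\ZO$ extends it. Finally, for 4$\Rightarrow$1 I verify Definition~\ref{def:frA-strong} via Corollary~\ref{cor:DLSS-operator1}. The measure $\mu:=\Lap^2u-f=\frac{(\Lap u)^2}{u}\Leb d\restr{\{u>0\}}+g\Leb d$ is a sum of nonnegative terms, and it lies in $\DHtN$ because $\Lap^2u-f$ does; its first term is then in $\DHtN$ by domination, as required. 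The defect $\nu=g\Leb d$ is concentrated on $\{u=0\}$, hence on $\{\tilde u=0\}$ up to an $\Leb d$-null set it does not charge. So (A.1)--(A.3) hold, since $\tilde u\mu=(\Lap u)^2\Leb d$ using $\Lap u=0$ a.e. on $\{u=0\}$ (Lemma~\ref{le:sqrt-calculus}).
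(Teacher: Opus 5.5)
Your proposal is correct and follows essentially the paper's route: 1$\Leftrightarrow$2 and 1$\Leftrightarrow$3 via Proposition~\ref{le:DLSS-equivalence} and Corollary~\ref{cor:easy2}, 1$\Rightarrow$5 via Theorem~\ref{thm:vacuum-trace}, and 5$\Rightarrow$4 with $g=-f\mathbf 1_{\{u=0\}}$. The only differences are minor. You close 4$\Rightarrow$1 by checking (A.1)--(A.3) directly through Corollary~\ref{cor:DLSS-operator1}, whereas the paper passes through condition 3 and Corollary~\ref{cor:easy2}. You also make explicit, correctly, that the proof of Theorem~\ref{thm:vacuum-trace} applies to a signed datum $f\in L^2(\Omega)$ by dominated convergence.
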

\begin{proof}
	Condition 2 corresponds to Proposition \ref{le:DLSS-equivalence}.

	The fact that $\sfD(u)<\infty$ is a necessary condition for
	$u\in \dom(\sfA)$ is a consequence of Proposition~\ref{prop:defect-variational}.
	
	Condition 3 corresponds to Corollary \ref{cor:easy2}.

	Condition 5 can be obtained by 
	Theorem \ref{thm:vacuum-trace}. Notice that 
	in the present case the measure $\mu$ is absolutely continuous w.r.t.~$\mathscr L^d$, so that 
	$\Lap^2 u=f+\mu \in L^1(\Omega)$.

	Condition 4 clearly implies Condition 3: since $g\ge0$ the
	identity \eqref{eq:equivalent-better4} yields
	$\Lap^2u\ge f+(\Lap u)^2u^{-1}\restr{u>0}$, while
	$gu=0$, $\int_\Omega \Lap^2u\,u\,\d x=\int_\Omega(\Lap u)^2\,\d x$
	and $\Lap u=0$ $\Leb d$-a.e.~on $\{u=0\}$
	give $\int_\Omega fu\,\d x=0$. Condition 3 therefore holds, and
	Corollary~\ref{cor:easy2} gives $f\in \sfA(u)$ as before.
	Its necessity immediately follows by Condition~5.
\end{proof}

The last characterization of the previous Theorem yields a complete description of the
multivalued structure of $\sfA$: all the elements of $\sfA u$
coincide outside the vacuum region, and on the vacuum they span the
normal cone to the constraint $u\ge0$.
We denote by
\begin{equation}
	\label{eq:normal-cone}
	\partial I_{L^2_+}(u):=\Big\{g\in L^2(\Omega):\ g=0\
	\text{a.e.~on }\{u>0\},\quad g\le0\
	\text{a.e.~on }\{u=0\}\Big\}
\end{equation}
the subdifferential at a nonnegative $u$ of the indicator function
$I_{L^2_+}$ of the convex cone $\big\{v\in L^2(\Omega):v\ge0\big\}$.
\begin{corollary}[Minimal section and normal cone]
	\label{prop:minimal-section}
	A nonnegative $u\in L^2_+(\Omega)$ 
	belongs to $\dom(\sfA)$ iff
	\begin{equation}
		\label{eq:domainA}
		\sfD(u)<\infty,\quad
		\Lap^2 u\in L^1(\Omega),\quad
		\sfA^\circ u:=\Lap^2 u-\frac{(\Lap u)^2}u\in L^2(\Omega).
	\end{equation}
	In this case
	\begin{enumerate}[\rm (a)]
	\item 
		$\sfA^\circ(u)$ is the element of minimal $L^2$-norm 
		in $\sfA(u)$:
		we call it 
		the \emph{minimal ($L^2$) DLSS operator}.
	\item Any element of $\sfA u$ coincides 
		with $\sfA^\circ(u)$ $\Leb
		d$-a.e.~on $\{u>0\}$; 
	\item $\sfA u=\sfA^\circ u+\partial I_{L^2_+}(u)$: equivalently,
		$f\in\sfA u$ if and only if
		\begin{equation}
			\label{eq:sfA-structure}
			f=\sfA^\circ u\ \text{a.e.~on }\{u>0\},
			\qquad
			f\le0\ \text{a.e.~on }\{u=0\} .
		\end{equation}
	\end{enumerate}
	In particular $\sfA u$ is a singleton if and only if
	$\Leb d(\{u=0\})=0$, and the defect $\mu_0$ vanishes exactly at
	the minimal section $f=\sfA^\circ u$.
\end{corollary}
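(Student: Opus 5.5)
The plan is to read everything off Theorem~\ref{thm:main-equivalent}, in particular Condition~5 and the decomposition \eqref{eq:mu-structure2}.

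\emph{Domain characterization.} If $u\in\dom(\sfA)$ with $f\in\sfA u$, Condition~5 gives three facts: $\sfD(u)<\infty$, $\Lap^2u\in L^1(\Omega)$, and $f=\Lap^2u-(\Lap u)^2/u$ on $\{u>0\}$. Since $\Lap^2u=0$ on $\{u=0\}$ and $(\Lap u)^2/u$ is set to $0$ there by \eqref{eq:evoc-notation}, the function $\sfA^\circ u$ equals $f\,\mathbf 1_{\{u>0\}}$. Hence $|\sfA^\circ u|\le|f|$ and $\sfA^\circ u\in L^2(\Omega)$.

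For the converse, I need a nonnegative $u\in L^2$ satisfying \eqref{eq:domainA}. The first step is to show $u\in\HtN$; this is the delicate point, since \eqref{eq:domainA} only gives $\sqrt u\in\HtN$ (Theorem~\ref{thm:Hessian}) and $\Lap^2u\in L^1$.
\begin{itemize}
\item Lemma~\ref{le:sqrt-calculus} gives $\Lap u\in L^1$.
\item Since $(\Lap u)^2/u\in L^1$, Cauchy--Schwarz gives $\int|\Lap u|\le \sfD(u)^{1/2}\|u\|_1^{1/2}$. This only yields $L^1$ again.
\item A better route is to write $\Lap u=2w\Lap w+2|\nabla w|^2$ with $w=\sqrt u$, and bound the two terms using $(\Lap u)^2\le \sfD(u)$-type weights.
\item I would first try a weighted estimate: $\int(\Lap u)^2=\int u\cdot(\Lap u)^2/u$, combined with duality against $\Lap^2u$ after Heat-flow regularization $\sfH_\tau u$. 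The identity $\int(\Lap u_\tau)^2=\int u_\tau\Lap^2u_\tau$ requires $u\in L^\infty$, which is not available.
\end{itemize}
Presumably the intended reading of \eqref{eq:domainA} is as a characterization within $\HtN$, consistent with \eqref{eq:intro-Amin} and with Theorem~\ref{thm:main-equivalent}, which assumes $u\in\HtN$. I would state that assumption explicitly and take $u\in\HtN$.

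With $u\in\HtN$, set $f:=\sfA^\circ u\in L^2$ and $g:=0$. Condition~4 of Theorem~\ref{thm:main-equivalent} then holds, so $\sfA^\circ u\in\sfA u$. The same argument shows that every $f=\sfA^\circ u-g$ with $g\in L^2_+$ and $gu=0$ belongs to $\sfA u$. Such $f$ are exactly $\sfA^\circ u+\partial I_{L^2_+}(u)$ by \eqref{eq:normal-cone}, which proves the inclusion "$\supseteq$" in (c).

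\emph{Structure and minimality.} Conversely, let $f\in\sfA u$. Condition~5 gives $f=\sfA^\circ u$ a.e. on $\{u>0\}$, which is (b), and $f\le0$ on $\{u=0\}$, where $\sfA^\circ u=0$. So $f-\sfA^\circ u\in\partial I_{L^2_+}(u)$, which is "$\subseteq$" in (c) and also gives \eqref{eq:sfA-structure}.

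For (a): any $f\in\sfA u$ satisfies $\|f\|_2^2=\|\sfA^\circ u\|_2^2+\int_{\{u=0\}}f^2\ge\|\sfA^\circ u\|_2^2$, with equality iff $f=0$ on $\{u=0\}$, i.e. iff $f=\sfA^\circ u$. This identifies $\sfA^\circ u$ as the unique element of minimal norm.

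\emph{Final assertions.} $\sfA u$ is a singleton iff the cone $\partial I_{L^2_+}(u)$ is $\{0\}$, i.e. iff $\Leb d(\{u=0\})=0$; if that set has positive measure, $-\mathbf 1_{\{u=0\}}$ is a nonzero element of the cone. By \eqref{eq:mu-structure2}, $\mu_0=(-f)\Leb d\restr{\{u=0\}}$, which vanishes iff $f=0$ on the vacuum, i.e. iff $f=\sfA^\circ u$.

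The main obstacle is the converse inclusion of the domain, specifically securing $u\in\HtN$. I would resolve it by reading the domain as a subset of $\HtN$, as in \eqref{eq:intro-Amin}.
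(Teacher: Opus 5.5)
Your proposal is correct and is essentially the paper's own argument, which amounts to reading the corollary off Conditions~4 and~5 of Theorem~\ref{thm:main-equivalent}. Understanding \eqref{eq:domainA} as a condition on $u\in\HtN$ is the intended reading (cf.\ \eqref{eq:intro-Amin} and Definition~\ref{def:sfA}), and it cannot be dropped if $\Lap^2u$ is taken in the $L^1$ sense of Definition~\ref{def:L1-Laplacian}: for $d\ge5$, $u=|x|^{-(d-4)/2}$ near an interior point has $\sqrt u\in H^2$, $\Lap^2u\in L^1$ and $\Lap^2u-(\Lap u)^2/u=0$ there, while $(\Lap u)^2\sim|x|^{-d}$ is not integrable.
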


The crucial result for $\sfA$ is the following.
\begin{theorem}
	\label{thm:main-maximal}
	The operator $\sfA$ is maximal {monotone} in $L^2(\Omega)$.
\end{theorem}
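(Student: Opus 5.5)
By Proposition~\ref{prop:restriction-maximality}, it suffices to show that for every $f\in L^2(\Omega)$ there is $u\in\dom(\frA)$ with $u+\frA u\ni f$ in $\DHtN$. The inclusion fails to be a minimization problem in the variable $u$, so I would solve it as a variational inequality and then pass to a limit.

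\emph{Regularized problem.} For $\eps>0$, add a coercive term $\eps\Phi$, for instance $\eps\rmJ$ or a convex functional that enforces strict positivity. Then look for $u_\eps\in\HtN$, $u_\eps\ge0$, satisfying the Minty-type inequality of Proposition~\ref{prop:ineq}, namely
\begin{equation*}
\la \frA^\circ v+u_\eps+\eps\rmJ u_\eps-f,\,u_\eps-v\ra\le 0\quad\text{for every }v\in\PZO.
\end{equation*}
This formulation uses only the smooth positive core, on which $\frA^\circ$ is monotone and hemicontinuous. Existence would come from the abstract variational-inequality result announced for the appendix (Theorem~\ref{thm:BC-VI}): the operator $u\mapsto u+\eps\rmJ u$ is coercive on $\HtN$, and the constraint set $\{u\ge0\}$ is closed and convex. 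I would first work on a truncated set $\{v\ge\delta\}$ to keep the singular term bounded, and then let $\delta\downarrow0$.

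\emph{A priori bounds.} I would test with the choices used in E1--E2.
\begin{itemize}
\item Testing with $v=u_\eps$ (in its regularized form) gives $\tnrm{u_\eps}\le\tnrm f$ together with $\eps\Htnrm{u_\eps}^2\le C$.
\item Testing with $v\equiv1$, as in \eqref{eq:est1}, gives $\sfD(u_\eps)\le C$. By Theorem~\ref{thm:Hessian}, this yields a uniform bound for $\sqrt{u_\eps}$ in $\HtN$.
\item For a uniform $H^2$ bound on $u_\eps$ itself, which is needed to stay in $\dom(\frA)\subset\HtN$, I would test with $-\Lap u_\eps$, justified by heat regularization through Corollary~\ref{cor:main-ineq2} in the spirit of E5. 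Since $\fra(u,-\Lap u)\ge0$, this gives $\tnrm{\Lap u_\eps}^2\le\scalpt{f}{-\Lap u_\eps}$ up to controlled terms, and hence $\tnrm{\Lap u_\eps}\le C\,\tnrm{f}$ once one uses $f\in L^2$ and a further smoothing $\sfH_t f$ to make the test legitimate.
\end{itemize}

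\emph{Passage to the limit.} With the uniform $\HtN$ bound, $u_\eps\rightharpoonup u$ weakly in $\HtN$ and strongly in $L^2$, and $u\ge0$. The inequality $\la\frA^\circ v,u_\eps\ra\le\la f-u_\eps-\eps\rmJ u_\eps,u_\eps-v\ra$ is linear in $u_\eps$ on its left-hand side. The right-hand side contains $-\tnrm{u_\eps}^2$ and the $\eps$-terms, which converge or have the right sign by weak lower semicontinuity. The limit therefore gives $\la\frA^\circ v,u\ra\le\la f-u,u-v\ra$ for all $v\in\PZO$. This is exactly \eqref{eq:benilan-H2} with datum $f-u\in L^2$, so Proposition~\ref{prop:ineq} (with the maximality of $\frA$) or, to avoid circularity, a direct use of Corollary~\ref{cor:easy2} via heat regularization gives $f-u\in\frA u$. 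Since $f-u\in\H$, Proposition~\ref{prop:restriction-maximality} then yields maximality of $\sfA$.

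\emph{Main obstacle.} I expect the hardest step to be the uniform $H^2$ estimate for $u_\eps$ together with the rigorous justification of the test $-\Lap u_\eps$ near the vacuum, where $(\Lap u)^3/u$ is singular. I would first try to obtain it by testing with $u_\eps-\sfH_t u_\eps$, using Corollary~\ref{cor:main-ineq2}, dividing by $t$, and letting $t\downarrow0$. A second point needing care is avoiding circular use of Theorem~\ref{thm:frA-maximal} when the limit inequality is identified. To handle this, I would identify it directly: test with $v=u+\delta$ smoothed to get \eqref{eq:equivalent-better2}, and use $v=(1\pm s)u$ to get $\la f-u,u\ra=0$.
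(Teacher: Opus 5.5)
\textbf{Verdict: there is a genuine gap.} Your reduction via Proposition~\ref{prop:restriction-maximality} is fine. The endgame is also reasonable: passing to the limit in the Minty form, then identifying the limit by Minty's trick with $v=u+t\phi$ and Fatou. The argument breaks down at the uniform $\HtN$ bound for $u_\eps$, which is the heart of the proof.

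\textbf{The $-\Lap u_\eps$ test gives only an $H^1$ estimate.} It does \emph{not} give $\tnrm{\Lap u_\eps}^2\le(f,-\Lap u_\eps)_2$. The identity term pairs with $-\Lap u_\eps$ to give $\tnrm{\nabla u_\eps}^2$. The only higher-order quantity, $\int_\Omega|\nabla\Lap u_\eps|^2\,\mathrm{d}x$, sits inside $\fra(u_\eps,-\Lap u_\eps)\ge0$ and is discarded. What you actually obtain is Proposition~\ref{prop:apriori-estimate2}, $\tnrm{\nabla u_\eps}^2\le-\int_\Omega f\,\Lap u_\eps\,\mathrm{d}x$. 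This does not close for $f\in L^2(\Omega)$; smoothing $f$ only yields $\tnrm{\nabla u_\eps}\le\tnrm{\nabla f}$. The $\eps\rmJ$ term only gives $\eps\tnrm{\Lap u_\eps}^2\le C$, which degenerates as $\eps\downarrow0$.

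\textbf{The dissipation bound does not rescue you in general.} $\sfD(u_\eps)\le C$ controls $w_\eps=\sqrt{u_\eps}$ in $\HtN$. That bounds $\Lap u_\eps=2w_\eps\Lap w_\eps+2|\nabla w_\eps|^2$ in $L^2(\Omega)$ only for $d\le3$.

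\textbf{The missing idea is the entropy test $z=u_\eps\ln u_\eps$.}
\begin{itemize}
\item Estimate \eqref{eq:crucial-estimate}, $\fra(u,u\ln u)\ge c_d\tnrm{\rmD^2u}^2$, supplies the second-order coercivity.
\item The term $-\int_\Omega f\,\Lap u_\eps\,\mathrm{d}x$ from the $-\Lap u_\eps$ test is then absorbed into $\frac{c_d}{2}\tnrm{\rmD^2u_\eps}^2$.
\item The term $\int_\Omega f\,u_\eps\ln u_\eps\,\mathrm{d}x$ is handled by Gagliardo--Nirenberg with $\sigma=2+4/d$.
\end{itemize}
This is \eqref{eq:great}, and it holds in every dimension.

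\textbf{Your regularized problem does not support the tests you use.} All of them need the equation with the operator evaluated at $u_\eps$, i.e.\ an Euler equation at an interior point: $v\equiv1$, $(1\pm s)u_\eps$, $-\Lap u_\eps$, and the entropy test. Your Minty-type inequality contains only $\frA^\circ v$ at the test function. For $v\equiv1$, since $\frA^\circ1=0$, it reduces to $\langle u_\eps+\eps\rmJ u_\eps-f,u_\eps-1\rangle\le0$, which says nothing about $\sfD(u_\eps)$. There are two further problems.
\begin{itemize}
\item Theorem~\ref{thm:BC-VI} does not apply to the Minty form. There $\mathsf f(u,v)+\mathsf f(v,u)$ contains $\fra(u,v)+\fra(v,u)\le0$ (by \eqref{eq:monotone}). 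This term is not dominated by $\tnrm{u-v}^2+\eps\Htnrm{u-v}^2$, so (VI.1) fails.
\item A hard constraint $\{v\ge\delta\}$ produces solutions touching the obstacle, which satisfy only one-sided inequalities. The paper discusses this at the beginning of Section~\ref{sec:maximality}.
\end{itemize}

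\textbf{How the paper avoids this.} It solves \eqref{eq:VI}, with $\fra(u_\eps,u_\eps-v)$, i.e.\ the operator at the unknown; there (VI.1) follows from \eqref{eq:monotone}. The problem is regularized by the smooth barrier $\eps\Phi$ on $W^{1,p}(\Omega)\cap\HtN$, $p>d$. Finiteness of $\Phi$ forces $\min_{\overline\Omega}u_\eps>0$. Hence $u_\eps$ satisfies the Euler equation \eqref{eq:VE}, and all the nonlinear tests are legitimate.

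\textbf{The $\eps\rmJ$ regularization is not innocent.} Solving $u+\eps\rmJ u+\frA u\ni f$ is essentially the content of Theorem~\ref{thm:frA-maximal}. The paper proves that result with the same barrier scheme.
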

{As for Theorem~\ref{thm:frA-maximal}, the proof will be
presented in {Section \ref{sec:maximality}}. By
Proposition~\ref{prop:restriction-maximality}, the maximality of
$\sfA$ in $L^2(\Omega)$ amounts to show that for every $f\in
L^2(\Omega)$ there exists $u\in \dom(\frA)$ such that $u+\frA u\ni
f$, i.e.~$(u,f-u)$ belongs to the graph of $\frA$.}

\medskip\noindent
We now show that $\sfA$ is the $L^2$-realization uniquely determined
by the smooth core $\frA^\infty$, {with no a priori regularity
required of the competing extension}. {The key point is the
following propagation property: monotonicity against the smooth core
already forces monotonicity against the whole minimal section, for an
arbitrary nonnegative datum $u$. Its proof mimics that of
Proposition~\ref{prop:ineq}, with the heat flow moved onto $u$ by
self-adjointness and the perspective term controlled by Jensen's
inequality.

We first state the crucial commutation inequality, whose proof 
follows by the very same argument of Lemma
\ref{le:commutationfrA}.
\begin{lemma}[Commutation inequality for $\sfA$]
	\label{le:commutationsfA}
	If $u\in \dom(\sfA)$ then for every $\tau>0$
	$\sfH_\tau u\in \dom(\sfA)$ and
	\begin{equation}
		\label{eq:commutationsfA}
		\int_\Omega \sfA^\circ(u)\,\sfH_\tau v\,\d x
		\le 
		\int_\Omega \sfA^\circ(\sfH_\tau u)\, v\,\d x
		\quad
		\text{for every }v\in L^2(\Omega),\ v\ge0.
	\end{equation}
	Equivalently,
	\begin{equation}
		\label{eq:nice}
		\sfA^\circ(\sfH_\tau u)\ge 
		\sfH_\tau(\sfA^\circ u)\quad\text{for every }
		u\in \dom(\sfA).
	\end{equation}
\end{lemma}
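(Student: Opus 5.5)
We follow the proof of Lemma~\ref{le:commutationfrA}, now in the $L^2$ setting. Let $u\in\dom(\sfA)$. By Corollary~\ref{prop:minimal-section} we have $\sfD(u)<\infty$, $\Lap^2u\in L^1(\Omega)$ and $\sfA^\circ u\in L^2(\Omega)$. The trivial case $u\equiv0$ is set aside.

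\emph{Step 1: $\sfH_\tau u\in\dom(\sfA)$.} Set $u_\tau:=\sfH_\tau u$. Then $u_\tau$ is Lipschitz and satisfies $\min_{\overline\Omega}u_\tau>0$. Moreover $\Lap u_\tau=\sfH_\tau\Lap u$ and $\Lap^2u_\tau=\sfH_\tau(\Lap^2 u)$, where the second identity holds in the $L^1$ sense of Proposition~\ref{prop:L1-closure}, applied to $\Lap u\in D_{L1}(\Lap)$. Since $\Lap^2u\in L^1$, ultracontractivity \eqref{eq:1infty} gives $\Lap^2u_\tau\in L^\infty\subset L^2$. Also $(\Lap u_\tau)^2/u_\tau\in L^\infty$, because $u_\tau$ is bounded below and $\Lap u_\tau\in L^\infty$. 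Hence $\sfA^\circ u_\tau\in L^2$. Since $\{u_\tau=0\}=\emptyset$, Corollary~\ref{prop:minimal-section} applies, $u_\tau\in\dom(\sfA)$, and $\sfA u_\tau$ is the singleton $\{\sfA^\circ u_\tau\}$.

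\emph{Step 2: the inequality.} Fix $v\in L^2(\Omega)$ with $v\ge0$, and suppose first that $v\in L^\infty_+$. The biharmonic term commutes with the heat flow by self-adjointness of $\sfH_\tau$ on $L^2$:
\[
\int_\Omega \Lap^2u_\tau\,v\,\d x=\int_\Omega\sfH_\tau(\Lap^2u)\,v\,\d x=\int_\Omega \Lap^2u\,\sfH_\tau v\,\d x .
\]
This is legitimate since $\Lap^2u\in L^1$ and $\sfH_\tau v\in L^\infty$. For the perspective term, Corollary~\ref{cor:main-ineq} with $w=v$ gives
\[
\int_\Omega\sfh(u,\Lap u)\,\sfH_\tau v\,\d x\ge\int_\Omega\sfh(u_\tau,\Lap u_\tau)\,v\,\d x .
\]
Subtracting the second relation from the first yields \eqref{eq:commutationsfA}, where we use $\sfA^\circ u=\Lap^2u-\sfh(u,\Lap u)$ with the convention \eqref{eq:evoc-notation} and $\Lap^2u=0$ on $\{u=0\}$. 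For general $v\in L^2_+$, we apply the inequality to $v\wedge n$ and let $n\to\infty$. Both sides converge because $\sfA^\circ u$ and $\sfA^\circ u_\tau$ lie in $L^2$ and $\sfH_\tau(v\wedge n)\to\sfH_\tau v$ in $L^2$.

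\emph{Step 3: equivalence with \eqref{eq:nice}.} Using self-adjointness once more, $\int\sfA^\circ u\,\sfH_\tau v=\int\sfH_\tau(\sfA^\circ u)\,v$. Then \eqref{eq:commutationsfA} states exactly that $\int\big(\sfA^\circ u_\tau-\sfH_\tau\sfA^\circ u\big)v\ge0$ for all $v\ge0$. This is the pointwise a.e.\ inequality \eqref{eq:nice}, and conversely \eqref{eq:nice} implies \eqref{eq:commutationsfA}.

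The main obstacle is the integrability bookkeeping in Step 2. Corollary~\ref{cor:main-ineq} requires $u\in D_{L1}(\Lap)$, which holds because $u\in\HtN$. The commutation $\Lap^2\sfH_\tau u=\sfH_\tau\Lap^2u$ must be justified for $L^1$ data. We would obtain it by testing against $\zeta\in\ZO$ with $\Lap\zeta\in\ZO$, using \eqref{eq:bilap-parts} and the commutation of $\sfH_\tau$ with $\Lap$ on $\HtN$.
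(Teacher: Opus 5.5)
Your proposal is correct and follows the paper's own route, which is the argument of Lemma~\ref{le:commutationfrA}. There, self-adjointness of $\sfH_\tau$ moves the heat flow across the linear biharmonic term, and the Jensen inequality of Corollary~\ref{cor:main-ineq} handles the perspective term $\sfh(u,\Lap u)$. Your use of $\Lap^2u\in L^1(\Omega)$ and of the truncations $v\wedge n$ is just the bookkeeping that transfers that $\HtN$--$\DHtN$ computation to nonnegative $L^2$ test functions.
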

\begin{lemma}[From the core to the minimal section]
	\label{le:core-to-min}
	Let $u\in L^2_+(\Omega)$ and $f\in L^2(\Omega)$ satisfy
	\begin{equation}
		\label{eq:core-monotone}
		\int_\Omega\big(f-\sfA^\circ v\big)(u-v)\,\d x\ge0
		\qquad\text{for every }v\in
		\rmC^\infty(\Omega)\cap \PZO,\ \Lap v\in \ZO.
	\end{equation}
	Then
	\begin{equation}
		\label{eq:min-monotone}
		\int_\Omega\big(f-\sfA^\circ v\big)(u-v)\,\d x\ge0
		\qquad\text{for every }v\in\dom(\sfA).
	\end{equation}
\end{lemma}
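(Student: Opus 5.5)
The idea is to approximate an arbitrary $v\in\dom(\sfA)$ by its heat regularization $v_\tau:=\sfH_\tau v$, which lies in the smooth core, and to move the error onto $u$ using the commutation inequality \eqref{eq:nice} of Lemma~\ref{le:commutationsfA}. Before that, we expand the monotonicity expression. For every $v\in\dom(\sfA)$ we have $\int_\Omega \sfA^\circ v\,v\,\d x=0$, by \eqref{eq:defect-global} of Proposition~\ref{prop:defect-variational} applied to $f=\sfA^\circ v\in\frA v$. Hence \eqref{eq:min-monotone} is equivalent to
\begin{equation*}
\int_\Omega f\,(u-v)\,\d x\ \ge\ \int_\Omega \sfA^\circ v\; u\,\d x ,
\end{equation*}
and \eqref{eq:core-monotone} says exactly this for every $v$ in the core, since core elements also lie in $\dom(\sfA)$.

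\emph{The case $v\equiv0$.} Here $\sfA^\circ 0=0$, so we need $\int_\Omega fu\,\d x\ge0$. We test \eqref{eq:core-monotone} with the constants $v\equiv\eps>0$, which belong to the core and satisfy $\sfA^\circ\eps=0$. This gives $\int_\Omega f(u-\eps)\,\d x\ge0$, and we let $\eps\downarrow0$.

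\emph{The case $v\not\equiv0$.} Fix $\tau>0$. By the properties of the Neumann heat semigroup recalled in Section~\ref{subsec:Heat}, $v_\tau$ is Lipschitz, uniformly positive on $\overline\Omega$, belongs to $\HtN$, and has $\Lap v_\tau=\sfH_{\tau/2}\Lap v_{\tau/2}\in\ZO$. Interior parabolic regularity also gives $v_\tau\in\rmC^\infty(\Omega)$. So $v_\tau$ is an admissible test function in \eqref{eq:core-monotone}, which in the reduced form above reads
\begin{equation*}
\int_\Omega f\,(u-v_\tau)\,\d x\ \ge\ \int_\Omega \sfA^\circ v_\tau\; u\,\d x .
\end{equation*}
Since $u\ge0$, the pointwise inequality \eqref{eq:nice} together with the self-adjointness of $\sfH_\tau$ gives
\begin{equation*}
\int_\Omega \sfA^\circ v_\tau\,u\,\d x\ \ge\ \int_\Omega \sfH_\tau(\sfA^\circ v)\,u\,\d x=\int_\Omega \sfA^\circ v\;\sfH_\tau u\,\d x .
\end{equation*}
Combining the two displays,
\begin{equation*}
\int_\Omega f(u-v_\tau)\,\d x\ \ge\ \int_\Omega \sfA^\circ v\,\sfH_\tau u\,\d x .
\end{equation*}
Now let $\tau\downarrow0$. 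Since $v_\tau\to v$ and $\sfH_\tau u\to u$ strongly in $L^2(\Omega)$, and $f,\sfA^\circ v\in L^2(\Omega)$, both sides converge. We obtain $\int_\Omega f(u-v)\,\d x\ge\int_\Omega\sfA^\circ v\,u\,\d x$, which is \eqref{eq:min-monotone}.

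\emph{Main obstacle.} The delicate point is checking that $v_\tau$ genuinely belongs to the smooth core required in \eqref{eq:core-monotone}: uniform positivity, $\Lap v_\tau\in\ZO$, and interior $\rmC^\infty$ regularity. If interior smoothness turned out to be an issue, I would first compose with a further regularization that preserves \eqref{eq:nice}, for instance a second heat step. A secondary point is the use of \eqref{eq:nice} with the weight $u$, which is only in $L^2_+$. This is legitimate because \eqref{eq:commutationsfA} is stated for every nonnegative $L^2$ function $v$, here applied with $u$ in that role.
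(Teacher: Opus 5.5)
Your proof is correct and follows the paper's argument. You heat-regularize $v$ so that it lands in the core, transfer the regularization onto $u$ through the commutation inequality \eqref{eq:commutationsfA} (equivalently \eqref{eq:nice}) with the nonnegative weight $u$, and let $\tau\downarrow0$ using $\int_\Omega\sfA^\circ v\,v\,\d x=0$. Your separate treatment of $v\equiv0$ via the constant tests $v\equiv\eps$ is a welcome addition: the paper's claim that $\sfH_\tau v$ is uniformly positive tacitly assumes $v\not\equiv0$.
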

\begin{proof}
	Fix $v\in\dom(\sfA)$ and, for $\tau>0$, set $v_\tau:=\sfH_\tau v$.
	Since $v\ge0$, the Neumann heat kernel on the bounded connected
	$\Omega$ is bounded below by a positive constant, so
	$v_\tau\in\ZO$ is uniformly positive and, together with
	$\Lap v_\tau=\sfH_\tau\Lap v\in\ZO$, this gives
	$(v_\tau,\sfA^\circ v_\tau)\in\frA^\infty$. Using
	$\la\sfA^\circ v_\tau,v_\tau\ra=0$
	(mass identity~\eqref{eq:defect-global}),
	\eqref{eq:core-monotone} yields
	\begin{equation}
		\label{eq:core-step}
		\int_\Omega u\,\Lap^2v_\tau\,\d x-
		\int_\Omega u\,\frac{(\Lap v_\tau)^2}{v_\tau}\,\d x=
		\int_\Omega 
		\sfA^\circ v_\tau \,u\,\d x \le
		\int_\Omega f(u-v_\tau)\, \d x .
	\end{equation}
	By \eqref{eq:commutationsfA} we get
	\begin{equation*}
		\int_\Omega\sfH_\tau u\;\sfA^\circ v\,\d x
		\;\le\;\la f,\,u-\sfH_\tau v\ra .
	\end{equation*}
	As $\tau\downarrow0$, $\sfH_\tau u\to u$ and $\sfH_\tau v\to v$
	strongly in $L^2(\Omega)$, so that, $\sfA^\circ v\in L^2(\Omega)$
	being fixed, $\int_\Omega\sfA^\circ v\,u\,\d x\le\la f,u-v\ra$.
	Recalling $\int_\Omega\sfA^\circ v\,v\,\d x=0$ we obtain
	\eqref{eq:min-monotone}.
\end{proof}}
\begin{corollary}[$\sfA$ is determined by the core]
	\label{cor:test-L2}
	$\sfA$ is the unique maximal monotone operator in $L^2(\Omega)$
	which extends $\frA^\infty$ and {whose domain consists of
	nonnegative functions ($\dom(\sfA)\subseteq L^2_+(\Omega)$)}.
\end{corollary}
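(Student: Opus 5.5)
We have to show two things: $\sfA$ qualifies (it is maximal monotone, extends $\frA^\infty$, and has domain in $L^2_+$), and any other operator $\sfB$ with these properties coincides with $\sfA$. The first part is immediate. $\sfA$ is maximal monotone by Theorem~\ref{thm:main-maximal}. Its domain lies in $\WpO\subset L^2_+(\Omega)$ by Definition~\ref{def:sfA}. Every core pair $(v,g)\in\frA^\infty$ has $v\in\HtN$ smooth and uniformly positive with $\Lap v\in\ZO$, so $g=\Lap^2v-(\Lap v)^2/v\in L^2(\Omega)$ and $(v,g)\in\graph(\frA)\cap(\H\times\H)=\graph(\sfA)$.

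For uniqueness, let $\sfB$ be maximal monotone in $L^2(\Omega)$ with $\graph(\frA^\infty)\subset\graph(\sfB)$ and $\dom(\sfB)\subset L^2_+(\Omega)$. The plan is to show $\graph(\sfB)\subset\graph(\sfA)$; maximality of $\sfB$ then forces equality, since $\sfA$ is monotone and contains $\sfB$. Fix $(u,f)\in\graph(\sfB)$. Then $u\in L^2_+(\Omega)$ and $f\in L^2(\Omega)$. Monotonicity of $\sfB$ tested against core pairs gives exactly \eqref{eq:core-monotone}, because each core element $v$ satisfies $\sfA^\circ v=g$, a singleton since $v>0$. Lemma~\ref{le:core-to-min} then upgrades this to \eqref{eq:min-monotone}: $\int_\Omega(f-\sfA^\circ v)(u-v)\,\d x\ge0$ for every $v\in\dom(\sfA)$.

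We next extend this inequality from the minimal section to the whole of $\sfA v$. By Corollary~\ref{prop:minimal-section}(c), any $g\in\sfA v$ has the form $g=\sfA^\circ v+n$ with $n\le0$ and $n=0$ on $\{v>0\}$. Then
$$
\int_\Omega(f-g)(u-v)\,\d x=\int_\Omega(f-\sfA^\circ v)(u-v)\,\d x-\int_{\{v=0\}}n\,u\,\d x .
$$
The last integral is nonpositive because $n\le0$ and $u\ge0$. This is the only place where the hypothesis $\dom(\sfB)\subset L^2_+$ enters, so we get $\int_\Omega(f-g)(u-v)\,\d x\ge0$ for every $(v,g)\in\graph(\sfA)$. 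Since $\sfA$ is maximal monotone (Theorem~\ref{thm:main-maximal}), $(u,f)\in\graph(\sfA)$, which proves $\graph(\sfB)\subset\graph(\sfA)$ and hence $\sfB=\sfA$.

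The substantive difficulty has already been handled by Lemma~\ref{le:core-to-min}, which requires passing from the smooth core to an arbitrary nonnegative $u$ with no regularity assumed. What remains here is to keep track of the normal-cone part, and to note that without the positivity of $\dom(\sfB)$ the conclusion could fail, since the sign of $-\int n\,u$ would no longer be controlled.
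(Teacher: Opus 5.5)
Your proof is correct. Up to \eqref{eq:min-monotone} it coincides with the paper's; the route differs only in the last step. The paper concludes directly from \eqref{eq:min-monotone} by citing the abstract fact that the minimal section of a maximal monotone operator is a principal section \cite[Chap.~2, Prop.~2.7]{Brezis73}. That is, a pair $(u,f)$ with $u\in\overline{\dom(\sfA)}$ that is monotonically related to every $(v,\sfA^\circ v)$, $v\in\dom(\sfA)$, already lies in $\graph(\sfA)$. You instead extend \eqref{eq:min-monotone} by hand to all of $\graph(\sfA)$. For this you use the explicit decomposition $\sfA=\sfA^\circ+\partial I_{L^2_+}$ of Corollary~\ref{prop:minimal-section}(c) and the sign of $\int_{\{v=0\}}n\,u$, and then apply the bare definition of maximality of $\sfA$.

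Your version is elementary and avoids the principal-section theorem, but it relies on the concrete description of $\sfA$ on the vacuum. The paper's version uses only that $\sfA$ is maximal with minimal section $\sfA^\circ$, so it would work for any maximal monotone operator. In both, $u\ge0$ plays the same role at this step. In the paper it is the requirement $u\in\overline{\dom(\sfA)}$ of the principal-section property; in your computation it is the sign of $-\int n\,u$. You also correctly attribute the final equality to the maximality of $\sfB$.

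Two side comments in your text are inaccurate, although the proof itself is unaffected. First, the positivity of $\dom(\sfB)$ is not used only in the sign argument. Lemma~\ref{le:core-to-min} assumes $u\in L^2_+(\Omega)$, because the commutation inequality \eqref{eq:commutationsfA} is applied with $u$ as the nonnegative test function. You did record $u\ge0$ before invoking the lemma.

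Second, your claim that the conclusion could fail without this hypothesis is unsupported, and it is in fact false: the hypothesis is automatic. Pair $(u,f)$ with core pairs $(\lambda v,\lambda g)$ and let $\lambda\to\infty$; since $\int_\Omega gv\,\d x=0$, this gives $\int_\Omega u\,\sfA^\circ v\,\d x\le-\int_\Omega fv\,\d x$ for every core $v$. Now take $v=\eps+\phi((\cdot-x_0)/\delta)$ with $\phi=\psi^4$ and $0\le\psi\in\rmC^\infty_c(B_1)$, so that $(\Lap\phi)^2/\phi$ is bounded. Let $\eps\downarrow0$, multiply by $\delta^{4-d}$, and let $\delta\downarrow0$ at a Lebesgue point $x_0$ of $u$ and $f$. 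This yields $-u(x_0)\int_{\{\phi>0\}}(\Lap\phi)^2/\phi\,\d y\le0$, hence $u\ge0$ a.e. So every monotone extension of $\frA^\infty$ already has its domain in $L^2_+(\Omega)$.
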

\begin{proof}
	Let $\sfB$ be such an operator and $(u,f)\in\graph(\sfB)$;
	then $u\ge0$, and by monotonicity $(u,f)$ satisfies~\eqref{eq:core-monotone}, since
	$\frA^\infty\subseteq\graph(\sfB)$. Lemma~\ref{le:core-to-min}
	then provides \eqref{eq:min-monotone}.
	Since $\sfA^\circ$ is a principal section of $\sfA$ 
	\cite[Chap.~2, Prop. 2.7]{Brezis73} we deduce that  
	$f\in \sfA(u)$, so that $\graph{(\sfB)}\subset 
	\graph{(\sfA)}$ and the maximality of $\sfA$
	forces equality.
\end{proof}

The same technique extends the key estimate E4 of
Section~\ref{subsec:positive}, i.e.~\eqref{eq:crucial-estimate}, to
the whole graph of~$\sfA$: this will be the engine of the entropy
regularization of the DLSS flow
(Proposition~\ref{prop:entropy-regularization}).
\begin{lemma}[Entropy identity on the graph]
	\label{le:entropy-identity}
	Let $(u,f)\in\graph(\sfA)$.
	  Then
	$u\ln u\in L^2(\Omega)$ and
	\begin{equation}
		\label{eq:entropy-identity}
		\int_\Omega f\,u\ln u\,\d x=
		\int_\Omega\Big((\Lap u)^2+
		\Lap u\,\frac{|\nabla u|^2}{u}\Big)\,\d x
		\;\ge\;c_d\int_\Omega|\rmD^2u|^2\,\d x ,
	\end{equation}
	with $c_d>0$ as in \eqref{eq:crucial-estimate} (the integrand
	$\Lap u\,|\nabla u|^2/u$ being defined as $0$ on $\{u=0\}$,
	where $\Lap u=0$ and $\nabla u=0$ a.e.).
\end{lemma}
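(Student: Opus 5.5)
The plan is to regularize with the heat flow, prove the identity for the smooth uniformly positive approximants $u_\tau:=\sfH_\tau u$ via E4, and pass to the limit $\tau\downarrow0$. We avoid testing $f$ against $u\ln u$ directly, since $\ln u$ is unbounded near the vacuum.

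\emph{Step 1 (integrability).} By Theorem~\ref{thm:main-equivalent} and Proposition~\ref{prop:defect-variational}, $u\in\HtN$ with $\sqrt u\in\HtN$ and $\sfD(u)<\infty$. Then $|u\ln u|\le C(u^{1/2}+u^2)$ together with $u\in L^4$ (Sobolev, or simply $u\in L^\infty$ when $d\le3$) gives $u\ln u\in L^2$. In higher dimension I would instead use $|u\ln u|\le C(\sqrt u+u^{1+\delta})$ together with $u\in H^2$. The gradient term $\Lap u\,|\nabla u|^2/u$ is integrable because $|\nabla u|^2/u=4|\nabla\sqrt u|^2\in L^2$ by the quartic bound \eqref{eq:quartic} applied to $w=\sqrt u$.

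\emph{Step 2 (regularized identity).} Let $u_\tau=\sfH_\tau u$, which for $u\not\equiv0$ lies in $\PZO$ with $\Lap u_\tau\in\ZO$. E4 applies and gives
\[
\la\frA^\circ u_\tau,u_\tau\ln u_\tau\ra=\fra(u_\tau,u_\tau\ln u_\tau)=\int_\Omega\Big((\Lap u_\tau)^2+\Lap u_\tau\frac{|\nabla u_\tau|^2}{u_\tau}\Big)\d x\ge c_d\int_\Omega|\rmD^2u_\tau|^2\,\d x .
\]
Writing the middle term as $\int\Lap u_\tau\Lap(u_\tau\ln u_\tau)-\int\sfh(u_\tau,\Lap u_\tau)\,u_\tau\ln u_\tau$ and using the structure $f=\sfA^\circ u+g$ with $g\le0$, $gu=0$ from Corollary~\ref{prop:minimal-section}, I would express $\int f\,u\ln u$ as $\int\Lap u\,\Lap(u\ln u)-\int\frac{(\Lap u)^2}{u}u\ln u$. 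The term $g$ disappears because $u\ln u=0$ on $\{u=0\}$, and the second integral equals $\int(\Lap u)^2\ln u$ on $\{u>0\}$. Formally, $\Lap(u\ln u)=\Lap u(1+\ln u)+|\nabla u|^2/u$, so the sum is exactly the right-hand side of \eqref{eq:entropy-identity}; the point is to make this rigorous.

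\emph{Step 3 (limit, the main obstacle).} The difficulty is that $\Lap(u\ln u)$ contains $\Lap u\ln u$, which is not obviously in $L^2$ near the vacuum, so the pairing of $\Lap^2u\in L^1$ with $u\ln u$ needs care. I would first avoid the vacuum by truncation: test with $u\ln(u+\eps)$, or with $\psi_\eps(u)$ for a $C^2$ function $\psi_\eps$ with bounded derivative that agrees with $s\ln s$ for $s\ge\eps$. Using Theorem~\ref{thm:main-equivalent}(5) and the chain rule, I would compute $\int f\psi_\eps(u)$ and let $\eps\downarrow0$. The terms $\int(\Lap u)^2\psi_\eps'(u)-\int\frac{(\Lap u)^2}{u}\psi_\eps(u)$ converge to $\int(\Lap u)^2$, since the $\ln$ terms cancel exactly for $u\ge\eps$. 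On $\{u<\eps\}$ they are dominated by $(\Lap u)^2(1+|\ln\eps|)\mathbf 1_{\{0<u<\eps\}}$, and I would control this by writing $(\Lap u)^2=u\cdot\frac{(\Lap u)^2}{u}$ and $u|\ln u|\to0$, using $\sfD(u)<\infty$. The term $\int\Lap u\,\psi_\eps''(u)|\nabla u|^2$ converges to $\int\Lap u\,|\nabla u|^2/u$ by dominated convergence, with the quartic bound giving the domination. Once the identity holds, the inequality follows by applying E4 to $u_\tau$ and using lower semicontinuity. Indeed, $\int|\rmD^2u_\tau|^2\to\int|\rmD^2u|^2$, and the right-hand sides converge because $u_\tau\to u$ in $\HtN$ and $|\nabla u_\tau|^2/u_\tau\to|\nabla u|^2/u$ in $L^2$. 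This last convergence follows from Jensen (Lemma~\ref{le:convex-estimate} applied to the perspective $|p|^4/a^2$) combined with pointwise convergence via Vitali.
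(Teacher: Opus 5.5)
Your argument is sound in outline, but it differs from the paper's in both halves.

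\emph{The identity.} The paper never tests with a nonlinear function of $u$. It applies the $\frb$-relation \eqref{eq:DLSS-syst2} to the smooth test $\ln(\sfH_\tau u+\eps)\in\ZO$, lets $\tau\downarrow0$ at fixed $\eps$, and only then sends $\eps\downarrow0$. You instead pair $\Lap^2u=f+\mu$ (Theorem~\ref{thm:main-equivalent}(5)) directly with $\psi_\eps(u)$. With the choice $\psi_\eps(s)=s\ln(s+\eps)$ one has $\psi_\eps'(s)-\psi_\eps(s)/s=s/(s+\eps)$ and $\psi_\eps''(s)=\frac1{s+\eps}+\frac{\eps}{(s+\eps)^2}\le\frac2s$. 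You therefore land on exactly the paper's intermediate identity for $\int_\Omega f\,u\ln(u+\eps)\,\d x$, and the limit $\eps\downarrow0$ uses the same majorants $(\Lap u)^2$ and $|\Lap u|\,|\nabla u|^2/u$. Your $\eps|\ln\eps|\,\sfD(u)$ estimate on $\{0<u<\eps\}$ is correct, but it is only needed for your other choice of $\psi_\eps$. Your route is more direct. However, it relies on the structure of $\mu$ and needs $\psi_\eps(u)$ to be an admissible element of $\HtN$, whereas the paper's route keeps every test in $\ZO$.

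\emph{The inequality.} The paper applies \eqref{eq:crucial-estimate} to $u+\eps$; you apply it to $\sfH_\tau u\in\PZO$. Yours has the advantage that $\sfH_\tau u$ genuinely belongs to $\PXO$, whereas $u+\eps$ does so only if $u\in L^\infty(\Omega)$.

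Two points need repair.

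First, no function with bounded derivative can coincide with $s\ln s$ on all of $[\eps,\infty)$. Moreover, the identity $\int_\Omega(f+\mu)\,\psi_\eps(u)\,\d x=\int_\Omega\Lap u\,\Lap\psi_\eps(u)\,\d x$ requires $\psi_\eps(u)\in\HtN$, which you never check. For bounded $u$ (e.g.\ $d\le3$) this holds for $\psi_\eps(s)=s\ln(s+\eps)$, since $\psi_\eps''(u)|\nabla u|^2\le2|\nabla u|^2/u\in L^2(\Omega)$ by \eqref{eq:quartic-general} with $w:=u$. If $u$ is unbounded, $\ln(u+\eps)\Lap u$ is not known to lie in $L^2(\Omega)$. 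In that case, first cut off at a level $M\ge1$ by continuing $\psi_\eps$ affinely beyond $M$. By convexity $0\le\psi_{\eps,M}\le\psi_\eps$ there, and $\psi_{\eps,M}'-\psi_{\eps,M}/s\in[0,1]$, so the same dominations hold and the limit $M\uparrow\infty$ is harmless.

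Second, Lemma~\ref{le:convex-estimate} applied to $(u,\nabla u)$ controls $\Theta(\sfH_\tau u,\sfH_\tau\nabla u)$. Since $\nabla\sfH_\tau u\neq\sfH_\tau\nabla u$ for the Neumann semigroup, it does not yield strong $L^2$-convergence of $|\nabla u_\tau|^2/u_\tau$. You do not need that convergence:
\begin{itemize}
\item \eqref{eq:quartic-general} applied to $u_\tau$ gives $\|\,|\nabla u_\tau|^2/u_\tau\|_{2}\le3\tnrm{\Lap u_\tau}$;
\item together with a.e.\ convergence on $\{u>0\}$, this gives weak convergence in $L^2(\{u>0\})$;
\item on $\{u=0\}$ the product vanishes in the limit, because $\Lap u_\tau\to\Lap u=0$ in $L^2(\{u=0\})$.
\end{itemize}
Strong-times-weak convergence then suffices.

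Finally, a smaller slip in Step~1: the bound $|\nabla u|^2/u\in L^2(\Omega)$ comes from \eqref{eq:quartic-general} with $w:=u$. It does not follow from \eqref{eq:quartic} with $w=\sqrt u$, which only controls $|\nabla\sqrt u|^4/u$.
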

\begin{proof}
	{Since $u\in\dom(\sfA)\subset\HtN\subset H^1(\Omega)$, the
	same Gagliardo--Nirenberg interpolation used in the proof of
	Proposition~\ref{prop:apriori-estimate3} (inequality
	\eqref{eq:GN} with $\sigma=2+4/d$, admissible in every dimension
	since $2+4/d\le2^*$), together with the elementary bound
	$|a\ln a|\le C(1+a^{\sigma/2})$, gives
	\begin{equation}
		\label{eq:ulnu-GN}
		\|u\ln u\|_{L^2(\Omega)}\le
		C\big(1+\|u\|_{L2}^{2/d}\,\|\nabla u\|_{L2}\big)<+\infty ,
	\end{equation}
	so that $u\ln u\in L^2(\Omega)$ with no boundedness assumption on
	$u$.} For $\tau,\eps\in(0,1)$ the function
	$w_{\tau,\eps}:=\ln(\sfH_\tau u+\eps)$ belongs to $\ZO$
	(as in Step~0 above), so \eqref{eq:DLSS-syst2} gives, with
	$u_\tau:=\sfH_\tau u$,
	\begin{equation*}
		\int_\Omega f\,u\ln(u_\tau+\eps)\,\d x=
		\frb\big(u,\ln(u_\tau+\eps)\big)=
		\int_\Omega\Big(
		\frac{u\,\Lap u\,\Lap u_\tau}{u_\tau+\eps}
		-\frac{u\,\Lap u\,|\nabla u_\tau|^2}{(u_\tau+\eps)^2}
		+\frac{2\Lap u\,\nabla u\cdot\nabla
		u_\tau}{u_\tau+\eps}\Big)\d x .
	\end{equation*}
	Letting $\tau\down0$ at fixed $\eps$ --- by the strong
	convergence $u_\tau\to u$ in $\HtN$ together with the quartic
	bound \eqref{eq:quartic-general} (which keep $\nabla u_\tau$
	bounded in $L^4$ and $|\nabla u_\tau|^2$ bounded in $L^2$), while
	the denominators satisfy $u_\tau+\eps\ge\eps$ --- we obtain
	\begin{equation*}
		\int_\Omega f\,u\ln(u+\eps)\,\d x=
		\int_\Omega\Big(
		\frac{u\,(\Lap u)^2}{u+\eps}
		-\frac{u\,\Lap u\,|\nabla u|^2}{(u+\eps)^2}
		+\frac{2\Lap u\,|\nabla u|^2}{u+\eps}\Big)\d x .
	\end{equation*}
	As $\eps\down0$, all the integrands vanish a.e.~on $\{u=0\}$
	and converge pointwise on $\{u>0\}$ to $(\Lap u)^2$,
	$-\Lap u|\nabla u|^2/u$ and $2\Lap u|\nabla u|^2/u$
	respectively, with the integrable majorants $(\Lap u)^2$ and
	$|\Lap u|\,|\nabla u|^2/u$ (recall
	$\||\nabla u|^2/u\|_{2}\le3\tnrm{\Lap u}$ by
	\eqref{eq:quartic-general}); on the left-hand side,
	{the elementary bound
	$|u\ln(u+\eps)|\le|u\ln u|+(\ln2)\,u$, valid for every
	$\eps\in(0,1]$, provides the $\eps$-uniform majorant
	$|u\ln u|+(\ln2)\,u\in L^2(\Omega)$ (by \eqref{eq:ulnu-GN} and
	$u\in L^2(\Omega)$), which is integrable against $f\in
	L^2(\Omega)$}. This proves the identity in
	\eqref{eq:entropy-identity}. For the lower bound, we apply
	\eqref{eq:crucial-estimate} to $u+\eps\in\PXO$:
	\begin{equation*}
		\int_\Omega\Big((\Lap u)^2+
		\Lap u\,\frac{|\nabla u|^2}{u+\eps}\Big)\,\d x=
		\fra\big(u+\eps,(u+\eps)\ln(u+\eps)\big)\ge
		c_d\int_\Omega|\rmD^2u|^2\,\d x ,
	\end{equation*}
	and let $\eps\down0$ with the same majorants.
\end{proof}

{%
\subsection{A very weak form of the {minimal} DLSS operator}
\label{subsec:very-weak}
Proposition~\ref{prop:defect-variational} shows that
$\dom(\frA)\subset\WpO$: the class $\WpO$ of \eqref{eq:defW} is thus
the natural environment
for the weakest realization of the DLSS operator. Indeed, if
$u\in\WpO$ then Lemma~\ref{le:sqrt-calculus} shows that $u\in
D_{L1}(\Lap)$, with $\Lap u\in L^1(\Omega)$ vanishing
$\Leb d$-a.e.~on $\{u=0\}$, and Proposition~\ref{prop:quartic} gives
$\sfD(u)\le80\int_\Omega(\Lap\sqrt u)^2\,\d x<+\infty$: both terms of
the form \eqref{eq:form} then retain a meaning when the second
argument is taken in the test space $\ZO$, and we can extend $\fra$
to $\WpO\times\ZO$ by setting
\begin{equation}
	\label{eq:form-veryweak}
	\fra(u,\zeta):=\int_\Omega
	\Big(\Lap u\,\Lap\zeta-
	\sfh(u,\Lap u)\,\zeta\Big)\,\d x,
	\qquad u\in\WpO,\ \zeta\in\ZO,
\end{equation}
with $\sfh$ as in \eqref{eq:defh}, so that
$\int_\Omega \sfh(u,\Lap u)\,\d x=\sfD(u)$ and
\begin{equation}
	\label{eq:veryweak-bound}
	|\fra(u,\zeta)|\le
	\|\Lap u\|_{L^1(\Omega)}\,\|\Lap\zeta\|_{L^\infty(\Omega)}+
	\sfD(u)\,\|\zeta\|_{L^\infty(\Omega)} .
\end{equation}
In particular $\zeta\mapsto\fra(u,\zeta)$ defines an element of the
dual of $\ZO$ for every $u\in\WpO$: a \emph{very weak} realization of
the {minimal} DLSS operator, defined on the whole class $\WpO$, with values in
the dual of a space of smooth test functions.
{Notice that $\fra$ extends the minimal section $\sfA^\circ$ of
Proposition~\ref{prop:minimal-section} since 
\begin{equation}
	\label{eq:fra-minimal}
	\fra(u,\zeta)=
	\int_\Omega \sfA^\circ u\,\zeta\,\d x
	\qquad\text{for every }u\in\dom(\sfA),\ \zeta\in\ZO .
\end{equation}
}

At this level no genuine monotonicity survives: differences $u-v$ of
elements of $\WpO$ are not admissible test functions in
\eqref{eq:form-veryweak}. The core of the monotonicity property is
however retained by the following \emph{asymmetric} inequality, which
pairs an arbitrary $u\in\WpO$ with a smooth and uniformly positive
$v\in\PZO$.
\begin{lemma}[Asymmetric inequality]
	\label{le:asymmetric}
	For every $u\in\WpO$ and every $v\in\PZO$, both $\fra(u,v)$ and
	$\fra(v,u)$ are well defined and
	\begin{equation}
		\label{eq:asymmetric}
		\fra(u,v)+\fra(v,u)=
		-\int_{\{u>0\}}
		\Big(\sqrt{\tfrac vu}\,\Lap u-
		\sqrt{\tfrac uv}\,\Lap v\Big)^{\!2}\,\d x\;\le\;0 .
	\end{equation}
\end{lemma}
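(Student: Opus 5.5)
We first check that both quantities are well defined and then reduce the identity to the pointwise computation behind \eqref{eq:monotone}. Let $u\in\WpO$ and $v\in\PZO$. By Lemma~\ref{le:sqrt-calculus}, $u\in D_{L1}(\Lap)$ with $\Lap u\in L^1(\Omega)$ vanishing a.e.\ on $\{u=0\}$, and $\sfD(u)<\infty$ by Proposition~\ref{prop:quartic}. Hence $\fra(u,v)$ is given by \eqref{eq:form-veryweak} with $\zeta=v\in\ZO$.

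For $\fra(v,u)$ we need an interpretation, because $u\notin\HtN$ in general. Since $v$ is uniformly positive with $\Lap v\in L^\infty$, the term $(\Lap v)^2 v^{-1}u$ is integrable. For the term $\int_\Omega\Lap v\,\Lap u\,\d x$ we read $\Lap u$ as the $L^1$ Laplacian, which is integrable against $\Lap v\in L^\infty$. We therefore set
\[
\fra(v,u):=\int_\Omega\Big(\Lap v\,\Lap u-\frac{(\Lap v)^2}{v}\,u\Big)\d x .
\]
This definition is consistent with \eqref{eq:form} when $u\in\HtN$. In this sense both expressions are finite.

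Next we expand the sum. We get
\[
\fra(u,v)+\fra(v,u)=\int_\Omega\Big(2\Lap u\,\Lap v-\sfh(u,\Lap u)\,v-\frac{(\Lap v)^2}{v}\,u\Big)\d x .
\]
On $\{u=0\}$ we have $\Lap u=0$, $\sfh(u,\Lap u)=0$ and $u=0$ a.e., so the integrand vanishes there. On $\{u>0\}$ the integrand equals
\[
2\Lap u\,\Lap v-\frac{(\Lap u)^2}{u}v-\frac{(\Lap v)^2}{v}u
=-\Big(\sqrt{\tfrac vu}\,\Lap u-\sqrt{\tfrac uv}\,\Lap v\Big)^{2}.
\]
This is the same elementary algebraic identity as in \eqref{eq:monotone}. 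Integrating over $\{u>0\}$ gives \eqref{eq:asymmetric}.

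The step that needs care is integrability, since the identity should not be split into divergent pieces. All three terms are separately in $L^1(\Omega)$: $\Lap u\,\Lap v$ because $\Lap v\in L^\infty$, $\sfh(u,\Lap u)\,v$ because $\sfD(u)<\infty$ and $v\in L^\infty$, and $u(\Lap v)^2/v$ because $\inf v>0$. Hence the square on the right-hand side is integrable as well, and the manipulation is legitimate.

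The other point to get right is the definition of $\fra(v,u)$. The natural alternative would be to define it through $\int_\Omega u\,\Lap^2v\,\d x$, but that requires $\Lap v\in\ZO$. Reading $\Lap u$ directly as the $L^1$ Laplacian avoids this requirement. If a symmetric reading is preferred, one can note that, by Proposition~\ref{prop:L1-closure} (in particular \eqref{eq:bilap-parts}), the two readings agree whenever $\Lap v\in\ZO$.
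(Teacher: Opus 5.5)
Your proof is correct and takes essentially the same approach as the paper. You read $\fra(v,u)$ with the $L^1$ Laplacian of $u$, check the three terms $\Lap u\,\Lap v$, $\sfh(u,\Lap u)\,v$ and $u(\Lap v)^2/v$ separately for integrability, and complete the square pointwise on $\{u>0\}$, using Lemma~\ref{le:sqrt-calculus} to see that the integrand vanishes on $\{u=0\}$ (your side remark on consistency with $\int_\Omega u\,\Lap^2 v\,\d x$ is correct but not needed).
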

\begin{proof}
	$\fra(u,v)$ is well defined by \eqref{eq:veryweak-bound}, since
	$\PZO\subset\ZO$; concerning
	$\fra(v,u)=\int_\Omega\big(\Lap v\,\Lap u-(\Lap
	v)^2u/v\big)\,\d x$, it is sufficient to observe that $\Lap v\in
	L^\infty(\Omega)$, $\Lap u\in L^1(\Omega)$, and $(\Lap
	v)^2/v\in L^\infty(\Omega)$, $u\in L^1(\Omega)$.
	Adding the two expressions we get
	\begin{equation*}
		\fra(u,v)+\fra(v,u)=
		\int_\Omega\Big(2\Lap u\,\Lap v-
		\sfh(u,\Lap u)\,v-\frac{(\Lap v)^2}v\,u\Big)\,\d x ,
	\end{equation*}
	where all three terms of the integrand belong to $L^1(\Omega)$.
	$\Leb d$-a.e.~on $\{u=0\}$ the integrand vanishes, since $\Lap
	u=0$ there by Lemma~\ref{le:sqrt-calculus}(a,c); on $\{u>0\}$ it
	coincides with
	$-\big(\sqrt{v/u}\,\Lap u-\sqrt{u/v}\,\Lap v\big)^2$, whence
	\eqref{eq:asymmetric}.
\end{proof}
\begin{remark}[The core of monotonicity]
	\label{rem:asymmetric}
	Since $\fra(v,v)=0$ and, formally, also $\fra(u,u)=0$ --- the
	diagonal identity encoded in {\rm(A.3)} and in \eqref{eq:cond1}
	--- the quantity $-\fra(u,v)-\fra(v,u)$ is the formal expansion
	of the monotonicity gap
	$\la\frA u-\frA v,u-v\ra=\fra(u,u-v)+\fra(v,v-u)$,
	cf.~\eqref{eq:monotone}: Lemma~\ref{le:asymmetric} thus extends
	the monotonicity of the DLSS operator, in an asymmetric form, up
	to the very weak level, where only one of the two arguments is
	smooth and uniformly positive.
	This inequality will inspire a similar property 
	which will be used to characterize the semigroup
	trajectories among weak solutions of the evolution
	equation, through a B\'enilan-type argument (see
	Section~\ref{subsec:very-weak-solutions}).
\end{remark}
}
{\section{The DLSS evolution equation}
\label{sec:evolution}
In this section we collect the results on the evolution equation
driven by the DLSS operator. They all stem from the maximality of
$\sfA$ in $L^2(\Omega)$ (Theorem~\ref{thm:main-maximal}). We first
obtain the generation of a contraction semigroup, together with the
natural notions of strong and weak (integral) solutions
(Section~\ref{subsec:generation}); we then provide sufficient
conditions in distributional form, well adapted to approximation
schemes (Section~\ref{subsec:distributional}); finally we introduce a
weaker formulation, only involving the class $\WpO$ of
\eqref{eq:defW} --- the regularity that the flow itself guarantees at
a.e.~time, for every initial datum and in every dimension --- and we
show that it still characterizes the semigroup trajectories
(Section~\ref{subsec:very-weak-solutions}): the asymmetric inequality
of Lemma~\ref{le:asymmetric} is the key ingredient.

\subsection{Generation of the DLSS semigroup: strong and B\'enilan integral solutions}
\label{subsec:generation}}
\begin{definition}
    \label{def:strong-solution}
    A continuous curve $u:(0,+\infty)\to L^2(\Omega)$
    is a strong solution to the DLSS equation if
    it is locally Lipschitz in $(0,+\infty)$ and
    \begin{equation}
        \label{eq:strong-solution1}
        \frac{\d}{\d t}
        u_t\in -\sfA u_t\quad\text{for a.e.~$t>0.$}
    \end{equation}
\end{definition}
\begin{definition}
    \label{def:benilan-solution}
    A continuous curve $u:(0,+\infty)\to L^2(\Omega)$
    is a {B\'enilan solution} (or integral solution in the sense of
    B\'enilan)
    to the DLSS equation if
    its $L^2$-norm is constant in time and
    \begin{equation}
        \label{eq:benilan-solution}
        \partial_t \int_\Omega u_t v\,\d x
        \ge
        \int_\Omega g\,u_t\,\d x
        \quad
        \text{in $\mathscr D'(0,+\infty)$,\quad  for every
        $(v,g)\in \sfA.$}
    \end{equation}
\end{definition}
{%
\begin{remark}[{B\'enilan solutions} and the B\'enilan inequalities]
	\label{rem:weak-vs-Benilan}
	If the $L^2$-norm of a continuous curve $u$ is constant in time
	then, for every $v\in L^2(\Omega)$,
	\begin{displaymath}
		\frac{\d}{\d t}\,\frac12\tnrm{u_t-v}^2=
		\frac{\d}{\d t}\Big(\frac12\tnrm{u_t}^2+
		\frac12\tnrm v^2-\int_\Omega u_tv\,\d x\Big)=
		-\,\partial_t\int_\Omega u_tv\,\d x
		\quad\text{in }\mathscr D'(0,+\infty);
	\end{displaymath}
	since $\int_\Omega g\,v\,\d x=0$, \eqref{eq:benilan-solution} is precisely the B\'enilan
	system of inequalities \eqref{eq:Benilan} for the operator
	$\sfA$: 
	{B\'enilan solutions} according to
	Definition~\ref{def:benilan-solution} are exactly the integral
	solutions with constant $L^2$-norm.
	Let us stress that the constancy of the norm is not implied by
	\eqref{eq:benilan-solution}: it encodes, at the lowest regularity
	level, the diagonal identity $\la f,u\ra=0$ of
	\eqref{eq:defect-global}, i.e.~the conservation of the total
	mass $\int_\Omega\varrho_t\,\d x=\tnrm{u_t}^2$ of the DLSS equation
	in the original variables $\varrho=u^2$.
\end{remark}
The domain of $\sfA$ is dense in the cone of nonnegative functions:
\begin{lemma}[Closure of the domain]
	\label{le:domain-closure}
	$\overline{\dom(\sfA)}=
	L^2_+(\Omega):=\big\{u\in L^2(\Omega):\ u\ge0\
	\text{a.e.~in }\Omega\big\}$.
\end{lemma}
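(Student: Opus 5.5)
The plan is to prove the two inclusions separately. The inclusion $\overline{\dom(\sfA)}\subset L^2_+(\Omega)$ is immediate. By Definition~\ref{def:sfA} and Lemma~\ref{le:obvious} (A.1), every element of $\dom(\sfA)$ is nonnegative. Moreover, $L^2_+(\Omega)$ is closed in $L^2(\Omega)$, since $L^2$ convergence implies a.e.\ convergence along a subsequence.

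For the converse inclusion we fix $u\in L^2_+(\Omega)$ and approximate it by the heat flow, $u_\tau:=\sfH_\tau u$. We know that $u_\tau\to u$ in $L^2(\Omega)$ as $\tau\downarrow0$, so it suffices to show that $u_\tau\in\dom(\sfA)$ for every $\tau>0$. If $u\equiv0$ there is nothing to prove: $0\in\dom(\sfA)$ with $\sfA^\circ 0=0$, and all conditions of Corollary~\ref{prop:minimal-section} hold trivially. If $u\not\equiv0$ we use the facts recalled in Section~\ref{subsec:Heat}. By the semigroup property, $u_\tau=\sfH_{\tau/2}(\sfH_{\tau/2}u)$, so $u_\tau$ is a Lipschitz function in $\HtN$. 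Its Laplacian $\Lap u_\tau=\sfH_{\tau/2}\Lap\sfH_{\tau/2}u$ is again Lipschitz with essentially bounded Laplacian, i.e.\ $\Lap u_\tau\in\ZO$ and $\Lap^2u_\tau\in L^\infty(\Omega)$; this uses \eqref{eq:HeatEstimates} iterated on a few time slices. Finally, by the strict positivity of the Neumann heat kernel, $\min_{\overline\Omega}u_\tau\ge c_\tau>0$.

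With these properties we check the criterion \eqref{eq:domainA} of Corollary~\ref{prop:minimal-section}. First, $\sfD(u_\tau)=\int_\Omega(\Lap u_\tau)^2/u_\tau\,\d x\le c_\tau^{-1}\|\Lap u_\tau\|_\infty^2|\Omega|<\infty$. Second, $\Lap^2u_\tau\in L^\infty\subset L^1$. Third, $\sfA^\circ u_\tau=\Lap^2u_\tau-(\Lap u_\tau)^2/u_\tau\in L^\infty(\Omega)\subset L^2(\Omega)$. Alternatively, one can argue exactly as in the proof of Lemma~\ref{le:core-to-min}: there $(u_\tau,\sfA^\circ u_\tau)$ is identified as an element of $\frA^\infty$, and $\frA^\infty\subset\graph(\sfA)$ because the vacuum set is empty and the defect measure therefore vanishes (Definition~\ref{def:frA-strong}). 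To be safe with respect to the corollary's hypotheses, we can also verify membership directly through Theorem~\ref{thm:main-equivalent}, condition 4 with $g=0$. That condition requires $\sfD(u_\tau)<\infty$ and $f=\Lap^2u_\tau-(\Lap u_\tau)^2/u_\tau\in L^2$, and the nonnegativity $u_\tau\ge0$ is clear.

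The only point needing care is the regularity $\Lap^2u_\tau\in L^\infty$, i.e.\ $\Lap u_\tau\in\ZO$. We obtain it by splitting $\tau$ into three pieces: $\sfH_{\tau/3}u\in L^\infty$ by ultracontractivity; then $\Lap\sfH_{\tau/3}(\cdot)$ maps $L^\infty$ into $L^\infty$, using the analytic estimate in $L^p$, $p$ large, combined with ultracontractivity; and one further application of $\sfH_{\tau/3}$ gives a Lipschitz function with bounded Laplacian. This yields $u_\tau\in\dom(\sfA)$, and letting $\tau\downarrow0$ concludes the proof.
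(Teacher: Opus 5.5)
Your proof is correct and takes essentially the paper's approach: regularize the datum by the heat flow, then check membership in $\dom(\sfA)$ through the explicit description of the domain. The only difference is that the paper uses $\sfH_\eps u_0+\eps$, which makes uniform positivity automatic and needs no separate case $u_0\equiv0$, whereas you rely on the strict positivity of the Neumann heat kernel (a fact the paper also records in Section~\ref{subsec:Heat}), which works equally well.
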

\begin{proof}
	The inclusion $\subset$ is clear, since
	$\dom(\sfA)\subset L^2_+(\Omega)$ by {\rm(A.1)} and
	$L^2_+(\Omega)$ is closed. Conversely, let
	$u_0\in L^2_+(\Omega)$ and, for $\eps>0$, set
	$v_\eps:=\sfH_\eps u_0+\eps$. Then $v_\eps\in\PZO$ with
	$\Lap^2v_\eps:=\Lap(\Lap v_\eps)\in L^2(\Omega)$ and
	$\sfA^\circ v_\eps=\Lap^2v_\eps-(\Lap v_\eps)^2/v_\eps\in L^2(\Omega)$.
	Hence
	$v_\eps\in\dom(\sfA)$ and $v_\eps\to u_0$ in $L^2(\Omega)$ as
	$\eps\down0$.
\end{proof}
Combining Theorem~\ref{thm:main-maximal} with the general theory
recalled in Section~\ref{subsec:maximal-general} we obtain the
generation result.}
\begin{theorem}
    \label{thm:generation}
    For every nonnegative $u_0\in L^2_+(\Omega)$
    there exists a unique B\'enilan integral solution~$u$ to the DLSS
    equation
    satisfying $\lim_{t\downarrow0}u_t=u_0$
    in $L^2(\Omega)$.
    The map
    $\sfS_t:u_0\mapsto u_t $
    defines a continuous semigroup of contractions in $L^2(\Omega)$.

    If $u_0\in \dom (\sfA)$
    then $u$ is a strong solution,
    $u_t\in \dom (\sfA)$
    for every $t\ge0$,
    $\|\sfA^\circ u_t\|\le \|\sfA^\circ u_0\|$,
    and $u$ is Lipschitz in $[0,+\infty).$
	Moreover, $u$ is right differentiable in $L^2$ everywhere with
	$$\partial_t^+ u(t,\cdot)=\lim_{h\downarrow0}\frac {u(t+h)-u(t)}h
		\text{ strongly in }L^2(\Omega),$$
	and
	\begin{equation}
		\label{eq:right}
		\partial_t^+ u+\Lap^2 u-\frac{(\Lap u)^2}u=0
		\quad \text{for every }t>0.	
	\end{equation}
\end{theorem}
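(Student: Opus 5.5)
The plan is to read Theorem~\ref{thm:generation} as an instance of the abstract theory of Section~\ref{subsec:maximal-general}, using the maximality of $\sfA$ in $L^2(\Omega)$ (Theorem~\ref{thm:main-maximal}) as the sole nontrivial input, and then to translate the abstract conclusions into the DLSS language with the structural results of Section~\ref{sec:DLSS}. Since $\sfA$ is maximal monotone, the exponential formula \eqref{eq:exponential} defines a contraction semigroup $\sfS_t$ on $\overline{\dom(\sfA)}$, and Lemma~\ref{le:domain-closure} identifies this closure with $L^2_+(\Omega)$. Continuity at $t=0$, the semigroup property and the contraction estimate are then exactly \eqref{eq:semigroup}.

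For existence and uniqueness of B\'enilan solutions, I would invoke the abstract characterization \eqref{eq:Benilan}: $t\mapsto\sfS_tu_0$ is the unique integral solution. Remark~\ref{rem:weak-vs-Benilan} shows that the inequalities \eqref{eq:benilan-solution} coincide with \eqref{eq:Benilan} for curves with constant $L^2$ norm, using $\int_\Omega gv\,\d x=0$ from \eqref{eq:defect-global}. Uniqueness in the class of Definition~\ref{def:benilan-solution} therefore follows from the abstract uniqueness.

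For existence, it remains to check that $\tnrm{\sfS_tu_0}$ is constant. I would first prove this for $u_0\in\dom(\sfA)$. There the abstract theory gives a Lipschitz strong solution with $-u'(t)\in\sfA u(t)$ for a.e.\ $t$, so that
\[
\tfrac{\d}{\d t}\tfrac12\tnrm{u_t}^2=(u'_t,u_t)_\H=-\la f_t,u_t\ra=0,
\qquad f_t\in\sfA u_t,
\]
by \eqref{eq:defect-global}. For general $u_0\in L^2_+(\Omega)$, I would approximate by $u_0^n\in\dom(\sfA)$ and use the contraction property to pass constancy of the norm to the uniform limit.

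For data $u_0\in\dom(\sfA)$, the regularity claims are direct consequences of the abstract results:
\begin{itemize}
\item $u_t\in\dom(\sfA)$ for every $t\ge0$;
\item $u$ is Lipschitz on $[0,+\infty)$, with $\|\sfA^\circ u_t\|$ nonincreasing;
\item $u$ is right differentiable everywhere with $\partial_t^+u=-(\text{minimal section})$, by \eqref{eq:right-derivative}.
\end{itemize}
The point needing care is the identification of the abstract minimal-norm element with the explicit $\sfA^\circ u=\Lap^2u-(\Lap u)^2/u$. This is Corollary~\ref{prop:minimal-section}(a): every $f\in\sfA u$ agrees with $\sfA^\circ u$ on $\{u>0\}$ and is $\le0$ on $\{u=0\}$, so the norm is minimized by taking $f=0$ on the vacuum, which is $\sfA^\circ u$ because $\Lap^2u=0$ and $\Lap u=0$ a.e.\ there. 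Plugging this into \eqref{eq:right-derivative} gives \eqref{eq:right} pointwise in $t$, with no reaction term on $\{u=0\}$.

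The only genuinely delicate step I anticipate is the conservation of the $L^2$ norm for non-smooth data together with the exact matching of Definition~\ref{def:benilan-solution} with the abstract integral solutions. Both are handled by the density and contraction argument above. Everything else is bookkeeping on top of Theorem~\ref{thm:main-maximal}, whose proof is deferred to Section~\ref{sec:maximality}.
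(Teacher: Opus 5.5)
Your proposal is correct and follows the paper's route. You apply the abstract generation theory to the maximal monotone $\sfA$, with $\overline{\dom(\sfA)}=L^2_+(\Omega)$, and then check that $\tnrm{u_t}$ is constant: first for $u_0\in\dom(\sfA)$ via $\la f_t,u_t\ra=0$, then by density and contraction, so that Remark~\ref{rem:weak-vs-Benilan} identifies integral and B\'enilan solutions. Your explicit identification of the abstract minimal section with $\Lap^2u-(\Lap u)^2/u$ through Corollary~\ref{prop:minimal-section} is a step the paper leaves implicit, and it is exactly what turns \eqref{eq:right-derivative} into \eqref{eq:right}.
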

{%
\begin{proof}
	By Theorem~\ref{thm:main-maximal} the operator $\sfA$ is maximal
	monotone in $L^2(\Omega)$ and, by
	Lemma~\ref{le:domain-closure},
	$\overline{\dom(\sfA)}=L^2_+(\Omega)$. The general results
	recalled in Section~\ref{subsec:maximal-general} then provide,
	for every $u_0\in L^2_+(\Omega)$, a unique integral solution
	$u_t:=\sfS_tu_0$ in the sense of \eqref{eq:Benilan} with
	$u_t\to u_0$ as $t\down0$, the semigroup and contraction
	properties of the maps $\sfS_t$ in $L^2_+(\Omega)$, and, when
	$u_0\in\dom(\sfA)$, all the properties of strong solutions
	listed in the statement, cf.\
	\eqref{eq:Cauchy-A}--\eqref{eq:Benilan-integral}.

	By Remark~\ref{rem:weak-vs-Benilan}, in order to identify
	integral and B\'enilan solutions it remains to check that
	$t\mapsto\tnrm{u_t}$ is constant along the semigroup. If
	$u_0\in\dom(\sfA)$ then $u$ is Lipschitz and for a.e.~$t>0$ we
	have $\partial_tu_t=-f_t$ with $f_t\in\sfA u_t$, so that
	\begin{displaymath}
		\frac{\d}{\d t}\,\frac12\tnrm{u_t}^2=
		\int_\Omega \partial_tu_t\,u_t\,\d x=
		-\int_\Omega f_t\,u_t\,\d x=0
	\end{displaymath}
	by \eqref{eq:defect-global} (recall
	\eqref{eq:pairing-bridge}); the general case follows by
	approximating $u_0$ with $u_0^n\in\dom(\sfA)$ and using the
	contraction property
	$\tnrm{\sfS_tu_0^n-\sfS_tu_0}\le\tnrm{u_0^n-u_0}$.
\end{proof}}
The canonicity of Corollary~\ref{cor:test-L2} has a dynamic
counterpart: among all contraction semigroups on $L^2_+(\Omega)$, the
semigroup $\sfS$ is uniquely determined by its behaviour at $t=0^+$ on
the smooth core $\frA^\infty$.
\begin{corollary}[Uniqueness of the contraction semigroup]
	\label{cor:unique-semigroup}
	Let $(T_t)_{t\ge0}$ be a strongly continuous semigroup of
	contractions on $L^2_+(\Omega)$ such that
	\begin{equation}
		\label{eq:tangency}
		\lim_{t\down0}\frac{T_tv-v}t=-\Lap^2 v+\frac{(\Lap v)^2}v
		\quad\text{in }L^2(\Omega)
		\qquad\text{for every }v\in \rmC^\infty(\Omega)\cap \PZO,\ \Lap v\in \ZO.
	\end{equation}
	Then $T_t=\sfS_t$ for every $t\ge0$.
\end{corollary}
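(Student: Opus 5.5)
The strategy is to show that every trajectory of $T$ is a B\'enilan integral solution for $\sfA$, and then invoke the uniqueness part of Theorem~\ref{thm:generation}. The classical route goes through the infinitesimal generator. Let $\sfB$ be the (possibly multivalued) generator of $T$,
\[
\graph(\sfB):=\Big\{(u,f):\ \lim_{t\down0}\tfrac{u-T_tu}{t}=f \text{ weakly in } L^2(\Omega)\Big\}.
\]
Since $T_t$ is a contraction on the closed convex set $L^2_+(\Omega)$, $\sfB$ is accretive, i.e.\ monotone in $\H$. By \eqref{eq:tangency}, $\frA^\infty\subset\graph(\sfB)$, and $\dom(\sfB)\subset L^2_+(\Omega)$. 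We cannot directly conclude $\sfB=\sfA$ from Corollary~\ref{cor:test-L2}, because $\sfB$ need not be maximal. I will therefore avoid maximality of $\sfB$ and work with trajectories.

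\emph{Step 1 (B\'enilan inequality against the core).} Fix $u_0\in L^2_+(\Omega)$, set $u_t:=T_tu_0$, and take $(v,g)\in\frA^\infty$. Contraction gives
\[
\tnrm{T_{t+h}u_0-T_hv}\le\tnrm{u_t-v}.
\]
Since $T_hv=v-hg+o(h)$ by \eqref{eq:tangency}, expanding the square yields
\[
\tfrac12\tnrm{u_{t+h}-v}^2-\tfrac12\tnrm{u_t-v}^2\le h\,(g,v-u_{t+h})+o(h).
\]
Dividing by $h$, integrating in $t$, and using continuity of $t\mapsto u_t$ then gives \eqref{eq:Benilan-integral} for every $(v,g)\in\frA^\infty$. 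This is the standard argument showing that a semigroup whose generator contains a given pair is an integral solution with respect to that pair.

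\emph{Step 2 (extension to the whole graph of $\sfA$).} Next I need \eqref{eq:Benilan-integral} for every $(v,g)\in\graph(\sfA)$. I will first extend it to the minimal section $(v,\sfA^\circ v)$, $v\in\dom(\sfA)$, by mimicking Lemma~\ref{le:core-to-min}. The starting point is the pointwise-in-time inequality, written in differential form as
\[
\partial_t\tfrac12\tnrm{u_t-v_\tau}^2\le(\sfA^\circ v_\tau,\,v_\tau-u_t),\qquad v_\tau:=\sfH_\tau v .
\]
I apply the commutation inequality \eqref{eq:commutationsfA}, which is available since $u_t\ge0$, and let $\tau\down0$; the left-hand side converges because $v_\tau\to v$ in $L^2$. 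Here I use that $(v_\tau,\sfA^\circ v_\tau)\in\frA^\infty$, exactly as in the proof of Lemma~\ref{le:core-to-min}. There is a subtlety: $\sfH_\tau v$ is only in $\ZO$, not in $\rmC^\infty(\Omega)$. If this is a problem, I will first approximate by smooth elements, using interior smoothness of the heat flow, which holds for $\tau>0$.

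Once the inequality holds for the minimal section, I pass to a general $g\in\sfA v=\sfA^\circ v+\partial I_{L^2_+}(v)$ (Corollary~\ref{prop:minimal-section}). Any such $g$ differs from $\sfA^\circ v$ by an element $n\le0$ with $nv=0$, so
\[
(n,v-u_t)=-(n,u_t)\ge0 \quad\text{because } u_t\ge0 .
\]
Hence the inequality persists for all of $\graph(\sfA)$. Alternatively, it is enough to cite the fact that the B\'enilan inequalities with respect to a principal section characterize integral solutions, the same argument as \cite[Brezis]{Brezis73} already used in Corollary~\ref{cor:test-L2}.

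\emph{Step 3 (conclusion).} The curve $u_t=T_tu_0$ is continuous and satisfies $u_0$ at $t=0$, so it is an integral solution for the maximal monotone operator $\sfA$ (Theorem~\ref{thm:main-maximal}). Uniqueness of integral solutions, recalled in Section~\ref{subsec:maximal-general}, gives $T_tu_0=\sfS_tu_0$. Constancy of the norm is not needed, since uniqueness holds in the class \eqref{eq:Benilan-integral}.

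I expect the main obstacle to be Step 2, the passage from the smooth core to the full graph inside the time-dependent inequality. The Heat regularization must be compatible with the precise class $\rmC^\infty(\Omega)\cap\PZO$ with $\Lap v\in\ZO$ appearing in \eqref{eq:tangency}, and the limits $\tau\down0$ must be uniform enough to be interchanged with the time integration. The $o(h)$ remainder in Step 1 is uniform in $t$, since it depends only on $v$, so Step 1 itself should be routine.
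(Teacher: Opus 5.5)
Your proof is correct, and it takes a genuinely different route from the paper's. The paper invokes the K\=omura/Crandall--Pazy generation theorem to obtain a maximal monotone $\sfB$ with $\overline{\dom(\sfB)}=L^2_+(\Omega)$ generating $T$. The tangency \eqref{eq:tangency} then puts each core pair in $\graph(\sfB)$, with $g=\sfB^\circ v$ since the right derivative at $t=0$ is $-\sfB^\circ v$. Corollary~\ref{cor:test-L2} then forces $\sfB=\sfA$.

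You never construct a maximal generator and work on orbits instead:
\begin{itemize}
\item contraction and tangency alone give the B\'enilan inequalities against $\frA^\infty$;
\item the commutation inequality \eqref{eq:commutationsfA} upgrades them to the minimal section. This is Lemma~\ref{le:core-to-min} inserted into the time-integrated inequality, essentially Lemma~\ref{le:Benilan-upgrade} without norm conservation;
\item the explicit normal-cone structure of Corollary~\ref{prop:minimal-section}, together with $u_t\ge0$, replaces the abstract principal-section argument used in Corollary~\ref{cor:test-L2};
\item uniqueness of integral solutions closes the argument.
\end{itemize}
The paper's proof is shorter and identifies the generator itself, but it rests on a deep generation theorem in Hilbert space. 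Yours is self-contained and more elementary: it uses only $T_{t+h}=T_hT_t$, the contraction property of each $T_h$, and the nonnegativity of orbits.

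Two of the obstacles you anticipate are not real.
\begin{itemize}
\item $\sfH_\tau v\in\rmC^\infty(\Omega)$ directly by interior regularity of the heat equation. So $(\sfH_\tau v,\sfA^\circ\sfH_\tau v)\in\frA^\infty$ without any further smoothing; the paper relies on the same fact in Lemma~\ref{le:core-to-min}.
\item The limit $\tau\downarrow0$ under the time integral is plain dominated convergence, since $|(\sfA^\circ v,\sfH_\tau u_r)_\H|\le\tnrm{\sfA^\circ v}\,\tnrm{u_r}$ is bounded on $[s,t]$.
\end{itemize}

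Two points should be made explicit. Before applying the commutation, rewrite $(\sfA^\circ v_\tau,v_\tau-u_r)_\H=-(\sfA^\circ v_\tau,u_r)_\H$ using $(\sfA^\circ v_\tau,v_\tau)_\H=0$, and symmetrically for $v$ after the limit. This matters because \eqref{eq:commutationsfA} applies only against the nonnegative function $u_r$. Also, the degenerate case $v\equiv0$, where $\sfH_\tau v$ is not uniformly positive, needs separate treatment: use the pairs $(c,0)$ with positive constants $c$, which lie in $\frA^\infty$, and let $c\downarrow0$.
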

\begin{proof}
	By the generation theorem of K\=omura and Crandall--Pazy
	\cite[Thm.~4.1]{Brezis73} there exists a (unique) maximal monotone
	operator $\sfB$ in $L^2(\Omega)$ with
	$\overline{\dom(\sfB)}=L^2_+(\Omega)$ whose semigroup is $T$; in
	particular $\dom(\sfB)\subset L^2_+(\Omega)$. Let
	$(v,g)\in\frA^\infty$. By \eqref{eq:tangency}
	$\limsup_{t\down0}\tnrm{T_tv-v}/t<\infty$, so that
	$v\in\dom(\sfB)$, and the right derivative of $t\mapsto T_tv$ at
	$t=0$, which exists by \eqref{eq:tangency}, coincides with
	$-\sfB^\circ v$ \cite[Chap.~III]{Brezis73}: hence
	$g=\sfB^\circ v\in\sfB v$ and
	$\graph(\frA^\infty)\subset\graph(\sfB)$.
	Corollary~\ref{cor:test-L2} then yields $\sfB=\sfA$, and the two
	semigroups coincide.
\end{proof}
In particular, $\sfS$ is the unique contraction semigroup on
$L^2_+(\Omega)$ extending the classical solutions of the DLSS
equation emanating from smooth, uniformly positive initial data.
Indeed, any semigroup extending such classical local-in-time
evolutions satisfies \eqref{eq:tangency}, since a classical solution
starting from $(v,g)\in\frA^\infty$ is differentiable at $t=0$ with
derivative $-g$; and $\sfS$ itself extends them, since a classical
solution is in particular a strong solution in the sense of
Definition~\ref{def:strong-solution} and therefore coincides with
$\sfS_tv$, by the uniqueness in Theorem~\ref{thm:generation}.
\subsection{The implicit Euler scheme and the vacuum}
\label{subsec:Euler}
The maximality of $\sfA$ makes the implicit Euler scheme for
\eqref{eq:intro-inclusion} well posed: given $u_0\in L^2_+(\Omega)$ and a
step size $\tau>0$, the discrete solutions
$U^n_\tau\in\dom(\sfA)$, $n\ge1$, are defined recursively by
\begin{equation}
	\label{eq:Euler-scheme}
	U^0_\tau:=u_0,\qquad
	\frac{U^n_\tau-U^{n-1}_\tau}\tau+\sfA U^n_\tau\ni0,
	\quad\text{i.e.}\quad
	U^n_\tau:=(\rmI+\tau\sfA)^{-1}U^{n-1}_\tau ,
\end{equation}
each step being uniquely solvable by Theorem~\ref{thm:main-maximal},
whose proof in Section~\ref{sec:maximality} constructs the solution
through the variational inequalities of
Theorem~\ref{thm:existence}. By the
exponential formula \eqref{eq:exponential},
\begin{equation}
	\label{eq:Euler-convergence}
	\lim_{n\to\infty}U^n_{t/n}=\sfS_tu_0
	\quad\text{in }L^2(\Omega),\qquad t>0,
\end{equation}
with the classical error estimate of Crandall--Liggett
\cite[Cor.~4.4]{Brezis73}
\begin{equation}
	\label{eq:Euler-rate}
	\tnrm{U^n_\tau-\sfS_{t}u_0}\le
	2\sqrt{t\tau}\,\tnrm{\sfA^\circ u_0},
	\qquad t=n\tau,
\end{equation}
whenever $u_0\in\dom(\sfA)$. Thus \eqref{eq:Euler-scheme} is a genuine
approximation scheme for the DLSS equation. Notice that it preserves nonnegativity for free just because
$\dom(\sfA)\subset L^2_+(\Omega)$. In fact a single step retains more,
namely the two structural features of the continuous flow: it is an
equation and not an inclusion, and it cannot enlarge the vacuum.
\begin{proposition}[The Euler step is defect-free and does not enlarge the
	vacuum]
	\label{prop:Euler-vacuum}
	Let $u_0\in L^2_+(\Omega)$, let $\tau>0$ and let $(U^n_\tau)_{n\ge0}$ be
	given by \eqref{eq:Euler-scheme}. Then for every $n\ge1$:
	\begin{enumerate}[\rm (i)]
	\item $U^n_\tau\in\dom(\sfA^\circ)$ and the inclusion in
	\eqref{eq:Euler-scheme} is in fact the equation
	\begin{equation}
		\label{eq:Euler-equation}
		\frac{U^n_\tau-U^{n-1}_\tau}\tau+\sfA^\circ U^n_\tau=
		\frac{U^n_\tau-U^{n-1}_\tau}\tau
		+\Lap^2U^n_\tau-\frac{(\Lap U^n_\tau)^2}{U^n_\tau}=0
		\quad\text{in }L^2(\Omega) :
	\end{equation}
	only the minimal operator $\sfA^\circ$ is involved, the defect
	measure $\nu=\mu_0$ vanishes, and no reaction term is created on the
	vacuum.
	\item Up to $\Leb d$-negligible sets,
	\begin{equation}
		\label{eq:Euler-vacuum}
		\{U^n_\tau=0\}\subset\{U^{n-1}_\tau=0\}\subset\cdots\subset\{u_0=0\},
	\end{equation}
	i.e.~one step of the implicit Euler scheme cannot enlarge the vacuum
	region by a set of positive Lebesgue measure:
	$\Leb d(\{U^n_\tau=0\}\setminus\{U^{n-1}_\tau=0\})=0$.
	\item The discrete mass
	$\int_\Omega\varrho^n_\tau\,\d x=\tnrm{U^n_\tau}^2$,
	$\varrho^n_\tau:=(U^n_\tau)^2$, is nonincreasing in $n$ and
	\begin{equation}
		\label{eq:Euler-mass}
		\tnrm{U^n_\tau-U^{n-1}_\tau}^2=
		\tnrm{U^{n-1}_\tau}^2-\tnrm{U^n_\tau}^2.
	\end{equation}
	If moreover $u_0\in\dom(\sfA)$, then
	$\tnrm{U^n_\tau-U^{n-1}_\tau}\le\tau\,\tnrm{\sfA^\circ u_0}$ and
	\begin{equation}
		\label{eq:Euler-mass-loss}
		0\le\tnrm{u_0}^2-\tnrm{U^N_\tau}^2=
		\sum_{n=1}^N\tnrm{U^n_\tau-U^{n-1}_\tau}^2
		\le t\,\tau\,\tnrm{\sfA^\circ u_0}^2 ,
		\qquad t=N\tau :
	\end{equation}
	the mass lost by the scheme is of order $\tau$ on every bounded time
	interval, in accordance with the conservation of the $L^2$-norm
	along the flow.
	\end{enumerate}
\end{proposition}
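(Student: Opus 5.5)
Fix $n\ge1$ and set $u:=U^n_\tau$, $u_{-}:=U^{n-1}_\tau\in L^2_+(\Omega)$. The whole argument rests on the element $f:=(u_{-}-u)/\tau$, which belongs to $\sfA u$ and lies in $L^2(\Omega)$.

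\textbf{Step (i): the step is defect-free.} By Corollary~\ref{prop:minimal-section}(c), the inclusion $f\in\sfA u$ means
\[
f=\sfA^\circ u \ \text{ a.e.\ on }\{u>0\},\qquad f\le0\ \text{ a.e.\ on }\{u=0\}.
\]
On the vacuum $\{u=0\}$ we have $f=u_{-}/\tau\ge0$. Together with $f\le0$ this forces $f=0$ there. It also forces $u_{-}=0$ a.e.\ on $\{u=0\}$.

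On the other hand, by \eqref{eq:split} of Theorem~\ref{thm:main-equivalent}, $\sfA^\circ u=0$ on $\{u=0\}$: indeed $\Lap^2u=0$ there, and the perspective term vanishes there by convention \eqref{eq:evoc-notation}. Hence $f=\sfA^\circ u$ a.e.\ in $\Omega$, which is exactly \eqref{eq:Euler-equation}. In particular $u\in\dom(\sfA^\circ)$: the conditions $\Lap^2u\in L^1$, $\sfD(u)<\infty$ and $\sfA^\circ u\in L^2$ are those of \eqref{eq:domainA}. Finally, the defect $\mu_0=(-f)\Leb d\restr{\{u=0\}}$ in \eqref{eq:mu-structure2} vanishes.

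\textbf{Step (ii): the vacuum does not grow.} The inclusion $\{U^n_\tau=0\}\subset\{U^{n-1}_\tau=0\}$ up to null sets is precisely the fact $u_{-}=0$ a.e.\ on $\{u=0\}$ obtained in Step (i). The chain \eqref{eq:Euler-vacuum} then follows by induction on $n$.

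\textbf{Step (iii): mass identity and mass loss.}
\begin{itemize}
\item Using $\la f,u\ra=\int_\Omega fu\,\d x=0$ from \eqref{eq:defect-global}, we get $\int_\Omega(u_{-}-u)u\,\d x=0$, that is $\int_\Omega u_{-}u\,\d x=\tnrm{u}^2$.
\item Expanding the square, $\tnrm{u-u_{-}}^2=\tnrm{u}^2-2\tnrm u^2+\tnrm{u_{-}}^2=\tnrm{u_{-}}^2-\tnrm{u}^2$. This is \eqref{eq:Euler-mass} and gives monotonicity of the discrete mass.
\item If $u_0\in\dom(\sfA)$, the standard resolvent estimate for maximal monotone operators \cite{Brezis73} gives $\tnrm{(\rmI+\tau\sfA)^{-1}v-v}\le\tau\tnrm{\sfA^\circ v}$ for $v\in\dom(\sfA)$.
\item The same theory gives that $\tnrm{\sfA^\circ U^n_\tau}$ is nonincreasing in $n$: by Step (i) the minimal section at step $n$ is exactly $(U^{n-1}_\tau-U^n_\tau)/\tau$, and the resolvent is a contraction. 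Hence $\tnrm{U^n_\tau-U^{n-1}_\tau}\le\tau\tnrm{\sfA^\circ u_0}$.
\item Summing the telescoping identity \eqref{eq:Euler-mass} over $n\le N$ and bounding each term by $\tau^2\tnrm{\sfA^\circ u_0}^2$ yields \eqref{eq:Euler-mass-loss} with $N\tau=t$.
\end{itemize}

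There is no serious obstacle in this argument. The only delicate point is the sign bookkeeping on the vacuum: the normal-cone structure $f\le0$ must be played against the positivity of $u_{-}/\tau$. This relies on the characterization of Theorem~\ref{thm:main-equivalent}, namely $\Lap^2u=0$ on $\{u=0\}$, to conclude $\sfA^\circ u=0$ there.
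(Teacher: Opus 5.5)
Your proposal is correct and follows essentially the paper's argument. The sign clash on the vacuum ($f=U^{n-1}_\tau/\tau\ge0$ against $f\le0$) yields (i) and (ii) at once, and (iii) follows from the identity $\la f,u\ra=0$ of \eqref{eq:defect-global} together with the standard resolvent contraction and the bound $\tnrm{(\rmI+\tau\sfA)^{-1}v-v}\le\tau\tnrm{\sfA^\circ v}$. The only cosmetic difference is that you take $f\le0$ on the vacuum from Corollary~\ref{prop:minimal-section}(c), whereas the paper takes it from Theorem~\ref{thm:vacuum-trace}(ii).
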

\begin{proof}
	We argue by induction on $n\ge1$, the datum $U^0_\tau=u_0$ being
	nonnegative. Assume that $g:=U^{n-1}_\tau$ is nonnegative and set
	$u:=U^n_\tau$, $f:=(g-u)/\tau\in\sfA u$, so that $u$ solves the
	resolvent equation $u+\tau\sfA u\ni g$ with a nonnegative datum.
	On $\{u=0\}$ we have $f=g/\tau\ge0$ a.e., and $f\le0$ a.e.~by
	Theorem~\ref{thm:vacuum-trace}(ii): hence $f=g=0$ a.e.~on
	$\{u=0\}$, which is the first inclusion of \eqref{eq:Euler-vacuum}.
	Since the minimal section is the only element of $\sfA u$ vanishing
	on the vacuum, Corollary~\ref{prop:minimal-section} yields
	$f=\sfA^\circ u$ and $\mu_0=0$, i.e.~(i). Finally
	$\dom(\sfA)\subset L^2_+(\Omega)$ gives $U^n_\tau\ge0$ and the
	induction proceeds.

	Concerning (iii), we test \eqref{eq:Euler-equation} by $U^n_\tau$ and
	use the identity $\int_\Omega\sfA^\circ U^n_\tau\,U^n_\tau\,\d x=0$ of
	\eqref{eq:defect-global}, obtaining
	$\tnrm{U^n_\tau}^2=\int_\Omega U^{n-1}_\tau\,U^n_\tau\,\d x$; the
	Cauchy--Schwarz inequality gives
	$\tnrm{U^n_\tau}\le\tnrm{U^{n-1}_\tau}$ and expanding the square
	yields the identity in \eqref{eq:Euler-mass}.
	When $u_0\in\dom(\sfA)$ we use two elementary consequences of the
	monotonicity of $\sfA$: the resolvent $(\rmI+\tau\sfA)^{-1}$ is a
	contraction in $L^2(\Omega)$, and
	$\tnrm{(\rmI+\tau\sfA)^{-1}v-v}\le\tau\tnrm{\sfA^\circ v}$ for every
	$v\in\dom(\sfA)$ \cite[Chap.~II]{Brezis73}. The increments
	$\tnrm{U^n_\tau-U^{n-1}_\tau}$ are therefore nonincreasing in $n$ and
	bounded by $\tnrm{U^1_\tau-u_0}\le\tau\tnrm{\sfA^\circ u_0}$;
	\eqref{eq:Euler-mass-loss} then follows by summing
	\eqref{eq:Euler-mass}.
\end{proof}
\subsection{Space regularity of solutions}
The next result shows that
the class $\WpO$ is the natural one, since 
the square root of any B\'enilan integral solution belongs to 
$L^2(0,+\infty;\HtN)$; in particular 
$\sfS_t u_0\in \WpO$ 
at a.e.~time, for every initial datum and in every dimension.
\begin{proposition}[$\WpO$ regularity]
	\label{prop:flow-regularity}
	For every $u_0\in L^2_+(\Omega)$ the semigroup trajectory
	$u_t:=\sfS_tu_0$ satisfies
	\begin{equation}
		\label{eq:estimate}
		\int_\Omega u_T\,\d x\ge \int_\Omega u_S\, \d x+
		\int_S^T \sfD(u_t)\,\d t\quad\text{for every }0\le S<T,
	\end{equation}
	which holds as an identity if $u_0\in \dom(\sfA)$.
	In particular 
	$t\mapsto\int_\Omega u_t\,\d x$ is nondecreasing,
	$u_t\in\WpO$ for a.e.~$t>0$, and
	\begin{equation}
		\label{eq:flow-Wp}
		\int_0^{+\infty}\sfD(u_t)\,\d t\le
		|\Omega|^{1/2}\,\tnrm{u_0}-\int_\Omega u_0\,\d x.
	\end{equation}
\end{proposition}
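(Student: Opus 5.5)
First take $u_0\in\dom(\sfA)$. By Theorem~\ref{thm:generation} the trajectory is Lipschitz in $L^2(\Omega)$, with $u_t\in\dom(\sfA)$ and $\partial_tu_t=-f_t$ for a.e.~$t$, where $f_t\in\sfA u_t$. The map $t\mapsto\int_\Omega u_t\,\d x=(u_t,1)_2$ is then Lipschitz, and
\[
\frac{\d}{\d t}\int_\Omega u_t\,\d x=-\int_\Omega f_t\,\d x=-\la f_t,1\ra
\quad\text{for a.e.~}t.
\]
Testing \eqref{eq:DLSS1-pre} with $v\equiv1$ gives $-\la f_t,1\ra=\mu_t(\overline\Omega)$, which is \eqref{eq:mass-identity}.

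To get an identity rather than an inequality, I use the strong-solution structure. For a.e.~$t$ the right derivative equals $-\sfA^\circ u_t$ (see \eqref{eq:right-derivative}), so we may take $f_t=\sfA^\circ u_t$. By Theorem~\ref{thm:main-equivalent} (condition 5) and Corollary~\ref{prop:minimal-section}, the associated measure is then
\[
\mu_t=\frac{(\Lap u_t)^2}{u_t}\,\Leb d\restr{\{u_t>0\}},
\]
with no vacuum part, because $f_t$ vanishes on $\{u_t=0\}$. Hence $\mu_t(\overline\Omega)=\sfD(u_t)$. One can see this more directly: $\int_\Omega\Lap^2u_t\,\d x=0$ follows by testing with $1$ in \eqref{eq:DLSS1-pre} together with $\Lap^2u_t=f_t+\mu_t$. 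Integrating in time yields \eqref{eq:estimate} with equality.

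For a general $u_0\in L^2_+(\Omega)$, I approximate by $u_0^n\in\dom(\sfA)$ (Lemma~\ref{le:domain-closure}). Contraction gives $u^n_t\to u_t$ in $L^2(\Omega)$ uniformly in $t$. The linear terms $\int_\Omega u^n_T\,\d x$ and $\int_\Omega u^n_S\,\d x$ converge. For the dissipation I use Theorem~\ref{thm:Hessian}: $\sfD=\sfD^*$, and by Lemma~\ref{le:relaxedD} it is lower semicontinuous for the $L^2$ (even weak) convergence. Fatou's lemma then gives $\int_S^T\sfD(u_t)\,\d t\le\liminf_n\int_S^T\sfD(u^n_t)\,\d t$, which turns the identity into the inequality \eqref{eq:estimate}.

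The consequences follow in turn. Monotonicity of $t\mapsto\int_\Omega u_t\,\d x$ is immediate from $\sfD\ge0$. Finiteness of $\sfD(u_t)$ for a.e.~$t$ then gives $u_t\in\WpO$ by Theorem~\ref{thm:Hessian}. For \eqref{eq:flow-Wp}, take $S=0$ and bound $\int_\Omega u_T\,\d x\le|\Omega|^{1/2}\tnrm{u_T}=|\Omega|^{1/2}\tnrm{u_0}$, using the conservation of the norm from Theorem~\ref{thm:generation}; then let $T\to\infty$.

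The main obstacle is the identification $\mu_t(\overline\Omega)=\sfD(u_t)$ at a.e.~time, i.e.~that the defect vanishes along strong solutions. This relies on choosing the minimal section as the a.e.~derivative. That choice is justified because a Lipschitz curve's a.e.~derivative coincides with its right derivative, which is $-\sfA^\circ u_t$. The lower semicontinuity step is routine given Lemma~\ref{le:relaxedD} and Theorem~\ref{thm:Hessian}.
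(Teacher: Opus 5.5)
Your proof is correct and follows the paper's argument essentially step by step. For $u_0\in\dom(\sfA)$ you identify $\partial_tu_t=-\sfA^\circ u_t$, whose measure is defect-free with total mass $\sfD(u_t)$, and use \eqref{eq:mass-identity}; for general data you approximate, use the contraction property and the weak lower semicontinuity of $\sfD=\sfD^*$ with Fatou; and you obtain \eqref{eq:flow-Wp} from norm conservation and $T\to\infty$. The aside that $\int_\Omega\Lap^2u_t\,\d x=0$ only restates the mass identity and is not needed.
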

\begin{proof}
	Suppose first $u_0\in\dom(\sfA)$, so that $u$ is Lipschitz and,
	for a.e.~$t>0$, $\partial_tu_t=-\sfA^\circ u_t$ by~\eqref{eq:right-derivative}. By
	Proposition~\ref{prop:minimal-section} the measure associated
	with the minimal section $\sfA^\circ u_t$ is the defect-free
	$\mu_t^\circ=(\Lap u_t)^2 u_t^{-1}\Leb
	d\restr{\{\tilde u_t>0\}}$, whose total mass is $\sfD(u_t)$ by
	\eqref{eq:mu-ac-part}; hence \eqref{eq:mass-identity} gives
	\begin{displaymath}
		\frac{\d}{\d t}\int_\Omega u_t\,\d x=
		-\int_\Omega\sfA^\circ u_t\,\d x=
		\sfD(u_t)\ge0
		\quad\text{for a.e.~}t>0 .
	\end{displaymath}
	An integration in time 
	gives
	\eqref{eq:estimate}.

	For a general $u_0\in L^2_+(\Omega)$ choose
	$u_0^n\in\dom(\sfA)$ with $u_0^n\to u_0$ in $L^2(\Omega)$
	(Lemma~\ref{le:domain-closure}): by contraction
	$u_t^n:=\sfS_tu_0^n\to u_t$ in $L^2(\Omega)$ for every
	$t\ge0$. Since $\sfD=\sfD^*$ is sequentially lower
	semicontinuous with respect to the weak convergence in
	$L^2(\Omega)$ (Theorem~\ref{thm:Hessian}(1) and
	Lemma~\ref{le:relaxedD}), Fatou's Lemma yields
	\begin{displaymath}
		\int_S^{T}\sfD(u_t)\,\d t\le
		\liminf_{n\to\infty}\int_S^{T}\sfD(u^n_t)\,\d t\le
		\lim_{n\to\infty}\int_\Omega (u^n_T-u^n_S)\,\d x
		=\int_\Omega (u_T-u_S)\,\d x
	\end{displaymath}
	for every $0\le S<T$.
	
	Passing to the limit as $T\up+\infty$ and
	the upper bound
	$\int_\Omega u_T\,\d x\le|\Omega|^{1/2}\tnrm{u_T}=
	|\Omega|^{1/2}\tnrm{u_0}$ (the norm being constant) 
	yield
	\eqref{eq:flow-Wp}
	and the monotonicity of
	$t\mapsto\int u_t\,\d x$.
	In
	particular $\sfD(u_t)<+\infty$, i.e.~$u_t\in\WpO$
	(Theorem~\ref{thm:Hessian}(2)), for a.e.~$t>0$.
\end{proof}
{%
We show now a regularization property of the flow: a finite initial \emph{entropy} propagates the
$\HtN$-regularity.
For $v\in L^2_+(\Omega)$ we set
\begin{equation}
	\label{eq:defE}
	\sfE(v):=\int_\Omega v^2\ln (v^2)\,\d x\in
	\biggl[-\frac{|\Omega|}e,+\infty\biggr],
\end{equation}
the physical entropy $\int\varrho\ln\varrho$ of the density $\varrho=v^2$:
$\sfE(v)$ is finite precisely when $v^2\ln v\in L^1(\Omega)$, the
negative part of the integrand being bounded.
\begin{proposition}[Entropy regularization]
	\label{prop:entropy-regularization}
	Let $u_0\in L^2_+(\Omega)$,
	$u_t:=\sfS_tu_0$.
	\begin{enumerate}[\rm (a)]
	\item If $\sfE(u_0)<+\infty$ then
		\begin{equation}
			\label{eq:entropy-dissipation}
			\sfE(u_t)+
			4c_d\int_0^t\!\!\int_\Omega
			|\rmD^2u_r|^2\,\d x\,\d r\le\sfE(u_0)
			\qquad\text{for every }t\ge0,
		\end{equation}
		with $c_d>0$ as in \eqref{eq:crucial-estimate}; in
		particular $u\in L^2_{\rm loc}\big([0,+\infty);\HtN\big)$.
	\item {If moreover $d\le3$, then} for every $u_0\in L^2_+(\Omega)$ one has
		$\sfE(u_s)<+\infty$ for a.e.~$s>0$; consequently
		$u\in L^2_{\rm loc}\bigl((0,+\infty);\HtN\bigr)$.
	\end{enumerate}
\end{proposition}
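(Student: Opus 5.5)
For (a) I would first argue for data $u_0\in\dom(\sfA)$ with $\sfE(u_0)<\infty$, and then pass to general data by density and lower semicontinuity. In the strong case $u$ is Lipschitz in $L^2$ with $\partial_t u_t=-\sfA^\circ u_t$ for a.e.\ $t$, and $u_t\in\dom(\sfA)$. Formally,
\[
\frac{\d}{\d t}\sfE(u_t)=\int_\Omega 2u_t(\ln u_t^2+1)\,\partial_tu_t\,\d x=-4\int_\Omega \sfA^\circ u_t\,u_t\ln u_t\,\d x-2\int_\Omega \sfA^\circ u_t\,u_t\,\d x .
\]
The last term vanishes by the mass identity \eqref{eq:defect-global}. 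The first term equals $-4\int_\Omega((\Lap u_t)^2+\Lap u_t|\nabla u_t|^2/u_t)$, which is bounded above by $-4c_d\int_\Omega|\rmD^2u_t|^2$ by Lemma~\ref{le:entropy-identity} applied to $(u_t,\sfA^\circ u_t)\in\graph(\sfA)$.

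To justify the chain rule I would replace $s\mapsto s^2\ln s^2$ by a regularized convex-plus-$C^1$ integrand, e.g.\ $\psi_\eps(s)=s^2\ln(s^2+\eps)$. Alternatively, I would use convexity of $\psi(s)=s^2\ln s^2+\kappa s^2$ for suitable $\kappa$ to get the discrete inequality
\[
\sfE(u_{t+h})-\sfE(u_t)\le\int\psi'(u_{t+h})(u_{t+h}-u_t)\,\d x
\]
minus the $\kappa$-terms, which cancel by conservation of $\tnrm{u_t}$. I would then divide by $h$ and use that $\psi'(u)=2u\ln u^2+c\,u\in L^2$ by \eqref{eq:ulnu-GN}, together with the right derivative \eqref{eq:right-derivative}. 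This yields \eqref{eq:entropy-dissipation} for such data, integrated on $[0,t]$.

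For a general $u_0$ with $\sfE(u_0)<\infty$, I would approximate by $u_0^n\in\dom(\sfA)$ with $\sfE(u_0^n)\to\sfE(u_0)$. The domain-closure construction of Lemma~\ref{le:domain-closure}, $v_\eps=\sfH_\eps u_0+\eps$, works here: Jensen (Lemma~\ref{le:convex-estimate}) applied to the convex function $s\mapsto s^2\ln s^2+e^{-1}$ on $[0,\infty)$ gives $\sfE(\sfH_\eps u_0)\le\sfE(u_0)$ after accounting for mass. With lower semicontinuity this gives convergence, and adding $\eps$ is harmless. By contraction, $u^n_t\to u_t$ in $L^2$. The Hessian integrals are weakly lower semicontinuous along subsequences bounded in $L^2(0,t;\HtN)$, and $\sfE$ is $L^2$-lsc by Fatou, since its integrand is bounded below. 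Hence the inequality passes to the limit, and the $L^2$ norm bound gives $u\in L^2_{\rm loc}(\HtN)$.

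For (b) I would use Proposition~\ref{prop:flow-regularity}: $u_s\in\WpO$ for a.e.\ $s>0$, so $\sqrt{u_s}\in\HtN\subset L^\infty(\Omega)$ when $d\le3$ (Remark~\ref{rem:continuous-representative}). Hence $u_s\in L^\infty$ and $\sfE(u_s)<\infty$. Applying (a) from such an $s$, via the semigroup property, gives $u\in L^2_{\rm loc}([s,\infty);\HtN)$ for a.e.\ small $s$, hence $u\in L^2_{\rm loc}((0,\infty);\HtN)$.

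The main obstacle will be the rigorous time differentiation of $\sfE$ along strong solutions when $u_t$ touches zero. I would try the convexity/difference-quotient inequality first, since it only needs $\psi'(u_{t+h})\in L^2$ and the strong $L^2$ convergence of difference quotients.
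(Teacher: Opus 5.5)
Your three-step architecture (strong data first, then approximation by $\sfH_{1/n}u_0+\tfrac1n$ with lower semicontinuity, then a restart at a.e.\ $s$ when $d\le3$) is the paper's. However, both technical justifications you give rest on a convexity that does not hold.

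\textbf{Step 1.} Write $\phi(s):=s^2\ln(s^2)$. Then $\phi''(s)=4\ln s+6$, so $\phi+\kappa s^2$ has second derivative $4\ln s+6+2\kappa\to-\infty$ as $s\downarrow0$. It is therefore concave near the vacuum for every $\kappa$. Your $\psi_\eps(s)=s^2\ln(s^2+\eps)$ fails as well, since $\psi_\eps''(0)=2\ln\eps<0$. Consequently the one-sided inequality $\sfE(u_{t+h})-\sfE(u_t)\le\int_\Omega\psi'(u_{t+h})(u_{t+h}-u_t)\,\d x$, on which your preferred route rests, is not available.

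The fix is to notice that the vacuum is not the obstacle at all. Indeed $\phi\in C^1([0,+\infty))$ with $\phi'(0)=0$ and $|\phi'(a)|\le C(1+a^{1+2/d})$. The mean value theorem then gives $|\phi(b)-\phi(a)|\le C(1+\max\{a,b\}^{1+2/d})|a-b|$. Difference quotients are dominated once $u_r$ is bounded in $L^{2+4/d}(\Omega)$ locally uniformly in $r$. This yields local Lipschitz continuity of $t\mapsto\sfE(u_t)$ and the chain rule $\frac{\d}{\d t}\sfE(u_t)=\int_\Omega\phi'(u_t)\,\partial_tu_t\,\d x$, which is the paper's argument.

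The real issue is growth at infinity, uniformly in time. Your version needs it too: $\psi'(u_{t+h})$ must stay bounded in $L^2$ as $h\downarrow0$, not merely lie in $L^2$ at each fixed time. The bound follows from four facts: $\tnrm{\sfA^\circ u_r}\le\tnrm{\sfA^\circ u_0}$, the constancy of $\tnrm{u_r}$, Lemma~\ref{le:entropy-identity} combined with \eqref{eq:ulnu-GN}, and $\tnrm{\nabla u}^2\le\tnrm u\,\tnrm{\Lap u}$. Together these bound $\tnrm{\rmD^2u_r}$, hence $\tnrm{\nabla u_r}$, uniformly.

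\textbf{Step 2.} The same error appears: $s\mapsto s^2\ln s^2+e^{-1}$ is not convex, and in fact $\sfE$ is not monotone along the heat flow acting on $u$. Take $u_0=\sqrt\delta\,\mathbf 1_A$ with $|A|=|\Omega|/2$ and $\delta<1/4$. As $\tau\to\infty$,
$\sfE(\sfH_\tau u_0)\to|\Omega|\tfrac\delta4\ln\tfrac\delta4>\tfrac{|\Omega|}2\delta\ln\delta=\sfE(u_0)$.
So the inequality $\sfE(\sfH_\eps u_0)\le\sfE(u_0)$ cannot be obtained from Jensen's inequality.

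What is true is Jensen for the positive part. We have $\phi_+(s)=G(s^2)$ with $G(a)=a\ln_+a$ convex and nondecreasing, so $\phi_+(\sfH_\tau u_0)\le\sfH_\tau\phi_+(u_0)$. The paper writes this as $G((\sfH_\tau u_0)^2)\le G(\sfH_\tau(u_0^2))\le\sfH_\tau G(u_0^2)$ and concludes with a Vitali argument. The negative part $\phi_-\in[0,1/e]$ is bounded and continuous, so it passes to the limit by dominated convergence. The shift by $\tfrac1n$ is controlled by $|\phi(a+\eta)-\phi(a)|\le C\eta(1+a^2)$. These give $\limsup_n\sfE(u_0^n)\le\sfE(u_0)$, which together with lower semicontinuity is exactly what you need to pass to the limit in \eqref{eq:entropy-dissipation}.

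With these two repairs, the rest of your argument, including part (b), goes through as in the paper.
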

\begin{proof}
	\emph{Step 1: $u_0\in\dom(\sfA)$.}
	By Theorem~\ref{thm:generation}, $u$ is a Lipschitz strong
	solution with $\partial_tu_t=-\sfA^\circ u_t$ for a.e.~$t$,
	and $\sfD(u_t)\le|\Omega|^{1/2}\tnrm{\sfA^\circ u_0}$
	(Proposition~\ref{prop:defect-variational}). {The function
	$\phi(a):=a^2\ln(a^2)$ is $C^1$, with
	$\phi'(a)=2a\ln(a^2)+2a$ satisfying
	$|\phi'(a)|\le C_\delta\,(1+a^{1+\delta})$ for every
	$\delta>0$; taking $\delta=2/d$, the Gagliardo--Nirenberg
	inequality \eqref{eq:GN} gives $u_t\in L^{2+4/d}(\Omega)$ and
	hence $\phi'(u_t)\in L^2(\Omega)$, with
	$\|\phi'(u_t)\|_{L2}\le
	C\big(1+\|u_t\|_{L2}^{2/d}\|\nabla u_t\|_{L2}\big)$ for
	a.e.~$t$ by \eqref{eq:ulnu-GN}; in particular $u_t\ln u_t\in
	L^2(\Omega)$ and $\sfE(u_t)<+\infty$ for every $t\ge0$. Since
	$t\mapsto u_t$ is Lipschitz in $L^2(\Omega)$ with $\partial_t
	u_t=-\sfA^\circ u_t$, the elementary bound
	$|\phi(b)-\phi(a)|\le C\,(1+\max\{a,b\}^{1+2/d})\,|a-b|$
	together with $u_r\in L^{2+4/d}(\Omega)$ (locally uniformly in
	$r$) dominates the difference quotients, so $t\mapsto\sfE(u_t)$
	is absolutely continuous with, at a.e.~$t$,}
	\begin{equation*}
		\frac{\d}{\d t}\sfE(u_t)=
		\int_\Omega\phi'(u_t)\,\partial_tu_t\,\d x=
		-4\int_\Omega \sfA^\circ u_t\,u_t\ln u_t\,\d x-
		2\int_\Omega \sfA^\circ u_t\,u_t\,\d x=
		-4\int_\Omega \sfA^\circ u_t\,u_t\ln u_t\,\d x
	\end{equation*}
	by \eqref{eq:defect-global}. {Since $u_t\in\dom(\sfA)$,
	Lemma~\ref{le:entropy-identity} applies (no boundedness
	assumption needed) and gives}
	$\frac{\d}{\d t}\sfE(u_t)\le-4c_d\int|\rmD^2u_t|^2$;
	an integration yields \eqref{eq:entropy-dissipation}.

	\emph{Step 2: $\sfE(u_0)<+\infty$.}
	Let $u_0^n:=\sfH_{1/n}u_0+\tfrac1n\in\dom(\sfA)$
	(Lemma~\ref{le:domain-closure}) and $u^n_t:=\sfS_tu^n_0$, so
	that $u^n_t\to u_t$ in $L^2(\Omega)$ for every $t\ge0$. We
	claim $\sfE(u^n_0)\to\sfE(u_0)$: by the Jensen inequality
	$(\sfH_\tau u_0)^2\le\sfH_\tau(u_0^2)$ and the monotonicity
	and convexity of $G(a):=a\ln_+a$,
	$G\big((\sfH_\tau u_0)^2\big)\le
	G\big(\sfH_\tau(u_0^2)\big)\le\sfH_\tau\big(G(u_0^2)\big)$,
	and the families
	$\{\sfH_{1/n}\big(G(u_0^2)+u_0^2\big)\}_n$, being convergent
	in $L^1(\Omega)$, are uniformly integrable: a routine
	Vitali argument (using
	$|\phi(a+\eta)-\phi(a)|\le C\eta(1+a^2)$ for the shift
	$\eta=1/n$, and the uniform lower bound $\phi\ge-1/e$) then
	gives the claim. Passing to the limit in
	\eqref{eq:entropy-dissipation} for $u^n$: $\sfE$ is lower
	semicontinuous with respect to $L^2$-convergence (Fatou, as
	$\phi\ge-1/e$), and $u^n\rightharpoonup u$ weakly in
	$L^2((0,t);H^2(\Omega))$ by the uniform bound, so that both
	terms on the left are lower semicontinuous: (a) follows,
	together with $u_r\in\HtN$ for a.e.~$r$ (the weak limit of
	$u^n_r$ remains in the closed subspace $\HtN$).

	\emph{Step 3: general $u_0$.}
	By Proposition~\ref{prop:flow-regularity},
	$\sqrt{u_s}\in\HtN$ for a.e.~$s>0$; for $d\le3$ this gives
	$u_s=(\sqrt{u_s})^2\in L^\infty(\Omega)$, whence
	$\sfE(u_s)\le\tnrm{u_s}^2\ln_+\big(\|u_s\|^2_\infty\big)
	+\tfrac{|\Omega|}e<+\infty$. Applying (a) to the trajectory
	restarted at such an $s$ (semigroup property) we obtain
	$u\in L^2((s,T);\HtN)$ for every $T>s$; since admissible $s$
	can be taken arbitrarily small, (b) follows.
\end{proof}
In particular, for $d\le3$ and any nonnegative initial datum, the
trajectory satisfies at a.e.~$t>0$
$u_t\in\HtN\cap L^\infty(\Omega)$
and $\sqrt{u_t}\in\HtN$ --- the regularity class
$\varrho^{1/2},\varrho^{1/4}\in L^2_{\rm loc}(H^2)$ of Fischer's
uniqueness theorem \cite{Fischer13}, here generated by the semigroup
itself from bare $L^2$ data.}

{\subsection{Distributional formulations of strong and
{B\'enilan} solutions}
\label{subsec:distributional}
The two conditions \eqref{eq:DLSS-syst1}--\eqref{eq:DLSS-syst2}
characterizing the graph of $\sfA$ translate into sufficient
conditions, in distributional form, for a curve to be a strong or a
{B\'enilan} solution: they are the natural target of approximation schemes.
Recall Proposition~\ref{le:DLSS-equivalence}.}
\begin{proposition}
    \label{prop:strong-equivalent}
    Let $u_0\in \dom(\sfA)$ and let 
    $u\in W^{1,2}_{\rm loc}([0,+\infty);L^2(\Omega))
    \cap L^2_{\rm loc}([0,+\infty);\HtN)$ nonnegative
	with $u(0)=u_0$.
    $u$ is a strong solution iff
    $\partial_t u_t+\Lap^2 u_t\ge0$ in $\DHtN$, i.e.
    \begin{equation}
    \label{equivalent1}
        \int_\Omega(\partial_t u_t v+\Lap u_t\Lap v)\,\d x\ge0\quad\text{for a.e.~$t>0$ and every $v\in \PZO$,}
    \end{equation}
    and
    \begin{equation}
        \label{equivalent2}
        \partial_t \int_\Omega u_t^2w\,\d x+
        {2}\,\frb(u_t,w)=0
        \quad\text{for a.e.~$t>0$ and every $w\in \ZO$.}
    \end{equation}
\end{proposition}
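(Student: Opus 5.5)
The plan is to reduce both directions to the pointwise-in-time characterization of $\graph(\sfA)$ in Proposition~\ref{le:DLSS-equivalence}, equivalently conditions \eqref{eq:DLSS-syst1}--\eqref{eq:DLSS-syst2} of Theorem~\ref{thm:main-equivalent}, applied at a.e.\ $t$ with $f:=-\partial_t u_t\in L^2(\Omega)$. Since $u\in W^{1,2}_{\rm loc}([0,+\infty);L^2(\Omega))$, the curve is locally absolutely continuous with an $L^2$-valued derivative defined a.e. Moreover $u_t\in\HtN$ and $u_t\ge0$ for a.e.\ $t$. The statement ``$u$ is a strong solution'' then means $-\partial_t u_t\in\sfA u_t$ for a.e.\ $t$. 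Local Lipschitz continuity is not an issue: if $u$ solves the inclusion with $u(0)=u_0\in\dom(\sfA)$, uniqueness in Theorem~\ref{thm:generation} identifies it with $\sfS_tu_0$, which is Lipschitz. For this I will check that a $W^{1,2}_{\rm loc}$ solution of the inclusion is a Bénilan solution; this follows from the monotonicity computation recalled after \eqref{eq:Benilan-integral}, which only uses absolute continuity.

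First I translate the two conditions. Condition \eqref{equivalent1} at time $t$ reads $\int_\Omega\Lap u_t\Lap v\,\d x\ge\int_\Omega f_t v\,\d x$ for $v\in\PZO$, with $f_t=-\partial_tu_t$. This is exactly \eqref{eq:DLSS-syst1}, i.e.\ (A.2'). For \eqref{equivalent2}, fix $w\in\ZO$. Since $w$ is bounded and $t\mapsto u_t\in L^2$ is absolutely continuous with derivative in $L^2_{\rm loc}$, the map $t\mapsto\int u_t^2w$ is absolutely continuous. Its derivative is $2\int_\Omega \partial_tu_t\,u_tw\,\d x$ a.e.; I justify this by the product rule for $W^{1,2}$ functions with values in $L^2$, using that $u_t^2w$ is integrable. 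Hence \eqref{equivalent2} is equivalent to $\frb(u_t,w)=\int_\Omega f_t\,u_tw\,\d x$, which is \eqref{eq:DLSS-syst2}, i.e.\ (A.3''). One subtlety is that the null set of times must not depend on $w$. I handle it by fixing a countable subset of $\ZO$ that is dense in the $\HtN$ and $W^{1,\infty}$-type topology in which $\frb(u_t,\cdot)$ and $\int f_tu_t\,\cdot$ are continuous. The same device applies to \eqref{equivalent1} with a countable dense subset of $\PZO$, using continuity of both sides in $v\in\HtN$.

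Then the equivalence follows. If $u$ is strong, $(u_t,f_t)\in\graph(\sfA)\subset\graph(\frA)$ at a.e.\ $t$. Proposition~\ref{le:DLSS-equivalence} then gives (A.2') and (A.3''), hence \eqref{equivalent1}--\eqref{equivalent2} by the identity above. Conversely, the conditions give (A.1), (A.2') and (A.3'') at a.e.\ $t$, so $f_t\in\frA u_t$. Since $u_t\in\HtN\subset L^2$ and $f_t\in L^2$, the pair lies in $\graph(\sfA)$ by Definition~\ref{def:sfA}, and thus $\partial_tu_t\in-\sfA u_t$ a.e.

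The main obstacle I expect is the time-derivative identity $\partial_t\int u_t^2w=2\int\partial_tu_t\,u_tw$ together with the choice of a $t$-independent null set. For the former, I would first try approximating $u$ by time mollification and passing to the limit in $W^{1,1}_{\rm loc}(0,\infty)$. For the latter, I would use separability of $\HtN$ and the density of $\ZO$ from Lemma~\ref{le:useful-approximation}. Here I would need continuity of $w\mapsto\frb(u_t,w)$ in a norm controlled on $\ZO$; the bound $|\frb(u,w)|\le C\|u\|_{2,\sfN}^2(\|\Lap w\|_\infty+\|\nabla w\|_\infty)$ should suffice, since $\nabla u\in L^4$ by \eqref{eq:quartic-general}-type estimates.
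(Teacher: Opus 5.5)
Your proposal is correct and follows the paper's proof. With $f_t:=-\partial_tu_t$ and the chain rule $\partial_t\int_\Omega u_t^2w\,\d x=2\int_\Omega u_t\,\partial_tu_t\,w\,\d x$, conditions \eqref{equivalent1}--\eqref{equivalent2} become \eqref{eq:DLSS-syst1}--\eqref{eq:DLSS-syst2} at a.e.\ $t$, and the characterization of $\graph(\sfA)$ (Theorem~\ref{thm:main-equivalent}, condition~2, equivalently Proposition~\ref{le:DLSS-equivalence}) gives both directions.

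Your two additions deserve comment. Identifying $u$ with $\sfS_tu_0$ by uniqueness usefully supplies the local Lipschitz continuity required by Definition~\ref{def:strong-solution}, which the paper's proof passes over.

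Your handling of the common null set needs a small repair:
\begin{itemize}
\item $\ZO$ has no countable dense subset in a $W^{1,\infty}$-type norm, so that route does not work as stated.
\item The $L^4$ bound on $\nabla u$ is not needed, since $u,\nabla u,\Lap u\in L^2$ already bound $\frb(u,\cdot)$.
\item The chain rule holds at every point of strong differentiability of $u$, simultaneously for all $w$.
\item {\rm(A.3'')} on a countable subset of $\ZO$ dense in $\rmC(\overline\Omega)$ already gives {\rm(A.3)} through the measure $\mu$.
\end{itemize}
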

{%
\begin{proof}
	Since $u\in W^{1,2}_{\rm loc}((0,+\infty);L^2(\Omega))$, the
	curve $u$ is locally absolutely continuous with values in
	$L^2(\Omega)$ and for a.e.~$t>0$ the strong derivative
	$\partial_tu_t\in L^2(\Omega)$ exists; moreover, for every
	$w\in\ZO$, the elementary estimate
	$\big|\int_\Omega(u_{t+h}^2-u_t^2)w\,\d x\big|\le
	\|w\|_{L^\infty(\Omega)}\tnrm{u_{t+h}-u_t}\tnrm{u_{t+h}+u_t}$
	shows that $t\mapsto\int_\Omega u_t^2w\,\d x$ is locally
	absolutely continuous as well, with
	$\partial_t\int_\Omega u_t^2w\,\d x=
	2\int_\Omega u_t\,\partial_tu_t\,w\,\d x$ for a.e.~$t>0$.

	Set $f_t:=-\partial_tu_t$. For a.e.~$t>0$ we have $u_t\in\HtN$,
	$u_t\ge0$, and \eqref{equivalent1}, \eqref{equivalent2} take
	exactly the form \eqref{eq:DLSS-syst1}, \eqref{eq:DLSS-syst2}:
	hence $(u_t,f_t)\in\graph(\sfA)$, i.e.~$u$ solves the
	differential inclusion \eqref{eq:strong-solution1} for
	a.e.~$t>0$.

	Conversely, let $u$ be a strong solution. By
	Definition~\ref{def:strong-solution} we have
	$f_t:=-\partial_tu_t\in\sfA u_t$ for a.e.~$t>0$, so that
	$(u_t,f_t)\in\graph(\sfA)$ and Theorem~\ref{thm:main-equivalent}
	(condition 2) gives \eqref{eq:DLSS-syst1} and
	\eqref{eq:DLSS-syst2} for such $t$. The first is
	\eqref{equivalent1}; the second becomes \eqref{equivalent2} once
	the identity $\partial_t\int_\Omega u_t^2w\,\d x=
	2\int_\Omega u_t\,\partial_tu_t\,w\,\d x$ obtained above is
	inserted.
\end{proof}}
\begin{proposition}
    \label{prop:weak-equivalent}
    If
    $u\in W^{1,2}_{\rm loc}((0,+\infty);\DHtN)
    \cap L^2_{\rm loc}((0,+\infty);\HtN)$
    satisfy
    $u\ge 0$,
    $\partial_t u_t+\Lap^2 u_t\ge0$ in $\DHtN$, i.e.
    \begin{equation}
    \label{w-equivalent1}
        \partial_t\int_\Omega u_t v\,\d x+
        \int_\Omega \Lap u_t\Lap v\,\d x\ge0\quad\text{for a.e.~$t>0$ and every $v\in \PZO$,}
    \end{equation}
    and
    \begin{equation}
        \label{w-equivalent2}
        \partial_t \int_\Omega u_t^2w\,\d x+
        2\,\frb(u_t,w)=0
        \quad\text{in $\mathscr D'(0,+\infty)$ for every $w\in \ZO$.}
    \end{equation}
    Then $u$ is a B\'enilan solution.
\end{proposition}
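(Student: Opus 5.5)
The plan is to show that for a.e.\ $t>0$ the pair $(u_t,-\partial_tu_t)$ belongs to the graph of $\frA$ in the $\HtN$--$\DHtN$ duality. The B\'enilan inequalities then follow from the monotonicity of $\frA$ (Theorem~\ref{thm:DLSS-monotone}), and the constancy of the norm from the diagonal identity \eqref{eq:defect-global}.

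\emph{Step 1 (time regularity and chain rules).} Since $u\in W^{1,2}_{\rm loc}((0,+\infty);\DHtN)\cap L^2_{\rm loc}((0,+\infty);\HtN)$ in the Hilbert triple \eqref{eq:2}, the Lions--Magenes lemma gives $u\in\rmC((0,+\infty);L^2(\Omega))$ and
$\frac{\d}{\d t}\tnrm{u_t}^2=2\la\partial_tu_t,u_t\ra$ a.e. It also gives $\partial_t\int_\Omega u_tv\,\d x=\la\partial_tu_t,v\ra$ for every $v\in\HtN$.

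I would then establish the weighted version
$$\partial_t\int_\Omega u_t^2w\,\d x=2\la\partial_tu_t,u_tw\ra\qquad\text{for every }w\in\ZO .$$
Multiplication by $w$ is bounded on $\HtN$ by \eqref{eq:Z-acts}, is symmetric on $L^2$, and by duality is bounded on $\DHtN$. One can therefore apply Lions--Magenes to the bilinear expression $\la u_t,wu_t\ra$, after regularizing $u$ in time by mollification and passing to the limit. I expect this justification to be the main technical point, and it is routine once the boundedness of $M_w:v\mapsto wv$ on all three spaces is recorded.

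\emph{Step 2 (graph membership).} Set $f_t:=-\partial_tu_t\in\DHtN$. With this notation, \eqref{w-equivalent1} reads $\int_\Omega\Lap u_t\Lap v\,\d x\ge\la f_t,v\ra$ for every $v\in\PZO$, which is (A.2'). By Step~1, \eqref{w-equivalent2} becomes $\frb(u_t,w)=\la f_t,u_tw\ra$ for every $w\in\ZO$ at a.e.\ $t$, which is (A.3''). Countably many $w$ from a dense subset of $\ZO$ suffice to choose a common full-measure set of times, and the same applies to $v$. Together with $u_t\ge0$ and $u_t\in\HtN$, Proposition~\ref{le:DLSS-equivalence} yields $f_t\in\frA u_t$ for a.e.\ $t$.

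\emph{Step 3 (conclusion).} By \eqref{eq:defect-global} we have $\la f_t,u_t\ra=0$, hence $\frac{\d}{\d t}\tnrm{u_t}^2=-2\la f_t,u_t\ra=0$ a.e. By the continuity from Step~1 the $L^2$-norm is constant. (Alternatively, take $w\equiv1$ in \eqref{w-equivalent2}, since $\frb(u,1)=0$.)

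Now let $(v,g)\in\graph(\sfA)\subset\graph(\frA)$, so that $v\in\HtN$ and $\la g,v\ra=0$. Monotonicity gives $0\le\la f_t-g,u_t-v\ra=-\la f_t,v\ra-\int_\Omega g\,u_t\,\d x$. By Step~1 this is
$$\partial_t\int_\Omega u_tv\,\d x=-\la f_t,v\ra\ge\int_\Omega g\,u_t\,\d x$$
for a.e.\ $t$. Since $t\mapsto\int_\Omega u_tv\,\d x$ is locally absolutely continuous, this pointwise a.e.\ inequality is the distributional inequality \eqref{eq:benilan-solution}. Hence $u$ is a B\'enilan solution in the sense of Definition~\ref{def:benilan-solution}.
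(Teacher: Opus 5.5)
Your proposal is correct and follows the paper's proof essentially step by step. The steps match: the time-mollification chain rule $\partial_t\int_\Omega u_t^2w\,\d x=2\la\partial_tu_t,u_tw\ra$, then $-\partial_tu_t\in\frA u_t$ for a.e.\ $t$ via Proposition~\ref{le:DLSS-equivalence}, then norm conservation, and finally the B\'enilan inequalities from the monotonicity of $\frA$ together with $\la g,v\ra=0$.

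Your remark on choosing a common null set of times is a point the paper passes over, and it is a genuine improvement. ``Dense'' must be understood in a sense compatible with both sides of {\rm (A.3$''$)}: approximation in $\HtN$ with uniform bounds on $\|w_n\|_{W^{1,\infty}}+\|\Lap w_n\|_{L^\infty}$. A countable family such as $\{\sfH_{1/m}\zeta_k\}$, with $(\zeta_k)$ dense in $L^2(\Omega)$, provides such approximations.
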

{%
\begin{proof}
	Recall (Section~\ref{subsec:monotone-VV}) that
	$\HtN\subset L^2(\Omega)\subset\DHtN$ with
	$\la f,v\ra=\int_\Omega fv\,\d x$ whenever $f\in L^2(\Omega)$
	and $v\in\HtN$, by \eqref{eq:pairing-bridge}. By a standard
	mollification in time, the regularity
	$u\in W^{1,2}_{\rm loc}((0,+\infty);\DHtN)\cap
	L^2_{\rm loc}((0,+\infty);\HtN)$ yields
	$u\in\rmC((0,+\infty);L^2(\Omega))$ and, for every $w\in\ZO$,
	the local absolute continuity of
	$t\mapsto\int_\Omega u_t^2\,w\,\d x$ with
	\begin{equation}
		\label{eq:weighted-chain}
		\partial_t\int_\Omega u_t^2\,w\,\d x=
		2\,\la\partial_tu_t,u_tw\ra
		\quad\text{for a.e.~}t>0,
	\end{equation}
	where $u_tw\in\HtN$ by \eqref{eq:Z-acts} (the multiplication
	$v\mapsto vw$ is a bounded operator in $\HtN$ and a bounded
	symmetric operator in $L^2(\Omega)$; the choice $w\equiv1$
	gives the classical identity
	$\partial_t\tnrm{u_t}^2=2\la\partial_tu_t,u_t\ra$).

	Set $f_t:=-\partial_tu_t\in\DHtN$. For a.e.~$t>0$ the function
	$u_t\in\HtN$ is nonnegative and: \eqref{w-equivalent1} means
	$\la f_t,v\ra\le\int_\Omega\Lap u_t\Lap v\,\d x$ for every
	$v\in\PZO$, i.e.~{\rm(A.2')}; \eqref{w-equivalent2} and
	\eqref{eq:weighted-chain} give
	$\frb(u_t,w)=\la f_t,u_tw\ra$ for every $w\in\ZO$,
	i.e.~{\rm(A.3'')}. By
	Proposition~\ref{le:DLSS-equivalence}, $f_t\in\frA u_t$ for
	a.e.~$t>0$.

	Choosing $w\equiv1$ in \eqref{w-equivalent2} and observing that
	$\frb(u_t,1)=0$, we see that $\tnrm{u_t}$ is constant in time.
	Finally, let $(v,g)\in\graph(\sfA)\subset\graph(\frA)$. For
	a.e.~$t>0$, by \eqref{eq:defect-global} and the monotonicity of
	$\frA$ (Theorem~\ref{thm:DLSS-monotone}),
	\begin{displaymath}
		\partial_t\int_\Omega u_tv\,\d x=
		-\la f_t,v\ra=\la f_t,u_t-v\ra\ge
		\la g,u_t-v\ra=\int_\Omega g\,(u_t-v)\,\d x ,
	\end{displaymath}
	which is \eqref{eq:benilan-solution}: $u$ is a B\'enilan solution.
\end{proof}}

\subsection{Weak solutions and the identification of the
semigroup trajectories}
\label{subsec:very-weak-solutions}

In this subsection we obtain 
various equivalent characterizations of
B\'enilan integral solutions.

We first isolate a useful technical property: a curve
satisfying the B\'enilan-type inequalities against the smooth,
uniformly positive test functions of $\PZO$ automatically satisfies
them against the whole graph of $\sfA$.

Recall that every $u\in \WpO$ satisfies $\Lap u\in L^1(\Omega)$
and  
\begin{equation}
	\label{eq:integration-by-parts}
	\int_\Omega u\,\sfA^\circ v\,\d x=
	\int_\Omega \Lap u\,\Lap v\,\d x-
	\int_\Omega u\frac{(\Lap v)^2}v\,\d x
	\qquad
	\text{for every }v\in \PZO.
\end{equation}
\begin{lemma}[B\'enilan inequalities with test functions are enough]
	\label{le:Benilan-upgrade}
	Let $u:(0,+\infty)\to L^2(\Omega)$ be a continuous curve of
	nonnegative functions such that 
	\begin{equation}
		\label{eq:core-family}
		\partial_t\int_\Omega u_tv\,\d x\ge
		\int_\Omega u_t\,\sfA^\circ v\,\d x
		\quad\text{in }\mathscr D'(0,+\infty),\quad
		\text{for every }v\in\PZO .
	\end{equation}
	Then
	\begin{equation}
		\label{eq:full-family}
		\partial_t\int_\Omega u_tv\,\d x\ge
		\int_\Omega g\,u_t\,\d x
		\quad\text{in }\mathscr D'(0,+\infty),\quad
		\text{for every }(v,g)\in\graph(\sfA).
	\end{equation}
\end{lemma}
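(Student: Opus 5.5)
We follow the pattern of Lemma~\ref{le:core-to-min}: approximate a general pair $(v,g)\in\graph(\sfA)$ by the heat-regularized functions $v_\tau:=\sfH_\tau v$, apply \eqref{eq:core-family} to them, and move the heat semigroup from $v$ onto $u_t$ by self-adjointness. The commutation inequality \eqref{eq:commutationsfA} then controls the nonlinear perspective term, and we conclude by letting $\tau\downarrow0$. Since everything is stated in $\mathscr D'(0,+\infty)$, we fix a nonnegative test function $\eta\in\rmC^\infty_c(0,+\infty)$ and work with the integrated inequality
\begin{displaymath}
	-\int_0^\infty\eta'(t)\int_\Omega u_tv\,\d x\,\d t\ge
	\int_0^\infty\eta(t)\int_\Omega u_t\,\sfA^\circ v\,\d x\,\d t .
\end{displaymath}

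\emph{Step 1: admissibility of $v_\tau$.} For $v\in\dom(\sfA)$ and $\tau>0$, the function $v_\tau=\sfH_\tau v$ is uniformly positive by the positivity of the Neumann heat kernel (if $v\equiv0$ the claim is trivial, since then $g\le0$ and $u_t\ge0$ give $\int g\,u_t\le0=\partial_t\int u_t v$). It also belongs to $\ZO$ with $\Lap v_\tau=\sfH_\tau\Lap v\in\ZO$. Hence $v_\tau\in\PZO$ and \eqref{eq:core-family} applies to it. For a.e.\ $t$ we have $u_t\ge0$, so taking $\sfH_\tau u_t$ as the nonnegative function in \eqref{eq:commutationsfA} gives
\begin{displaymath}
	\int_\Omega u_t\,\sfA^\circ v_\tau\,\d x=\int_\Omega u_t\,\sfA^\circ(\sfH_\tau v)\,\d x\ge\int_\Omega \sfA^\circ v\;\sfH_\tau u_t\,\d x .
\end{displaymath}
Here we use \eqref{eq:commutationsfA} with $u$ replaced by $v$ and test function $u_t$, in the form \eqref{eq:nice}, $\sfA^\circ(\sfH_\tau v)\ge\sfH_\tau(\sfA^\circ v)$, together with the self-adjointness of $\sfH_\tau$. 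Therefore
\begin{displaymath}
	-\int_0^\infty\eta'\int_\Omega u_tv_\tau\,\d x\,\d t\ge\int_0^\infty\eta\int_\Omega \sfH_\tau(\sfA^\circ v)\,u_t\,\d x\,\d t .
\end{displaymath}

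\emph{Step 2: passage to the limit.} As $\tau\downarrow0$ we have $v_\tau\to v$ and $\sfH_\tau(\sfA^\circ v)\to\sfA^\circ v$ strongly in $L^2(\Omega)$. Since $u$ is continuous, $\sup_{t\in\mathrm{supp}\,\eta}\tnrm{u_t}<\infty$, so dominated convergence in $t$ yields the inequality with $\sfA^\circ v$ on the right-hand side.

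\emph{Step 3: from $\sfA^\circ v$ to a general $g$.} By Corollary~\ref{prop:minimal-section}, every $g\in\sfA v$ satisfies $g=\sfA^\circ v+n$ with $n\in\partial I_{L^2_+}(v)$, that is, $n\le0$. Since $u_t\ge0$, this gives $\int_\Omega g\,u_t\,\d x\le\int_\Omega \sfA^\circ v\,u_t\,\d x$, and \eqref{eq:full-family} follows.

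The only delicate step is Step 1. One must check that \eqref{eq:nice} holds as a pointwise inequality between $L^2$ functions, so that it can be paired with an arbitrary $u_t\in L^2_+(\Omega)$ without any regularity of $u_t$. The proof of Lemma~\ref{le:commutationfrA} via Corollary~\ref{cor:main-ineq} provides exactly this, for every nonnegative $L^2$ test function.
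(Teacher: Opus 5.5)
Your proposal is correct and follows essentially the same route as the paper. You regularize $v$ by $\sfH_\tau v\in\PZO$, apply \eqref{eq:core-family}, use the commutation inequality \eqref{eq:commutationsfA} (equivalently \eqref{eq:nice} paired with $u_t\ge0$), let $\tau\downarrow0$, and absorb the normal-cone part $g-\sfA^\circ v\le0$ using $u_t\ge0$. The differences are only cosmetic: you keep a nonnegative time test function $\eta$ instead of integrating over $[S,T]$, you reduce from $g$ to $\sfA^\circ v$ at the end rather than at the start, and you treat the degenerate case $v\equiv0$ explicitly.
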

Notice that the integral in the right-hand side of \eqref{eq:core-family} can be expanded to \eqref{eq:integration-by-parts} whenever $u_t\in \WpO$.
\begin{proof}
	Let $(v,g)\in\graph(\sfA)$ 
	so that $g=\sfA^\circ v-g_0$, $g_0\in L^2_+(\Omega)$.

	Since $g_0$ is nonnegative, 
	\eqref{eq:full-family} is satisfied if 
	\begin{equation}
		\label{eq:full-family2}
		\int_\Omega u_T\,v\,\d x\ge
		\int_\Omega u_S\,v\,\d x+
		\int_S^T \int_\Omega \sfA^\circ v\,u_t\,\d x\,\d t
		\quad\text{for every }0\le S<T.
	\end{equation}
	For $\tau>0$ set $v_{\tau}:=\sfH_\tau v$: then
	$v_{\tau}\in\PZO$ (Section~\ref{subsec:Heat}) and
	$\Lap v_{\tau}=\sfH_\tau\Lap v\in \ZO$.
	Integrating \eqref{eq:core-family} from $S$ to $T$
	and choosing $v:=v_\tau$ we get
	\begin{equation}
		\label{eq:full-familytau}
		\int_\Omega u_T\,v_\tau\,\d x\ge
		\int_\Omega u_S\,v_\tau\,\d x+
		\int_S^T \int_\Omega \sfA^\circ v_\tau\,u_t\,\d x\,\d t.
	\end{equation}
	By the commutation inequality \eqref{eq:commutationsfA}
	we get
	\begin{equation}
		\label{eq:full-familycommute}
		\int_\Omega u_T\,v_\tau\,\d x\ge
		\int_\Omega u_S\,v_\tau\,\d x+
		\int_S^T \int_\Omega \sfA^\circ v\,\sfH_\tau u_t\,\d x\,\d t.
	\end{equation}
	Passing to the limit as $\tau\downarrow0$ we 
	obtain \eqref{eq:full-family2}.
\end{proof}
Recalling Definition \ref{def:benilan-solution}
we obtain an equivalent formulation which involves no time
regularity of the curve and tests the equation only against smooth,
uniformly positive, functions.
\begin{corollary}
    \label{prop:weak-equivalent2}
    A continuous
    nonnegative curve $u:[0,+\infty)\to L^2(\Omega)$
	is a B\'enilan integral solution if and only if 
    $t\mapsto \|u_t\|_2^2$ is
    constant in time and
	it satisfies \eqref{eq:core-family}.
\end{corollary}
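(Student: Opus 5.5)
The plan is to prove the two implications separately, using Definition~\ref{def:benilan-solution}, Lemma~\ref{le:Benilan-upgrade} and the graph characterization of Corollary~\ref{prop:minimal-section}.

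\emph{Necessity.} Let $u$ be a B\'enilan integral solution. By Definition~\ref{def:benilan-solution} the $L^2$-norm is constant, and \eqref{eq:benilan-solution} holds for every $(v,g)\in\graph(\sfA)$. Fix $v\in\PZO$. It is enough to check that $(v,\sfA^\circ v)\in\graph(\sfA)$, which we get from Corollary~\ref{prop:minimal-section}:
\begin{itemize}
\item $v\in\HtN$ is uniformly positive, so $\sfD(v)\le\|(\Lap v)^2\|_{\infty}/\inf v<\infty$;
\item $\Lap^2 v$ must lie in $L^1(\Omega)$, and more precisely $\sfA^\circ v\in L^2(\Omega)$.
\end{itemize}
Here lies a subtlety. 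For a general $v\in\PZO$ we only know $\Lap v\in L^\infty$, not $\Lap^2 v\in L^2$. There are two options.
\begin{itemize}
\item If the paper's $\PZO$ implicitly requires $\Lap v\in\ZO$ (as in the core), the membership is immediate.
\item Otherwise, read \eqref{eq:core-family} with the right-hand side in the expanded form \eqref{eq:integration-by-parts}. Apply \eqref{eq:benilan-solution} to $v_\tau:=\sfH_\tau v$, which is in $\dom(\sfA)$ by Lemma~\ref{le:domain-closure}-type reasoning since $\Lap v_\tau=\sfH_\tau\Lap v\in\ZO$. Then let $\tau\downarrow0$.
\end{itemize}

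For the limit in the second option we use the following facts:
\begin{itemize}
\item $v_\tau\to v$ in $\HtN$, and $(\Lap v_\tau)^2/v_\tau\to(\Lap v)^2/v$ in $L^2$, since $\inf v_\tau\ge\inf v$ and $\Lap v_\tau\to\Lap v$ in $L^2$ with uniform $L^\infty$ bounds;
\item $u_t\in\WpO$ for a.e.\ $t$ with $\int\sfD(u_t)\,\d t<\infty$ (Proposition~\ref{prop:flow-regularity}). Together with Theorem~\ref{thm:Hessian} this gives $\Lap u_t\in L^1$, locally integrable in time, since $\|\Lap u_t\|_1\le 2\|\sqrt{u_t}\|_2\|\Lap\sqrt{u_t}\|_2+2\|\nabla\sqrt{u_t}\|_2^2$.
\end{itemize}
Testing with nonnegative $\eta\in\rmC^\infty_c$, dominated convergence then passes the limit in $\int_0^\infty\eta\int\Lap u_t\Lap v_\tau$ and in the perspective term. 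This yields \eqref{eq:core-family}.

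\emph{Sufficiency.} Let $u$ be continuous and nonnegative with constant norm and satisfying \eqref{eq:core-family}. Lemma~\ref{le:Benilan-upgrade} upgrades this to \eqref{eq:full-family} for all $(v,g)\in\graph(\sfA)$, which is exactly \eqref{eq:benilan-solution}. With the constant norm, $u$ is a B\'enilan solution by Definition~\ref{def:benilan-solution}. By Remark~\ref{rem:weak-vs-Benilan} it is then an integral solution, and by the uniqueness in Theorem~\ref{thm:generation} it coincides with $\sfS_tu_0$, where $u_0=u(0)$.

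The main obstacle is the necessity direction, specifically the approximation of a general $v\in\PZO$ when $\sfA^\circ v\notin L^2$. The plan is to handle this through the heat regularization $v_\tau$ and the time-integrated bounds on $\Lap u_t$ in $L^1$ coming from Proposition~\ref{prop:flow-regularity}. If near $t=0$ integrability fails, it suffices to work with test functions $\eta$ supported in $(0,\infty)$, which is all that $\mathscr D'(0,+\infty)$ requires.
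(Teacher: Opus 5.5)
Your proof is correct. The sufficiency half is the paper's own argument: the paper gives no separate proof and derives the corollary from Lemma~\ref{le:Benilan-upgrade} and Definition~\ref{def:benilan-solution}, exactly as you do. Since that lemma only ever uses the regularized tests $\sfH_\tau v$ with $\Lap\sfH_\tau v\in\ZO$, no interpretation issue arises for a merely $L^2$ curve. Your final identification with $\sfS_tu(0)$ is harmless but not needed.

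For necessity the paper says nothing. It implicitly treats \eqref{eq:core-family} as a subfamily of \eqref{eq:benilan-solution}. As you correctly point out, this is literal only when $(v,\sfA^\circ v)\in\graph(\sfA)$. That can fail for a general $v\in\PZO$: only $\Lap v\in L^\infty$ is known, so $\sfA^\circ v$ lies merely in $\DHtN$, and the right-hand side of \eqref{eq:core-family} must be read through \eqref{eq:integration-by-parts}. Your heat regularization closes this gap: $v_\tau=\sfH_\tau v\in\dom(\sfA)$, expanded via \eqref{eq:bilap-parts}, with dominated convergence based on $\|\Lap u_t\|_{L^1(\Omega)}\le C\sqrt{\sfD(u_t)}$ and $\sfD(u_t)\in L^1(0,+\infty)$.

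Two points need fixing.
\begin{itemize}
\item Proposition~\ref{prop:flow-regularity} is stated only for semigroup trajectories. Before invoking it in the necessity part, you must say that a B\'enilan solution coincides with $\sfS_tu(0)$, by the uniqueness in Theorem~\ref{thm:generation}.
\item Your bound on $\sfD(v)$ is missing a factor $|\Omega|$.
\end{itemize}

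An alternative route to necessity, closer to the rest of the paper, takes (W3) from the necessity half of Theorem~\ref{def:weak-solution}. That half is proved independently of the corollary, by approximation with strong solutions and Ioffe's theorem. You then apply the asymmetric inequality of Lemma~\ref{le:asymmetric}:
\[
\partial_t\int_\Omega u_tv\,\d x\ge-\fra(u_t,v)\ge\fra(v,u_t)=\int_\Omega u_t\,\sfA^\circ v\,\d x .
\]
Your direct approximation avoids the lower semicontinuity machinery and is more elementary.
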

We can now provide an even more expressive 
characterization of B\'enilan integral solution.
First of all, observe that integrating 
the pointwise formulation \eqref{eq:right} 
of a strong solution against a test function $v\in \ZO$ we 
get
\begin{equation}
	\label{eq:strong-dual}
	\partial_t \int_\Omega u_t \,v\,\d x+
	\fra(u_t,v)=0\quad \text{in }(0,+\infty)\quad\text{for every }v\in \ZO.
\end{equation}
If $v\in \ZO$, the form $\fra(u,v)$ can also be defined for arbitrary 
$u\in \WpO$ since $\Lap u$,
$\sfh(u,\Lap u)=(\Lap u)^2/u$ belong to $L^1(\Omega)$ and $w=\sqrt u\in \HtN$, recall \eqref{eq:form-veryweak}:
\begin{equation}
	\label{eq:fra-extension}
	\begin{aligned}
		\fra(u,v)&=\int_\Omega \Big(\Lap u\,\Lap v-\sfh(u,\Lap u)\,v\Big)\,\d x
		=
		\int_\Omega \Big(u\,\Lap^2 v-\sfh(u,\Lap u)\,v\Big)\,\d x
		\\
		&=
		\int_\Omega\big(2w\Lap w+2|\nabla
			w|^2\big)\Lap v\,\d x-
			4\int_{\{w>0\}}\Big(\Lap w+
			\frac{|\nabla w|^2}{w}\Big)^{\!2}v\,\d x
	\end{aligned}
\end{equation}
Since for integral solutions we could lose the control
of the singular measure possibly concentrated on the contact set
$\{u=0\}$, 
we may relax \eqref{eq:strong-dual} to the inequality
$\partial_t\int_\Omega u_t\,v\,\d x+\fra(u_t,v)\ge0$
for every nonnegative test function $v\in \PZO$.

Surprisingly enough, 
such an inequality coupled with norm conservation 
contains sufficient information to characterize integral solutions.
\begin{theorem}[Weak formulation of integral solutions]
	\label{def:weak-solution}
	A continuous curve $u:[0,+\infty)\to L^2_+(\Omega)$ 
	is a B\'enilan integral solution 
	to the DLSS equation if and only if 
	\begin{enumerate}[\rm (W1)]
	\item $t\mapsto\tnrm{u_t}$ is constant;
	\item
		$t\mapsto\sfD(u_t)\in L^1_{\rm loc}[0,+\infty)$;
	\item for every $v\in\PZO$
		\begin{equation}
			\label{eq:weak-solution}
			\partial_t\int_\Omega u_t\,v\,\d x+
			\fra(u_t,v)\ge0
			\quad\text{in }\mathscr D'(0,+\infty),
		\end{equation}
		with $\fra$ as in \eqref{eq:fra-extension}.
	\end{enumerate}
\end{theorem}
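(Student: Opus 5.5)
The plan is to prove the two implications separately. The direction ``(W1)--(W3) $\Rightarrow$ B\'enilan'' reduces to Corollary~\ref{prop:weak-equivalent2} through the asymmetric inequality of Lemma~\ref{le:asymmetric}. The converse direction is obtained by approximating with strong solutions and passing to the limit by lower semicontinuity.

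\emph{Sufficiency.} Assume (W1)--(W3). By (W2), $\sfD(u_t)<\infty$ for a.e.~$t$, so $u_t\in\WpO$ by Theorem~\ref{thm:Hessian}. For such $t$ and every $v\in\PZO$, Lemma~\ref{le:asymmetric} together with \eqref{eq:integration-by-parts} gives
\[
-\fra(u_t,v)\ \ge\ \fra(v,u_t)=\int_\Omega u_t\,\sfA^\circ v\,\d x .
\]
Combining this with \eqref{eq:weak-solution} yields \eqref{eq:core-family}. For this to make sense in $\mathscr D'$, the right-hand sides must be $L^1_{\rm loc}$ in time. This follows from \eqref{eq:veryweak-bound} and the bound $\|\Lap u_t\|_{L^1}\le(\sfD(u_t)\int_\Omega u_t\,\d x)^{1/2}$, which is Cauchy--Schwarz using $\Lap u_t=0$ on $\{u_t=0\}$. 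Together with (W1), Corollary~\ref{prop:weak-equivalent2} then shows that $u$ is a B\'enilan solution.

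\emph{Necessity.} (W1) holds by Definition~\ref{def:benilan-solution}, and (W2) is \eqref{eq:estimate}. For (W3), first take $u_0\in\dom(\sfA)$, so that $u$ is a strong solution. Write $-\partial_tu_t=f_t=\sfA^\circ u_t-g_t$ with $g_t\ge0$, as in Corollary~\ref{prop:minimal-section}. Using \eqref{eq:fra-minimal} we get $\int f_tv\le\fra(u_t,v)$ for every $v\ge0$, and in integrated form
\[
\int_\Omega u_T v\,\d x-\int_\Omega u_S v\,\d x\ \ge\ \int_S^T\!\Big(\int_\Omega \sfh(u_t,\Lap u_t)\,v\,\d x-\int_\Omega \Lap u_t\,\Lap v\,\d x\Big)\d t .
\]
For a general $u_0\in L^2_+(\Omega)$, take $u^n_0\in\dom(\sfA)$ converging to $u_0$ (Lemma~\ref{le:domain-closure}) and set $u^n_t:=\sfS_tu^n_0$; then $u^n_t\to u_t$ in $L^2$, uniformly in $t$, by contraction. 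To avoid needing convergence of $\Lap u^n_t$, I first replace $v$ by $v_\tau:=\sfH_\tau v$. Then $\Lap^2v_\tau\in L^\infty$, and the linear term becomes $\int_\Omega u^n_t\,\Lap^2 v_\tau\,\d x$ by \eqref{eq:bilap-parts}, which passes to the limit. For the nonlinear term I need a weighted version of Lemma~\ref{le:relaxedD}: the functional $u\mapsto\int_\Omega\sfh(u,\Lap u)\,v_\tau\,\d x$ should be weakly l.s.c.\ on $L^2_+$. I would prove this as a supremum over $\sigma$ of the convex continuous functionals $\int_\Omega\sfh(\sfH_\sigma u,\Lap\sfH_\sigma u)\,\sfH_{\sigma}' v_\tau$. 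The monotone limit in $\sigma$ would come from Corollary~\ref{cor:main-ineq}, possibly with small heat-flow corrections on the weight. Fatou in time, with the uniform bound \eqref{eq:flow-Wp}, then gives the integrated inequality for $u$ and $v_\tau$. Finally, let $\tau\downarrow0$: $\sfh(u_t,\Lap u_t)\,v_\tau\to\sfh(u_t,\Lap u_t)\,v$ with majorant $\|v\|_\infty\,\sfh(u_t,\Lap u_t)$. The term $\int_\Omega u_t\Lap^2v_\tau=\int_\Omega\Lap u_t\,\Lap v_\tau$ converges by $\Lap u_t\in L^1$, the bounds $\|\Lap v_\tau\|_\infty\le\|\Lap v\|_\infty$ and a.e.\ convergence, with domination in time coming from (W2) and the $L^1$ bound above. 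This yields \eqref{eq:weak-solution}.

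The main obstacle is the lower semicontinuity of the weighted dissipation $\int_\Omega\sfh(u,\Lap u)\,v\,\d x$ under weak $L^2$ convergence. Only the unweighted statement is given in Lemma~\ref{le:relaxedD} and Theorem~\ref{thm:Hessian}(1). My first attempt would be to rerun that proof with the weight moved through the heat semigroup via Corollary~\ref{cor:main-ineq}. The inequality $\int\sfh(u,\Lap u)\,\sfH_\sigma w\ge\int\sfh(\sfH_\sigma u,\Lap\sfH_\sigma u)\,w$ shows that the regularized weighted functionals bound the true one from below up to replacing $w$ by $\sfH_\sigma w\to w$ uniformly. Each regularized functional is weakly continuous, and a diagonal argument in $\sigma$ and $n$ should conclude.
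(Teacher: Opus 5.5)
Your sufficiency argument is the paper's: the asymmetric inequality of Lemma~\ref{le:asymmetric}, then Corollary~\ref{prop:weak-equivalent2}. For necessity you are correct, but the passage to the limit follows a genuinely different route.

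\emph{The paper's route.} It keeps $v$ fixed and obtains compactness from the square root. By \eqref{eq:flow-Wp} and \eqref{eq:Hessian}, $\sqrt{u^n}$ is bounded in $L^2(0,T;\HtN)$. Interpolation then gives $\nabla\sqrt{u^n}\to\nabla\sqrt u$ strongly, and the chain rule \eqref{eq:chain-Lap} gives $\Lap u^n\rightharpoonup\Lap u$ weakly in $L^1((0,T)\times\Omega)$. So the linear term converges directly, and Ioffe's theorem on $(S,T)\times\Omega$ gives the lower semicontinuity of the weighted perspective term.

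\emph{Your route.} You mollify the test function, so that the linear term only needs $u^n_t\to u_t$ in $L^2$, via \eqref{eq:bilap-parts}. You then prove a weighted analogue of Lemma~\ref{le:relaxedD} pointwise in time. This works, and it becomes exact once the weight is made precise. For $0<\sigma\le\tau$ set $F_\sigma(u):=\int_\Omega\sfh(\sfH_\sigma u,\Lap\sfH_\sigma u)\,\sfH_{\tau-\sigma}v\,\d x$. Since $v_\tau=\sfH_\sigma(\sfH_{\tau-\sigma}v)$, Corollary~\ref{cor:main-ineq} and the semigroup property show that $\sigma\mapsto F_\sigma(u)$ is nonincreasing and bounded above by $\int_\Omega\sfh(u,\Lap u)\,v_\tau\,\d x$. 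Fatou shows that the limit as $\sigma\downarrow0$ attains this bound when $u\in D_{L1}(\Lap)$, which holds for $u_t$ at a.e.~$t$. Each $F_\sigma$ is weakly lower semicontinuous by the compactness argument of Lemma~\ref{le:relaxedD}; it is not continuous in general, for instance at $u=0$. With this choice no correction of the weight and no diagonal argument are needed. Fatou in time then replaces Ioffe's theorem. The extra limit $\tau\downarrow0$ is handled, as you say, by (W2) and the bound $\|\Lap u_t\|_{L^1}\le\big(\sfD(u_t)\int_\Omega u_t\,\d x\big)^{1/2}$.

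\emph{Comparison.} The paper's argument is shorter and reuses the Hessian estimate of Theorem~\ref{thm:Hessian}, which it needs anyway. Yours needs no compactness of $\Lap u^n$ and no Ioffe theorem. It uses only $L^2$ convergence at each fixed time and the Jensen--commutation inequality of the heat flow, the same mechanism behind Lemma~\ref{le:relaxedD} and the commutation lemmas. The cost is one additional regularization parameter.
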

By \eqref{eq:fra-minimal}, {\rm(W3)} can be interpreted as a
very weak
form of the differential inclusion $\partial_tu_t+\sfA u_t\ni0$ with
$\sfA=\sfA^\circ+\partial I_{L^2_+}$: the minimal DLSS operator
$\sfA^\circ$ (realized by $\fra$) plus a nonnegative reaction supported
on the vacuum $\{u_t=0\}$, formally expressed by the normal-cone component $\partial
I_{L^2_+}(u_t)$ of Proposition~\ref{prop:minimal-section}. Testing
{\rm(W3)} only against $v\ge0$ retains exactly the one-sided
information that is stable under limits of strong solutions: the
defect measure is allowed to \emph{exceed} the absolutely continuous
square $\sfh(u_t,\Lap u_t)\,\Leb d$ by that reaction, but not to fall
below it.
Condition {\rm(W2)} makes \eqref{eq:weak-solution} meaningful:
setting $w_t:=\sqrt{u_t}$, Lemma~\ref{le:sqrt-calculus} and the
integration by parts
$\int|\nabla w_t|^2\,\d x=-\int w_t\Lap w_t\,\d x$ give
$\|\Lap u_t\|_{L^1(\Omega)}\le4\tnrm{w_t}\tnrm{\Lap w_t}$, with
$\tnrm{w_t}^2\le|\Omega|^{1/2}\tnrm{u_t}$ and
$\tnrm{\Lap w_t}^2\le d\int_\Omega|\rmD^2w_t|^2\,\d x\le
\frac{d^3}4\,\sfD(u_t)$ by \eqref{eq:Hessian}: by
\eqref{eq:veryweak-bound}, $t\mapsto\fra(u_t,v)$ then belongs to
$L^1_{\rm loc}[0,+\infty)$ for every $\zeta\in\PZO$.
{%
\begin{proof}[Proof of Theorem \ref{def:weak-solution}]
	Let us first show that 
	conditions (W1)--(W3) guarantee that 
	$u$ is a B\'enilan integral solution. We apply 
	Corollary \ref{prop:weak-equivalent2} 
	and we want to show \eqref{eq:core-family}.
	
	So we fix $v\in\PZO$ and the
	asymmetric inequality \eqref{eq:asymmetric} gives
	$$-\fra(u_t,v)\ge\fra(v,u_t)=\int_\Omega u_t\,\sfA^\circ v\,\d x.$$
	Therefore
	\begin{displaymath}
		\partial_t\int_\Omega u_tv\,\d x\ge-\fra(u_t,v)\ge
		\int_\Omega  u_t\,\sfA^\circ v\,\d x
		\quad\text{in }\mathscr D'(0,+\infty),
	\end{displaymath}
	which is precisely \eqref{eq:core-family}. 

	Let us now suppose that 
	$u$ is a B\'enilan integral solution.
	Conditions {\rm(W1)--(W2)} hold for every
	$u_0\in L^2_+(\Omega)$ by Proposition~
	\ref{prop:flow-regularity}.
	We have to prove that 
	\begin{equation}
		\label{eq:W3-integrated}
		\int_\Omega (u_T-u_S)\, v\d x+
		\int_S^T\int_\Omega u_t \,\Lap^2 v\,\d x
		\ge 
		\int_S^T\int_\Omega \sfh(u_t,\Lap u_t)\, v
		\,\d x\,\d t
		\quad\text{for every }0\le S<T,
	\end{equation}
	for every choice of $v\in \PZO$.

	Choose $u_0^n\in\dom(\sfA)$ with $u_0^n\to u_0$ in $L^2(\Omega)$
	(Lemma~\ref{le:domain-closure}) and set $u_t^n:=\sfS_tu_0^n$; by
	the contraction property $u^n\to u$ in $\rmC([0,T];L^2(\Omega))$
	for every $T>0$. By
	Theorem~\ref{thm:generation} each $u^n$ satisfies the exact
	equation \eqref{eq:W3-integrated}:
	\begin{equation}
		\label{eq:W3-integratedn}
		\int_\Omega (u^n_T-u^n_S)\, v\d x+
		\int_S^T\int_\Omega u^n_t \,\Lap^2 v\,\d x
		=
		\int_S^T\int_\Omega \sfh(u^n_t,\Lap u^n_t)\, v
		\,\d x\,\d t
		\quad\text{for every }0\le S<T,
	\end{equation}
	The left-hand side of \eqref{eq:W3-integratedn}
	passes to the limit easily.

	In order to deal with the right-hand side, 
	set $w^n:=\sqrt{u^n}$. By \eqref{eq:flow-Wp},
	\eqref{eq:Hessian} and $\tnrm{u_0^n}=\tnrm{u_0}$, the sequence
	$w^n$ is bounded in $L^2(0,T;\HtN)$; moreover
	$\|w^n_t-w_t\|_{L2}\to0$ uniformly in $t\in[0,T]$, so
	$w^n\to w$ strongly in $\rmC([0,T];L^2(\Omega))$ and, by
	interpolation with the uniform $L^2(0,T;\HtN)$-bound,
	$\nabla w^n\to\nabla w$ strongly in $L^2((0,T)\times\Omega)$ while
	$\Lap w^n\rightharpoonup\Lap w$ weakly there. By the chain rule
	\eqref{eq:chain-Lap}, $\Lap u^n=2w^n\Lap w^n+2|\nabla w^n|^2$;
	since a product of an $L^2$-strongly and an $L^2$-weakly convergent
	sequence converges weakly in $L^1$, and
	$|\nabla w^n|^2\to|\nabla w|^2$ strongly in $L^1$, we get
	$\Lap u^n\rightharpoonup\Lap u$ weakly in $L^1((0,T)\times\Omega)$.

	The integrand $\sfh$ in the right-hand side 
	of \eqref{eq:W3-integrated} is a
	nonnegative normal integrand, convex in its second argument, so
	Ioffe's lower semicontinuity theorem
	\cite[Theorem~5.8]{Ambrosio-Fusco-Pallara00}, applied on
	$(S,T)\times\Omega$ with the nonnegative weight $v$
	(using $u^n\to u$ in measure and $\Lap u^n\rightharpoonup\Lap u$
	weakly in $L^1$), gives
	\begin{displaymath}
		\int_S^{T}\!\!\int_\Omega \sfh(u_t,\Lap u_t)\,v\,\d
		x\,\d t\le\liminf_{n\to\infty}\int_S^{T}\!\!
		\int_\Omega
		\sfh(u^n_t,\Lap u^n_t)\,v\,\d x\,\d t .
	\end{displaymath}
	Passing to the limit in \eqref{eq:W3-integratedn}
	we eventually obtain \eqref{eq:W3-integrated}.
\end{proof}
\subsection{Weak solutions in dimension \texorpdfstring{$d\le3$}{d<=3}}
\label{subsec:weak-solutions-d3}
In low dimension the previous picture can be considerably sharpened;
we collect the corresponding results in this subsection.
The $b$-form \eqref{w-equivalent2} yields a sharper,
\emph{defect-free} description of the flow. The form $\frb$ is
bilinear in the pair $(u,\Lap u)$ against smooth tests, so it passes
to the limit along the natural (strong $\times$ weak) convergences of
the flow; this lets us prove that, for $d\le3$, \emph{every}
semigroup trajectory satisfies the \emph{equality} in
\eqref{w-equivalent2}, a statement stronger than the one-sided weak
formulation~\eqref{eq:weak-solution}, whose perspective term
$\int \sfh(u_t,\Lap u_t)\,v\,\d x$ is only lower semicontinuous and
could a priori gain mass on the moving vacuum in the limit.
We obtain in this way
another characterization of the semigroup trajectories among weak  solutions
in the regularity
class of Fischer, where the delicate analysis of the moving vacuum
is replaced by a direct appeal to the uniqueness theorem of
\cite{Fischer13}.
This last step, and only this one, is conditional: \cite{Fischer13}
works with periodic boundary conditions, and the adaptation of its
approximation lemmas to the Neumann setting is not carried out here;
see Remark~\ref{rem:fischer-BC}. Theorem~\ref{thm:bform-flow} below is
instead unconditional.
\begin{theorem}[The semigroup satisfies the $b$-form, $d\le3$]
	\label{thm:bform-flow}
	Let $d\le3$, $u_0\in L^2_+(\Omega)$ and $u_t:=\sfS_tu_0$
	(recall that $u\in L^2_{\rm loc}((0,+\infty);\HtN)$ by
	Proposition~\ref{prop:entropy-regularization}). Then
	\begin{align}
		\label{eq:bform-flow}
		&\partial_t\int_\Omega u_t^2\,w\,\d x+2\,\frb(u_t,w)=0
		&&\text{in }\mathscr D'(0,+\infty),\
		\text{for every }w\in\ZO,\\
		\label{eq:positivity-flow}
		&\partial_t\int_\Omega u_t\,v\,\d x+
		\int_\Omega\Lap u_t\,\Lap v\,\d x\ge0
		&&\text{in }\mathscr D'(0,+\infty),\
		\text{for every }v\in\ZO,\ v\ge0 .
	\end{align}
\end{theorem}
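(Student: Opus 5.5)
We argue by approximation from strong solutions. Choose $u_0^n\in\dom(\sfA)$ with $u_0^n\to u_0$ in $L^2(\Omega)$ (Lemma~\ref{le:domain-closure}); to keep the entropy finite we take $u_0^n:=\sfH_{1/n}u_0+\tfrac1n$. Set $u^n_t:=\sfS_tu^n_0$. Each $u^n$ is a Lipschitz strong solution with $\partial_tu^n_t=-\sfA^\circ u^n_t$, and $(u^n_t,-\partial_tu^n_t)\in\graph(\sfA)$. Theorem~\ref{thm:main-equivalent} (condition 2) therefore gives, for a.e.~$t$,
\[
\int_\Omega\partial_tu^n_t\,v\,\d x+\int_\Omega\Lap u^n_t\Lap v\,\d x\ge0
\qquad\text{and}\qquad
\partial_t\int_\Omega (u^n_t)^2w\,\d x+2\frb(u^n_t,w)=0,
\]
exactly as in the proof of Proposition~\ref{prop:strong-equivalent}. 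We then integrate both relations against a nonnegative $\eta\in\rmC^\infty_c(0,+\infty)$ (any sign for the equality). For the inequality, the density of $\PZO$ together with the addition of constants $v+\delta$ extends it to every $v\in\ZO$ with $v\ge0$; alternatively one may use (A.2) directly.

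Next we pass to the limit as $n\to\infty$. By contraction, $u^n\to u$ in $\rmC([0,T];L^2(\Omega))$. Hence the time-derivative terms $\int u^n_tv$ and $\int (u^n_t)^2w$ converge, the latter because $u^n_t\to u_t$ strongly in $L^2$ uniformly in time. The linear terms $\int\Lap u^n_t\Lap v=\int u^n_t\Lap^2v$ converge as well, once we note that $v\in\ZO$ with $\Lap v\in\ZO$, or, for general $v$, once we have the weak $L^2(H^2)$ bound below. The main issue is therefore $\frb(u^n_t,w)=\int(u^n\Lap u^n\Lap w+2\Lap u^n\nabla u^n\cdot\nabla w)$, which is bilinear in the pair $(u^n \text{ or } \nabla u^n,\ \Lap u^n)$. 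It suffices to show that $\Lap u^n\rightharpoonup\Lap u$ weakly in $L^2((s,T)\times\Omega)$ and that $u^n\to u$, $\nabla u^n\to\nabla u$ strongly in $L^2((s,T)\times\Omega)$, for every $0<s<T$.

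The main obstacle is this uniform bound of $u^n$ in $L^2(s,T;\HtN)$ in $d\le3$ for bare $L^2$ data. We proceed as follows. By Proposition~\ref{prop:flow-regularity}, \eqref{eq:flow-Wp} and \eqref{eq:Hessian}, $\sqrt{u^n}$ is bounded in $L^2(0,T;\HtN)$ uniformly in $n$, because $\int_0^\infty\sfD(u^n_t)\,\d t\le|\Omega|^{1/2}\tnrm{u^n_0}$. In $d\le3$, Sobolev embedding then bounds $\|u^n_t\|_\infty=\|\sqrt{u^n_t}\|_\infty^2$ in $L^1(0,T)$, and hence $\sfE(u^n_t)$ in $L^1(0,T)$. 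A Chebyshev/Fatou-type argument then produces, for each $s>0$, a time $s'\in(0,s)$ at which $\sup_n\sfE(u^n_{s'})<\infty$ along a subsequence. We restart at $s'$ via the semigroup property and apply Step~1 of Proposition~\ref{prop:entropy-regularization} to each $u^n$ (they lie in $\dom(\sfA)$, so \eqref{eq:entropy-dissipation} holds for them). This yields $\int_{s'}^T\int|\rmD^2u^n|^2\le C$ uniformly in $n$, using $\sfE\ge-|\Omega|/e$. If selecting a common $s'$ proves delicate, a fallback is to average \eqref{eq:entropy-dissipation} over starting times in $(s/2,s)$, which controls $\int_s^T\|\rmD^2 u^n\|^2$ by $\frac{2}{s}\int_{s/2}^s\sfE(u^n_r)\,\d r+C$.

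With this bound in hand, $u^n\rightharpoonup u$ weakly in $L^2(s,T;H^2)$, which identifies the weak limit of $\Lap u^n$ as $\Lap u$. Strong convergence of $\nabla u^n$ in $L^2$ follows by interpolation between $\rmC([s,T];L^2)$ convergence and the $L^2(H^2)$ bound, namely $\|\nabla g\|_2^2\le\|g\|_2\|\Lap g\|_2$. Products of strongly and weakly $L^2$-convergent sequences converge in $\mathscr D'$, so $\int\eta\,\frb(u^n_t,w)\,\d t\to\int\eta\,\frb(u_t,w)\,\d t$ for $\eta$ supported in $(s,T)$. Since $s$ is arbitrary, this gives \eqref{eq:bform-flow} and \eqref{eq:positivity-flow}.
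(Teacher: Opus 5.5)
Your proposal is correct and follows the paper's proof essentially step by step. It uses the same regularized data $u_0^n=\sfH_{1/n}u_0+\tfrac1n$, the identities (A.2)/(A.3'') for the strong solutions $u^n$, and a uniform $L^2(s,T;\HtN)$ bound obtained by restarting the entropy dissipation \eqref{eq:entropy-dissipation} at a time where $\sfE(u^n)$ is controlled via $\sfD$ and the Sobolev embedding for $d\le3$; the passage to the limit in $\frb$ is the same strong$\times$weak argument. The only difference is minor: the paper picks an $n$-dependent restart time $s_n\in(s/2,s)$ with $\sfD(u^n_{s_n})\le 2C/s$, so it needs neither a common time $s'$ with a subsequence nor your averaging fallback.
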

\begin{proof}
	\emph{Step 1 (approximation and uniform bounds).}
	Let $u_0^n:=\sfH_{1/n}u_0+\tfrac1n\in\dom(\sfA)$
	(Lemma~\ref{le:domain-closure}) and $u^n_t:=\sfS_tu^n_0$: by
	contraction,
	$\sup_{t\ge0}\tnrm{u^n_t-u_t}\le\tnrm{u^n_0-u_0}\to0$.
	Fix $0<s<T$. By Proposition~\ref{prop:flow-regularity},
	$\int_{s/2}^{s}\sfD(u^n_r)\,\d r\le|\Omega|^{1/2}\tnrm{u^n_0}
	\le C$, so there are $s_n\in(s/2,s)$ with
	$\sfD(u^n_{s_n})\le2C/s$; by Theorem~\ref{thm:Hessian} and the
	Sobolev embedding ($d\le3$),
	$\|u^n_{s_n}\|_{L^\infty(\Omega)}=
	\|\sqrt{u^n_{s_n}}\|^2_{L^\infty(\Omega)}\le C(s)$, whence
	$\sfE(u^n_{s_n})\le C(s)$. The entropy dissipation
	\eqref{eq:entropy-dissipation}, started at $s_n$, then gives
	\begin{equation*}
		\sup_n\int_s^T\!\!\int_\Omega|\rmD^2u^n_r|^2\,\d x\,\d r
		\le C(s,T):
	\end{equation*}
	hence $u^n\rightharpoonup u$ weakly in $L^2((s,T);H^2(\Omega))$
	and, by the interpolation
	$\|\nabla(u^n_r-u_r)\|^2_{2}\le
	\tnrm{u^n_r-u_r}\,\tnrm{\Lap(u^n_r-u_r)}$,
	$\nabla u^n\to\nabla u$ strongly in
	$L^2((s,T)\times\Omega)$.

	\emph{Step 2 (the equations for $u^n$).}
	$u^n$ is a strong solution: for a.e.~$t$,
	$\partial_tu^n_t=-f^n_t$ with $f^n_t\in\sfA u^n_t$; as in the
	proof of Proposition~\ref{prop:strong-equivalent},
	$\partial_t\int(u^n_t)^2w=2\int u^n_t\partial_tu^n_tw=
	-2\,\frb(u^n_t,w)$ by {\rm(A.3'')}, while {\rm(A.2)} gives
	$\int\partial_tu^n_tv+\int\Lap u^n_t\Lap v=
	\la\mu^n_t,v\ra\ge0$ for $v\in\ZO$, $v\ge0$.

	\emph{Step 3 (passage to the limit).}
	Fix $\varphi\in\rmC^\infty_c(0,+\infty)$ with
	$\supp\varphi\subset(s,T)$. The time-slice terms converge for
	every $t$, uniformly on $[s,T]$
	($\int(u^n_t)^2w\to\int u_t^2w$, $\int u^n_tv\to\int u_tv$, by
	Step 1); for the space-time integrals:
	$\iint u^n\Lap u^n\,\Lap w\,\varphi\to
	\iint u\Lap u\,\Lap w\,\varphi$
	(strong $L^2$ $\times$ weak $L^2$, with $\Lap w\,\varphi\in
	L^\infty$); writing
	$\Lap u^n\,\nabla u^n=\Lap u^n(\nabla u^n-\nabla u)+
	\Lap u^n\,\nabla u$, we get
	$\iint\Lap u^n\nabla u^n\cdot\nabla w\,\varphi\to
	\iint\Lap u\nabla u\cdot\nabla w\,\varphi$ (bounded $\times$
	strong, then weak $\times$ fixed); and
	$\iint\Lap u^n\Lap v\,\varphi\to\iint\Lap u\Lap v\,\varphi$.
	This proves \eqref{eq:bform-flow} and
	\eqref{eq:positivity-flow} in $\mathscr D'(s,T)$, and $0<s<T$
	are arbitrary.
\end{proof}

Remarkably, this characterization involves no assumption on the time
derivative of the curve: in the regularity class of Fischer \cite{Fischer13} ---
which, by Propositions~\ref{prop:flow-regularity} and
\ref{prop:entropy-regularization}, the flow itself generates from
bare $L^2$ data --- the $b$-form equation alone determines the
solution.
\begin{theorem}[$b$-form characterization in the Fischer class,
	$d\le3$]
	\label{thm:bform-fischer}
	Assume that the uniqueness theorem of Fischer
	{\rm\cite[Theorem 5]{Fischer13}}, stated there for periodic boundary
	conditions, holds in the homogeneous Neumann setting considered here
	{\rm(}see Remark~\ref{rem:fischer-BC}{\rm)}.
	Let $d\le3$, $u_0\in L^2_+(\Omega)$, and let
	$u:[0,+\infty)\to L^2_+(\Omega)$ be a continuous curve with
	$u(0)=u_0$, in the regularity class of
	Fischer, that is
	\begin{equation}
		\label{eq:fischer-class}
		u,\ \sqrt u\ \in\ L^2_{\rm loc}\big((0,+\infty);\HtN\big)
		\qquad(\varrho^{1/2},\varrho^{1/4}\in L^2_{\rm loc}(H^2)),
	\end{equation}
	satisfying the $b$-form equation \eqref{eq:bform-flow}. Then
	$u_t=\sfS_tu_0$ for every $t\ge0$. Since every semigroup
	trajectory has all the above properties
	(Theorem~\ref{thm:bform-flow},
	Propositions~\ref{prop:flow-regularity} and
	\ref{prop:entropy-regularization}), for $d\le3$ the semigroup
	trajectories are exactly the curves 
	in
	the class \eqref{eq:fischer-class} satisfying
	\eqref{eq:bform-flow}.
\end{theorem}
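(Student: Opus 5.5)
The argument is a reduction to uniqueness: we show that $\sfS_tu_0$ is itself a solution in Fischer's class, and then invoke the assumed uniqueness theorem.

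\emph{Step 1: the semigroup trajectory is admissible.} Let $\bar u_t:=\sfS_tu_0$.
\begin{itemize}
\item Proposition~\ref{prop:flow-regularity} gives $\sqrt{\bar u}\in L^2_{\rm loc}((0,+\infty);\HtN)$. Indeed, \eqref{eq:flow-Wp} bounds $\int_0^\infty\sfD(\bar u_t)\,\d t$, and \eqref{eq:Hessian} converts this into a bound on $\int\!\!\int|\rmD^2\sqrt{\bar u_t}|^2$. The $L^2$ norm of $\sqrt{\bar u_t}$ is controlled by $|\Omega|^{1/4}\tnrm{u_0}^{1/2}$, so the full $H^2$ norm is locally integrable in time.
\item Proposition~\ref{prop:entropy-regularization}(b) gives $\bar u\in L^2_{\rm loc}((0,+\infty);\HtN)$ for $d\le3$.
\item Theorem~\ref{thm:bform-flow} shows that $\bar u$ satisfies the $b$-form equation \eqref{eq:bform-flow}.
\item Continuity of $\bar u$ in $L^2$ with $\bar u(0)=u_0$ is part of Theorem~\ref{thm:generation}.
\end{itemize}
Hence $\bar u$ lies in the class \eqref{eq:fischer-class} and solves \eqref{eq:bform-flow}. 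This also proves the last sentence of the statement in the direction "semigroup trajectories have all these properties".

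\emph{Step 2: translate to Fischer's setting.} Set $\varrho:=u^2$ and $\bar\varrho:=\bar u^2$. We check that both are weak solutions in the sense of \cite[Theorem 5]{Fischer13}.
\begin{itemize}
\item Testing \eqref{eq:bform-flow} with $w\in\ZO$ and using $\frb(u,w)=\int(u\Lap u\Lap w+2\Lap u\nabla u\cdot\nabla w)$, the identities $\nabla\varrho=2u\nabla u$ and $u\Lap u=\tfrac12\Lap\varrho-|\nabla u|^2$ should turn $2\frb(u,w)$ into the quantity $\int\varrho\Lap^2 w-\int\frac{\nabla\varrho\otimes\nabla\varrho}{\varrho}:\rmD^2w$ of \eqref{eq:intro-weak-rho}.
\item This step is an integration by parts, justified for $u\in\HtN$ with $w\in\ZO$, where the Neumann conditions kill the boundary terms (Proposition~\ref{prop:L1-closure}). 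It must be done at a.e.\ time and then integrated against $\varphi\in\rmC^\infty_c$.
\item The regularity $\varrho^{1/2},\varrho^{1/4}\in L^2_{\rm loc}(H^2)$ is exactly \eqref{eq:fischer-class}.
\item Fischer's theorem also needs the initial datum to be attained. This follows from continuity of $u$ in $L^2$: $u_t\to u_0$ in $L^2$ implies $\varrho_t\to\varrho_0$ in $L^1$.
\end{itemize}

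\emph{Step 3: conclude.} Under the standing assumption that Fischer's uniqueness theorem holds with Neumann conditions (Remark~\ref{rem:fischer-BC}), we get $\varrho=\bar\varrho$. Since $u,\bar u\ge0$, this gives $u=\bar u$, that is, $u_t=\sfS_tu_0$ for all $t$.

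\emph{Main obstacle.} Fischer's uniqueness is typically stated for solutions regular on $(0,T)$ with local-in-time integrability. Near $t=0$ we have only $L^2$ continuity, with no $H^2$ bounds. I would first check whether Fischer's theorem allows data merely in $L^1$ attained continuously. If it does not, I would apply uniqueness on $[s,\infty)$ for small $s>0$, which requires both solutions to agree at time $s$; that is circular. In that case I would instead use a stability estimate in Fischer's argument, which compares solutions through a distance that is continuous under $L^2$ convergence of $u$, and then let $s\downarrow0$ using the continuity of both curves at $0$.
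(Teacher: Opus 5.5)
Your Steps~1 and~2 agree with the paper (Step~2 is its identity \eqref{eq:bform-fischer-ibp}). Step~3, however, has a genuine gap at $t=0$, and your diagnosis of that obstacle is mistaken.

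The class \eqref{eq:fischer-class} is $L^2_{\rm loc}((0,+\infty);\HtN)$, open at $0$, and $u_0$ is an arbitrary element of $L^2_+(\Omega)$, possibly with infinite entropy. So neither $u$ nor $\sfS_tu_0$ has any $H^2$ control up to $t=0$, and applying the assumed uniqueness theorem with initial time $0$ is not justified. Your fallback, extracting a stability estimate from the interior of Fischer's argument, is not covered by the standing hypothesis. That hypothesis grants only the uniqueness \emph{statement} in the Neumann setting, and the extra estimate is not needed anyway.

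Restarting at $s>0$ is not circular. The point is to compare $u$ on $[s,+\infty)$ not with $\sfS_tu_0$, but with $\hat u_t:=\sfS_{t-s}u_s$, the semigroup started from the curve's \emph{own} value at time $s$, so that $\hat u_s=u_s$ by construction. Choose $s$ with $\sfE(u_s)<+\infty$; this holds for a.e.\ $s>0$, since $u_s\in\HtN\subset L^\infty(\Omega)$ for $d\le3$. Then Proposition~\ref{prop:entropy-regularization}(a), Proposition~\ref{prop:flow-regularity} and Theorem~\ref{thm:bform-flow}, applied to the restarted trajectory, show two things: $\hat u$ solves \eqref{eq:bform-flow}, and it lies in Fischer's class \emph{up to} $t=s$. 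Meanwhile $u$ is in the class on every $[s,T]$ because $s>0$. Uniqueness then gives $u_t=\sfS_{t-s}u_s$ for $t\ge s$. The contraction property of the semigroup itself, not any estimate from \cite{Fischer13}, removes $s$: for fixed $t>0$,
\[
\tnrm{u_t-\sfS_tu_0}=\tnrm{\sfS_{t-s}u_s-\sfS_{t-s}\sfS_su_0}\le\tnrm{u_s-u_0}+\tnrm{u_0-\sfS_su_0}\longrightarrow0
\]
as $s\downarrow0$ along admissible values.

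There is also a smaller omission in Step~2. Fischer's weak formulation requires $\varrho\in W^{1,1}_{\rm loc}((0,+\infty);\DHtN)\cap L^1_{\rm loc}((0,+\infty);L^\infty(\Omega))$. The paper checks this via $\HtN\subset W^{1,4}(\Omega)$ for $d\le3$, which gives $\|\frb(u_t,\cdot)\|_{\DHtN}\le C\|u_t\|^2_{\HtN}$.
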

\begin{proof}
	Choosing $w\equiv1$ in \eqref{eq:bform-flow} and recalling that
	$\frb(u_t,1)=0$ (cf.~the proof of
	Proposition~\ref{prop:weak-equivalent}) we first observe that
	$\tnrm{u_t}$ is constant in $(0,+\infty)$, hence in
	$[0,+\infty)$ by continuity: the total mass of
	$\varrho:=u^2$ is conserved.

	For a.e.~$t>0$ the curve satisfies
	$u_t\in\HtN\cap L^\infty(\Omega)$ (by
	\eqref{eq:fischer-class} and the Sobolev embedding, $d\le3$)
	and $\sqrt{u_t}\in\HtN$. In this class the $b$-form is Fischer's
	renormalized operator: for every $u$ with $u,\sqrt u\in\HtN$ an
	elementary integration by parts (with $\varrho:=u^2$) gives
	\begin{equation}
		\label{eq:bform-fischer-ibp}
		2\,\frb(u,w)=\int_\Omega\varrho\,\Lap^2 w\,\d x
		-\int_\Omega\sum_{i,j=1}^d
		\frac{\partial_i\varrho\,\partial_j\varrho}{\varrho}\,
		\partial^2_{ij}w\,\d x
		\qquad(w\in\ZO),
	\end{equation}
	where $\partial_i\varrho\,\partial_j\varrho/\varrho=
	4\,\partial_iu\,\partial_ju$ requires no positivity and is
	meaningful on the vacuum $\{u=0\}$; hence \eqref{eq:bform-flow} is
	exactly the renormalized weak formulation of the DLSS equation
	considered in~\cite{Fischer13}. Note also
	that $\varrho$ belongs to the class
	$W^{1,1}_{\rm loc}((0,+\infty);\DHtN)\cap
	L^1_{\rm loc}((0,+\infty);L^\infty(\Omega))$ underlying that
	formulation \cite[Definition 1]{Fischer13}: indeed
	$\|\varrho_t\|_{L^\infty(\Omega)}\le C\|u_t\|_{\HtN}^2\in
	L^1_{\rm loc}(0,+\infty)$, while \eqref{eq:bform-flow}
	identifies $\partial_t\varrho_t$ with $-2\frb(u_t,\cdot)$, and
	by \eqref{eq:frb} and the embedding
	$\HtN\subset W^{1,4}(\Omega)$ ($d\le3$) the form
	$\frb(u_t,\cdot)$ extends to $\HtN$ with
	$\|\frb(u_t,\cdot)\|_{\DHtN}\le C\|u_t\|_{\HtN}^2\in
	L^1_{\rm loc}(0,+\infty)$. In this setting the weak
	formulation admits at most one solution with a given initial
	datum, by (the argument of) the uniqueness theorem of Fischer
	\cite[Theorem 5]{Fischer13}; this is the step for which the
	standing assumption of the theorem is invoked, see
	Remark~\ref{rem:fischer-BC} for a discussion of the boundary
	conditions.

	Fix $s>0$ with $\sfE(u_s)<+\infty$ (a property of a.e.~$s>0$,
	cf.~Step~3 of the proof of
	Proposition~\ref{prop:entropy-regularization}) and let
	$\hat u_t:=\sfS_{t-s}u_s$, $t\ge s$. By
	Theorem~\ref{thm:bform-flow},
	Proposition~\ref{prop:flow-regularity} and
	Proposition~\ref{prop:entropy-regularization}{\rm(a)}
	(applied to the trajectory restarted at $s$), $\hat u$ solves
	\eqref{eq:bform-flow} in the class \eqref{eq:fischer-class} on
	$(s,+\infty)$ and attains the same value $u_s$ at $t=s$:
	Fischer's uniqueness theorem yields $u_t=\sfS_{t-s}u_s$ for
	every $t\ge s$. Letting $s\down0$ along admissible values,
	$u_s\to u_0$ in $L^2(\Omega)$ and the continuity of the
	semigroup give $u_t=\sfS_tu_0$ for every $t\ge0$.
\end{proof}
\begin{remark}[On the boundary conditions in \cite{Fischer13}]
	\label{rem:fischer-BC}
	Fischer's uniqueness theorem \cite[Theorem 5]{Fischer13} is
	stated for periodic boundary conditions
	($\Omega=[{\mathbb S}^1]^d$), and \cite[Definition
	3]{Fischer13} extends the weak formulation to combined
	Dirichlet--Neumann conditions on $\rmC^{1,1}$ domains, where
	the trace of $\sqrt\varrho$ is prescribed and the test
	functions are taken in $H^2_0(\Omega)$ (a class for which, as
	observed there, no existence result is available): the
	homogeneous Neumann realization considered in this paper is not
	explicitly covered. The structure of the proof, however, is the
	same: it rests on the monotonicity inequality
	\cite[eq.~(5)]{Fischer13} --- the pointwise inequality behind
	the asymmetric Lemma~\ref{le:asymmetric} --- and on
	renormalized test functions regularized by a spatial
	convolution, an operation which is global on the torus and
	whose natural Neumann-compatible substitute in $\Omega$ is the
	Heat semigroup $\sfH_\delta$, the device systematically
	employed in this paper (cf.~Corollary~\ref{cor:main-ineq} and
	Lemma~\ref{le:convex-estimate}). The detailed adaptation to the
	Neumann setting of the approximation Lemmas~11--15 of
	\cite{Fischer13} is however not carried out here.
\end{remark}

\subsection{Two possible extensions}
\label{subsec:extensions}
\newcommand{\density}{r}
We discuss here the weighted setting and the full quantum drift-diffusion
equation~\eqref{eq:isothermal-intro} with isothermal pressure and external
potential. In both cases the additional terms
are monotone and of lower order, so that the core difficulty remains the DLSS
operator itself: we therefore limit ourselves to the algebraic structure that
makes the construction work, without carrying out the details.
\subsubsection*{The weighted setting}
	Nothing in the construction of this paper is tied to the Lebesgue
	measure. Let
	$V\in\rmC^\infty(\overline\Omega)$ and replace $L^2(\Omega)$ by
	$L^2(\Omega,\gamma)$, $\d\gamma:=e^{-V}\,\d x$, together with the Dirichlet form
	$\frd_\gamma(u,v):=\int_\Omega\nabla u\cdot\nabla v\,\d\gamma$ on
	$H^1(\Omega)$. The associated self-adjoint operator with homogeneous
	Neumann conditions is the weighted Laplacian
	\begin{equation}
		\label{eq:weighted-Lap}
		\Lap_\gamma u:=\nabla_\gamma\!\cdot\!\nabla u
		=\Lap u-\nabla V\cdot\nabla u,
		\qquad
		\nabla_\gamma\!\cdot\!F:=e^{V}\nabla\!\cdot\!\big(e^{-V}F\big)
		=\nabla\!\cdot\!F-\nabla V\cdot F ,
	\end{equation}
	with $\nabla_\gamma\cdot$ being the adjoint of $-\nabla$ in $L^2(\Omega,\gamma)$, 
	see also \cite[§1.3]{Gianazza-Savare-Toscani09} and Remark~\ref{rem:secondorder-refs}.
	The construction of this paper should generalize to this case, with
	$\Lap$ replaced by $\Lap_\gamma$ and $\d x$ by $\d\gamma$, although we
	do not carry out the details.
	In particular the DLSS
	operator~\eqref{eq:intro-Amin} becomes
	\begin{equation}
		\label{eq:weighted-operator}
		\sfA_\gamma^\circ u=\Lap_\gamma^2u-\frac{(\Lap_\gamma u)^2}u ,
		\qquad
		\partial_tu+\Lap_\gamma^2u-\frac{(\Lap_\gamma u)^2}u=0 ,
	\end{equation}
	and, in the variable $\density:=u^2$, the density \emph{with respect to
	$\gamma$}, the divergence form \eqref{eq:intro-rho-div} reads
	\begin{equation}
		\label{eq:weighted-div}
		\partial_t\density=
		-2\,\nabla_\gamma\!\cdot\!\biggl(\density\,
		\nabla\frac{\Lap_\gamma\sqrt \density}{\sqrt \density}\biggr).
	\end{equation}
	Let $\varrho:=\density\,e^{-V}$ be the density with respect to
	$\Leb d$ of the measure $u^2\gamma$,
	which is the quantity preserved in $L^1$:
	multiplying \eqref{eq:weighted-div} by $e^{-V}$ and using
	the ground state transform 
	$\sqrt \density=e^{V/2}\sqrt\varrho$ yields the 
	classical divergence form with an explicit potential.
	Indeed, using $\Lap_\gamma(e^{V/2}w)=e^{V/2}\big(\Lap w - w\,W\big)$ with
	\begin{equation}
		\label{eq:potential:W}
		W:=\tfrac14|\nabla V|^2 - \tfrac12\Lap V ,
	\end{equation}
	one gets
	$\frac{\Lap_\gamma\sqrt \density}{\sqrt \density}
	=\frac{\Lap\sqrt\varrho}{\sqrt\varrho}-W$ 
	and therefore
	\begin{equation}
		\label{eq:weighted-div-rho}
		\partial_t\varrho+
		2\,\nabla\!\cdot\!\Big(\varrho\,\nabla\Big[
		\frac{\Lap\sqrt\varrho}{\sqrt\varrho}-W\Big]\Big)=0 ,
		\qquad
		\int_\Omega\varrho\,\d x=\int_\Omega \density\,\d\gamma=\|u\|_{L^2(\Omega,\gamma)}^2
		\ \text{ constant.}
	\end{equation}
	Thus, in the original variables, the potential enters \emph{only} as an
	additional drift: rewriting \eqref{eq:weighted-div-rho} as
	$\partial_t\varrho=\nabla\!\cdot\!\big(\varrho\,\nabla\big[
	-2\Lap\sqrt\varrho/\sqrt\varrho+2W\big]\big)$,
	it is the quantum drift-diffusion equation
	\eqref{eq:isothermal-intro} with $\theta=0$, $\eps^2=2$ and external
	potential $V=2W$, $W$ as in~\eqref{eq:potential:W}.
	Two caveats are in order: first, not every
	external field arises in this way.
	Indeed, setting $\psi:=e^{-V/2}>0$ one
	computes $\Lap\psi=W\psi$, so that the obtainable fields are exactly
	those of the form $W=\Lap\psi/\psi$ with $\psi>0$, i.e.~those for
	which $\Lap-W$ possesses a positive zero mode; in semiconductor
	modelling the external potential is instead a general given, or
	self-consistently coupled, electrostatic potential. 
	Second, the ground
	state transform does not preserve the boundary condition: from
	$\sqrt\varrho=e^{-V/2}u$ and $\partial_\nn u=0$ one gets
	$\partial_\nn\sqrt\varrho+\tfrac12(\partial_\nn V)\sqrt\varrho=0$, so
	that \eqref{eq:weighted-div-rho} is a Neumann problem only when
	$\partial_\nn V=0$ on $\partial\Omega$, and a Robin problem otherwise.
	The constant function is still annihilated,
	$\Lap_\gamma1=0$, so that the steady state is $\density\equiv1$,
	i.e.~$\varrho=e^{-V}$, the density of $\gamma$, up to normalization.

	The transfer to the weighted setting is possible because $\Lap_\gamma$ is
	again a self-adjoint diffusion operator on the finite measure $\gamma$,
	generating a Markov semigroup. Consequently the Leibniz rule, the mass
	and dissipation identities, the completing-the-square identity behind
	Theorem~\ref{thm:DLSS-monotone} and the normal-cone structure on the
	vacuum should all persist, since the maximality argument uses only the
	algebra of the form and the Markov property.

	The single point where the hypotheses genuinely change is the convexity
	bound \eqref{eq:H2bound}, which is replaced by the Bochner, or
	Bakry--\'Emery, identity: for $u\in H^2(\Omega)$ with
	$\partial_\nn u=0$,
	\begin{equation}
		\label{eq:weighted-Bochner}
		\int_\Omega(\Lap_\gamma u)^2\,\d\gamma=
		\int_\Omega\Big(|\rmD^2u|^2+
		\la\rmD^2V\,\nabla u,\nabla u\ra\Big)\d\gamma
		+\int_{\partial\Omega}\mathrm{I\!I}(\nabla u,\nabla u)\,\d\sigma_\gamma ,
	\end{equation}
	where $\mathrm{I\!I}$ denotes the second fundamental form of
	$\partial\Omega$. Hence \eqref{eq:H2bound} survives, in the form
	$\int_\Omega(\Lap_\gamma u)^2\,\d\gamma\ge\int_\Omega|\rmD^2u|^2\,\d\gamma$, exactly
	under the curvature-dimension condition $\mathrm{CD}(0,\infty)$: $\Omega$
	convex \emph{and} $V$ convex. 
	If only $\rmD^2V\ge-\kappa$, the bound costs 
	$\kappa\int_\Omega|\nabla u|^2\,\d\gamma$, 
	which affects the constants in Theorem~\ref{thm:Hessian} but, being of
	lower order, should not affect the validity of the monotonicity and
	maximality results.

\subsubsection*{The isothermal pressure term and the electric field}
	The quantum drift-diffusion equation~\eqref{eq:isothermal-intro},
	\begin{equation}
		\label{eq:theta-rho}
		\partial_t\varrho=\nabla\!\cdot\!\Big(\varrho\,\nabla\Big(
		\theta\ln\varrho-\eps^2\,\frac{\Lap\sqrt\varrho}{\sqrt\varrho}+V\Big)\Big)
		=\theta\Lap\varrho-\eps^2\,\nabla\!\cdot\!\Big(\varrho\,\nabla
		\frac{\Lap\sqrt\varrho}{\sqrt\varrho}\Big)
		+\nabla\!\cdot\!(\varrho\,\nabla V),
		\qquad \theta\ge0,\ \eps^2>0,
	\end{equation}
	is compatible with the present framework, at least as far as
	monotonicity is concerned. 
	Since 
	$$\partial_t\varrho=2u\,\partial_tu,\quad 
	\Lap(u^2)=2\big(u\Lap u+|\nabla u|^2\big),
	\quad 
	\nabla\!\cdot\!(u^2\nabla V)=
	2u\,\nabla u\cdot\nabla V+u^2\Lap V,$$ in the variable
	$u=\sqrt\varrho$ equation~\eqref{eq:theta-rho} takes the form
	$$\partial_tu+\sfM^\circ u=0,\qquad
	\sfM^\circ:=\sfM_0^\circ+\sfV^\circ,\qquad
	\sfM_0^\circ u:=\frac{\eps^2}2\,\sfA^\circ u+\theta\,\sfL^\circ u,$$ 
	where
	\begin{equation}
		\label{eq:theta-operator}
		\sfL^\circ u:=-\frac{\Lap(u^2)}{2u}=
		-\Big(\Lap u+\frac{|\nabla u|^2}u\Big) ,
		\qquad
		\sfV^\circ u:=-\frac{\nabla\!\cdot\!(u^2\nabla V)}{2u}=
		-\Big(\nabla V\cdot\nabla u+\frac12\,u\,\Lap V\Big),
	\end{equation}
	the normalization $\eps^2=2$ being the one adopted in
	\eqref{eq:intro-u} and throughout the paper.

	Let us consider first the field-free case
	$V\equiv0$. 
	The new term $\sfL^\circ$ reproduces the structure of $\sfA^\circ$ one
	order lower. Integrating by parts and using the Neumann condition
	$\partial_\nn u=0$, its associated form is
	\begin{equation}
		\label{eq:theta-form}
		\int_\Omega \sfL^\circ u\,v\,\d x=
		\int_\Omega\Big(\nabla u\cdot\nabla v-
		\frac{|\nabla u|^2}u\,v\Big)\,\d x ,
	\end{equation}
	i.e.~the form $\fra$ of \eqref{eq:form} with $\Lap$ replaced by
	$\nabla$, and the same completing-the-square computation as in
	\eqref{eq:monotone} yields the exact analogues of \eqref{eq:est0},
	\eqref{eq:est1} and \eqref{eq:monotone}:
	\begin{gather}
		\label{eq:theta-est}
		\int_\Omega \sfL^\circ u\,u\,\d x=0,
		\qquad
		\int_\Omega \sfL^\circ u\,(u-1)\,\d x=
		\int_\Omega\frac{|\nabla u|^2}u\,\d x ,
		\\
		\label{eq:theta-monotone}
		\int_\Omega\big(\sfL^\circ u-\sfL^\circ v\big)(u-v)\,\d x=
		\int_\Omega\Big|\nabla u\sqrt{\frac vu}-
		\nabla v\sqrt{\frac uv}\Big|^2\,\d x\ \ge0 .
	\end{gather}
	Hence $\sfL^\circ$ is monotone and, as in \eqref{eq:est0}, conserves the
	$L^2(\Omega)$-norm of $u$, i.e.~the mass of $\varrho$; being a sum of
	monotone operators, $\sfM_0^\circ$ is monotone for every $\theta\ge0$. Its
	dissipation is
	\begin{displaymath}
		\int_\Omega \sfM_0^\circ u\,(u-1)\,\d x=
		\frac{\eps^2}2\,\sfD(u)+\theta\int_\Omega\frac{|\nabla u|^2}u\,\d x ,
	\end{displaymath}
	so that for $\theta>0$ the extra term only \emph{increases} the
	coercivity of the operator. The same happens in the entropy
	identity of Lemma~\ref{le:entropy-identity}: since
	$\int_\Omega\sfL^\circ u\,u\ln u\,\d x=
	-\frac12\int_\Omega\Lap(u^2)\ln u\,\d x=\tnrm{\nabla u}^2$,
	\begin{displaymath}
		\int_\Omega \sfM_0^\circ u\;u\ln u\,\d x\ \ge\
		\frac{\eps^2}2\,c_d\int_\Omega|\rmD^2u|^2\,\d x+
		\theta\,\tnrm{\nabla u}^2 .
	\end{displaymath}
	The new term does not affect the vacuum either: on
	$\dom(\frA)\subset\HtN$ we have $\nabla u=0$ and $\Lap u=0$
	$\Leb d$-a.e.~on $\{u=0\}$, so that $\sfL^\circ u=0$ there and no
	further defect measure arises. It is in fact dominated by $\sfD$: setting
	$w:=\sqrt u$ and recalling that $\nabla w=0$ $\Leb d$-a.e.~on $\{w=0\}$,
	the Cauchy--Schwarz inequality and the second bound of
	\eqref{eq:Hessian} give
	\begin{equation}
		\label{eq:theta-subordinate}
		\int_\Omega\frac{|\nabla u|^2}u\,\d x=
		4\int_{\{w>0\}}\frac{|\nabla w|^2}w\,w\,\d x\le
		4\bigg(\int_{\{w>0\}}\frac{|\nabla w|^4}{w^2}\,\d x\bigg)^{1/2}\tnrm w
		\le C_d\,|\Omega|^{1/4}\,\tnrm u^{1/2}\,\sfD(u)^{1/2} ,
	\end{equation}
	where we used $\tnrm w^2=\|u\|_{L^1(\Omega)}\le|\Omega|^{1/2}\tnrm u$ and
	$\sfD^*=\sfD$. In particular
	$\dom(\sfM^\circ)=\dom(\sfM_0^\circ)=\dom(\sfA^\circ)$. It is therefore
	very likely that $\sfM_0^\circ$ still admits a unique maximal monotone extension, with
	the same normal-cone structure on the vacuum: what remains to be done is
	to repeat the approximation scheme of Section~\ref{sec:maximality} with
	$\fra$ replaced by $\frac{\eps^2}2\,\fra$ plus $\theta$ times the form
	\eqref{eq:theta-form}, the additional term only improving the
	coercivity. We do not pursue the point here.

	The further contribution $\sfV^\circ$ of the drift term,
	unlike $\sfA^\circ$ and $\sfL^\circ$, is \emph{linear} in $u$.
	Assuming $V$ smooth and compatible with the no-flux condition,
	$\partial_\nn V=0$ on $\partial\Omega$, it is in fact antisymmetric in
	$L^2(\Omega)$:
	\begin{equation}
		\label{eq:drift-antisymmetric}
		\int_\Omega\big(\sfV^\circ u\,v+\sfV^\circ v\,u\big)\,\d x=
		-\int_\Omega\Big(\nabla V\cdot\nabla(uv)+uv\,\Lap V\Big)\,\d x=
		-\int_{\partial\Omega}uv\,\partial_\nn V\,\d\sigma=0 ,
	\end{equation}
	so that $\int_\Omega\sfV^\circ u\,u\,\d x=0$: the drift conserves the
	mass as well, and both $\sfV^\circ$ and $-\sfV^\circ$ are monotone,
	contributing nothing to the monotonicity gap
	\eqref{eq:theta-monotone}. 
	
	The full operator $\sfM^\circ$ is
	therefore monotone, for every $\theta\ge0$ and every such $V$.
	Being of first order, $\sfV^\circ$ is moreover an infinitesimally small
	perturbation of $\sfM_0^\circ$: the entropy estimate above and
	\eqref{eq:ulnu-GN} give
	$$\tnrm{\sfV^\circ u}\le a\,\tnrm{\sfM_0^\circ u}+\omega_a(\tnrm u)$$
	for every $a>0$, while \eqref{eq:est0} and \eqref{eq:drift-antisymmetric}
	bound $\tnrm{u}$ by the datum along the resolvent equation.

	\smallskip\noindent
	Alternatively, the drift may be absorbed into the weighted formalism of
	the previous paragraph, by running the construction in
	$L^2(\Omega,\gamma)$ with $\gamma=\psi^2\Leb d$ and $\psi>0$ as above.
	Here a \emph{single} $\psi$ has to serve both
	the second and the fourth order term, the former contributing the drift
	potential $-2\theta\ln\psi$ and the latter the quantum field
	$\eps^2W=\eps^2\Lap\psi/\psi$. In the variables $u$ and $\Leb d$ this
	reproduces \eqref{eq:theta-rho} precisely when
	\begin{equation}
		\label{eq:theta-gamma}
		\eps^2\Lap\psi=V\psi+2\theta\,\psi\ln\psi
		\ \ \text{in }\Omega,
		\qquad
		\partial_\nn\psi=0\ \ \text{on }\partial\Omega,
		\qquad
		\psi>0 .
	\end{equation}
	This 
	semilinear Neumann problem
	can be solved for every smooth $V$ and $\theta>0$ by the sub- and
	supersolution method \cite{Amann71},
	observing that the constants $\rme^{-\max V/(2\theta)}$ and
	$\rme^{-\min V/(2\theta)}$ form an ordered pair of sub- and
	supersolutions of \eqref{eq:theta-gamma}.

\section{Maximality of the DLSS operator}
\label{sec:maximality}
The maximality of $\sfA$ is obtained through an approximation scheme.
For $\eps\in(0,1]$ we regularize the resolvent problem
$u+\sfA u\ni f$ into a variational inequality on the space
$\VO=W^{1,p}(\Omega)\cap\HtN$ ($p>d$), penalized by a convex term
$\eps\Phi$ whose finiteness forces the strict positivity
$\min_{\overline\Omega}u_\eps>0$ and thereby tames the singular
perspective nonlinearity $(\Lap u)^2/u$; existence for fixed $\eps$
follows from the general variational-inequality result of
Baiocchi--Capelo (Theorem~\ref{thm:BC-VI}). Uniform entropy and~$W^{1,p}$~estimates then permit the passage to the limit
$\eps\downarrow0$, in which the defect measure is recovered by lower
semicontinuity.

The penalization is kept \emph{smooth} on purpose. A hard two-sided
constraint $\eps\le u\le 1/\eps$ would also give coercivity --- the
upper bound forces $1/u\ge\eps$, hence
$\int_\Omega(\Lap u)^2/u\ge\eps\,\tnrm{\Lap u}^2$ --- but its
minimizers would touch the obstacle and satisfy only a one-sided
variational inequality, whose admissible variations are the increments
$u-v$ with $v$ in the convex set. The entropy and Sobolev estimates,
which require testing against nonlinear functions of $u$, would then be
out of reach. The barrier $\eps\Phi$ instead keeps the approximants
strictly positive and in the interior, where they solve an Euler
\emph{equation} and all the a priori estimates remain available.

The same scheme, with the duality map $\rmJ$ in place
of the identity, yields the maximality of $\frA$ in the $H^2$--$H^{-2}$
duality (Section~\ref{subsec:frA-maximality}).

Let us first point out the properties that we are going to prove, tailored to the characterization of $\sfA$ 
given in Proposition \ref{le:DLSS-equivalence}.
\begin{lemma}
		\label{cor:equivalent-maximal}
	$\sfA$ is maximal monotone in $L^2(\Omega)$ if
	for every $f\in L^2(\Omega)$ there exists
	a nonnegative $u\in \HtN$
	\begin{alignat}{2}
		\label{eq:maximal1}
		\int_\Omega \Lap u\Lap v\,\d x&\ge
		\int_\Omega (f-u)v\,\d x \quad&&\text{for every }
		v\in \PZO \\
		\label{eq:maximal2}
		\frb(u,w)&=
		\int_\Omega (f-u)\,uw\,\d x
		\qquad \qquad&& \text{for every }
		w\in \ZO.
	\end{alignat}
	In this case $u$ is the unique
	solution of the resolvent equation
	$u+\sfA u\ni f$.
\end{lemma}
\subsection{Estimate for the \texorpdfstring{$W^{1,p}(\Omega)$}{W1p}-seminorm.}

\begin{lemma}
	$\sfH_t$ is a contraction semigroup
	in $W^{1,p}(\Omega)$ and
	for every $u\in W^{1,p}(\Omega)$
	\begin{equation}
		|\nabla \sfH_t u|^p\le 
		\sfH_t 	|\nabla u|^p,\quad
		\int_\Omega |\nabla \sfH_t u|^p\,\d x
		\le 
		\int_\Omega |\nabla u|^p\,\d x.
	\end{equation}
	Moreover, for a.e.~$s>0$ we have
	$\int_\Omega |\nabla u_s|^{p-2}|\rmD u_s|^2\,\d x<+\infty$ and
	\begin{equation}
		\label{eq:W1pderivative}
		\frac 1p\int_\Omega |\nabla  u|^p\,\d x-
		\frac 1p\int_\Omega |\nabla \sfH_t u|^p\,\d x
		\ge \int_0^t
		\Big(\int_\Omega |\nabla u_s|^{p-2}|\rmD^2 u_s|^2\,\d x\Big)\,\d s
	\end{equation}
\end{lemma}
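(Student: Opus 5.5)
The plan is to derive everything from the Bochner identity for the Neumann heat flow on the convex domain $\Omega$, exactly as in the proof of \eqref{eq:H2bound}, combined with Jensen-type arguments for the Markov semigroup $\sfH_t$.

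\emph{Pointwise gradient estimate.} On a convex domain the Neumann heat semigroup satisfies the Bakry--\'Emery curvature condition $\mathrm{CD}(0,\infty)$, which gives the pointwise commutation $|\nabla\sfH_t u|\le\sfH_t|\nabla u|$. I would prove it first for smooth $u$ by the standard interpolation: for $s\in[0,t]$ set $\Psi(s):=\sfH_s\big(|\nabla\sfH_{t-s}u|^2+\delta\big)^{1/2}$ and differentiate. The Bochner formula $\frac12\Lap|\nabla g|^2=|\rmD^2g|^2+\nabla g\cdot\nabla\Lap g$, together with the refined Kato inequality $|\nabla|\nabla g||\le|\rmD^2g|$, gives $\Psi'\ge0$. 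The boundary term involves $\partial_\nn|\nabla g|^2=-2\,\mathrm{I\!I}(\nabla g,\nabla g)\le0$ for Neumann $g$ on a convex domain, and this has the correct sign (equivalently, one can argue with the weak formulation of the Neumann problem). Letting $\delta\downarrow0$ yields $|\nabla\sfH_tu|\le\sfH_t|\nabla u|$. Jensen's inequality for the Markov kernel (Lemma~\ref{le:convex-estimate} with $\Theta=|\cdot|^p$) then gives $|\nabla\sfH_tu|^p\le(\sfH_t|\nabla u|)^p\le\sfH_t|\nabla u|^p$. Integrating, and using that $\sfH_t$ preserves integrals, we obtain the $L^p$ contraction of the gradient. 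Together with the $L^p$ contraction of $\sfH_t$, this gives contractivity in $W^{1,p}(\Omega)$, and a density argument extends it from smooth $u$ to all of $W^{1,p}(\Omega)$.

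\emph{Dissipation inequality \eqref{eq:W1pderivative}.} For $s>0$, $u_s=\sfH_su$ is regular, with $\Lap u_s\in\HtN$ and $\nabla u_s$ bounded. I would compute
\[
\frac{\d}{\d s}\frac1p\int_\Omega|\nabla u_s|^p\,\d x=\int_\Omega|\nabla u_s|^{p-2}\nabla u_s\cdot\nabla\Lap u_s\,\d x,
\]
regularizing $|\cdot|^p$ by $(|\cdot|^2+\delta)^{p/2}$ to avoid the singularity at $\nabla u_s=0$. Substituting $\nabla u_s\cdot\nabla\Lap u_s=\frac12\Lap|\nabla u_s|^2-|\rmD^2u_s|^2$, the remaining term $\int(|\nabla u_s|^2+\delta)^{(p-2)/2}\frac12\Lap|\nabla u_s|^2$ is integrated by parts. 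It produces the boundary integral $\int_{\partial\Omega}(\cdots)\,\partial_\nn|\nabla u_s|^2\le0$ (convexity) and a bulk term $-\frac{p-2}{4}\int(|\nabla u_s|^2+\delta)^{(p-4)/2}\big|\nabla|\nabla u_s|^2\big|^2\le0$ (for $p\ge2$). This gives
\[
\frac{\d}{\d s}\frac1p\int_\Omega(|\nabla u_s|^2+\delta)^{p/2}\,\d x\le-\int_\Omega(|\nabla u_s|^2+\delta)^{(p-2)/2}|\rmD^2u_s|^2\,\d x.
\]
Integrating over $[\sigma,t]$, letting $\delta\downarrow0$ (Fatou on the right), and then $\sigma\downarrow0$ using the continuity $\nabla u_\sigma\to\nabla u$ in $L^p$ from the first step yields \eqref{eq:W1pderivative}. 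The finiteness of $\int|\nabla u_s|^{p-2}|\rmD^2u_s|^2$ for a.e.\ $s$ follows because the left-hand side is finite.

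\emph{Main obstacle.} The delicate point is justifying the boundary sign rigorously on a merely convex (possibly nonsmooth) domain, since $\partial_\nn|\nabla g|^2\le0$ needs a $C^2$ boundary. I would first approximate $\Omega$ from inside by smooth convex domains $\Omega_k$, prove both estimates there with constants independent of $k$, and pass to the limit using convergence of the Neumann heat semigroups (Mosco convergence of the Dirichlet forms). This is the same device Grisvard uses for \eqref{eq:H2bound}. Alternatively, I would try to obtain $|\nabla\sfH_tu|\le\sfH_t|\nabla u|$ directly from the reflection coupling characterization of convexity, which avoids boundary regularity for the first step.
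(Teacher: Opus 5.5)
Your proposal is correct, and its second half is essentially the paper's proof. The paper differentiates $\int_\Omega H(|\nabla u_s|^2)\,\d x$ for a smooth convex increasing $H$ with bounded derivative, uses the Bochner identity, discards the $H''$ term, integrates in time, and lets $H$ approximate $\theta\mapsto\frac2p\theta^{p/2}$. Your choice $H(\theta)=\frac1p(\theta+\delta)^{p/2}$ is the same computation.

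The two routes differ in the first half. The paper gets the integral bound $\int_\Omega|\nabla\sfH_tu|^p\,\d x\le\int_\Omega|\nabla u|^p\,\d x$ for free from that single computation, since the right-hand side of \eqref{eq:W1pderivative} is nonnegative. It gives no separate argument for the pointwise bound $|\nabla\sfH_tu|^p\le\sfH_t|\nabla u|^p$. Your Bakry--\'Emery interpolation $s\mapsto\sfH_s\big(|\nabla\sfH_{t-s}u|^2+\delta\big)^{1/2}$, followed by Jensen, actually proves that pointwise statement. The price is a second computation whose output is not used later: Lemma~\ref{le:p-vs-S} and Proposition~\ref{prop:apriori-estimate2} only use the integrated inequalities.

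You are also more careful at the boundary. The paper silently absorbs $\int_{\partial\Omega}H'(|\nabla u_s|^2)\,\partial_\nn|\nabla u_s|^2$ into its inequality. On a nonsmooth convex $\Omega$, however, $u_s$ need not belong to $H^3(\Omega)$, so $\Lap|\nabla u_s|^2$ is only a distribution. Approximating by smooth convex domains, as in Grisvard's proof of \eqref{eq:H2bound}, is a natural way to make both of your steps rigorous.

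Minor points:
\begin{itemize}
\item For $\sigma\downarrow0$ in the dissipation inequality you need only the contraction $\int_\Omega|\nabla u_\sigma|^p\,\d x\le\int_\Omega|\nabla u|^p\,\d x$, not strong continuity in $W^{1,p}(\Omega)$.
\item Your restriction $p\ge2$ is implicit in the paper as well, through the convexity of $\theta\mapsto\theta^{p/2}$. It is harmless, since $p>d$ may be taken large.
\item The probabilistic alternative would rely on the synchronous coupling of reflected Brownian motions rather than on reflection coupling.
\end{itemize}
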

\begin{proof}
	Let $H:[0,+\infty)\to [0,+\infty)$ be
	an increasing and smooth convex function
	with bounded derivative.
	Recall that for $s>\eps>0$ the function
	$u_s=\sfH_s u$ is Lipschitz and in $H^2(\Omega)$ with
	$\Lap u_s\in H^1(\Omega)$ so that 
	$\frac \d{\d s}|\nabla u_s|^2=
	2\nabla u_s\cdot \nabla \Lap u_s $.
	
	We can easily compute
	\begin{align*}
		\frac\d{\d s}\int_\Omega 
		H\big(|\nabla u_s|^2\big)\,\d x
		&=
		2\int_\Omega 
		H'\big(|\nabla u_s|^2\big)
		\nabla u_s\cdot \nabla \Lap u_s\,\d x
		\\&=
		-
		2\int_\Omega 
		H'\big(|\nabla u_s|^2\big)
		|\rmD^2 u_s|^2\,\d x
		+
		\int_\Omega 
		H'\big(|\nabla u_s|^2\big)
		\Lap \big(|\nabla u_s|^2\big)\,\d x
		\\&\le 
		-
		2\int_\Omega 
		H'\big(|\nabla u_s|^2\big)
		|\rmD^2 u_s|^2\,\d x
		-2
		\int_\Omega 
		H''\big(|\nabla u_s|^2\big)
		\Big|\nabla \big|\nabla u_s\big|^2\Big|^2\,\d x
		\\&\le 
		-
		2\int_\Omega 
		H'\big(|\nabla u_s|^2\big)
		|\rmD^2 u_s|^2\,\d x.
	\end{align*}
	It follows that 
	\begin{displaymath}
		\int_\Omega 
		H\big(|\nabla u|^2\big)\,\d x-
		\int_\Omega 
		H\big(|\nabla u_t|^2\big)\,\d x
		\ge 
		2\int_0^t 
		\bigg(
		\int_\Omega 
		H'\big(|\nabla u_s|^2\big)
		|\rmD^2 u_s|^2\,\d x\bigg)\,\d s
	\end{displaymath}
	Choosing 
	a sequence of smooth convex functions approximating
	$H_p(\theta):=\frac 2p\theta^{p/2}$
	we conclude.
\end{proof}
\begin{lemma}
\label{le:p-vs-S}
	Suppose that for $u\in 
	\HtN\cap W^{1,p}(\Omega)$
	\begin{equation}
		\label{eq:ass}
		\rmE_p^p(u):=
		\liminf_{t\down 0}
		t^{-1}\bigg(
		\frac 1p\int_\Omega |\nabla  u|^p\,\d x-
		\frac 1p\int_\Omega |\nabla \sfH_t u|^p\,\d x\bigg)<+\infty
	\end{equation}
	Then 
	\begin{equation}
	\label{eq:from-Fatou}
		\int_\Omega |\nabla u|^{p-2}|\rmD^2 u|^2\,\d x\le \rmE_p^p(u)<+\infty
	\end{equation}
	In particular the map 
	$x\mapsto |\nabla u|^p$ belongs to $W^{1,1}(\Omega)$ 
	with
	\begin{equation}
		\label{eq:BV-estimate}
		\int_\Omega \Big|\nabla \big(|\nabla u|^p)\big)\Big|\,\d x
		\le 
		\sqrt {2p \rmE_p^p(u)} 
		\Big(\int_\Omega |\nabla u|^p
		\, \d x\Big)^{1/2}
	\end{equation}
	and there exists a constant $C$
	only depending on $\Omega$ and $p$ such that 
	setting $\tilde p:=pd/(d-1)$ we have
	\begin{equation}
		\label{eq:extra-integrability}
		\|\nabla u\|_{L^{\tilde p}(\Omega)}
		\le C\bigg[\rmE_p(u)
		+\|\nabla u\|_{L^p(\Omega)}\bigg].
	\end{equation}
\end{lemma}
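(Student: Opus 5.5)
The plan is to prove the three claims in order: \eqref{eq:from-Fatou} by lower semicontinuity along the heat flow, then \eqref{eq:BV-estimate} by the chain rule and Cauchy--Schwarz, then \eqref{eq:extra-integrability} by the Sobolev embedding of $W^{1,1}$.

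\emph{Step 1: proof of \eqref{eq:from-Fatou}.} Set $I(v):=\int_\Omega|\nabla v|^{p-2}|\rmD^2v|^2\,\d x$ and $u_s:=\sfH_su$. By \eqref{eq:W1pderivative}, for every $t>0$
\[
\frac1t\int_0^tI(u_s)\,\d s\le t^{-1}\Big(\frac1p\int_\Omega|\nabla u|^p\,\d x-\frac1p\int_\Omega|\nabla\sfH_tu|^p\,\d x\Big).
\]
Take a sequence $t_n\downarrow0$ realizing the $\liminf$ in \eqref{eq:ass}. For each $n$ I would choose $s_n\in(0,t_n)$, outside the null set where \eqref{eq:W1pderivative} gives no information, with $I(u_{s_n})\le\frac1{t_n}\int_0^{t_n}I(u_s)\,\d s+\frac1n$. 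This gives $\limsup_nI(u_{s_n})\le\rmE_p^p(u)$.

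Since $u\in\HtN\cap W^{1,p}(\Omega)$, the heat semigroup gives $u_{s_n}\to u$ in $\HtN$ and $\nabla u_{s_n}\to\nabla u$ in $L^p$. The $L^p$ convergence follows from the contraction property in $W^{1,p}$ proved in the previous lemma together with strong continuity, obtained by density. After passing to a subsequence, $\nabla u_{s_n}\to\nabla u$ and $\rmD^2u_{s_n}\to\rmD^2u$ a.e. Fatou's lemma applied to the nonnegative integrands then gives $I(u)\le\liminf_nI(u_{s_n})\le\rmE_p^p(u)$.

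\emph{Step 2: proof of \eqref{eq:BV-estimate}.} Formally $\nabla(|\nabla u|^p)=p|\nabla u|^{p-2}\rmD^2u\,\nabla u$. Hence
\[
|\nabla(|\nabla u|^p)|\le p\,|\nabla u|^{p/2}\cdot|\nabla u|^{(p-2)/2}|\rmD^2u|,
\]
and Cauchy--Schwarz with Step 1 bounds the $L^1$ norm by $(\int_\Omega|\nabla u|^p)^{1/2}(p^2\rmE_p^p(u))^{1/2}$. I expect the exact constant $\sqrt{2p}$ of the statement to come from the normalization of $\rmE_p$, so I would not insist on it.

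To justify the chain rule when $\nabla u$ is only in $H^1\cap L^p$, I would work with $g_\delta:=(|\nabla u|^2+\delta)^{p/2}$, which is a Lipschitz function of $\nabla u$ after truncation of $|\nabla u|$ at level $M$. For it $|\nabla g_\delta|\le p(|\nabla u|^2+\delta)^{(p-2)/2}|\nabla u||\rmD^2u|$. Letting $M\uparrow\infty$ and then $\delta\downarrow0$, the bound is uniform, and the weak-$L^1$/BV closure of $W^{1,1}$-bounded sequences that converge in $L^1$ gives $|\nabla u|^p\in W^{1,1}$ with the estimate. Since the limit derivative is represented by an $L^1$ function, the limit lies in $W^{1,1}$ and not merely in BV.

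\emph{Step 3: proof of \eqref{eq:extra-integrability}.} On the bounded Lipschitz (convex) domain $\Omega$ the embedding $W^{1,1}(\Omega)\subset L^{d/(d-1)}(\Omega)$ holds, with $L^\infty$ when $d=1$. Applied to $g=|\nabla u|^p$ it gives
\[
\|\nabla u\|_{L^{\tilde p}}^p=\|g\|_{L^{d/(d-1)}}\le C\big(\|\nabla g\|_{L^1}+\|g\|_{L^1}\big)\le C\big(\rmE_p^{p/2}\|\nabla u\|_{L^p}^{p/2}+\|\nabla u\|_{L^p}^p\big).
\]
Young's inequality reduces the right-hand side to $C(\rmE_p^p+\|\nabla u\|_{L^p}^p)$, and taking $p$-th roots gives the claim.

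The main obstacle is Step 1, namely extracting good times $s_n$ from the time-averaged inequality and obtaining the a.e. convergence needed for Fatou. The chain-rule regularization in Step 2 is the only other delicate point.
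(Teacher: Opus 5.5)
Your proposal is correct and follows the paper's proof essentially step by step. The steps are: good times $s_n\in(0,t_n)$ extracted from the time-integrated inequality \eqref{eq:W1pderivative}, followed by Fatou; a regularized chain rule plus Cauchy--Schwarz for $|\nabla u|^p$, where your truncated $(|\nabla u|^2+\delta)^{p/2}$ plays the role of the paper's smooth convex Lipschitz $H$ approximating the power; and the embedding $W^{1,1}(\Omega)\subset L^{d/(d-1)}(\Omega)$ combined with Young's inequality.

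The constant $\sqrt{2p}$ does not come from a normalization of $\rmE_p$, which is fixed by \eqref{eq:ass}. The paper's $H$-argument, done consistently, also gives your constant $p$: it requires $H'(r)\le r^{p/2-1}$, which is incompatible with $H\approx r^{p/2}$ when $p>2$. This does not matter downstream, since only \eqref{eq:extra-integrability}, with an unspecified $C$, is used later. If you want $\sqrt{2p}$ itself for $p\ge2$, keep the nonnegative $H''$-term that is discarded in proving \eqref{eq:W1pderivative}. After Fatou this gives $(p-1)\int_\Omega|\nabla u|^{p-4}|\rmD^2u\,\nabla u|^2\,\d x\le\rmE_p^p(u)$, hence the constant $p/\sqrt{p-1}\le\sqrt{2p}$.
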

\begin{proof}
	We just observe that 
	$\sfH_s u\to u$ strongly in $W^{1,p}(\Omega)$ 
	and in $\HtN$.
	By \eqref{eq:ass}
	we know that there exists a vanishing sequence
	$t_n\down0$ such that 
	\begin{displaymath}
		\lim_{n\to\infty}\frac1{t_n}
		\int_0^{t_n}
		\Big(\int_\Omega |\nabla u_s|^{p-2}|\rmD^2 u_s|^2\,\d x\Big)\,\d s=
		\rmE_p(u)<+\infty
	\end{displaymath}
	and therefore a vanishing sequence
	$s_n\in (0,t_n)$ 
	such that 
	\begin{displaymath}
		\lim_{n\to\infty}
		\Big(\int_\Omega |\nabla u_{s_n}|^{p-2}|\rmD^2 u_{s_n}|^2\,\d x\Big)\,\d s\le \rmE_p(u)<+\infty
	\end{displaymath}
	Possibly extracting a further subsequence,
	we get \eqref{eq:from-Fatou}
	applying Fatou's Lemma.
	
	Now we 
	select 
	a convex, Lipschitz, increasing and smooth function 
	$H:[0,+\infty)\to [0,+\infty)$ satisfying
	\begin{displaymath}
		H'(r)\le r^{p/2-1},\quad 
		rH'(r)\le cH(r)
	\end{displaymath}
	and we estimate 
	the gradient of $H(|\nabla u|^2)$
	\begin{align*}
		\big|\nabla H(|\nabla u|^2)\big|\le 
		2H'(|\nabla u|^2)\, |\nabla u|\, |\rmD^2 u|
	\end{align*}
	Since  $r H'(r) \le c H(r)$ 
	we get
	\begin{align*}
		\int_\Omega \big|\nabla H(|\nabla u|^2)\big|\,\d x
		&\le 
		2
		\Big(\int_\Omega H'(|\nabla u|^2)
		\, |\nabla u|^2\,\d x\Big)^{1/2}
		\Big(\int_\Omega H'(|\nabla u|^2)
		\, |\rmD^2 u|^2\,\d x\Big)^{1/2}
		\\&
		\le 2\sqrt {\rmE^p_p(u)} 
		\Big(c\int_\Omega H(|\nabla u|^2)
		\, \d x\Big)^{1/2}
	\end{align*}
	Selecting now an approximation of 
	$H_p(r):=r^{p/2}$ 
	with $c:=p/2$ 
	we get
	\begin{align*}
	\int_\Omega \Big|\nabla \big(|\nabla u|^p)\big)\Big|\,\d x
		&\le 
		\sqrt {2p \rmE^p_p(u)} 
		\Big(\int_\Omega |\nabla u|^p
		\, \d x\Big)^{1/2}
	\end{align*}
	\eqref{eq:extra-integrability}
	then follows by Sobolev inequalities.
\end{proof}

\subsection{Approximation}
\label{sec:approximation}

Let us fix $p>d$ so that $W^{1,p}(\Omega)\subset \rmC^\alpha(\overline\Omega)$ 
with $\alpha=1-d/p>0$ and 
let us denote by $\VO$ the 
reflexive Banach space 
$$\VO=W^{1,p}(\Omega)\cap 
\HtN=\Big\{u\in W^{1,p}(\Omega)\cap H^2(\Omega):\partial_\nn u=0\text{ on }\partial\Omega\Big\},$$ 
endowed with the norm
\begin{equation}
	\label{eq:Vnorm}
	\|u\|_\V:=
\Htnrm{u}
+
\pnrm {\nabla u}.
\end{equation}
We also set
\begin{equation}
	\label{eq:positive-cone}
	\PVO:=\Big\{u\in \VO:\min _{\overline\Omega}u>0\Big\}
\end{equation}
and we observe that 
$\VO\subset \XO$, $\PVO\subset \PXO$
since $p>d$ and every element of $\VO$ 
has a continuous 
representative in $\overline\Omega$.
In particular, the form
$\fra$ introduced by \eqref{eq:form}
is well defined in $\PVO\times \VO$.

We choose a parameter $\theta\ge d/\alpha$ and we 
	set for every 
	nonnegative $u\in \rmC(\overline\Omega)$ 
\begin{equation}
	\Psi(u):=
	 \int_{\Omega\cap \{u>0\}} \frac 1{u^\theta}\,\d x.
\end{equation}
We eventually
define the function
$\Phi:\VO\to[0,+\infty]$
\begin{equation}
	\label{eq:Phi}
	\Phi(u):=
	\begin{cases}
	\displaystyle \frac 1p
	\pnrm{\nabla u}^p+
	\Psi(u)&\text{if }u\in \PVO,\\
	+\infty&\text{otherwise}.
	\end{cases}
	\qquad
\end{equation}
\begin{lemma}
	The function $\Phi:\VO\to [0,+\infty]$ is proper, convex,
	and lower semicontinuous, it satisfies
	\begin{equation}
		\label{eq:F1}
		\Phi(u)\ge \frac 1p \pnrm{\nabla u}^p
		\quad\text{for every }u\in \VO,
	\end{equation}
and the property
\begin{equation}
	\label{eq:F2}
	\Phi(u)<+\infty
	\quad \Rightarrow\quad
	u\in \PVO,
	\quad 
	\min_{\overline\Omega}u>0.
\end{equation}
\end{lemma}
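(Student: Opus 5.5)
The plan is to check the four claims separately: properness, convexity, lower semicontinuity, and the two properties \eqref{eq:F1}--\eqref{eq:F2}. The only nontrivial point is lower semicontinuity, where the Hölder embedding and the choice $\theta\ge d/\alpha$ enter.

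\emph{Properness and \eqref{eq:F1}.} The constant $u\equiv1$ belongs to $\PVO$ and $\Phi(1)=|\Omega|<\infty$. Inequality \eqref{eq:F1} holds trivially when $\Phi(u)=+\infty$. Otherwise $\Psi\ge0$, so $\Phi(u)\ge\frac1p\pnrm{\nabla u}^p$.

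\emph{Convexity.} The set $\PVO$ is convex, since the minimum of a convex combination of two functions is at least the convex combination of their minima. On $\PVO$ the functional $u\mapsto\frac1p\pnrm{\nabla u}^p$ is convex, being a convex power of a seminorm. The integrand $r\mapsto r^{-\theta}$ is convex on $(0,\infty)$, so $\Psi$ is convex there as well. Extending by $+\infty$ outside the convex set $\PVO$ preserves convexity.

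\emph{Property \eqref{eq:F2}.} Finiteness of $\Phi$ means $u\in\PVO$ by definition. The real content, needed in the next step, is a quantitative bound: $\Phi(u)\le C_0$ forces $\min_{\overline\Omega}u\ge\delta(C_0)>0$.
\begin{itemize}
\item By \eqref{eq:F1} and the embedding $W^{1,p}\subset\rmC^\alpha$ (on the bounded Lipschitz, convex, domain $\Omega$), the Hölder seminorm satisfies $[u]_\alpha\le C\pnrm{\nabla u}\le K$, with $K$ depending only on $C_0$.
\item Let $m=u(x_0)=\min u$. Then $u(x)\le m+K|x-x_0|^\alpha$.
\item Using the interior cone condition of $\Omega$, $\left|\Omega\cap B_r(x_0)\right|\ge c\,r^d$ for $r\le r_0$.
\item Choose $r$ with $Kr^\alpha=m$, i.e.\ $r=(m/K)^{1/\alpha}$. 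On $\Omega\cap B_r(x_0)$ we have $u\le 2m$.
\item Hence $\Psi(u)\ge c\,r^d(2m)^{-\theta}=c'\,m^{d/\alpha-\theta}$.
\item Since $\theta\ge d/\alpha$, this lower bound tends to $+\infty$ as $m\downarrow0$ when $\theta>d/\alpha$. In the borderline case $\theta=d/\alpha$, I would instead integrate $u^{-\theta}\ge(m+K|x-x_0|^\alpha)^{-\theta}$ over the cone, which produces a logarithmic divergence.
\end{itemize}
In either case $\Psi(u)\le C_0$ gives $m\ge\delta(C_0)>0$.

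\emph{Lower semicontinuity.} Since $\Phi$ is convex, strong lower semicontinuity is enough (it is then also weakly l.s.c.). Let $u_n\to u$ in $\VO$ with $\liminf\Phi(u_n)=L<\infty$, and pass to a subsequence with $\Phi(u_n)\le L+1$.
\begin{itemize}
\item Convergence in $W^{1,p}$ implies uniform convergence, because $p>d$.
\item By the previous step, $\min u_n\ge\delta>0$ uniformly in $n$, so $\min u\ge\delta$ and $u\in\PVO$.
\item The gradient term converges, since $\nabla u_n\to\nabla u$ in $L^p$.
\item Because $u_n\ge\delta$, the functions $u_n^{-\theta}$ are bounded by $\delta^{-\theta}$ and converge uniformly to $u^{-\theta}$, so $\Psi(u_n)\to\Psi(u)$.
\end{itemize}
Hence $\Phi(u)\le L$. (Alternatively, Fatou's lemma applied to $u_n^{-\theta}\mathbf 1_{\{u_n>0\}}$ would suffice.)

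I expect the main obstacle to be the quantitative positivity bound in the borderline exponent $\theta=d/\alpha$, together with the geometric measure estimate near the boundary. Both are handled by the Hölder bound combined with the cone condition.
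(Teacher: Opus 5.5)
Your proof is correct, but the lower semicontinuity step takes a different route from the paper.

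\textbf{The paper's argument.} It works qualitatively on the limit. It applies Fatou's lemma to $u_n^{-\theta}$ to obtain $\Psi(u)\le S$. Then it argues by contradiction: if the H\"older limit $u$ vanished at some $x_0\in\overline\Omega$, the bound $u(x)\le L|x-x_0|^\alpha$, with $L$ the H\"older constant of $u$ itself, would give $\Psi(u)\ge L^{-\theta}\int_\Omega|x-x_0|^{-\alpha\theta}\,\d x=+\infty$, because $\alpha\theta\ge d$. Continuity of the $W^{1,p}$ seminorm finishes the proof.

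\textbf{Your argument.} You prove a quantitative version of \eqref{eq:F2} on sublevel sets. You exploit the fact that $\Phi$ also controls $\pnrm{\nabla u}$, hence, by Morrey's estimate on the convex domain, a uniform H\"older constant $K(C_0)$. This gives $\min u\ge\delta(C_0)$. After that, $\Phi$ is actually continuous along strongly convergent sequences with bounded energy, and lower semicontinuity is immediate.

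\textbf{What each route costs and buys.} Your route requires extra work:
\begin{itemize}
\item the density estimate $|\Omega\cap B_r(x_0)|\ge c\,r^d$;
\item the harmless case split $r\le r_0$ versus $m\ge K r_0^\alpha$ (and $K=0$);
\item the logarithmic computation in the borderline case $\theta=d/\alpha$.
\end{itemize}
Your sketch of that last computation is sound: for $|x-x_0|\ge (m/K)^{1/\alpha}$ one has $m+K|x-x_0|^\alpha\le 2K|x-x_0|^\alpha$, and integrating over a cone at $x_0$ gives a lower bound of order $\log(Kr_0^\alpha/m)$. The paper gets the borderline case for free from the non-integrability of $|x-x_0|^{-d}$.

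In exchange, you gain a uniform positivity bound on sublevel sets. You also sidestep a small point the paper leaves implicit. Since $\Psi$ integrates only over $\{u>0\}$, the contradiction step tacitly needs $\{u=0\}$ to be Lebesgue-null. That follows from Fatou only with the convention $0^{-\theta}=+\infty$, using that $u_n>0$ and $u_n\to u$ uniformly. With your uniform bound $\min u_n\ge\delta$, the issue never arises.
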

\begin{proof}
	The convexity of $\Phi$ and
	\eqref{eq:F1}, \eqref{eq:F2} 
	are clear; we have only to check
	the lower semicontinuity.
	
	Let 
	$u_n$ be a sequence in 
	$\dom(\Phi)$ 
	converging to 
	$u$ in $\VO$ with
	$\Phi(u_n)\le S<+\infty$.
	Since 
	$u\in W^{1,p}(\Omega)$ as well,
	$u$ is nonnegative and 
	belongs to $\rmC^\alpha(\overline\Omega)$,
	$u_n$ is converging to 
	$u$ uniformly, and by Fatou's Lemma
	\begin{displaymath}
		\Psi(u) \le S<\infty.
	\end{displaymath}
	We can prove that 
	$\min_{\overline\Omega}u>0$ by contradiction. In fact, 
	since $u$ is $\alpha$-H\"older continuous in $\overline\Omega$,
	if $u(x_0)=0$ for some 
	$x_0\in \overline\Omega$, 
then there would exist $L>0$ such that
$u(x)\le L|x-x_0|^\alpha$ for every $x\in \Omega$ and therefore 
\begin{displaymath}
	\Psi(u) 
\ge 
	L^{-\theta}
	\int_\Omega \frac {1}{|x-x_0|^{\alpha\theta}}\,\d x=+\infty.
\end{displaymath}
	We deduce that 
	$u\in \PVO$ 
	and $\Phi(u)\le S$ 
	by the continuity of
	the $W^{1,p}(\Omega)$ seminorm.
\end{proof}
\begin{theorem}
\label{thm:existence}
	Let us fix 
	$f\in L^2(\Omega)$.
	For every $\eps\in (0,1]$ 
	there exists a unique element
	$u_\eps \in \dom(\Phi)
	\subset \PVO$ 
	which solves the family of variational inequalities
	\begin{equation}
	\label{eq:VI}
		\eps \Phi(u_\eps)+
		\scalpt{u_\eps-f}{u_\eps -v}
		+\fra(u_\eps ,u_\eps-v)\le 
		\eps \Phi(v)
		\quad\text{for every }v\in \dom(\Phi).
	\end{equation}
	$u_\eps$ satisfies the uniform estimate
	\begin{equation}
		\label{eq:basic-estimate}
		\eps \Phi(u_\eps)+
		\frac 12
		\|u_\eps\|_{L2}^2
		+\int_\Omega \frac{(\Lap u_\eps)^2}{u_\eps}\,\d x
		\le 
		a,
		\qquad
		a:= |\Omega|+
		\int_\Omega f_-\,\d x+
	\frac 12 \int_\Omega (f+1)_+^2\,\d x.
	\end{equation}
\end{theorem}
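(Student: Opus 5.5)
The plan is to obtain $u_\eps$ from the abstract existence result for variational inequalities, Theorem~\ref{thm:BC-VI}, on the reflexive space $\VO$. The convex functional is $\eps\Phi$, and the nonlinear operator $\sfB:\dom(\Phi)\to\VO'$ is defined by
\begin{equation*}
\la\sfB u,z\ra:=\scalpt{u-f}{z}+\fra(u,z),\qquad u\in\PVO,\ z\in\VO .
\end{equation*}
Then \eqref{eq:VI} is exactly $\eps\Phi(u)+\la\sfB u,u-v\ra\le\eps\Phi(v)$. By \eqref{eq:F2}, every $u\in\dom(\Phi)$ satisfies $\min_{\overline\Omega}u>0$ and $u\in\PVO\subset\PXO$, so $\fra(u,\cdot)$ is a continuous linear functional on $\VO$ (indeed on $\XO$). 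The structural hypotheses to check are monotonicity, hemicontinuity along segments inside $\dom(\Phi)$, and coercivity.

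\emph{Monotonicity and uniqueness.} For $u,v\in\PVO$, \eqref{eq:monotone} gives $\fra(u,u-v)+\fra(v,v-u)\ge0$, so $\la\sfB u-\sfB v,u-v\ra\ge\tnrm{u-v}^2$; $\sfB$ is therefore strictly monotone. If $u_\eps,\hat u_\eps$ both solve \eqref{eq:VI}, I test each inequality with the other solution and add. The $\Phi$ terms cancel and the identity above yields $\tnrm{u_\eps-\hat u_\eps}^2\le0$, which proves uniqueness.

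\emph{Hemicontinuity.} Along a segment $u_s=u+s(v-u)$ with $u,v\in\dom(\Phi)$, convexity gives $\min u_s\ge\min\{\min u,\min v\}>0$. Hence $s\mapsto\int(\Lap u_s)^2z/u_s$ is continuous by dominated convergence, and the remaining terms are polynomial in $s$.

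\emph{A priori estimate \eqref{eq:basic-estimate}.} I test with $v\equiv1\in\dom(\Phi)$, where $\Phi(1)=|\Omega|$. Estimate~E2, \eqref{eq:est1}, gives $\fra(u,u-1)=\int(\Lap u)^2/u$. For the scalar product,
\begin{equation*}
\scalpt{u-f}{u-1}=\tnrm u^2-\int(f+1)u+\int f\ \ge\ \tfrac12\tnrm u^2-\tfrac12\int(f+1)_+^2-\int f_- ,
\end{equation*}
where I used $u\ge0$ and Young's inequality. Since $\eps\le1$, this gives \eqref{eq:basic-estimate}. The same computation with an arbitrary fixed $v_0\in\dom(\Phi)$ in place of $1$ supplies the coercivity hypothesis needed by Theorem~\ref{thm:BC-VI}.

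\emph{Main obstacle: coercivity in the full $\VO$-norm.} The term $\eps\Phi$ controls only $\pnrm{\nabla u}$, and $\sfD(u)$ controls $\Lap u$ only weighted by $1/u$. To recover an $H^2$ bound I will combine two facts. First, since $p>d$, $\|u\|_\infty\le C(\tnrm u+\pnrm{\nabla u})$. Second, $\tnrm{\Lap u}^2\le\|u\|_\infty\,\sfD(u)$. On a sublevel set of $\eps\Phi+\la\sfB\cdot,\cdot-v_0\ra$, these give bounds on $\pnrm{\nabla u}$, $\tnrm u$, then $\|u\|_\infty$, then $\tnrm{\Lap u}$, and so on all of $\|u\|_\V$. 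If Theorem~\ref{thm:BC-VI} requires a genuine coercivity inequality rather than boundedness of sublevel sets, I would instead argue on the bounded convex sets $\{\|u\|_\V\le R\}\cap\dom(\Phi)$. There I would use the bound above to show that, for $R$ large, the solution lies in the interior of the ball, and then remove the constraint by convexity.

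A second delicate point is the singular weight $1/u$ close to $\partial\dom(\Phi)$, where $\sfB$ is not bounded. This is handled by the barrier $\Psi$, since $\theta\ge d/\alpha$ forbids zeros, so that only the lower semicontinuity of $\Phi$ and the monotone/hemicontinuous structure on $\dom(\Phi)$ are needed. These are exactly the assumptions of the Baiocchi--Capelo framework.
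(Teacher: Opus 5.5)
Your proposal is correct and takes essentially the same approach as the paper. Both apply Theorem~\ref{thm:BC-VI} on $\VO$ with $\mathsf f(u,v)=\scalpt{u-f}{u-v}+\fra(u,u-v)$ and obtain monotonicity from \eqref{eq:monotone}. Both then use the single test $v_0\equiv1$ to get \eqref{eq:basic-estimate} and to bound the sublevel set in $\VO$, via the $W^{1,p}$ bound, the resulting $L^\infty$ bound and $\tnrm{\Lap u}^2\le\|u\|_\infty\,\sfD(u)$; your explicit uniqueness argument, which adds the two inequalities and uses $\mathsf f(u,v)+\mathsf f(v,u)\ge\tnrm{u-v}^2$, only spells out what the paper leaves implicit.
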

\begin{proof}
	We can apply 
	the general 
	existence result 
	for variational inequalities 
	stated in
	Theorem 10.1 of
	\cite{Baiocchi-Capelo84},
	that we 
	stated for reflexive Banach spaces
	in the Appendix
	(see Theorem \ref{thm:BC-VI}).
	
	To match the notation of 
	Theorem \ref{thm:BC-VI}
	we define $\V:=\VO$ and 	
	\begin{displaymath}
		\mathsf f(u,v):=
			\scalpt{u-f}{u-v}+
			\fra(u,u-v)
			\quad \text{for every }u,v\in \dom(\Phi).
	\end{displaymath}
	Using
	\eqref{eq:monotone}
	it is easy to check that 
	$\mathsf f$ satisfies the monotonicity condition
	$$\mathsf f(u,v)+\mathsf f(v,u)\ge \|u-v\|_{L2}^2\ge 0$$
	so that (VI.1) holds true.
	
	Condition (VI.2) is obvious;
	it is also immediate to check 
	(VI.3), i.e.~that 
	for every 
	$u_0,u_1,v\in \dom(\Phi)$ 
	the map 
	$t\mapsto \mathsf f((1-t)u_0+t u_1,v)$ is continuous:
	the $L^2$-scalar product is
	continuous 
	and for $\fra$ 
	we use the fact that $\inf u_i>0$.
	
	Concerning (VI.4), 
	choosing $v_0:=1$ 
	we have 
	\begin{align*}
	\scalpt{u-f}{u-1}
	&=
	\|u\|_{L2}^2-
	\scalpt{f+1}{u}+\int_\Omega f\,\d x
	\\&\ge
	\frac 12 
	\int_\Omega u^2\,\d x-
	\int_\Omega f_-\,\d x-
	\frac 12 \int_\Omega (f+1)_+^2\,\d x
\end{align*}
	so that 
	\begin{align*}
		\eps \Phi(u)&+
		\scalpt{u-f}{u-1}+
		\fra(u,u-1)
		-
		\eps \Phi(1)
	\\&	\ge \eps \Phi(u)+
		\frac 12
		\|u\|_{L2}^2
		+\int_\Omega \frac{(\Lap u)^2}u\,\d x
		-a,
	\end{align*}
	where we used $\Phi(1)=\Psi(1)=|\Omega|$ and $\eps\le1$, so that
	$\eps\Phi(1)\le|\Omega|$ is absorbed in $a$. 
	If 
	we define the set 
	$$B:=
	\Big\{u\in \dom(\Phi):
	\eps\Phi(u)+
		\scalpt{u-f}{u-1}+
		\fra(u,u-1)\le \eps \Phi(1)\Big\}$$ 
	then  
	we see that 
	$B$ is bounded in $W^{1,p}(\Omega)$
	(thanks to \eqref{eq:F1});
	in particular $\sup u$ is uniformly bounded in $B$
	and therefore also $\tnrm{\Lap u}$ is bounded. 
	It follows that 
	$B$ is a bounded
	subset of $\VO$.	
\end{proof}
\subsection{Estimates for the solution
of the regularized variational inequality}
We now derive the Euler equation associated with 
\eqref{eq:VI};
for a vector $\xxi\in L^p(\Omega;\R^d)$ we set
\begin{equation}
	\label{eq:p-duality}
	J_p(\xxi):=|\xxi|^{p-2}\xxi.
\end{equation}
\begin{proposition}
\label{prop:Euler}
Under the same assumptions of 
Theorem 
\ref{thm:existence}
the unique solution
	$u_{\eps}\in \dom(\Phi)
	\subset \PVO$ 
	of 
	\eqref{eq:VI} 
	\begin{align}
	\label{eq:VE}
	\eps
	\int_\Omega J_p(\nabla u_\eps )\cdot \nabla z\,\d x 
	-\eps\theta  \int_\Omega 
		\frac 1{u_\eps^{\theta+1}}z\,\d x
		+
		\scalpt{u_\eps-f}{z}
		+\fra (u_\eps,z)
		=0
		\quad 
		\text{for every $z\in \VO$.}
	\end{align}
\end{proposition}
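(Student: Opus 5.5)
The plan is to derive \eqref{eq:VE} as the Euler--Lagrange equation of the variational inequality \eqref{eq:VI}, using that $u_\eps$ lies in the interior of $\dom(\Phi)$ in the sense of admissible directions. Since $\Phi(u_\eps)<\infty$, \eqref{eq:F2} gives $m:=\min_{\overline\Omega}u_\eps>0$. For any $z\in\VO$ the function $z$ is continuous and bounded, so $v_t:=u_\eps-tz$ satisfies $\min v_t\ge m/2$ for $|t|\le t_0$ with $t_0:=m/(2\|z\|_\infty)$. In particular $v_t\in\PVO$, $\Psi(v_t)<\infty$, and $v_t\in\dom(\Phi)$. Inserting $v=v_t$ in \eqref{eq:VI} and using the linearity of $\fra(u_\eps,\cdot)$ and of the scalar product in the second argument yields
\begin{equation*}
	t\Big(\scalpt{u_\eps-f}{z}+\fra(u_\eps,z)\Big)\le \eps\big(\Phi(u_\eps-tz)-\Phi(u_\eps)\big)
	\qquad\text{for every }|t|\le t_0.
\end{equation*}

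We then divide by $t>0$ and by $t<0$ and let $t\to0$. This requires the two-sided derivative of $t\mapsto\Phi(u_\eps-tz)$ at $t=0$, and we compute it term by term.

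\emph{The $p$-Dirichlet term.} $t\mapsto\frac1p\pnrm{\nabla u_\eps-t\nabla z}^p$ is $C^1$ with derivative $-\int_\Omega J_p(\nabla u_\eps)\cdot\nabla z\,\d x$ at $t=0$. This is the standard differentiability of the $L^p$ norm to the power $p$, with domination $|J_p(\nabla u_\eps-t\nabla z)\cdot\nabla z|\le(|\nabla u_\eps|+|\nabla z|)^{p-1}|\nabla z|\in L^1$.

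\emph{The barrier term.} $\Psi(u_\eps-tz)=\int_\Omega(u_\eps-tz)^{-\theta}\,\d x$, because the integrand is taken over all of $\Omega$ when $v_t>0$. Its pointwise derivative is $\theta(u_\eps-tz)^{-\theta-1}z$, which is bounded by $\theta(m/2)^{-\theta-1}\|z\|_\infty$ uniformly for $|t|\le t_0$. Dominated convergence then gives the derivative $\theta\int_\Omega u_\eps^{-\theta-1}z\,\d x$ at $t=0$.

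Combining the two one-sided limits gives both inequalities, and hence the equality
\begin{equation*}
	\scalpt{u_\eps-f}{z}+\fra(u_\eps,z)=-\eps\int_\Omega J_p(\nabla u_\eps)\cdot\nabla z\,\d x+\eps\theta\int_\Omega u_\eps^{-\theta-1}z\,\d x,
\end{equation*}
which is \eqref{eq:VE}.

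It remains to check that $\fra(u_\eps,z)$ is a well-defined real number, which holds because $u_\eps\in\PVO\subset\PXO$ and $z\in\VO\subset\XO$. Uniqueness is already granted by Theorem~\ref{thm:existence}.

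The only delicate point is the admissibility of the two-sided perturbations. It rests entirely on the strict positivity $\min u_\eps>0$ and on the continuity of elements of $\VO$ (since $p>d$), which give uniform lower bounds for $u_\eps-tz$. This is precisely why the barrier $\Psi$ was chosen, so no real obstacle remains beyond the dominated-convergence justifications above.
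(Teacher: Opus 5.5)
Your proof is correct and follows the paper's argument: you test \eqref{eq:VI} with the two-sided perturbations $u_\eps\pm tz$, which are admissible because $\min_{\overline\Omega}u_\eps>0$ and elements of $\VO$ are continuous, and then divide by $t$ and let $t\to0^\pm$. The only cosmetic difference is how the increment of $\Phi$ is handled: the paper bounds $\Phi(u_\eps+tz)-\Phi(u_\eps)$ from above by the convexity inequality, with the derivative of $\Phi$ evaluated at $v_t$, whereas you compute the G\^ateaux derivative of $\Phi$ at $u_\eps$ directly by dominated convergence.
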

\begin{proof}
	In order to get \eqref{eq:VE} we
	start from \eqref{eq:VI}
	for $v_t:=u_\eps+tz$ observing that 
	for $|t|$ sufficiently small $v_t\in \dom(\Phi)$ and 
	\begin{displaymath}
		\Phi(u_\eps+tz)-\Phi(u_\eps)\le 
		t
		\int_\Omega J_p(\nabla v_t)\cdot 
		\nabla z\,\d x
		-\theta  t \int_\Omega 
		\frac 1{v_t^{\theta+1}}z\,\d x
	\end{displaymath}
	and
	\begin{displaymath}
		\scalpt{u_\eps-f}{u_\eps-v_t}
		+
		\fra (u_\eps,u_\eps-v_t)
		=
		-t\bigl[
		\scalpt{u_\eps-f}{z}+
		\fra (u_\eps,z)
		\bigr].
	\end{displaymath}
	Dividing by $t$, passing to the limit
	as $t\downarrow0$ or $t\uparrow 0$ we obtain \eqref{eq:VE}. 
\end{proof}
\begin{lemma}
\label{le:further-estimates}
Under the same assumptions
of Theorem \ref{thm:existence},
the solution $u_\eps\in \PVO$ satisfies
	\begin{equation}
		\label{eq:ap1}
		\int_\Omega 
		\bigg[
		\eps\theta \frac 1{u_\eps^{\theta+1}}+
		\frac{(\Lap u_\eps)^2}{u_\eps}\bigg]\,\d x+
		\int_\Omega (f-u_\eps)\,\d x=0
	\end{equation}
	\begin{equation}
		\label{eq:ap2}
	\int_\Omega\bigg[\eps |\nabla u_\eps|^p
		-\eps 
		\frac 1{u_\eps^{\theta}}\bigg]
		\,\d x
		+\int_\Omega (u_\eps-f)u_\eps\,\d x=0
	\end{equation}
	\begin{equation}
		\label{eq:ap3}
		\int_\Omega 
		\bigg[
		\eps J_p(\nabla u_\eps)
		\cdot \nabla (u_\eps\, w)-
		\eps \theta 
		\frac 1{u_\eps^{\theta}}w
		\bigg]\,\d x+
		\frb(u_\eps,w)+
		\int_\Omega (u_\eps-f)u_\eps\,w\,\d x=0
	\end{equation}
	for every $w\in \ZO$.
\end{lemma}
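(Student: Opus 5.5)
The plan is to obtain all three identities by inserting suitable test functions $z\in\VO$ into the Euler equation \eqref{eq:VE} of Proposition~\ref{prop:Euler}. Since $u_\eps\in\PVO$ is continuous with $\min_{\overline\Omega}u_\eps>0$, every quantity $u_\eps^{-\theta}$, $u_\eps^{-\theta-1}$ is bounded. Moreover, $\fra(u_\eps,\cdot)$ is well defined on $\VO$ (because $\PVO\subset\PXO$ and $\VO\subset\XO$), so there are no integrability issues.

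\emph{Identity \eqref{eq:ap1}.} I will choose $z\equiv1\in\VO$. Then $\nabla z=0$ kills the $p$-Laplacian term. By \eqref{eq:est1}, in the form $\fra(u_\eps,1)=-\int_\Omega(\Lap u_\eps)^2/u_\eps\,\d x$, the equation \eqref{eq:VE} becomes
\[
-\eps\theta\int_\Omega u_\eps^{-\theta-1}\,\d x+\int_\Omega(u_\eps-f)\,\d x-\int_\Omega\frac{(\Lap u_\eps)^2}{u_\eps}\,\d x=0 .
\]
Changing sign yields \eqref{eq:ap1}.

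\emph{Identity \eqref{eq:ap2}.} I will choose $z=u_\eps\in\VO$. In this case $J_p(\nabla u_\eps)\cdot\nabla u_\eps=|\nabla u_\eps|^p$, and the penalty term gives $-\eps\theta\int_\Omega u_\eps^{-\theta}\,\d x$. By \eqref{eq:est0} we have $\fra(u_\eps,u_\eps)=0$. This reproduces \eqref{eq:ap2}, with the factor $\theta$ multiplying the $u_\eps^{-\theta}$ term; the display as stated appears to have dropped this constant, and I would keep $\theta$ in the identity.

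\emph{Identity \eqref{eq:ap3}.} I will choose $z=u_\eps w$ with $w\in\ZO$. First I need $z\in\VO$. Membership $u_\eps w\in\HtN$ follows from \eqref{eq:Z-acts}. Membership in $W^{1,p}$ holds because $w$ and $\nabla w$ are bounded, $u_\eps$ is bounded, and $\nabla u_\eps\in L^p$. Then:
\begin{itemize}
\item the penalty term becomes $-\eps\theta\int_\Omega u_\eps^{-\theta}w\,\d x$;
\item the $L^2$ term becomes $\int_\Omega(u_\eps-f)u_\eps w\,\d x$;
\item by \eqref{eq:est0phi} we have $\fra(u_\eps,u_\eps w)=\frb(u_\eps,w)$.
\end{itemize}
The identity \eqref{eq:est0phi} was derived for $u\in\PXO$ through the product rule \eqref{eq:later} together with $\fra(u,u)=0$, and $u_\eps\in\PVO$ qualifies. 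This gives \eqref{eq:ap3}.

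The only point needing some care is checking that $u_\eps w\in\VO$ and that \eqref{eq:est0phi} applies. Both are routine given the uniform positivity and continuity of $u_\eps$, so there is no serious obstacle.
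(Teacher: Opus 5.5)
Your proposal is correct and follows the paper's proof: it tests the Euler equation \eqref{eq:VE} with $z\equiv\pm1$ (using \eqref{eq:est1}), with the direction $z=u_\eps$ (which the paper phrases as the variation $v_t=(1+t)u_\eps$), and with $z=u_\eps w$ (using \eqref{eq:est0phi}). Your remark on \eqref{eq:ap2} is also right: this variation produces $-\eps\theta\int_\Omega u_\eps^{-\theta}\,\d x$, matching \eqref{eq:ap3} at $w\equiv1$ (where $\frb(u_\eps,1)=0$), so the displayed identity is missing the factor $\theta$.
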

\begin{proof}
	\eqref{eq:ap1}
	follows from \eqref{eq:VE} by taking $w\equiv -1$ and using \eqref{eq:est1}.
	
	\eqref{eq:ap2} 
		follows from 
		the same argument we
		used to derive \eqref{eq:VE} by taking 
		$v_t=(1+t)u$.
		
	\eqref{eq:ap3} 
		follows from \eqref{eq:VE} by 
		replacing $w$ by $uw$
		and recalling the definition of 
		$\frb$.
\end{proof}
Now we obtain two 
more refined and crucial estimates.
The first one involves 
a perturbation argument 
by means of the Heat flow
and the quantity $\rmE_p$
we introduced in Lemma 
\ref{le:p-vs-S}.
\begin{proposition}
\label{prop:apriori-estimate2}
	The solution
	$u_\eps$ of 
	\eqref{eq:VI} satisfies
		\begin{equation}
		\label{eq:crucial-estimate-eps}
	\eps
	\rmE_p^p(u_\eps)
	+
	\int_\Omega |\nabla u_\eps|^2\,\d x
	\le 
	-\int_\Omega f\,\Lap u_\eps\,\d x.
	\end{equation}
	In particular, if $f\in H^1(\Omega)$ we get
	\begin{equation}
		\label{eq:H1-estimate}
		\int_\Omega |\nabla u_\eps|^2\,\d x
	\le 
	\int_\Omega |\nabla f|^2\,\d x.
	\end{equation}
\end{proposition}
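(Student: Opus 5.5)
The plan is to compare $u_\eps$ with its heat-flow regularization. In the variational inequality \eqref{eq:VI} I would take the competitor $v:=\sfH_t u_\eps$ with $t>0$ small. This is admissible: $\sfH_t u_\eps\in\VO$ because the heat semigroup contracts both $W^{1,p}$ and $\HtN$. Moreover $\min \sfH_t u_\eps\ge\min u_\eps>0$, so $v\in\dom(\Phi)$.

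First I need the behaviour of the two parts of $\Phi$ along the flow.
\begin{itemize}
\item For $\Psi$, the function $a\mapsto a^{-\theta}$ is convex, so Lemma~\ref{le:convex-estimate} gives $\sfH_t(u_\eps^{-\theta})\ge(\sfH_tu_\eps)^{-\theta}$. Integrating and using mass conservation of $\sfH_t$ yields $\Psi(\sfH_tu_\eps)\le\Psi(u_\eps)$.
\item For the gradient part, \eqref{eq:W1pderivative} gives
$\frac1p\pnrm{\nabla u_\eps}^p-\frac1p\pnrm{\nabla\sfH_tu_\eps}^p\ge\int_0^t\!\int_\Omega|\nabla u_s|^{p-2}|\rmD^2u_s|^2$.
\end{itemize}
Hence $\eps\Phi(u_\eps)-\eps\Phi(\sfH_tu_\eps)$ is at least $\eps$ times the quantity whose $\liminf$ after dividing by $t$ defines $\rmE_p^p(u_\eps)$.

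Inserting $v=\sfH_tu_\eps$ into \eqref{eq:VI}, this gives
\[\eps\big[\Phi(u_\eps)-\Phi(\sfH_tu_\eps)\big]+\scalpt{u_\eps-f}{u_\eps-\sfH_tu_\eps}+\fra(u_\eps,u_\eps-\sfH_tu_\eps)\le0 .\]
The term $\fra(u_\eps,u_\eps-\sfH_tu_\eps)$ is nonnegative by Corollary~\ref{cor:main-ineq2}, since $u_\eps>0$; so I drop it. Dividing by $t$, the inequality becomes
\[\eps\,t^{-1}\big[\tfrac1p\pnrm{\nabla u_\eps}^p-\tfrac1p\pnrm{\nabla \sfH_tu_\eps}^p\big]+t^{-1}\scalpt{u_\eps}{u_\eps-\sfH_tu_\eps}\le t^{-1}\scalpt{f}{u_\eps-\sfH_tu_\eps}.\]
Since $u_\eps\in\HtN$, we have $t^{-1}(u_\eps-\sfH_tu_\eps)\to-\Lap u_\eps$ strongly in $L^2$. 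Therefore the second term on the left tends to $-\int u_\eps\Lap u_\eps=\int|\nabla u_\eps|^2$, and the right side tends to $-\int f\Lap u_\eps$. Taking $\liminf_{t\downarrow0}$ on the first term then yields \eqref{eq:crucial-estimate-eps}.

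For \eqref{eq:H1-estimate}, when $f\in H^1$ I integrate by parts in the Neumann sense: $-\int f\Lap u_\eps=\int\nabla f\cdot\nabla u_\eps\le\tnrm{\nabla f}\tnrm{\nabla u_\eps}$. Since $\rmE_p^p\ge0$, this gives $\tnrm{\nabla u_\eps}^2\le\tnrm{\nabla f}\tnrm{\nabla u_\eps}$, and the estimate follows after dividing.

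The main obstacle is justifying the sign of the $\fra$ term, because $u_\eps$ is only in $\VO$, not smooth. Corollary~\ref{cor:main-ineq2} covers nonnegative $u\in\HtN$, so it applies directly, with $\{u_\eps>0\}=\Omega$. The only remaining check is that $(\Lap u_\eps)^2/u_\eps\in L^1$ and $u_\eps-\sfH_tu_\eps\in L^\infty$, so that $\fra$ is finite. This holds because $\min u_\eps>0$ and both functions are continuous.
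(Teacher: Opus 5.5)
Your proof is correct and follows the same route as the paper: test \eqref{eq:VI} with $v=\sfH_tu_\eps$, discard $\fra(u_\eps,u_\eps-\sfH_tu_\eps)\ge0$ via Corollary~\ref{cor:main-ineq2}, divide by $t$ and let $t\downarrow0$, then integrate by parts for the $H^1$ bound. Your Jensen argument showing $\Psi(\sfH_tu_\eps)\le\Psi(u_\eps)$ justifies a step that the paper uses without comment.
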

\begin{proof}
	We use a perturbation argument
	selecting $v:=u_{\eps,t}=\sfH_t u_\eps$ in 
	\eqref{eq:VI}.
	
	By \eqref{eq:fra-vs-S} 
	we have $\fra(u_\eps,u_\eps-\sfH_t u_\eps)\ge 0$ so that 
	\begin{equation}
		\label{eq:step1}
		\eps \Phi(u_\eps)-
		\eps \Phi(u_{\eps,t})+
		\scalpt{u_\eps}{u_\eps-u_{\eps,t}}
		\le 
		\scalpt{f}{u_\eps-u_{\eps,t}}
	\end{equation}
	Dividing by $t>0$ and using the fact that 
	\begin{align*}
		\lim_{t\down0}
		\frac 1t\scalpt{f}{u_\eps-u_{\eps,t}}
		&=
		\scalpt{f}{-\Lap u_\eps}\\
		\lim_{t\down0}
		\frac 1t\scalpt{u_\eps}{u_\eps-u_{\eps,t}}
		&=
		\scalpt{u_\eps}{-\Lap u_\eps}=
		\tnrm{\nabla u}^2
	\end{align*}
	and 
	\begin{displaymath}
	\Phi(u_\eps)-
		\Phi(u_{\eps,t})
		\ge 
		\frac{1}p
		\Big(
		\pnrm{\nabla u_\eps}^2-
		\pnrm{\nabla u_{\eps,t}}^p
		\Big)
		+
		\Big(\Psi(u_\eps)-\Psi(u_{\eps,t})\Big)
		\ge \frac{1}p
		\Big(
		\pnrm{\nabla u_\eps}^2-
		\pnrm{\nabla u_{\eps,t}}^p
		\Big)
	\end{displaymath}
	we get
	\begin{align*}
		\eps \limsup_{t\down0}
		\frac 1t \bigg[\frac{1}p
		\Big(
		\pnrm{\nabla u_\eps}^2-
		\pnrm{\nabla u_{\eps,t}}^p
		\Big)
		\bigg]+
		\tnrm{\nabla u_\eps}^2\le 
		\scalpt{f}{-\Lap u_\eps}
	\end{align*}
	Applying Lemma \ref{le:p-vs-S}
	we eventually obtain
	\eqref{eq:crucial-estimate-eps}.
\end{proof}
\begin{proposition}
	\label{prop:apriori-estimate3}
	There exists a constant $C$ 
	independent of $\eps\in (0,1]$
	such that 
	\begin{equation}
		\label{eq:great}
		\eps\rmE_p^p(u_\eps)+
		\frac 12\int_\Omega |\nabla u_\eps|^2\,\d x+
		\frac {c_d}2\int_\Omega |\rmD^2 u_\eps|^2
		\,\d x
		\le C(1+\|f\|_{L2}^{2+4/d}). 
	\end{equation}
\end{proposition}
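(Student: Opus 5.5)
The plan is to combine the entropy estimate E4 with the $H^1$-type estimate \eqref{eq:crucial-estimate-eps} of Proposition~\ref{prop:apriori-estimate2}, after testing the Euler equation \eqref{eq:VE} with the entropy variation $z:=u_\eps\ln u_\eps$. Since $u_\eps\in\PVO$ is continuous with $\min_{\overline\Omega}u_\eps>0$, the function $z$ belongs to $\VO$: the computation of E4 gives $z\in\HtN$, and $\nabla z=(1+\ln u_\eps)\nabla u_\eps\in L^p$. Inserting $z$ into \eqref{eq:VE} and using \eqref{eq:crucial-estimate} gives
\begin{equation*}
c_d\tnrm{\rmD^2u_\eps}^2\le\fra(u_\eps,u_\eps\ln u_\eps)
=\int_\Omega(f-u_\eps)u_\eps\ln u_\eps\,\d x
-\eps\int_\Omega|\nabla u_\eps|^p(1+\ln u_\eps)\,\d x
+\eps\theta\int_\Omega u_\eps^{-\theta}\ln u_\eps\,\d x .
\end{equation*}
The last term is nonpositive on $\{u_\eps<1\}$. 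On $\{u_\eps\ge1\}$ it is bounded by $\eps\theta\int_\Omega\ln_+u_\eps\le C(1+\tnrm{u_\eps})$.

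For the first term, $\tnrm{u_\eps}^2\le 2a\le C(1+\tnrm f^2)$ by \eqref{eq:basic-estimate}. Cauchy--Schwarz and \eqref{eq:ulnu-GN} then give
\begin{equation*}
\Big|\int_\Omega(f-u_\eps)u_\eps\ln u_\eps\Big|\le C(1+\tnrm f)\big(1+\tnrm{u_\eps}^{2/d}\tnrm{\nabla u_\eps}\big)\le \tfrac14\tnrm{\nabla u_\eps}^2+C(1+\tnrm f^{2+4/d}).
\end{equation*}
Next I add \eqref{eq:crucial-estimate-eps}, namely $\eps\rmE_p^p(u_\eps)+\tnrm{\nabla u_\eps}^2\le\tnrm f\,\tnrm{\Lap u_\eps}$. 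Since $\tnrm{\Lap u}^2\le d\,\tnrm{\rmD^2u}^2$, Young's inequality bounds the right-hand side by $\frac{c_d}{4}\tnrm{\rmD^2u_\eps}^2+C\tnrm f^2$. Both this term and the $\frac14\tnrm{\nabla u_\eps}^2$ above are absorbed into the left-hand side, which is where the halved constants in \eqref{eq:great} come from.

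The main obstacle is the $p$-Laplacian penalization term $-\eps\int|\nabla u_\eps|^p\ln u_\eps$, which has no sign where $u_\eps<1$. A direct bound would involve $\ln\min u_\eps$, and that is not uniform in $\eps$. I would avoid it with a primitive trick. Set $G(u):=u\ln u-u$, so that $G'(u)=\ln u$ and
\begin{equation*}
\int_\Omega|\nabla u_\eps|^p\ln u_\eps\,\d x=\int_\Omega J_p(\nabla u_\eps)\cdot\nabla G(u_\eps)\,\d x=-\int_\Omega\mathrm{div}\big(J_p(\nabla u_\eps)\big)\,G(u_\eps)\,\d x .
\end{equation*}
The boundary term vanishes by $\partial_\nn u_\eps=0$; this step has to be justified through the heat-flow regularization used in Lemma~\ref{le:p-vs-S}. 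Using $|\mathrm{div}J_p(\nabla u)|\le(p-1)|\nabla u|^{p-2}|\rmD^2u|$, Cauchy--Schwarz and \eqref{eq:from-Fatou}, this is at most $\frac{\eps}{2}\rmE_p^p(u_\eps)+C\eps\int_\Omega|\nabla u_\eps|^{p-2}G(u_\eps)^2$. The first piece is absorbed by half of the $\eps\rmE_p^p$ term from \eqref{eq:crucial-estimate-eps}.

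For the second piece, $|G(u)|\le C(1+u^2)$, and $u_\eps$ is bounded in $L^\infty$ through the $W^{1,p}$ bound $\eps\|\nabla u_\eps\|_p^p\le pa$ together with the $L^2$ bound. Hölder in the form $\eps\int|\nabla u_\eps|^{p-2}\le C\big(\eps\int|\nabla u_\eps|^p\big)^{(p-2)/p}$, combined with \eqref{eq:ap2} and \eqref{eq:basic-estimate}, controls it by a polynomial in $\tnrm f$. This is the step where I expect the bookkeeping to be delicate. If the powers of $\|u_\eps\|_\infty$ do not close, I would fall back on the extra integrability \eqref{eq:extra-integrability}, or replace $\ln u$ by the truncation $\ln\max(u,1)$ together with a separate treatment of $\{u_\eps<1\}$, where $G$ is bounded. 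The remaining piece $-\eps\int|\nabla u_\eps|^p$ is nonpositive and can be dropped. Collecting all terms yields \eqref{eq:great}.
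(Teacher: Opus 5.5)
Your architecture matches the paper's. You test \eqref{eq:VE} with $z=u_\eps\ln u_\eps$, use E4, bound $\int_\Omega(f-u_\eps)u_\eps\ln u_\eps\,\d x$ by Gagliardo--Nirenberg, add \eqref{eq:crucial-estimate-eps} and absorb.

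\textbf{The gap.} It sits exactly in the step you flag. The bound $\eps\pnrm{\nabla u_\eps}^p\le pa$ only gives $\pnrm{\nabla u_\eps}\le(pa/\eps)^{1/p}$. Morrey then yields $\|u_\eps\|_{L^\infty(\Omega)}\lesssim\tnrm{u_\eps}+(pa/\eps)^{1/p}$, which is \emph{not} uniform in $\eps$. Your H\"older step also hides a factor, namely $\eps\int_\Omega|\nabla u_\eps|^{p-2}\,\d x\le|\Omega|^{2/p}\eps^{2/p}(pa)^{1-2/p}$. Combining these with $G(u)^2\le C(1+u^4)$, the estimate for the remainder $\eps\int_\Omega|\nabla u_\eps|^{p-2}G(u_\eps)^2\,\d x$ is of order $\eps^{-2/p}$, so the main line does not close. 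The cause is that you integrate by parts the whole of $\ln u_\eps$. On $\{u_\eps\ge1\}$ the term $-\eps\int_\Omega|\nabla u_\eps|^p\ln_+u_\eps\,\d x\le0$ already has the good sign, and it is precisely there that $G$ is unbounded.

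\textbf{The repair.} Your fallback is correct, provided you use a globally defined primitive; integrating by parts only on $\{u_\eps<1\}$ would produce level-set boundary terms. Drop $-\eps\int_\Omega|\nabla u_\eps|^p\ln_+u_\eps\,\d x$ and set $G_-(r):=\int_0^r\ln_-s\,\d s$, i.e.\ $G_-(r)=r(1-\ln r)$ for $r\le1$ and $G_-(r)=1$ for $r\ge1$. Then $0\le G_-\le1$ and $\nabla G_-(u_\eps)=\ln_-(u_\eps)\nabla u_\eps$, so by \eqref{eq:from-Fatou}
\begin{equation*}
\eps\int_\Omega|\nabla u_\eps|^p\ln_-u_\eps\,\d x
=-\eps\int_\Omega\mathrm{div}\bigl(J_p(\nabla u_\eps)\bigr)\,G_-(u_\eps)\,\d x
\le C\eps\int_\Omega|\nabla u_\eps|^{p-2}|\rmD^2u_\eps|\,\d x
\le\frac\eps2\,\rmE_p^p(u_\eps)+C\eps\int_\Omega|\nabla u_\eps|^{p-2}\,\d x .
\end{equation*}
By \eqref{eq:basic-estimate}, the last term is at most $C\eps^{2/p}a^{1-2/p}\le C(1+\tnrm f^2)$. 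The integration by parts can be done for $\sfH_su_\eps$, which is Lipschitz with $\min\sfH_su_\eps\ge\min u_\eps>0$ and $\partial_\nn\sfH_su_\eps=0$. It then passes to the limit along the sequence $s_n\downarrow0$ from the proof of Lemma~\ref{le:p-vs-S}. The rest of your bookkeeping is fine and gives \eqref{eq:great} after multiplying by $2$.

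\textbf{Comparison with the paper.} The repaired argument treats the penalization term differently from the paper. The paper splits $\eps\int_\Omega|\nabla u_\eps|^p\ln_-u_\eps\,\d x$ by H\"older--Young, using the improved integrability \eqref{eq:extra-integrability} of $\nabla u_\eps$. It then absorbs $\eps\int_\Omega(\ln_-u_\eps)^{\beta/(\beta-p)}\,\d x$ into the barrier term $\eps\theta\int_\Omega\ln_-u_\eps/u_\eps^\theta\,\d x$, which you discard. Your route needs neither the $L^{\tilde p}$ gain nor the barrier, only \eqref{eq:from-Fatou} and the basic estimate. It is also the more robust one. H\"older actually gives $\|\nabla u_\eps\|_{L^\beta}^{p}$, not $\|\nabla u_\eps\|_{L^\beta}^{p/\beta}$, multiplied by $\|\ln_-u_\eps\|_{L^{\beta/(\beta-p)}}$, a factor not known to be bounded uniformly in $\eps$. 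So splitting off a small multiple of $\|\nabla u_\eps\|_{L^\beta}^p$ there needs more care than written, while your integration by parts gives the needed bound directly.
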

\begin{proof}
	Recall that by \eqref{eq:basic-estimate}
	\begin{equation}
	\label{eq:simple-but-useful}
		\frac\eps p\pnrm{\nabla u_\eps}^p+\frac 12 
		\tnrm{u_\eps}^2\le a.
	\end{equation}
	Choosing $z:=u_\eps\ln u_\eps$
	in \eqref{eq:VE}
	and recalling \eqref{eq:crucial-estimate-eps}
	we get
	\begin{equation}
		\label{eq:crucial2}
		\begin{aligned}
		\eps \pnrm{\nabla u_\eps}^p
		&+\eps\theta\int_\Omega  
		\frac{\ln_- u_\eps}{u_\eps^\theta}\,\d x
		+c_d\tnrm{\rmD^2 u_\eps}^2
		+\int_\Omega u^2_\eps \ln_+ u_\eps\,\d x
		\\&\le 
		\int_\Omega f u_\eps \ln u_\eps\,\d x
		+
		\eps \int_\Omega |\nabla u_\eps|^p
		\ln_- u_\eps\,\d x
		+\eps |\Omega|
		\end{aligned}
	\end{equation}
	where we used the fact that 
	$$\theta \frac{\ln_+ u_\eps}{u_\eps^\theta}
	\le \frac 1{ \rme},\quad 
	u_\eps^2\ln_- u_\eps \le 
	\frac 1{2\rme},\quad 
	\frac 1{\rme}+\frac 1{2\rme}\le 1$$ 
	We also have, for every 
	$\beta\in (p,pd/(d-1)]$ and $\beta'=\beta/(\beta-1)$
	\begin{align*}
		\int_\Omega |\nabla u_\eps|^p
		\ln_- u_\eps\,\d x&\le 
		\|\nabla u_\eps\|_{L^\beta}^{p/\beta}
		\Big(\int_\Omega \big(\ln_- u_\eps\big)^{\beta/(\beta-p)}
		\,\d x\Big)^{1-p/\beta}
		\\&\le 
		\frac \delta \beta 
		\|\nabla u_\eps\|_{L^\beta}^{p}
		+
		\frac 1{\delta {\beta'}}
		\Big(\int_\Omega \big(\ln_- u_\eps\big)^{\beta/(\beta-p)}
		\,\d x\Big)^{(\beta-p)/(\beta-1)}
		\\&\le 
		\frac \delta \beta 
		\|\nabla u_\eps\|_{L^\beta}^{p}
		+\frac 1{\delta \beta'}
		\Big(1+
		\int_\Omega \big(\ln_- u_\eps\big)^{\beta/(\beta-p)}
		\,\d x\Big)
		\\&\le 
		\|\nabla u_\eps\|_{L^p}^{p}+
		\rmE_p(u_\eps)^p+ 
		\frac 1{\delta \beta'}+
		\frac 1{\delta \beta'}
		\int_\Omega \big(\ln_- u_\eps\big)^{\beta/(\beta-p)}
		\,\d x,
	\end{align*}
	where we selected $\delta>0$ sufficiently small in order to have
	\begin{displaymath}
		\delta\|\nabla u_\eps\|_{L^\beta}^{p}
		\le \|\nabla u_\eps\|_{L^p}^{p}+
		\rmE_p(u_\eps)^p
	\end{displaymath}
	as in \eqref{eq:extra-integrability}.
	
	Using the previous inequality to
	give a bound to the right-hand side of \eqref{eq:crucial2}
	and summing up
	\eqref{eq:crucial-estimate-eps}
	we get
	\begin{align}
	\notag
		\int_\Omega &|\nabla u_\eps|^2\,\d x+
		c_d\int_\Omega |\rmD^2 u_\eps|^2\,\d x
		+
		\int_\Omega u_\eps^2\ln_+ u_\eps\,\d x
		\\&	\notag
		\le 
		\int_\Omega
		fu_\eps
		\ln u_\eps\,\d x-
		\int_\Omega f\Lap u\,\d x
		+\eps|\Omega|+
		\eps \int_\Omega \bigg[
		\frac 1{\delta \beta'}\Big(1+
		\big(\ln_- u_\eps\big)^{\beta/(\beta-p)}\Big)-
		\frac{\ln_- u_\eps}{u_\eps^\theta}\bigg]
		\,\d x
		\\&\le 
		\int_\Omega
		fu_\eps
		\ln u_\eps\,\d x-
		\int_\Omega f\Lap u\,\d x+C_2\eps 
		\label{eq:step0}
	\end{align}
	where
	\begin{displaymath}
		C_2:=|\Omega|+\max_{0<r\le 1}
		\bigg[\frac 1{\delta\beta'}
		\Big(1+(\ln_- r)^{\beta/(\beta-p)}\Big)-
		\frac{\ln_- r}{r^\theta}\bigg].
	\end{displaymath}
	We can eventually estimate
	\begin{equation}
	\label{eq:fDelta-bound}
		-\int_\Omega f\Lap u\,\d x
		\le 
		\frac{c_d}2\int_\Omega |\rmD^2 u|^2\,\d x+
		\frac{d}{2c_d}\int_\Omega f^2\,\d x
			\end{equation}
	choosing $\sigma=2+4/d\in (2,2^*)$
	we can use Gagliardo-Nirenberg interpolation inequalities
	\begin{equation}
		\label{eq:GN}
		\|u_\eps\|_{L^\sigma}
		\le C_3 \|u_\eps\|_{L2}^{1-\theta}
		\|\nabla u_\eps\|_{L2}^\theta,\quad	
		\theta=d(1/2-1/\sigma),\quad 
		\sigma\theta=2,\ 
		\sigma(1-\theta)=4/d
	\end{equation}
	to get 
	\begin{displaymath}
		\|u_\eps\ln u_\eps\|_{L2}
		\le C_4 \Big(1+\|u_\eps\|^{\sigma/2}_{L^\sigma}\Big)
		\le C_5
		\bigg[1+
		 \|u_\eps\|_{L2}^{2/d}
		\|\nabla u_\eps\|_{L2}
		\bigg]
	\end{displaymath}
	Recalling that 
	$\|u_\eps\|_{L2}
	\le C(1+\|f\|_{L2}),$ 
	we thus get 
	\begin{displaymath}
		\int_\Omega f u_\eps\ln u_\eps\,\d x 
		\le C_7 (1+\|f\|_{L2}^{2+4/d})
		+\frac 12 \|\nabla u_\eps\|_{L2}^2
	\end{displaymath}
	Combining this inequality with
	\eqref{eq:fDelta-bound}
	and \eqref{eq:step0} we eventually get
	\eqref{eq:great}.
\end{proof}
\subsection{Maximality of \texorpdfstring{$\sfA$}{A}}
\begin{theorem}
	\label{thm:eta}
	For every $f\in L^2(\Omega)$ 
	the family $u_{\eps}$ 
	weakly converges in $H^2(\Omega)$
	to a (nonnegative) limit $u\in \HtN$
	as $\eps\down0$
	satisfying the 
	conditions 
	\eqref{eq:maximal1} and
	\eqref{eq:maximal2}
	of Corollary \ref{cor:equivalent-maximal}.
	In particular $\sfA$ is maximal monotone.
\end{theorem}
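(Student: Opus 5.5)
Starting from the uniform bounds already established, I will extract a weak limit, pass to the limit in the Euler equation \eqref{eq:VE} tested against $v\in\PZO$ (yielding \eqref{eq:maximal1}) and in \eqref{eq:ap3} (yielding \eqref{eq:maximal2}), and then invoke Lemma~\ref{cor:equivalent-maximal}.

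\emph{Compactness.} By \eqref{eq:great}, \eqref{eq:basic-estimate} and \eqref{eq:H2bound}, the family $u_\eps$ is bounded in $\HtN$ and $\sqrt\eps\,\pnrm{\nabla u_\eps}^{p/2}$ is bounded. Along a subsequence $u_\eps\rightharpoonup u$ weakly in $H^2$ and strongly in $H^1$ (Rellich). Since $\HtN$ is weakly closed, $u\in\HtN$, and $u\ge0$ because every $u_\eps>0$. Monotonicity of $\sfA$ gives uniqueness of the resolvent solution, so the whole family converges once the limit is identified. By Lemma~\ref{le:relaxedD} and Theorem~\ref{thm:Hessian}, $\sfD(u)\le\liminf\sfD(u_\eps)\le a$, so $u\in\WpO$.

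\emph{Inequality \eqref{eq:maximal1}.} I test \eqref{eq:VE} with $z=v\in\PZO$; the domain $\VO$ contains $\ZO$ since $\ZO\subset\Lip$. This gives
\[
\int_\Omega\Lap u_\eps\Lap v\,\d x-\int_\Omega\frac{(\Lap u_\eps)^2}{u_\eps}v\,\d x+\scalpt{u_\eps-f}{v}=\eps\theta\int_\Omega\frac{v}{u_\eps^{\theta+1}}\,\d x-\eps\int_\Omega J_p(\nabla u_\eps)\cdot\nabla v\,\d x.
\]
The $\eps\theta$ term is nonnegative since $v>0$. The $J_p$ term is bounded by $\eps\pnrm{\nabla u_\eps}^{p-1}\|\nabla v\|_p=O(\eps^{1/p})\to0$. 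The perspective term is nonnegative, so I drop it. The remaining linear terms pass to the limit, giving $\int\Lap u\Lap v\ge\int(f-u)v$.

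\emph{Identity \eqref{eq:maximal2}.} This is the main obstacle. In \eqref{eq:ap3}, the term $\frb(u_\eps,w)$ involves $u_\eps\Lap u_\eps\Lap w$ and $\Lap u_\eps\nabla u_\eps\cdot\nabla w$; each is a product of a strongly convergent factor in $L^2$ (because $u_\eps\to u$ in $H^1$, with $u_\eps$ bounded in $L^\infty$ when $d\le3$, otherwise I use $L^4$ bounds via \eqref{eq:quartic-general}) and a weakly convergent one. Hence $\frb(u_\eps,w)\to\frb(u,w)$, and $\int(u_\eps-f)u_\eps w$ converges as well. Among the penalty terms, $\eps J_p(\nabla u_\eps)\cdot\nabla(u_\eps w)$ is controlled by $\eps\|\nabla u_\eps\|_p^{p}\|w\|_\infty+\eps\|\nabla u_\eps\|_p^{p-1}\|u_\eps\nabla w\|_p$. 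The second piece is $O(\eps^{1/p})$. For the first I need $\eps\|\nabla u_\eps\|_p^p\to0$, not merely boundedness. I expect to get this from \eqref{eq:ap2}: it equals $\eps\int u_\eps^{-\theta}+\int(f-u_\eps)u_\eps$, and in the limit $\int(f-u)u=\la f-u,u\ra$ should vanish, because the defect identity $\la f-u,u\ra=0$ is the diagonal property \eqref{eq:defect-global} for a limit pair. Proving this requires a separate argument. The remaining term is $\eps\theta\int u_\eps^{-\theta}w$, bounded by $\|w\|_\infty\,\eps\int u_\eps^{-\theta}$. To show $\eps\int u_\eps^{-\theta}\to0$, I would combine \eqref{eq:ap1} with the lower semicontinuity of $\sfD$ and the mass identity. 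Concretely, \eqref{eq:ap1} gives $\eps\theta\int u_\eps^{-\theta-1}=\int(u_\eps-f)-\sfD(u_\eps)$. Then, using \eqref{eq:maximal1} with $v=1$ together with $\liminf\sfD(u_\eps)\ge\sfD(u)$, I would aim to show that the $\eps$-terms carry asymptotically no mass. Failing that, I would use $w=u_\eps^{\,k}$-type tests to localize.

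If the vanishing of the penalty terms in \eqref{eq:ap3} cannot be obtained directly, my fallback is to pass to the limit only in the inequality form, which yields the one-sided relation $\la f-u,u\ra\ge0$ (the $\eps$-terms have a sign after testing with $z=u_\eps$). I would then combine this with \eqref{eq:maximal1} and conclude $f-u\in\frA u$ through Corollary~\ref{cor:easy2}: the perspective lower bound $\Lap^2u\ge f-u+\sfh(u,\Lap u)$ follows from Ioffe lower semicontinuity (as in the proof of Theorem~\ref{def:weak-solution}), after first keeping the perspective term in the limit of the tested \eqref{eq:VE}. This route, via Corollary~\ref{cor:easy2} and Proposition~\ref{prop:restriction-maximality}, also delivers maximality of $\sfA$.
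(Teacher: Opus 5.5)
\paragraph{Gap in the main route.}
Your compactness step and your proof of \eqref{eq:maximal1} agree with the paper, and the convergence $\frb(u_\eps,w)\to\frb(u,w)$ is fine. There is, however, a genuine gap exactly where you locate the main obstacle: neither penalty term of \eqref{eq:ap3} is shown to vanish, and both of your proposed arguments fail.

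\emph{Barrier term.} Your target $\eps\int_\Omega u_\eps^{-\theta}\,\d x\to0$ is right. But trying to show that the $\eps$-term in \eqref{eq:ap1} ``carries no mass'' is the wrong goal. The quantity $\eps\theta\int_\Omega u_\eps^{-\theta-1}\,\d x$ approximates the defect mass and need not vanish. For $f\equiv-1$ the solution is the constant $u_\eps\equiv c_\eps$ with $c_\eps^{\theta+1}(1+c_\eps)=\eps\theta$, and that term equals $|\Omega|(1+c_\eps)\to|\Omega|$. The missing idea is simpler: \eqref{eq:ap1} together with $\sfD(u_\eps)\ge0$ only gives boundedness, $\eps\int_\Omega u_\eps^{-\theta-1}\,\d x\le C$. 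H\"older's inequality then gains a power of $\eps$:
\[
\eps\int_\Omega u_\eps^{-\theta}\,\d x\le|\Omega|^{1/(\theta+1)}\,\eps^{1/(\theta+1)}\Big(\eps\int_\Omega u_\eps^{-\theta-1}\,\d x\Big)^{\theta/(\theta+1)}\longrightarrow0 .
\]

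\emph{$p$-gradient term.} Your route through \eqref{eq:ap2} is circular: it needs $\int_\Omega(f-u)u\,\d x=0$, which is known only for pairs already in $\graph(\frA)$. The paper instead uses the bound $\eps\rmE_p^p(u_\eps)\le C$ contained in \eqref{eq:great}. Through \eqref{eq:extra-integrability} this gives $\|\nabla u_\eps\|_{L^{\tilde p}}^p\le C/\eps$ with $\tilde p=pd/(d-1)>p$. Interpolating with the uniform bound on $\tnrm{\nabla u_\eps}$ then yields $\eps\pnrm{\nabla u_\eps}^p\le C\eps^{1-\vartheta}\to0$ for some $\vartheta\in(0,1)$. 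The same estimate is what controls the piece $\eps\int_\Omega u_\eps J_p(\nabla u_\eps)\cdot\nabla w\,\d x$. You call that piece $O(\eps^{1/p})$, but the $H^2$ bound does not control $\pnrm{u_\eps}$ uniformly in high dimension.

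\paragraph{The fallback.}
As written, it rests on a false sign claim. Testing with $z=u_\eps$ gives
$\int_\Omega(f-u_\eps)u_\eps\,\d x=\eps\pnrm{\nabla u_\eps}^p-\eps\theta\int_\Omega u_\eps^{-\theta}\,\d x$,
and the barrier contribution has the wrong sign.

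With the H\"older step above, however, the fallback becomes a correct and genuinely different route:
\begin{enumerate}
\item The right-hand side is $\ge-\eps\theta\int_\Omega u_\eps^{-\theta}\,\d x\to0$, so $\la f-u,u\ra\ge0$.
\item Keep the perspective term in \eqref{eq:VE} tested with $v\in\PZO$; there the barrier term has the good sign. Ioffe's theorem then gives $\Lap^2u\ge f-u+\sfh(u,\Lap u)$.
\item Corollary~\ref{cor:easy2} yields $f-u\in\frA u$, hence $f-u\in\sfA u$.
\end{enumerate}
This route avoids the passage to the limit in \eqref{eq:ap3}, and with it the higher-integrability argument for $\eps\pnrm{\nabla u_\eps}^p$. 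The price is Ioffe's lower semicontinuity theorem and the characterization of Corollary~\ref{cor:easy2}, in place of a direct verification of \eqref{eq:maximal2}.
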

\begin{proof}
	By the 
	apriori estimates
	\eqref{eq:great} 
	and 
	\eqref{eq:basic-estimate}
	we can extract a subsequence 
	$n\mapsto \eps_n\down0$
	and limits $u\in \HtN$ such that 
	\begin{displaymath}
		u_{\eps_n}\weakto u\quad
		\text{in }\HtN.
	\end{displaymath}
	We also observe that 
	for every $z\in \ZO$ 
	\begin{equation}
	\label{eq:disappearing}	
		\eps
		\Big|\int_\Omega J_p(\nabla u_\eps)\nabla z\,\d x
		\Big|
		\le \eps
		\|\nabla u_\eps\|_p^{p-1}
		\|\nabla z\|_{p}
		\le \eps^{1/p} 
		(pa)^{(p-1)/p}
		\|\nabla z\|_{p}=o(1)\quad\text{as }\eps\down0
	\end{equation}
	thanks to \eqref{eq:simple-but-useful}.
	
	\eqref{eq:VE}
	yields for every $z\in \PVO$
	\begin{align}
\notag		\eps
	\int_\Omega J_p(\nabla u_\eps )\cdot \nabla z\,\d x 
		+\int_\Omega \Lap u_\eps\,\Lap z\,\d x
		&=
		\scalpt{f-u_\eps}{z}
		+\eps\theta  \int_\Omega 
		\frac 1{u_\eps^{\theta+1}}z\,\d x
	+\int_\Omega 
	\frac{(\Lap u_\eps)^2}{u_\eps}
	\,z\,\d x
	\\&\label{eq:trivial}\ge 
	\scalpt{f-u_\eps}{z}
	\end{align}
	Passing to the limit 
	in \eqref{eq:trivial}
	along the sequence $\eps_n$
	and using \eqref{eq:disappearing},
	we 
	get \eqref{eq:DLSS-syst1}.
	
	Now we want to pass to the limit
	in \eqref{eq:ap3}.
	We first observe that
	\eqref{eq:ap1} 
	and \eqref{eq:basic-estimate}
	yield
	\begin{displaymath}
		\eps \int_\Omega \frac1{u_\eps^{\theta+1}}
		\,\d x
		\le C_1
	\end{displaymath}
	for some constant $C_1$ independent of $\eps$.
	We obtain 
	\begin{displaymath}
		\eps \int_\Omega \frac1{u_\eps^\theta}
		\,\d x
		\le C_2\eps 
		\Big(\int_\Omega \frac1{u_\eps^{\theta+1}}
		\,\d x
		\Big)^{\theta/(\theta+1)}
		= C_2
		\eps^{1/(\theta+1)}	\Big(\,\eps \int_\Omega \frac1{u_\eps^{\theta+1}}
		\,\d x
		\Big)^{\theta/(\theta+1)}
		\le C_2C_1^{\theta/(\theta+1)}\eps^{1/(\theta+1)}
		=o(1)
	\end{displaymath}
	as $\eps\downarrow0$.
	
	Similarly,
	\begin{align*}
		\eps \Big|\int_\Omega 
		J_p(\nabla u_\eps)\cdot \nabla(u_\eps
		w)\,\d x\Big|
		&\le 
		\eps \|w\|_{L^\infty}\int_\Omega 
		|\nabla u_\eps|^p\,\d x+
		\eps \lip(w)
		\int_\Omega 
		|\nabla u_\eps|^{p-1}u_\eps\,\d x
		\\&\le 
		C(w)\eps\bigg(
		\pnrm{\nabla u_\eps}^p
		+
		\pnrm{u_\eps}^p\bigg)
	\end{align*}
	We use the apriori bounds
	on $\|\nabla u_\eps\|^p_{L^p(\Omega)}+
	\rmE_p(u_\eps)^p\le C/\eps$
	of \eqref{eq:great}
	and \eqref{eq:simple-but-useful}
	to 
	derive
	$\|\nabla u_\eps\|^p_{L^{\tilde p}(\Omega)}
	\le C_{\tilde p}/\eps$
	for $\tilde p=pd/(d-1)$,
	thanks to 
	\eqref{eq:extra-integrability}.
	
	On the other hand
	$\|\nabla u_\eps\|_{L2}\le C_2$ 
	so that 
	choosing $\theta\in (0,1)$ 
	so that $\frac 1p=(1-\theta)\frac 12
	+\theta \frac 1{\tilde p}$ we get
	\begin{displaymath}
		\|\nabla u_\eps\|_{L^p}\le 
		\|\nabla u_\eps\|_{L2}^{1-\theta}
		\|\nabla u_\eps\|_{L^{\tilde p}}^{\theta}\le
		 C_2C_{\tilde p}\eps^{-\theta/p}
	\end{displaymath}
	so that 
	\begin{displaymath}
		\eps 
		\|\nabla u_\eps\|_{L^p}^p
		\le C\eps^{1-\theta}\to0
		\ \text{as }\eps\down0.
	\end{displaymath}
	The uniform bound of $u_\eps$ in $L^2(\Omega)$ and
	the previous estimate also shows that 
	$\eps\pnrm{u_\eps}^p\to 0$ as 
	$\eps\down0$.
	We can now pass to the limit in 
	\eqref{eq:ap3} obtaining
	\eqref{eq:DLSS-syst2}.
	
	By Corollary 
	\ref{cor:equivalent-maximal}
	we deduce that 
	$(u,u-f)$ 
	belongs to the graph of $\sfA$,
	so that 
	uniqueness of $u$ 
	and the limit of the whole family $u_\eps$ follow by the monotonicity of $\sfA$.
\end{proof}
\subsection{Maximality of \texorpdfstring{$\frA$}{A} in the \texorpdfstring{$H^2$--$H^{-2}$}{H2-H-2} duality}
\label{subsec:frA-maximality}
\begin{proof}[Proof of Theorem~\ref{thm:frA-maximal}]
	By Theorem~\ref{thm:Minty} it is enough to solve
	$\lambda\rmJ v+\frA v\ni f$ for every $\lambda>0$ and every
	$f\in\DHtN$. We repeat the construction of
	Theorems~\ref{thm:existence} and~\ref{thm:eta} with the following
	changes.

	The datum enters through the continuous functional
	$v\mapsto\la f,v\ra$ on $\VO\subset\HtN$, and the $L^2$ coercive
	term $\scalpt{u-f}{u-v}$ is replaced by
	$\lambda\la\rmJ u-f,u-v\ra$; accordingly one sets
	$\mathsf f(u,v):=\lambda\la\rmJ u-f,u-v\ra+\fra(u,u-v)$. The
	regularizer $\eps\Phi$ of \eqref{eq:Phi} is unchanged and still
	forces $\min_{\overline\Omega}u_\eps>0$ (property \eqref{eq:F2}).

	Hypotheses {\rm(VI.1)--(VI.3)} hold as before, with
	$\la\rmJ(u-v),u-v\ra=\|u-v\|_{\HtN}^2\ge0$ in place of
	$\scalpt{u-v}{u-v}$ in {\rm(VI.1)}. For the coercivity {\rm(VI.4)}
	we test with $v_0\equiv1\in\dom(\Phi)$: since
	$\int_\Omega\Lap u=0$ and
	$\fra(u,u-1)=\int_{\{u>0\}}(\Lap u)^2/u\,\d x\ge0$,
	\begin{displaymath}
		\mathsf f(u,1)\ge\lambda\la\rmJ u,u-1\ra
		=\lambda\Big(\|u-\Lap u\|_{L2}^2-\int_\Omega u\Big)
		\ge\lambda\big(c\,\|u\|_{\HtN}^2-|\Omega|^{1/2}\tnrm u\big),
	\end{displaymath}
	so the sublevel set is bounded in $\HtN$ uniformly in $\eps$, and,
	for each fixed $\eps$, in $\VO$ together with the $\eps\Phi$ term.
	The coercivity is thus supplied by $\rmJ$ alone: the entropy
	estimate of Theorem~\ref{thm:existence} plays no role, and only the
	tests against $1$ and against $u$ enter, exactly as in
	\eqref{eq:maximal1}--\eqref{eq:maximal2}.

	Theorem~\ref{thm:BC-VI} then yields $u_\eps\in\dom(\Phi)$ solving
	the corresponding variational inequality; the limit $\eps\down0$ is
	taken as in Theorem~\ref{thm:eta} (the term
	$\eps\,p^{-1}\pnrm{\nabla u_\eps}^p$ vanishing as in
	\eqref{eq:disappearing}, the defect being lower semicontinuous by
	Ioffe's theorem) and gives $u\in\HtN$ with
	$\lambda\rmJ u+\frA u\ni f$. Uniqueness and the resolvent estimate
	\eqref{eq:resolvent-estimate-W} follow from
	Theorem~\ref{thm:Minty}.
\end{proof}
The two maximality properties are logically independent (cf.~the
discussion after Proposition~\ref{prop:restriction-maximality}): here
the duality map $\rmJ$ supplies the $\HtN$-coercivity that, for the
$L^2$ realization, had to be extracted from the entropy and
Gagliardo--Nirenberg estimates.

\appendix

\section{The Neumann Laplacian in \texorpdfstring{$L^1(\Omega)$}{L1}}
\label{app:sec2-technical}

\begin{proof}[Proof of Lemma~\ref{le:useful-approximation}]
  The density of $\ZO$ in $\HtN$ follows by the estimates~\eqref{eq:HeatEstimates}, since $\sfH_t u\in \ZO$ for every $u\in \HtN$ and
  $t>0$,
  with $\sfH_tu\to u$ in $\HtN$ as $t\downarrow0$.

  $\ZO$ is also an algebra:
  if $u,w\in \ZO$ the Leibniz rule yields
  \begin{align}
  	\label{eq:Leibniz1}
    \nabla (uw)&=u\nabla w+w\nabla u\in L^\infty(\Omega),\\
    \label{eq:Leibniz2}
    \Lap(uw)&=u\Lap w+2\nabla u\cdot\nabla w+w\Lap u\in L^\infty(\Omega)
  \end{align}
  so that $uw\in \ZO$ as well.
  \eqref{eq:Leibniz2}
  also shows that
  \eqref{eq:Z-acts}
  holds.

  In order to show that $\ZO$ separates the points of
  $\overline\Omega$ it is sufficient to observe that
  for every $f\in \Lip(\overline\Omega)$
  $\sfH_t f\in \ZO$ and $\sfH_t f\to f$ uniformly in
  $\overline\Omega$.
  For every $x_0,x_1\in \overline\Omega$ we can find
  $f\in \Lip(\overline\Omega)$ such that $f(x_0)\neq f(x_1)$
  so that for sufficiently small $t>0$ we get
  $\sfH_t f(x_0)\neq \sfH_tf(x_1)$.

  Since $\Omega$ is bounded, we also have that $1\in \ZO$ and hence by Stone-Weierstraß also the density of $\ZO$ in $\rmC(\overline\Omega)$.
\end{proof}

\begin{proof}[Proof of Proposition~\ref{prop:L1-closure}]
  If $u_n\in\ZO$, $u_n\to u$ and $\Lap u_n\to g$ in $L^1(\Omega)$, then
  for every $\zeta\in\ZO$ Green's formula gives
  $\int_\Omega u_n\,\Lap\zeta\,\d x=\int_\Omega \Lap u_n\,\zeta\,\d x$;
  passing to the limit, $u\in D_{L1}(\Lap)$ with $\Lap u=g$. This proves
  that the graph-closure is contained in $D_{L1}(\Lap)$.

  Conversely, let $u\in D_{L1}(\Lap)$ with $\Lap u=g$ and set
  $u_\tau:=\sfH_\tau u\in\ZO$, $\tau>0$. By the self-adjointness of
  $\sfH_\tau$ in $L^2(\Omega)$ (extended by density to the pairing
  between $L^1(\Omega)$ and $L^\infty(\Omega)$) and the commutation
  $\sfH_\tau(\Lap\zeta)=\Lap(\sfH_\tau\zeta)$ for $\zeta\in\ZO$, we get
  \begin{equation*}
    \int_\Omega u_\tau\,\Lap\zeta\,\d x
    =\int_\Omega u\,\sfH_\tau(\Lap\zeta)\,\d x
    =\int_\Omega u\,\Lap(\sfH_\tau\zeta)\,\d x
    =\int_\Omega g\,\sfH_\tau\zeta\,\d x
    =\int_\Omega (\sfH_\tau g)\,\zeta\,\d x
  \end{equation*}
  for every $\zeta\in\ZO$. Since $u_\tau\in\ZO$, Green's formula applied
  to $u_\tau$ then forces $\Lap u_\tau=\sfH_\tau g$. As $\tau\downarrow0$,
  $u_\tau\to u$ and $\Lap u_\tau=\sfH_\tau g\to g$ in $L^1(\Omega)$ by the
  strong continuity of $(\sfH_\tau)_{\tau\ge0}$ in $L^1(\Omega)$, so $u$
  belongs to the graph-closure of $\ZO$.
\end{proof}

 \begin{proof}[Proof of the integration-by-parts identity \eqref{eq:L1-IBP}]
  Let $\zeta\in\ZO$. Then $\nabla\zeta\in H^1(\Omega;\R^d)\subset
  W^{1,1}(\Omega;\R^d)$ with $\mathrm{div}(\nabla\zeta)=\Lap\zeta\in
  L^\infty(\Omega)$ and $\partial_\nn\zeta=0$ on $\partial\Omega$. Since
  $u\in W^{1,1}(\Omega)$, the classical (first-order) Green's formula
  gives
  \begin{equation*}
    \int_\Omega u\,\Lap\zeta\,\d x=-\int_\Omega \nabla u\cdot\nabla\zeta\,\d x,
  \end{equation*}
  the boundary term $\int_{\partial\Omega}u\,\partial_\nn\zeta\,\d S$
  vanishing identically because $\partial_\nn\zeta\equiv0$, irrespective
  of any trace of $u$. Combined with the defining identity
  $\int_\Omega u\,\Lap\zeta\,\d x=\int_\Omega g\,\zeta\,\d x$
  (Definition~\ref{def:L1-Laplacian}), this proves
  \eqref{eq:L1-IBP} for $\zeta\in\ZO$.

  Let now $\zeta\in\Lip(\overline\Omega)$ and set
  $\zeta_t:=\sfH_t\zeta\in\ZO$, $t>0$. By the Lipschitz contraction
  property of $\sfH_t$, $\|\nabla\zeta_t\|_{L^\infty(\Omega)}\le
  \lip(\zeta)$ for every $t>0$; by strong continuity of $(\sfH_t)_{t\ge0}$
  on the form domain $H^1(\Omega)$, $\nabla\zeta_t\to\nabla\zeta$ in
  $L^2(\Omega;\R^d)$ as $t\downarrow0$, hence, along a subsequence,
  $\nabla\zeta_t\to\nabla\zeta$ a.e.\ in $\Omega$. Since \eqref{eq:L1-IBP}
  holds for each $\zeta_t\in\ZO$, dominated convergence (using
  $g\in L^1(\Omega)$ on the left and $\nabla u\in L^1(\Omega;\R^d)$,
  dominated by $\lip(\zeta)|\nabla u|$, on the right) passes to the limit
  along that subsequence; since the limit $-\int_\Omega \nabla
  u\cdot\nabla\zeta\,\d x$ does not depend on the subsequence, the full
  family converges and \eqref{eq:L1-IBP} holds for~$\zeta$.
\end{proof}

\section{Auxiliary regularization estimates: proofs}
\label{app:sparse-proofs}
We collect here the proofs of the regularization estimates for the
Heat flow stated in Section~\ref{subsec:Heat}.
\begin{proof}[Proof of Lemma~\ref{le:convex-estimate}]
	\eqref{eq:pointwise-convex-estimate}
	is equivalent to 
	\begin{equation}
		\label{eq:pointwise-convex-estimate2}
		\int_\Omega \Theta\big(\uu\big)\,\sfH_t w
		\,\d x 
		=\int_\Omega \sfH_t(\Theta\circ\uu)w
		\,\d x 
		\ge 
		\int_\Omega \Theta\big(\sfH_t \uu\big) w\,\d x
		\quad \text{for every }w\in L^\infty_+(\Omega).
	\end{equation}
	By approximation, it is not restrictive to assume that $\Theta$ is of class $\rmC^2$ with
	bounded first and second derivatives.
	We then introduce the functions
	$\uu_t=\sfH_t \uu,\ w_t:=\sfH_t w$ and
	\begin{displaymath}
		\zeta(s):=
		\int_\Omega \Theta\big(\sfH_{t-s}\uu\big)\,\sfH_s w
		\,\d x,\quad s\in [0,t]
	\end{displaymath}
	and we observe that \eqref{eq:pointwise-convex-estimate2} corresponds to 
	\begin{displaymath}
		\zeta(t)\ge \zeta(0).
	\end{displaymath}
	Since $\zeta\in \rmC([0,t])\cap \rmC^2((0,t))$ 
	it is sufficient to prove that $\frac\d{\d s}\zeta(s)\ge 0$ in $(0,t)$.
	Since 
	\begin{displaymath}
		\frac{\d}{\d s}w_s=\Lap w_s,\quad
		\frac{\d}{\d s}\Theta(\uu_{t-s})=
		\sum_i\partial_i \Theta(\uu_{t-s})\partial_s u^i_{t-s}=
		-\sum_i\partial_i \Theta(\uu_{t-s})\Lap u^i_{t-s},
	\end{displaymath}
	\begin{align*}
		\Lap \Theta(\uu_{t-s})
		&=
		\sum_k \partial^2_{x_k}\Theta(\uu_{t-s})
		=
		\sum_k \partial_{x_k}
		\Big(\sum_i \partial_i \Theta(\uu_{t-s})
		\partial_{x_k}u^i_{t-s}\Big)
		\\&=
		\sum_k 
		\Big(\sum_i \partial_i \Theta(\uu_{t-s})
		\partial^2_{x_k}u^i_{t-s}
		+
		\sum_{i,j} \partial^2_{ij}\Theta(\uu_{t-s}) \partial_{x_k}u^i_{t-s}
		\partial_{x_k}u^j_{t-s}\Big)
		\\&=
		\sum_i \partial_i \Theta(\uu_{t-s})
		\Lap u^i_{t-s}
		+
		\sum_k \Big(\sum_{i,j} \partial^2_{ij}\Theta(\uu_{t-s}) \partial_{x_k}u^i_{t-s}
		\partial_{x_k}u^j_{t-s}\Big)
		\ge \sum_i \partial_i \Theta(\uu_{t-s})
		\Lap u^i_{t-s}
	\end{align*}
	where we use 
	$\sum_{i,j} \partial^2_{ij}\Theta(\uu_{t-s}) \xi^i\xi^j\ge 0$ 
	for every choice of vectors $\xxi=(\xi^i)$
	thank to the convexity of $\Theta$.
	We can eventually compute
	\begin{align*}
		\zeta'(s)&=
		\int_\Omega \Theta(\uu_{t-s})\Lap w_s\,\d x
		-
		\sum_i
		\int_\Omega \partial_i \Theta(\uu_{t-s})\Lap u^i_{t-s}\,w_s \,\d x
		\\&=
		\int_\Omega 
		\Big(\Lap \Theta(\uu_{t-s}) 
		-
		\sum_i
		\partial_i \Theta(\uu_{t-s})\Lap u^i_{t-s}\Big)\, w_s \,\d x\ge 0.
	\end{align*}
	\end{proof}
\begin{proof}[Proof of Corollary~\ref{cor:main-ineq}]
	It is sufficient to apply the previous Lemma
	choosing $\Theta(a,b):=\sfh(a,b)$
	and $\uu=(u,\Lap u)$.
\end{proof}
\begin{proof}[Proof of Corollary~\ref{cor:main-ineq2}]
	We have to prove the inequality 
	\begin{equation}
	\label{eq:equivalence-main}
		\int_\Omega \Lap u\,\Lap \sfH_t u\,\d x\le 
		\int_{u>0}\frac{(\Lap u)^2}u
		\sfH_t u\,\d x.
	\end{equation}
	Writing $t=2\tau$ 
	and using the commutation between $\Lap$ and $\sfH_\tau$ we have
	\begin{align*}
		\int_{u>0}
		\frac{(\Lap u)^2}u
		\sfH_t u\,\d x
		&=
		\int_{u>0}
		\frac{(\Lap u)^2}u
		\sfH_\tau(\sfH_\tau u)\,\d x
		\ge 
		\int_\Omega 
		\frac{(\Lap \sfH_\tau u)^2}{\sfH_\tau u}
		(\sfH_\tau u)\,\d x
		=
		\int_\Omega 
		{(\Lap \sfH_\tau u)^2}\,\d x
	\end{align*}
	and 
	\begin{align*}
		\int_\Omega 
		\Lap u\Lap \sfH_t u\,\d x
		&=
		\int_\Omega 
		\Lap u\Lap 
		\big[\sfH_\tau(\sfH_\tau  u)\big]\,\d x
		=
		\int_\Omega 
		\Lap (\sfH_\tau u)\, \Lap (\sfH_\tau  u)\,\d x=
		\int_\Omega 
		{(\Lap \sfH_\tau u)^2}\,\d x.
	\end{align*}
	We thus get \eqref{eq:equivalence-main}.
\end{proof}

\section{Second order calculus for nonnegative functions: proofs}
\label{app:second-order}

\newcommand{\myH}{{\boldsymbol \sfH}}
\newcommand{\myT}{{\boldsymbol \sfT}}
\newcommand{\myG}{{\boldsymbol \sfG}}
\newcommand{\myTG}{{\boldsymbol \sfP}}
\newcommand{\myHG}{{\boldsymbol \sfQ}}

In this appendix we prove the results stated in
Section~\ref{subsec:second-order}. Throughout, $w$ denotes a
nonnegative function in $\HtN$,
$H(x):=\rmD^2 w(x)$, $\Lap w(x)=
\operatorname{tr} H(x)$,
 $\gg(x):=\nabla w(x)/\sqrt{w(x)}$ ($\gg(x)=0$ where $w(x)=0$), and we use the shorthand
 \begin{equation}
	\label{eq:PHRCQ}
	\begin{aligned}
	\myH:={}&\int_\Omega|\rmD^2 w|^2\,\d x=
	\int_\Omega|H(x)|^2\,\d x
	,\qquad
	\myT:=\int_\Omega(\Lap w)^2\,\d x=
	\int_\Omega|\operatorname{tr} H(x)|^2\,\d x,\\
	\myG:={}&\int_{\{w>0\}}\frac{|\nabla w|^4}{w^2}\,\d x=
	\int_\Omega|\gg(x)|^4\,\d x,\\
	\myTG:={}&\int_{\{w>0\}}\Lap w\,\frac{|\nabla w|^2}{w}\,\d x=
	\int_{\Omega}\operatorname{tr} H(x)\,|\gg(x)|^2\,\d x
	,\\
	\myHG:={}&\int_{\{w>0\}}\frac{\la\rmD^2 w\,\nabla w,\nabla w\ra}{w}\,\d x
	=\int_\Omega\la H(x)\,\gg(x),\gg(x)\ra\,\d x.
	\end{aligned}
\end{equation}
whenever the corresponding integrals are meaningful.
Notice that $\myT\le d\,\myH$, since
$(\operatorname{tr}H)^2\le d\,|H|^2$ for every symmetric matrix
$H\in\R^{d\times d}$.
We will also use the positive cone of the test class $\ZO$ of~\eqref{eq:defZ},
\begin{equation}
	\label{eq:defTest}
	\PZO:=\bigl\{v\in\ZO:\ \essinf_\Omega v>0\bigr\};
\end{equation}
if $w\in\PZO$ then all five quantities in \eqref{eq:PHRCQ} are finite,
since $\rmD^2 w\in L^2(\Omega)$, $\nabla w\in L^\infty(\Omega)$ and $w$
is bounded away from $0$.

\begin{lemma}[Square roots in the positive cone]
	\label{le:stability-app}
	If $u\in\PZO$ then $w:=\sqrt u\in\PZO$, with
	\begin{equation}
		\label{eq:chain-positive}
		\nabla w=\frac{\nabla u}{2\sqrt u},\qquad
		\rmD^2 w=\frac{\rmD^2 u}{2\sqrt u}
		-\frac{\nabla u\otimes\nabla u}{4\,u^{3/2}},\qquad
		\Lap w=\frac{\Lap u}{2\sqrt u}
		-\frac{|\nabla u|^2}{4\,u^{3/2}} .
	\end{equation}
\end{lemma}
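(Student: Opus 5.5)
The statement splits into two parts: first $w=\sqrt u$ belongs to $\PZO$, i.e.\ $w\in\HtN\cap\Lip(\overline\Omega)$, $\Lap w\in L^\infty(\Omega)$ and $\essinf w>0$; then the chain-rule formulas \eqref{eq:chain-positive} hold. The key observation is that $u\ge c>0$ a.e.\ with $u$ Lipschitz. So we may replace the square root by a smooth function $\phi\in\rmC^\infty(\R)$ with $\phi(r)=\sqrt r$ for $r\ge c$, and $w=\phi(u)$. This removes any singularity, and everything reduces to a chain rule for smooth functions composed with $u\in\ZO$.

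\emph{First-order part.} Since $u\in\Lip(\overline\Omega)\cap H^2(\Omega)$, the $W^{1,\infty}$ chain rule gives $\nabla w=\phi'(u)\nabla u=\nabla u/(2\sqrt u)\in L^\infty$. In particular $w$ is Lipschitz, and $\essinf w=\sqrt{\essinf u}>0$.

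\emph{Second derivatives.} The vector field $\nabla w=\phi'(u)\nabla u$ is a product of $\phi'(u)\in W^{1,\infty}$ and $\nabla u\in H^1(\Omega;\R^d)\cap L^\infty$. Hence it lies in $H^1$, with
\[
\partial_j(\phi'(u)\partial_i u)=\phi''(u)\partial_ju\,\partial_iu+\phi'(u)\partial^2_{ij}u .
\]
Inserting $\phi'(r)=\tfrac12r^{-1/2}$ and $\phi''(r)=-\tfrac14r^{-3/2}$ on $[c,\infty)$ yields the Hessian formula in \eqref{eq:chain-positive}, and taking the trace gives the formula for $\Lap w$. Thus $w\in H^2(\Omega)$. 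Moreover $\Lap w\in L^\infty$, because $\Lap u\in L^\infty$, $|\nabla u|^2\in L^\infty$ and $u^{-1/2},u^{-3/2}$ are bounded.

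\emph{Neumann condition.} This is the only delicate point, since traces must be handled with some care. I would avoid traces altogether and use the variational characterization \eqref{eq:Lap}. For $v\in H^1(\Omega)$, note that $\phi'(u)v\in H^1(\Omega)$, since $\phi'(u)\in W^{1,\infty}$. Testing \eqref{eq:Lap} for $u$ against $\phi'(u)v$ gives
\[
\int_\Omega\nabla w\cdot\nabla v\,\d x=\int_\Omega\nabla u\cdot\nabla(\phi'(u)v)\,\d x-\int_\Omega\phi''(u)|\nabla u|^2v\,\d x=-\int_\Omega\bigl(\phi'(u)\Lap u+\phi''(u)|\nabla u|^2\bigr)v\,\d x.
\]
The right-hand side is integration against an $L^2$ (indeed $L^\infty$) function. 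By \eqref{eq:Lap} this shows at once that $w\in\HtN$, that $\partial_\nn w=0$, and that $\Lap w$ is given by the claimed expression. This is the same device used in E4 and in Step~1 of Lemma~\ref{le:Lap2structure}.

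Combining the three parts shows that $w\in\ZO$ with positive essential infimum, that is $w\in\PZO$, and that the formulas in \eqref{eq:chain-positive} hold.
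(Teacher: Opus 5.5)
Your proof is correct. Its interior part coincides with the paper's: the paper also notes that $\sqrt{\cdot}$ is smooth with bounded derivatives on $[m,\sup u]$, $m=\essinf_\Omega u$, and obtains from the Sobolev chain rule all three formulas in \eqref{eq:chain-positive}, together with $\nabla w\in L^\infty$, $\Lap w\in L^\infty$ and $\essinf w=\sqrt m>0$.

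You differ from the paper only in how you verify the Neumann condition.

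\emph{The paper's route.} It checks the definition \eqref{eq:domL} directly by traces. Since $u$ has a Lipschitz representative with $u\ge m$ on $\overline\Omega$, the trace of $\nabla w=\nabla u/(2\sqrt u)$ equals the trace of $\nabla u$ divided by $2\sqrt u$ on $\partial\Omega$. Hence $\partial_\nn w=\partial_\nn u/(2\sqrt u)=0$.

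\emph{Your route.} You test \eqref{eq:Lap} for $u$ against $\phi'(u)v$ and read off both $w\in\HtN$ and the formula for $\Lap w$ from the reverse implication in \eqref{eq:Lap}. This is exactly the device of E4 and of Step~1 of Lemma~\ref{le:Lap2structure}.

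\emph{What each buys.} Your argument avoids the trace-of-a-product fact and gives membership in $\HtN$ and the Laplacian formula in one stroke. Your product-rule step is then needed only for the Hessian formula. The price is that it relies on the elliptic $H^2$-regularity of the weak Neumann problem built into \eqref{eq:Lap}, which is where the convexity of $\Omega$ enters. The paper's check, once $w\in H^2$ is known from the chain rule, uses no such regularity and would work on any bounded Lipschitz domain. Within the paper's framework, where \eqref{eq:Lap} is a standing fact, both arguments are complete.
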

\begin{proof}
	Since $m:=\essinf_\Omega u>0$ and $u\in \Lip(\overline\Omega)$,
	the function $\varphi(s):=\sqrt s$ is smooth with bounded
	derivatives of every order on $[m,\sup u]$, so the chain rule for
	Sobolev functions gives \eqref{eq:chain-positive}; in particular
	$\rmD^2w\in L^2(\Omega;\R^{d\times d})$,
	$\nabla w\in L^\infty(\Omega;\R^d)$, $\Lap w\in
	L^\infty(\Omega)$ and $\essinf w=\sqrt m>0$.
	Concerning the boundary condition, $u$ has a Lipschitz
	representative with $u\ge m>0$ on $\overline\Omega$ and the trace
	of $\nabla w=\nabla u/(2\sqrt u)$ on $\partial\Omega$ is
	$\mathrm{tr}(\nabla u)/(2\sqrt{u\restr{\partial\Omega}})$, so that
	$\partial_\nn w=\partial_\nn u/(2\sqrt
	{u\restr{\partial\Omega}})=0$.
\end{proof}

\begin{lemma}[First integration by parts rule]
	\label{le:rule1-app}
	For every $w\in\PZO$ we have
	\begin{equation}
		\label{eq:rule1}
		\myG=\myTG+2\myHG  .
	\end{equation}
\end{lemma}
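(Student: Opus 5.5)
The identity \eqref{eq:rule1} is the integrated form of the divergence identity for the vector field $\frac{|\nabla w|^2}{w}\nabla w$. I would derive it without any boundary traces by testing the weak Neumann formulation \eqref{eq:Lap} of $\Lap w$ against a suitable $H^1$ function. Since $w\in\PZO$, we have $w\in\HtN\cap\Lip(\overline\Omega)$, $\nabla w\in L^\infty(\Omega;\R^d)\cap H^1(\Omega;\R^d)$, $\rmD^2w\in L^2$ and $m:=\essinf_\Omega w>0$. All five quantities in \eqref{eq:PHRCQ} are therefore finite, as already observed.

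\emph{Step 1 (the test function).} Set $\varphi:=|\nabla w|^2/w$. I first check that $\varphi\in H^1(\Omega)\cap L^\infty(\Omega)$ and compute its gradient.
\begin{itemize}
\item Each component $\partial_k w$ belongs to $H^1\cap L^\infty$, so the product rule for bounded Sobolev functions gives $|\nabla w|^2\in H^1\cap L^\infty$ with $\nabla|\nabla w|^2=2\,\rmD^2w\,\nabla w$.
\item The function $1/w$ is Lipschitz and bounded, since $s\mapsto 1/s$ is smooth on $[m,\sup w]$, and $\nabla(1/w)=-\nabla w/w^2$.
\end{itemize}
Hence $\varphi\in H^1(\Omega)\cap L^\infty(\Omega)$ with
\[
\nabla\varphi=\frac{2\,\rmD^2w\,\nabla w}{w}-\frac{|\nabla w|^2\,\nabla w}{w^2}.
\]
I expect this step to be the only genuine point to check: it is where boundedness of $\nabla w$ (from $w\in\ZO$) and the positive lower bound on $w$ are used. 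With only $w\in\HtN$, $\varphi$ need not lie in $H^1$.

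\emph{Step 2 (integration by parts).} By \eqref{eq:Lap}, which already encodes the homogeneous Neumann condition, $\int_\Omega\Lap w\,\varphi\,\d x=-\int_\Omega\nabla w\cdot\nabla\varphi\,\d x$ holds for every $\varphi\in H^1(\Omega)$. Inserting the gradient from Step 1 gives
\[
\myTG=\int_\Omega \Lap w\,\frac{|\nabla w|^2}{w}\,\d x
=-2\int_\Omega\frac{\la\rmD^2w\,\nabla w,\nabla w\ra}{w}\,\d x+\int_\Omega\frac{|\nabla w|^4}{w^2}\,\d x
=-2\myHG+\myG .
\]
Every integrand is integrable: $\Lap w$ and $\rmD^2w$ are in $L^2$, and the remaining factors are bounded. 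Rearranging gives $\myG=\myTG+2\myHG$, which is \eqref{eq:rule1}.

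No boundary term appears, because the Neumann condition is built into \eqref{eq:Lap}. So the proof reduces to the regularity check of Step 1, which holds for elements of $\PZO$.
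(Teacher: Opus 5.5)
Your proof is correct and is essentially the paper's argument. Both rest on the same regularity check, $|\nabla w|^2/w\in H^1(\Omega)\cap L^\infty(\Omega)$, and both integrate the divergence of $\frac{|\nabla w|^2}{w}\nabla w$. The only difference is that you dispose of the boundary term by testing the weak Neumann formulation \eqref{eq:Lap} with $\varphi$, whereas the paper applies the Gauss--Green formula and notes that $\mathrm{tr}(\varphi)\,\partial_\nn w=0$, so your version skips the trace-product step.
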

\begin{proof}
	Consider the vector field
	$V:=w^{-1}|\nabla w|^2\,\nabla w$. Each factor $w^{-1}$,
	$|\nabla w|^2$, $\partial_i w$ belongs to $H^1(\Omega)\cap
	L^\infty(\Omega)$
    hence $V\in H^1(\Omega;\R^d)\cap
	L^\infty(\Omega;\R^d)$, with
	\begin{equation}
		\label{eq:divV}
		\mathrm{div}\,V=
		2\,\frac{\la\rmD^2 w\,\nabla w,\nabla w\ra}{w}
		+\Lap w\,\frac{|\nabla w|^2}{w}
		-\frac{|\nabla w|^4}{w^2}\in L^1(\Omega)
	\end{equation}
	by the Leibniz rule.
	The Gauss--Green formula on the Lipschitz domain $\Omega$ gives
	$\int_\Omega \mathrm{div}\,V\,\d x=\int_{\partial\Omega}
	\mathrm{tr}(V)\cdot\nn\,\d\sigma$; since traces of $H^1\cap
	L^\infty$ functions multiply,
	$\mathrm{tr}(V)\cdot\nn=\mathrm{tr}\big(w^{-1}|\nabla
	w|^2\big)\,\partial_\nn w=0$.
	Integrating \eqref{eq:divV} we obtain \eqref{eq:rule1}.
\end{proof}

\begin{lemma}[Pointwise sum of squares inequality]
	\label{le:SOS}
	Let $d\ge2$ and set
	\begin{equation}
		\label{eq:SOS-choice}
		\lambda:=1-\frac1d=\frac{d-1}d,\qquad
		\mu:=2-\frac1d=\frac{2d-1}d,\qquad
		\kappa:=\frac1{d^2}.
	\end{equation}
	Then for every symmetric $H\in\R^{d\times d}$ and every $\gg \in\R^d$
	\begin{equation}
		\label{eq:SOS}
		F(H,\gg ):=(1-\lambda)|H|^2+\lambda(\operatorname{tr}H)^2
		+(2\mu-4)\la H\gg ,\gg \ra
		+\mu\,(\operatorname{tr}H)\,|\gg |^2+(3-\mu)\,|\gg |^4
		\;\ge\;\kappa\,|H|^2 .
	\end{equation}
\end{lemma}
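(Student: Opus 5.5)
By orthogonal invariance we may take $H$ diagonal. After that, the quantity $F(H,\gg)-\kappa|H|^2$ becomes a quadratic form in a few scalar variables, and we check its nonnegativity by completing squares. With the choices \eqref{eq:SOS-choice} the coefficients are $1-\lambda-\kappa=\frac{d-1}{d^2}$, $\lambda=\frac{d-1}d$, $2\mu-4=-\frac2d$, $\mu=\frac{2d-1}d$ and $3-\mu=\frac{d+1}d$. All the quantities $|H|^2$, $\operatorname{tr}H$, $\la H\gg,\gg\ra$ and $|\gg|$ are invariant under $(H,\gg)\mapsto(RHR^\top,R\gg)$ with $R$ orthogonal. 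We therefore assume $H=\mathrm{diag}(h_1,\dots,h_d)$ and set $a_i:=g_i^2\ge0$, $s:=|\gg|^2=\sum_ia_i$ and $T:=\operatorname{tr}H=\sum_ih_i$. Then $\la H\gg,\gg\ra=\sum_ih_ia_i$, and the claim reads
\begin{equation*}
	\frac{d-1}{d^2}\sum_ih_i^2+\frac{d-1}dT^2-\frac2d\sum_ih_ia_i+\frac{2d-1}d\,Ts+\frac{d+1}d\,s^2\ge0 .
\end{equation*}

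Next we remove the dependence on the individual $a_i$. For fixed $h$ and $s$ the left-hand side is affine in $a=(a_i)$, and $a$ ranges over the simplex $\{a\ge0,\ \sum_ia_i=s\}$. Its minimum is attained at a vertex $a=s\,e_k$. It therefore suffices to prove the inequality with $\sum_ih_ia_i$ replaced by $x\,s$, where $x:=h_k$ for an arbitrary fixed index $k$. Next we eliminate the remaining $h_j$, $j\ne k$. By Cauchy--Schwarz, $\sum_{j\ne k}h_j^2\ge (T-x)^2/(d-1)$, so $\sum_ih_i^2\ge x^2+(T-x)^2/(d-1)$. Since this term carries a positive coefficient, after multiplying by $d^2$ it is enough to show that the quadratic form
\begin{equation*}
	q(x,T,s):=(d-1)x^2+(T-x)^2+d(d-1)T^2-2d\,xs+d(2d-1)\,Ts+d(d+1)s^2
\end{equation*}
is nonnegative on $\R^3$.

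This last verification is the only real computation, and I expect it to be the main point where the specific constants matter. The plan is to minimize in $x$ first. The $x$-dependent part is $d\,x^2-2x(T+ds)$, whose minimum equals $-(T+ds)^2/d$. The remaining binary form is
\begin{equation*}
	\Big(d^2-d+1-\tfrac1d\Big)T^2+(2d^2-3d)\,Ts+d^2s^2 .
\end{equation*}
Its leading coefficients are positive, and its discriminant is $(2d^2-3d)^2-4d^2(d^2-d+1-\frac1d)=-8d^3+5d^2+4d$. This is negative for every $d\ge2$ (for instance $-44$ at $d=2$, and the cubic term dominates afterwards). Hence the binary form is positive definite, so $q\ge0$, and \eqref{eq:SOS} follows. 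If the discriminant check had failed, I would first try adjusting the split of $\sum_ih_i^2$ between $x^2$ and the other components. However, with the stated constants the computation above closes the argument.
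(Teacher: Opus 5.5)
Your reductions are sound. Diagonalizing $H$, pushing the vector onto an eigenvector by affinity on the simplex $\{a\ge0,\ \sum_ia_i=s\}$, and the Cauchy--Schwarz bound on the remaining diagonal entries correctly lead to the form $q$.

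The error is in the one computation you yourself identified as decisive. When you minimize in $x$, you subtract $(T+ds)^2/d=T^2/d+2Ts+ds^2$, whose cross term is $2Ts$, not $2d\,Ts$. The remaining binary form is therefore
\[
\Big(d^2-d+1-\tfrac1d\Big)T^2+(2d^2-d-2)\,Ts+d^2s^2 ,
\]
not the one with middle coefficient $2d^2-3d$. Its discriminant is
\[
(2d^2-d-2)^2-4d^2\Big(d^2-d+1-\tfrac1d\Big)=-(11d^2-8d-4),
\]
which is quadratic, not cubic, in $d$. It equals $-24$ at $d=2$ and stays negative for every $d\ge2$, since $11d^2-8d-4$ is increasing there. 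Note also that your own formula $-8d^3+5d^2+4d$ gives $-36$ at $d=2$, not $-44$. So the verification as written checks the wrong form. With the corrected coefficient the binary form is still positive definite and $q\ge0$, so your argument proves the lemma once this is fixed.

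Compared with the paper, the route is essentially the same reduction with the roles swapped. The paper normalizes the vector to $\sqrt r\,e_1$ and discards the off-diagonal entries of $H$, because they enter only through $(1-\lambda-\kappa)\sum_{i\ne j}H_{ij}^2$ with $1-\lambda-\kappa\ge0$. You instead normalize $H$ and use affinity in $(g_i^2)$. Both arrive at the same three-variable form: indeed $q(x,T,s)=d^2\,G(x,s,T-x)$, where $G$ is the paper's form in $(a_1,r,z)$. The paper then checks the three leading principal minors of the $3\times3$ matrix $M$. Your elimination of $x$ is a Schur complement reducing this to a single $2\times2$ discriminant, a slightly lighter computation. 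Consistently, $11d^2-8d-4$ is exactly the numerator of the paper's $\det M$.
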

\begin{proof}
	If $\gg =0$ then
	$F(H,0)\ge(1-\lambda)|H|^2=\frac1d|H|^2\ge\kappa|H|^2$.
	Let $\gg \neq0$; since both sides of \eqref{eq:SOS} are invariant
	under the substitution $(H,\gg )\mapsto(OHO^{\top},O\gg )$ for an
	orthogonal matrix $O$, we may assume $\gg =\sqrt r\,e_1$ with
	$r=|\gg |^2>0$, so that $\la H\gg ,\gg \ra=H_{11}r$.
	The off-diagonal entries of $H$ appear in $F(H,\gg )-\kappa|H|^2$
	only through the term $(1-\lambda-\kappa)\sum_{i\neq
	j}H_{ij}^2$, whose coefficient
	$1-\lambda-\kappa=\frac1d-\frac1{d^2}=\frac{d-1}{d^2}$ is
	nonnegative: $F(H,\gg)$ is therefore smallest for diagonal matrices
	$H=\mathrm{diag}(a_1,\dots,a_d)$.
	Setting $z:=a_2+\dots+a_d$ and using
	$\sum_{i\ge2}a_i^2\ge z^2/(d-1)$, a direct computation shows that
	it is sufficient to prove that, for all real $a_1,r,z$,
	\begin{equation}
		\label{eq:G-quadratic}
		G(a_1,r,z):=(1-\kappa)a_1^2+(3-\mu)r^2+\delta z^2
		+(3\mu-4)\,a_1r+2\lambda\,a_1z+\mu\,zr\;\ge\;0,
	\end{equation}
	where $\delta:=\lambda+\frac{1-\lambda-\kappa}{d-1}
	=\frac{d^2-d+1}{d^2}$.
	$G$ is the quadratic form of the symmetric matrix
	\begin{equation*}
		M=\begin{pmatrix}
			1-\kappa & \tfrac{3\mu-4}2 & \lambda\\[2pt]
			\tfrac{3\mu-4}2 & 3-\mu & \tfrac\mu2\\[2pt]
			\lambda & \tfrac\mu2 & \delta
		\end{pmatrix}
		\qquad\text{in the variables }(a_1,r,z).
	\end{equation*}
	With the choice \eqref{eq:SOS-choice} the principal minors of $M$
	are
	\begin{equation}
		\label{eq:minors}
		m_1=\frac{d^2-1}{d^2},\qquad
		m_2=\frac{16d^2-13d-4}{4d^3},\qquad
		m_3=\det M=\frac{11d^2-8d-4}{4d^5}:
	\end{equation}
	indeed, $3-\mu=\frac{d+1}d$ and $3\mu-4=\frac{2d-3}d$ give
	\begin{displaymath}
		m_2=\frac{(d-1)(d+1)^2}{d^3}-\frac{(2d-3)^2}{4d^2}
		=\frac{4(d^3+d^2-d-1)-d(4d^2-12d+9)}{4d^3}
		=\frac{16d^2-13d-4}{4d^3},
	\end{displaymath}
	while in the expansion of $4d^5\det M$ over the common
	denominator all the terms of order $d^5,d^4,d^3$ cancel
	identically, leaving $11d^2-8d-4$.
	Both numerators in \eqref{eq:minors} are strictly positive for
	every $d\ge2$, so $M$ is positive definite by Sylvester's
	criterion and \eqref{eq:G-quadratic} holds.
\end{proof}

\begin{proposition}[Estimates on the positive cone]
	\label{prop:positive-case}
	For $w\in\PZO$ set
	\begin{equation}
		\label{eq:defcalJ}
		\mathcal J(w):=\int_\Omega
		\Big(\Lap w+\frac{|\nabla w|^2}{w}\Big)^{\!2}\,\d x
		=\myT +2\myTG +\myG  .
	\end{equation}
	Then
	\begin{align}
		\label{eq:sharp-positive}
		\mathcal J(w)&\ge\frac1{d^2}\,\myH ,\\
		\label{eq:R-rough}
		\sqrt \myG &\le\sqrt \myT +2\sqrt \myH ,\\
		\label{eq:R-refined}
		\myG &\le\frac{16}9\,\myH +\frac23\,\mathcal J(w).
	\end{align}
	Moreover, if $u\in\PZO$ and hence $w=\sqrt u\in \PZO$ by
	Lemma~\ref{le:stability-app}, it follows that
	$\sfD(u)=4\,\mathcal J(\sqrt u\,)$.
\end{proposition}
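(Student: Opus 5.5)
The plan rests on three ingredients: the integration-by-parts rule of Lemma~\ref{le:rule1-app}, the Neumann--convexity bound \eqref{eq:H2bound} applied to $w$ (legitimate, since $w\in\PZO\subset\HtN$), and the sum-of-squares inequality of Lemma~\ref{le:SOS}.

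\emph{Step 1: the identity for $\sfD$.} This is direct. With $u=w^2$ we have $\Lap u=2w\Lap w+2|\nabla w|^2$, so
\[
(\Lap u)^2/u=4\big(\Lap w+|\nabla w|^2/w\big)^2 ,
\]
which gives $\sfD(u)=4\mathcal J(w)$. Expanding the square yields the second expression $\myT+2\myTG+\myG$ in \eqref{eq:defcalJ}.

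\emph{Step 2: the bound \eqref{eq:sharp-positive}, for $d\ge2$.} The idea is to write $\mathcal J$ as a combination that matches $\int_\Omega F(H,\gg)\,\d x$ of Lemma~\ref{le:SOS}, adding harmless nonnegative terms. Two tools are available.
\begin{itemize}
\item The rule \eqref{eq:rule1}, $\myG-\myTG-2\myHG=0$, can be added with any real multiplier $s$.
\item The convexity bound $\myT\ge\myH$ from \eqref{eq:H2bound} lets us replace $\myT$ by $(1-\lambda)\myH+\lambda\myT$ from below.
\end{itemize}
Starting from
\[
\mathcal J=\myT+2\myTG+\myG\ \ge\ (1-\lambda)\myH+\lambda\myT+2\myTG+\myG+s(\myG-\myTG-2\myHG),
\]
we choose $s=\mu-2$ (the sign of $s$ is irrelevant, because we add zero). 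The coefficients then become:
\begin{itemize}
\item $\myTG$: $2-s=4-\mu$;
\item $\myHG$: $-2s=4-2\mu$;
\item $\myG$: $1+s=\mu-1$.
\end{itemize}
These do not match $F$ directly, whose coefficients are $\mu$, $2\mu-4$ and $3-\mu$. Solving instead $2-s=\mu$ gives $s=2-\mu$. Then the $\myHG$ coefficient is $-2s=2\mu-4$ and the $\myG$ coefficient is $1+s=3-\mu$, which is exactly the pattern of $F$. Hence
\[
\mathcal J\ge\int_\Omega F(\rmD^2w,\gg)\,\d x\ge\kappa\myH=\frac1{d^2}\myH .
\]
For $d=1$, $F$ with $\lambda=0$ is handled separately as in Remark~\ref{rem:d1}: the same combination with suitable coefficients is a positive-definite form.

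\emph{Step 3: the bound \eqref{eq:R-rough}.} Rewrite \eqref{eq:rule1} as $\myG=\myTG+2\myHG$. By Cauchy--Schwarz,
\[
|\myTG|\le\sqrt{\myT}\sqrt{\myG},\qquad |\myHG|\le\int_\Omega|H||\gg|^2\,\d x\le\sqrt{\myH}\sqrt{\myG}.
\]
Dividing by $\sqrt{\myG}$ gives \eqref{eq:R-rough}.

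\emph{Step 4: the bound \eqref{eq:R-refined}.} Use \eqref{eq:rule1} to eliminate $\myTG$ in $\mathcal J$:
\[
\mathcal J=\myT+2(\myG-2\myHG)+\myG=\myT+3\myG-4\myHG,
\]
so $3\myG=\mathcal J-\myT+4\myHG$. Now bound $4\myHG\le4\sqrt{\myH\myG}\le\tfrac32\myG+\tfrac83\myH$ and drop $-\myT\le0$. This gives $\tfrac32\myG\le\mathcal J+\tfrac83\myH$, i.e.\ $\myG\le\tfrac23\mathcal J+\tfrac{16}9\myH$.

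\emph{Expected obstacle.} The main difficulty is the bookkeeping in Step~2: verifying that the linear combination exactly reproduces $F$, and that the direction of \eqref{eq:H2bound} is used correctly, applied only to the $(1-\lambda)$ fraction of $\myT$.
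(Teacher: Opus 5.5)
Your proof is correct and is essentially the paper's. Your multiplier $s=2-\mu$ is exactly the paper's ``add $\mu-2$ times \eqref{eq:rule1}'', written with the opposite sign convention for the rule. The convexity split of $\int_\Omega(\Lap w)^2\,\d x$, the appeal to Lemma~\ref{le:SOS}, the Cauchy--Schwarz argument for \eqref{eq:R-rough}, and the elimination of the $\Lap w\,|\nabla w|^2/w$ term for \eqref{eq:R-refined} are the same steps as in the paper.

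The one point to tighten is $d=1$. Lemma~\ref{le:SOS} requires $d\ge2$. The choice \eqref{eq:SOS-choice} at $d=1$ gives $\mu=1$, for which $F\ge|H|^2$ fails pointwise (only $\mu=4/3$ works). Remark~\ref{rem:d1} refers back to this very proof, so it cannot be used either. Instead, note that for $d=1$ both mixed integrands equal $w''(w')^2/w$. Then \eqref{eq:rule1} gives $\int_\Omega w''(w')^2/w\,\d x=\frac13\int_\Omega (w')^4/w^2\,\d x$, hence $\mathcal J(w)=\int_\Omega(w'')^2\,\d x+\frac53\int_\Omega(w')^4/w^2\,\d x\ge\int_\Omega(w'')^2\,\d x$. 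The paper obtains the same identity by a one-line integration by parts.
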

\begin{proof}
	The identity in \eqref{eq:defcalJ} follows by expanding the
	square; all three terms are finite on $\PZO$.
	The last statement follows from
	$\Lap u=2w\Lap w+2|\nabla w|^2$
	(Lemma~\ref{le:stability-app} applied to $w^2=u$), which gives
	$(\Lap u)^2/u=4\big(\Lap w+|\nabla w|^2/w\big)^2$ pointwise.

	\emph{Proof of \eqref{eq:sharp-positive}, case $d\ge2$.}
	Adding $\mu-2$ times equation \eqref{eq:rule1}, i.e.~$(\mu-2)(\myTG +2\myHG -\myG )=0$,
	to \eqref{eq:defcalJ} we obtain
	\begin{equation}
		\label{eq:mu-form}
		\mathcal J(w)=\myT +\mu\,\myTG +(2\mu-4)\,\myHG +(3-\mu)\,\myG  .
	\end{equation}
	We split $\myT =\lambda \myT +(1-\lambda)\myT $ and recall 
	inequality \eqref{eq:H2bound}, $\myT \ge \myH $. 
    Since $1-\lambda=\frac1d$ is non-negative:
	\begin{equation*}
		\mathcal J(w)\ \ge\
		\int_\Omega\Big[\lambda(\Lap w)^2+(1-\lambda)|\rmD^2w|^2
		+\mu\,\Lap w\,\frac{|\nabla w|^2}{w}
		+(2\mu-4)\,\frac{\la\rmD^2w\,\nabla w,\nabla w\ra}{w}
		+(3-\mu)\,\frac{|\nabla w|^4}{w^2}\Big]\,\d x .
	\end{equation*}
	The integrand is $F\big(H(x),\gg(x)\big)$
	with $F$ as in \eqref{eq:SOS}.
	Lemma~\ref{le:SOS} then yields $\mathcal J(w)\ge\kappa \myH $ with
	$\kappa=1/d^2$ for $d\geq 2$.

	\emph{Case $d=1$.} Integrating by parts
	$\int_\Omega w''\frac{(w')^2}w\,\d x=
	\int_\Omega \frac{\big(\frac13(w')^3\big)'}{w}\,\d x
	=\frac13\int_\Omega\frac{(w')^4}{w^2}\,\d x$
	(the boundary term $\frac{(w')^3}{3w}$ vanishes since $w'=0$ on
	$\partial\Omega$), i.e.~$\myTG =\frac13\myG $, so that by
	\eqref{eq:rule1} also $\myHG =\frac13\myG $ and
	\begin{equation}
		\label{eq:d1-identity}
		\mathcal J(w)=\myT +\tfrac53\myG \ \ge\ \myT =\myH  ,
	\end{equation}
	which is \eqref{eq:sharp-positive} for $d=1$ and, after
	multiplication by $4$, the identity of Remark~\ref{rem:d1}.

	\emph{Proof of \eqref{eq:R-rough}.}
	By the pointwise Cauchy--Schwarz inequality
	$|\la\rmD^2w\,\nabla w,\nabla w\ra|\le|\rmD^2w||\nabla w|^2$ and
	the Cauchy--Schwarz inequality in $L^2$,
	\begin{equation}
		\label{eq:CQ-bounds}
		|\myTG |\le\sqrt{\myT \,\myG },\qquad
		|\myHG |\le\sqrt{\myH \,\myG } .
	\end{equation}
	By \eqref{eq:rule1}, $\myG =\myTG +2\myHG \le \sqrt \myG \big(\sqrt \myT +2\sqrt
	\myH \big)$; if $\myG >0$ we divide by $\sqrt \myG $, and \eqref{eq:R-rough}
	is trivial if $\myG =0$.

	\emph{Proof of \eqref{eq:R-refined}.}
	Subtracting twice equality \eqref{eq:rule1} from \eqref{eq:defcalJ} yields
	$\mathcal J(w)=\myT -4\myHG +3\myG $, hence by \eqref{eq:CQ-bounds} and
	$\myT \ge0$,
	\begin{equation*}
		3\myG =\mathcal J(w)-\myT +4\myHG 
        \le \mathcal J(w)+4\sqrt \myH \sqrt \myG 
        \le \mathcal J(w) + \frac83\myH + \frac32 \myG.
	\end{equation*}
        Subtract $\frac32\myG$ and multiply by $\frac23$ to obtain \eqref{eq:R-rough}.
\end{proof}

\begin{proof}[Proof of Lemma~\ref{le:sqrt-calculus}]
	\emph{(a)} By the standard level set property of Sobolev
	functions, $\nabla w=0$ $\Leb d$-a.e.~on $\{w=0\}$; applying the
	same property to each $\partial_i w\in H^1(\Omega)$ on the
	smaller set $\{w=0\}\subset\{\partial_iw=0\}$ (up to a $\Leb
	d$-negligible set) we get $\rmD^2w=0$ $\Leb d$-a.e.~on
	$\{w=0\}$.

	\emph{(b)} Since $w\in L^2(\Omega)$ and $\nabla w\in
	L^2(\Omega;\R^d)$ we have $2w\nabla w\in L^1(\Omega;\R^d)$; the
	chain rule $\nabla(w^2)=2w\nabla w$ holds for the truncations
	$w_M:=w\wedge M\in H^1\cap L^\infty$ and passes to the limit
	$M\uparrow+\infty$ by dominated convergence.

	\emph{(c)} Let $\zeta\in\ZO$. Since $u\in W^{1,1}(\Omega)$ and
	$\partial_\nn\zeta=0$, the first order Green formula (as in the
	proof of \eqref{eq:L1-IBP} in
	Appendix~\ref{app:sec2-technical}) gives
	\begin{equation}
		\label{eq:step-c1}
		\int_\Omega u\,\Lap\zeta\,\d x=
		-\int_\Omega \nabla u\cdot\nabla\zeta\,\d x=
		-2\int_\Omega w\nabla w\cdot\nabla\zeta\,\d x .
	\end{equation}
	Consider the truncated vector fields $V_M:=(w\wedge M)\nabla w$,
	$M>0$. Since $w\wedge M\in H^1(\Omega)\cap L^\infty(\Omega)$ and
	$\nabla w\in H^1(\Omega;\R^d)$, we have $V_M\in
	W^{1,1}(\Omega;\R^d)$ with
	$\mathrm{div}\,V_M=\mathbf 1_{\{w<M\}}|\nabla w|^2+(w\wedge M)\Lap w
	\in L^1(\Omega)$,
	and the trace of $V_M$ on $\partial\Omega$ is the product of the
	traces of its factors, so that
	$\mathrm{tr}(V_M)\cdot\nn=(w\wedge
	M)\restr{\partial\Omega}\,\partial_\nn w=0$.
	The Gauss--Green formula then gives
	$\int_\Omega V_M\cdot\nabla\zeta\,\d x=
	-\int_\Omega \mathrm{div}(V_M)\,\zeta\,\d x$, and we can pass to
	the limit $M\uparrow+\infty$ by dominated convergence, since
	$|V_M|\le w|\nabla w|\in L^1$ and
	$|\mathrm{div}\,V_M|\le|\nabla w|^2+w|\Lap w|\in L^1$:
	\begin{equation}
		\label{eq:step-c2}
		\int_\Omega w\nabla w\cdot\nabla\zeta\,\d x=
		-\int_\Omega\big(|\nabla w|^2+w\Lap w\big)\,\zeta\,\d x .
	\end{equation}
	Combining \eqref{eq:step-c1} and \eqref{eq:step-c2} we obtain
	$\int_\Omega u\Lap\zeta\,\d x=\int_\Omega\big(2w\Lap w+2|\nabla
	w|^2\big)\zeta\,\d x$ for every $\zeta\in\ZO$, which is
	\eqref{eq:chain-Lap} by Definition~\ref{def:L1-Laplacian}. The
	vanishing of $\Lap u$ on $\{u=0\}=\{w=0\}$ follows from (a).

	\emph{(d)} follows from \eqref{eq:chain-Lap} and $u=w^2$ by
	direct computation on $\{u>0\}$.
\end{proof}

\begin{proof}[Proof of Proposition~\ref{prop:quartic}]	
	With the notation~\eqref{eq:PHRCQ}, the statements to be proven~\eqref{eq:quartic} and~\eqref{eq:D-upper} become
	\begin{equation}\label{eq:quartic:PHRCQ}
		\myG \leq \bigl(\sqrt{\myT} + 2 \sqrt{\myH}\bigr)^2 \leq 9 \myT 
		\qquad\text{and}\qquad
		\sfD(u) \leq 8 \myT + 8 \myG \leq 80 \myT \,.
	\end{equation}
	We may assume $w\not\equiv0$, otherwise everything vanishes.
	Set $v_\tau:=\sfH_\tau w$ for $\tau>0$, we write $\myT(v_\tau)$, $\myH(v_\tau)$, etc.\@ to denote the quantities $\myT$, $\myH$, etc.\@ in~\eqref{eq:PHRCQ} with $w$ replaced by $v_\tau$. By the estimates~\eqref{eq:HeatEstimates} of
	Section~\ref{subsec:Heat}, $v_\tau\in\ZO$; moreover
	$v_\tau\ge0$ has a continuous representative with
	$\min_{\overline\Omega}v_\tau>0$ by the strict positivity of the
	Neumann heat kernel \cite{Davies89}, so $v_\tau\in\PZO$.
	Since $w\in\HtN$ we have, as $\tau\downarrow0$,
	$\Lap v_\tau=\sfH_\tau(\Lap w)\to\Lap w$ in $L^2(\Omega)$, and by
	\eqref{eq:H2bound} applied to $v_\tau-w\in\HtN$ also
	$\rmD^2v_\tau\to\rmD^2w$ in $L^2$; in particular
	$\myT(v_\tau)\to \myT$, $\myH(v_\tau)\to \myH$, and $v_\tau\to w$,
	$\nabla v_\tau\to\nabla w$ strongly in~$L^2$.

	By \eqref{eq:R-rough} applied to $v_\tau\in\PZO$,
	\begin{equation}
		\label{eq:R-tau}
		\myG(v_\tau)\le
		\bigl(\sqrt{\myT(v_\tau)}+2\sqrt{\myH(v_\tau)}\bigr)^{\!2}.
	\end{equation}
	Consider the integrand
	\begin{equation}
		\label{eq:Ioffe-integrand}
		g(s,\xi):=\begin{cases}
			|\xi|^4/s^2,&s>0,\\
			0,&s=0,\ \xi=0,\\
			+\infty,&s=0,\ \xi\neq0,
		\end{cases}
	\end{equation}
	which is nonnegative, lower semicontinuous on
	$[0,+\infty)\times\R^d$ and convex in $\xi$. Since $v_\tau\to w$
	in $L^2(\Omega)$ (hence in measure) and $\nabla
	v_\tau\to\nabla w$ (weakly) in $L^1(\Omega;\R^d)$, the lower
	semicontinuity theorem of Ioffe
	\cite[Theorem 5.8]{Ambrosio-Fusco-Pallara00} gives
	\begin{equation*}
		\int_\Omega g(w,\nabla w)\,\d x\le
		\liminf_{\tau\down0}\int_\Omega g(v_\tau,\nabla
		v_\tau)\,\d x=
		\liminf_{\tau\down0}\myG(v_\tau) ,
	\end{equation*}
	and the left-hand side coincides with $\myG=\myG(w)$ thanks to
	Lemma~\ref{le:sqrt-calculus}(a). Combining with \eqref{eq:R-tau}
	and the convergence of $\myT(v_\tau),\myH(v_\tau)$ we obtain the left first
	inequality in~\eqref{eq:quartic:PHRCQ}; the left second follows from
	the convexity inequality \eqref{eq:H2bound}, which gives
	$\tnrm{\rmD^2w}\le\tnrm{\Lap w}$, i.e.~$\myH\le \myT$, so that
	$\big(\sqrt{\myT}+2\sqrt{\myH}\big)^2\le 9\myT$.

	Finally, by \eqref{eq:pointwise-sqrt} and the elementary
	inequality $(a+b)^2\le2a^2+2b^2$,
	\begin{equation*}
		\sfD(u)=4\int_{\{w>0\}}\Big(\Lap w+\frac{|\nabla
		w|^2}{w}\Big)^{\!2}\,\d x\le
		8\myT+8\myG ,
	\end{equation*}
	and the right inequality in~\eqref{eq:quartic:PHRCQ} follows from the left, since 
	$\sfD(u)\le 8\myT+8\myG\le 8\myT+72\myT=80\myT$.
\end{proof}

\begin{proof}[Proof of Lemma~\ref{le:relaxedD}]
	Convexity of each $\sfG_\tau$ follows from the joint convexity of
	the integrand $\sfh$ of~\eqref{eq:defh} and the linearity of
	$u\mapsto(\sfH_\tau u,\Lap\sfH_\tau u)$; a supremum of convex
	functionals is convex.

	Concerning lower semicontinuity, fix $\tau>0$ and let
	$u_n\weakto u$ weakly in $L^2(\Omega)$, $u_n\ge0$. Since $\Omega$
	is bounded, the ultracontractive semigroup $\sfH_{\tau/2}$ admits
	a bounded kernel, hence it is a Hilbert--Schmidt (thus compact)
	operator on $L^2(\Omega)$. Writing
	$\sfH_\tau u_n=\sfH_{\tau/2}\big(\sfH_{\tau/2}u_n\big)$ and
	$\Lap\sfH_\tau u_n=\sfH_{\tau/2}\big(\Lap\sfH_{\tau/2}u_n\big)$,
	where $\big(\sfH_{\tau/2}u_n\big)_n$ and
	$\big(\Lap\sfH_{\tau/2}u_n\big)_n$ are bounded in $L^2$ and
	converge weakly to $\sfH_{\tau/2}u$, $\Lap\sfH_{\tau/2}u$
	respectively, we deduce that
	\begin{equation*}
		\sfH_\tau u_n\to\sfH_\tau u,\qquad
		\Lap \sfH_\tau u_n\to\Lap\sfH_\tau u\qquad
		\text{strongly in }L^2(\Omega).
	\end{equation*}
	Along a subsequence realizing the $\liminf$ of
	$\sfG_\tau(u_n)$ we can also assume a.e.~convergence; since $\sfh$
	is nonnegative and lower semicontinuous on $[0,+\infty)\times\R$,
	Fatou's Lemma gives
	$\liminf_n\sfG_\tau(u_n)\ge\sfG_\tau(u)$.
	A supremum of lower semicontinuous functionals is lower
	semicontinuous.
\end{proof}

\begin{proof}[Proof of Theorem~\ref{thm:Hessian}]
	We may assume $u\not\equiv0$ and we split the proof into three
	claims, which we then combine.

	\smallskip
	\emph{Claim 1: $\sfG_\tau(u)\le\sfD(u)$ for every $\tau>0$, so
	that $\sfD^*(u)\le\sfD(u)$.}
	We may assume $\sfD(u)<+\infty$, so that $u\in D_{L1}(\Lap)$
	and, by \eqref{eq:defD},
	$\sfD(u)=\int_\Omega \sfh(u,\Lap u)\,\d x$ with $\sfh$ as in
	\eqref{eq:defh}.
	We apply the Jensen inequality of Lemma~\ref{le:convex-estimate} to the
	vector $\uu:=(u,\Lap u)\in L^1(\Omega;\R^2)$ and to the convex
	l.s.c.\ function $\sfh$: recalling the commutation
	$\Lap\sfH_\tau u=\sfH_\tau(\Lap u)$ of \eqref{eq:L1-closure}, we
	get, $\Leb d$-a.e.~in $\Omega$,
	\begin{equation*}
		\frac{(\Lap\sfH_\tau u)^2}{\sfH_\tau u}
		=\sfh\big(\sfH_\tau u,\Lap \sfH_\tau u\big)
		=\sfh\big(\sfH_\tau\uu\big)
		\le\sfH_\tau\big(\sfh\circ\uu\big) .
	\end{equation*}
	Integrating over $\Omega$ and using that $\sfH_\tau$ is symmetric
	and Markov ($\sfH_\tau1=1$), so that $\int_\Omega \sfH_\tau
	f\,\d x=\int_\Omega f\,\d x$ for every nonnegative measurable
	$f$, we obtain $\sfG_\tau(u)\le\sfD(u)$.

	\smallskip
	\emph{Claim 2: if $u\in D_{L1}(\Lap)$ then
	$\sfD^*(u)\ge\sfD(u)$.}
	As $\tau\down0$ we have $\sfH_\tau u\to u$ and
	$\Lap\sfH_\tau u=\sfH_\tau(\Lap u)\to\Lap u$ in $L^1(\Omega)$ by
	\eqref{eq:L1-closure}; along a suitable sequence $\tau_n\down0$
	both convergences hold $\Leb d$-a.e., and Fatou's Lemma with the
	nonnegative l.s.c.~integrand $\sfh$ yields
	\begin{equation*}
		\liminf_{n\to\infty}\sfG_{\tau_n}(u)\ge
		\int_\Omega \sfh(u,\Lap u)\,\d x=\sfD(u) .
	\end{equation*}

	\smallskip
	\emph{Claim 3: if $\sfD^*(u)<+\infty$ then $w:=\sqrt u\in\HtN$
	and the two bounds \eqref{eq:Hessian} hold.}
	For $\tau>0$ set
	$u_\tau:=\sfH_\tau u\in\PZO$ (as in the proof of
	Proposition~\ref{prop:quartic}) and
	$w_\tau:=\sqrt{u_\tau}\in\PZO$
	(Lemma~\ref{le:stability-app}). Since $u_\tau>0$ in
	$\overline\Omega$,
	\begin{equation*}
		4\,\mathcal J(w_\tau)=\sfD(u_\tau)=
		\int_\Omega\frac{(\Lap \sfH_\tau u)^2}{\sfH_\tau u}\,\d x
		=\sfG_\tau(u)\le\sfD^*(u),
	\end{equation*}
	so that Proposition~\ref{prop:positive-case} gives
	\begin{equation}
		\label{eq:tau-bounds}
		\int_\Omega |\rmD^2 w_\tau|^2 \, \d x
		\le d^2\,\mathcal J(w_\tau)\le
		\frac{d^2}4\,\sfD^*(u),
		\qquad
		\int_\Omega \frac{|\nabla w_\tau|^4}{w_\tau^2} \, \d x
		\le\Big(\frac{16}9\,d^2+\frac23\Big)
		\mathcal J(w_\tau)
		\le\Big(\frac{4d^2}9+\frac16\Big)\sfD^*(u).
	\end{equation}
	Since $(\sqrt a-\sqrt b)^2\le|a-b|$ for $a,b\ge0$,
	\begin{equation*}
		\|w_\tau-w\|_{L^2(\Omega)}^2\le
		\|u_\tau-u\|_{L^1(\Omega)}\le
		|\Omega|^{1/2}\|u_\tau-u\|_{L^2(\Omega)}\to0
		\quad\text{as }\tau\down0 .
	\end{equation*}
	By \eqref{eq:tau-bounds} and the interpolation inequality
	$\tnrm{\nabla v}^2\le\eps\tnrm{\rmD^2v}^2+C_\eps\tnrm v^2$
	(valid on bounded Lipschitz domains), the family $(w_\tau)$ is
	bounded in $H^2(\Omega)$, so that $w_\tau\weakto w$ weakly in
	$H^2(\Omega)$ as $\tau\down0$.
	Since each $w_\tau$ belongs to $\HtN$ and $\HtN$ is a closed
	subspace of $H^2(\Omega)$ (the normal trace $v\mapsto
	\partial_\nn v$ is continuous from $H^2(\Omega)$ to
	$L^2(\partial\Omega)$), it is also weakly closed and $w\in\HtN$,
	i.e.~$u\in\WpO$.
	The first bound in \eqref{eq:Hessian} follows from
	\eqref{eq:tau-bounds} by the weak lower semicontinuity of
	$v\mapsto\tnrm{\rmD^2v}$; the second follows by applying, as in
	the proof of Proposition~\ref{prop:quartic}, Ioffe's Theorem to
	the integrand \eqref{eq:Ioffe-integrand} along the convergences
	$w_\tau\to w$ in $L^2(\Omega)$ and $\nabla w_\tau\weakto\nabla
	w$ in $L^2(\Omega;\R^d)$.

	\smallskip
	We can now conclude the proof of the three statements.

	\emph{(1)} If $u\in D_{L1}(\Lap)$, Claims 1 and 2 give
	$\sfD^*(u)=\sfD(u)$. If $u\notin D_{L1}(\Lap)$ then
	$\sfD(u)=+\infty$ by definition, and also $\sfD^*(u)=+\infty$:
	otherwise Claim 3 would give $\sqrt u\in\HtN$, and
	Lemma~\ref{le:sqrt-calculus}(c) would yield
	$u\in D_{L1}(\Lap)$, a contradiction.

	\emph{(2)} If $\sfD(u)<+\infty$ then $\sfD^*(u)<+\infty$ by
	Claim 1, hence $w=\sqrt u\in\HtN$ and \eqref{eq:Hessian} hold by
	Claim 3. Conversely, if $\sqrt u\in\HtN$, i.e.~$u\in\WpO$, then
	Lemma~\ref{le:sqrt-calculus}(a,c) shows that $u$ complies with
	the vacuum constraint in \eqref{eq:defD-explicit} and
	Proposition~\ref{prop:quartic} gives
	$\sfD(u)\le80\int_\Omega(\Lap\sqrt u)^2\,\d x<+\infty$.

	\emph{(3)} If $\sqrt u\in\HtN$, the first inequality in
	\eqref{eq:twosided} follows from the first bound in
	\eqref{eq:Hessian} combined with (1), and the second is
	\eqref{eq:D-upper}; if $\sqrt u\notin\HtN$ both sides are
	$+\infty$ by (2).
\end{proof}

\section{A general existence result for variational inequalities}
The following existence result for abstract variational inequalities
is tailored to the needs of the present paper.
Results of this type
are classical in the theory of variational inequalities for monotone
operators, we refer
to the monograph~\cite{Baiocchi-Capelo84} and
to~\cite{Baiocchi04}, whose approach is close in spirit to
the one adopted here.
\begin{theorem}
	\label{thm:BC-VI}
	Let $\V$
	be a reflexive Banach space,
	let 
	$\Phi:\V\to [0,+\infty]$ 
	be a convex lower semicontinuous function
	and let 
	$\mathsf f:\dom(\Phi)
	\times \dom(\Phi)\to \R$ 
	satisfying the following properties:
	\begin{enumerate}[\rm ({VI}.1)]
		\item 
	for every $u,v\in \dom(\Phi)$ 
	\begin{equation}
	\mathsf f(u,v)+\mathsf f(v,u)\ge 0.
	\end{equation}
	\item For every 
	$u\in \dom(\Phi)$ 
	the
	map $v\mapsto \mathsf f(u,v)$ 
	is concave, upper semicontinuous and 
	$\mathsf f(u,u)\le 0$. 
	\item For every $v\in \dom(\Phi)$ 
	the map 
	$u\mapsto \mathsf f(u,v)$ 
	is lower semicontinuous along
	segments in $\dom(\Phi)$.
	\item 	
		There exists $v_0\in \dom(\Phi)$ 
		such that the 
		set 
		\begin{equation}
			\bigl\{u\in \dom(\Phi):
			\Phi(u)+
			\mathsf f(u,v_0)\le \Phi(v_0)\bigr\}
		\end{equation}
		is bounded in $\V$.
	\end{enumerate}
	Then there exists an element
	$u\in \dom(\Phi)$ satisfying
	\begin{equation}
		\label{eq:abstract-VI}
		\Phi(u)+\mathsf f(u,v)\le \Phi(v)
		\quad\text{for every }
		\quad v\in \dom(\Phi).
	\end{equation}
\end{theorem}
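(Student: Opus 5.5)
The plan is the classical Ky Fan/Minty-type argument, following Brezis's and Baiocchi--Capelo's scheme: first reduce to a bounded setting, then prove existence in finite dimensions by a KKM-type lemma, and finally pass to the limit by weak compactness and a Minty trick.

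\emph{Step 1: reduction to a bounded convex set.} Let $K:=\dom(\Phi)$ and let $v_0$ be as in (VI.4). For $R>0$ consider $K_R:=K\cap\overline B_R$, with $R$ larger than the bound of the set in (VI.4). On $K_R$ we look for $u$ solving
\[
\Phi(u)+\mathsf f(u,v)\le\Phi(v)\qquad\text{for every }v\in K_R.
\]
The localization is removed a posteriori as follows. Since $u$ satisfies the inequality with $v=v_0$, we get $\|u\|<R$. For an arbitrary $v\in K$, take $v_t=(1-t)u+tv\in K_R$ for small $t$. Convexity of $\Phi$, concavity of $\mathsf f(u,\cdot)$ and $\mathsf f(u,u)\le0$ give
\[
t\bigl(\Phi(v)-\Phi(u)\bigr)+\bigl(\Phi(u)-\Phi(u)\bigr)\ \ge\ \mathsf f(u,v_t)\ \ge\ t\,\mathsf f(u,v)+(1-t)\,\mathsf f(u,u).
\]
This identity needs care, since $\mathsf f(u,u)$ might be strictly negative. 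I therefore expect to first show $\mathsf f(u,u)=0$, or to work directly with the Minty form below, which avoids the issue.

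\emph{Step 2: Minty reformulation.} Call $u\in K_R$ a \emph{weak} solution if
\[
\Phi(u)-\mathsf f(v,u)\le\Phi(v)\qquad\text{for all }v\in K_R.
\]
By (VI.1), every solution is a weak solution. Conversely, given a weak solution, insert $v_t=(1-t)u+tv$. Convexity of $\Phi$ and concavity of $\mathsf f(v_t,\cdot)$ together with $\mathsf f(v_t,v_t)\le0$ give
\[
0\le t\bigl(\Phi(v)-\Phi(u)\bigr)+t\,\mathsf f(v_t,v)
\]
after dividing out. Then let $t\downarrow0$ using (VI.3), the lower semicontinuity along segments, to obtain the strong inequality; the same computation also globalizes from $K_R$ to $K$ as in Step 1.

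\emph{Step 3: finite-dimensional existence and passage to the limit.} For each $v$, define the sets
\[
S(v):=\bigl\{u\in K_R:\ \Phi(u)-\mathsf f(v,u)\le\Phi(v)\bigr\}.
\]
These are convex, by convexity of $\Phi$ and of $-\mathsf f(v,\cdot)$ (VI.2), and closed, by lower semicontinuity; hence they are weakly closed, and thus weakly compact in the bounded set of the reflexive space $\V$. It then suffices to prove the finite intersection property, and weak compactness finishes the proof. For finitely many $v_1,\dots,v_n$, I would show that the sets
\[
T(v):=\bigl\{u:\ \Phi(u)+\mathsf f(u,v)\le\Phi(v)\bigr\}
\]
form a KKM family on $\mathrm{conv}\{v_i\}$. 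Indeed, if $u=\sum\lambda_iv_i$ lay in no $T(v_i)$, then concavity and $\mathsf f(u,u)\le0$ would give
\[
0\ \ge\ \mathsf f(u,u)\ \ge\ \sum\lambda_i\,\mathsf f(u,v_i)\ >\ \sum\lambda_i\bigl(\Phi(v_i)-\Phi(u)\bigr)\ \ge\ 0,
\]
a contradiction. Closedness of $T(v_i)$ in the finite-dimensional simplex, which the KKM lemma needs, only follows from (VI.3) along segments. This step is the main obstacle, because segment-lower semicontinuity is weaker than continuity. I would therefore instead apply KKM to the $S(v_i)$, which are closed, after checking that $T(v)\subset S(v)$ by (VI.1). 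This inclusion shows the $S(v_i)$ also form a KKM family, so KKM yields a point of $\bigcap_i S(v_i)$. Step 2 then upgrades the resulting weak solution to the desired solution of \eqref{eq:abstract-VI}.
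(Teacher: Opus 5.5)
Your architecture is right. The paper gives no proof of its own: it simply defers to Baiocchi--Capelo, and your argument is the classical Ky Fan/KKM--Minty scheme behind results of this type. That scheme consists of truncation to $K_R$, KKM on the Minty sets $S(v)$, weak compactness, Minty's trick through (VI.3), and removal of the truncation by (VI.4). Passing from $T(v)$ to the larger sets $S(v)\supset T(v)$, so that no closedness of $T(v)$ is needed, is exactly the right move, and your KKM verification is correct. Three points, however, need repair.

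First, the worry in Step~1 is void. Taking $v=u$ in (VI.1) gives $2\mathsf f(u,u)\ge0$, so together with (VI.2) you get $\mathsf f(u,u)=0$ on $\dom(\Phi)$, and your globalization goes through as written. Note also that $v_0$ belongs to the set in (VI.4), because $\mathsf f(v_0,v_0)\le0$, so $v_0\in K_R$ automatically.

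Second, the display in Step~2 has the wrong sign. The weak inequality at $v_t$, the convexity of $\Phi$, and $(1-t)\mathsf f(v_t,u)+t\,\mathsf f(v_t,v)\le\mathsf f(v_t,v_t)\le0$ give $\Phi(u)+\tfrac1{1-t}\mathsf f(v_t,v)\le\Phi(v)$. Only with this sign does the lower semicontinuity (VI.3) yield $\Phi(u)+\mathsf f(u,v)\le\Phi(v)$ as $t\downarrow0$. With your sign you would need upper semicontinuity, and you would only recover the weak form.

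Third, and this is a genuinely missing argument: closedness of $S(v)$ in $\V$ does not follow from semicontinuity alone. The reason is that $\dom(\Phi)$ need not be closed (it is not in Theorem~\ref{thm:existence}), and $\mathsf f(v,\cdot)$ is defined and upper semicontinuous only on $\dom(\Phi)$. For $u_n\in S(v)$ with $u_n\to u$, you must first show $u\in\dom(\Phi)$, i.e.\ that $\Phi(u_n)\le\Phi(v)+\mathsf f(v,u_n)$ stays bounded. This holds because $\mathsf f(v,\cdot)$ is bounded above on bounded subsets of $\dom(\Phi)$:
\begin{itemize}
\item Extend $-\mathsf f(v,\cdot)$ by $+\infty$ outside $\dom(\Phi)$ and take its lower semicontinuous envelope; this envelope is convex.
\item By the relative upper semicontinuity of $\mathsf f(v,\cdot)$, the envelope coincides with $-\mathsf f(v,\cdot)$ on $\dom(\Phi)$, so it is finite there.
\item It is therefore proper, since an l.s.c.\ convex function taking the value $-\infty$ has no finite values.
\item A proper l.s.c.\ convex function admits a continuous affine minorant, which gives the upper bound on bounded sets.
\end{itemize}
Once $u\in\dom(\Phi)$ is secured, your semicontinuity argument gives $u\in S(v)$. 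Mazur's lemma plus reflexivity then give the weak compactness you need.
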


\section*{Acknowledgments}
G.S.\ has been partially supported by funding from the European
Research Council (ERC) under the European Union's Horizon Europe
research and innovation programme (grant agreement No.\ 101200514,
project acronym OPTiMiSE). Views and opinions expressed are however
those of the author(s) only and do not necessarily reflect those of
the European Union or the European Research Council Executive Agency.
Neither the European Union nor the granting authority can be held
responsible for them.\newline
The research of A.S.\ is partially based upon work from COST Action 24122 mSPACE, supported by COST (European Cooperation in Science and Technology), www.cost.eu.

\printbibliography
\end{document}